\newcommand{\nc}{\newcommand}
\nc{\dmo}{\DeclareMathOperator}
\dmo{\ra}{\rightarrow}
\dmo{\Prob}{\mathbb{P}}
\dmo{\E}{\mathbb{E}}
\dmo{\N}{\mathbb{N}}
\dmo{\Z}{\mathbb{Z}}
\dmo{\inte}{int}
\dmo{\Q}{\mathbb{Q}}
\dmo{\R}{\mathbb{R}}
\dmo{\C}{\mathcal{C}}
\dmo{\X}{\mathcal{X}}
\dmo{\U}{\mathcal{U}}
\dmo{\T}{\mathcal{T}}
\dmo{\F}{\mathcal{F}}
\dmo{\AC}{\mathcal{AC}}
\dmo{\w}{\omega}
\dmo{\MIN}{\mathcal{MIN}}
\dmo{\Mod}{Mod}
\dmo{\PMod}{PMod}
\dmo{\PMF}{\mathcal{PMF}}
\dmo{\Mat}{Mat}
\dmo{\supp}{supp}
\dmo{\UE}{\mathcal{UE}}
\dmo{\vol}{vol}
\dmo{\B}{B}
\dmo{\PB}{PB}
\dmo{\PR}{PSL(2,\mathbb{R})}
\dmo{\GL}{GL(k, \mathbb{C})}
\dmo{\SL}{SL(2, \mathbb{Z})}
\dmo{\Isom}{Isom}
\dmo{\Aut}{Aut}
\dmo{\RP}{\mathbb{R} \mathrm{P}}
\dmo{\I}{\mathcal{I}}
\dmo{\el}{\ell_{\C}}
\dmo{\NN}{\mathcal{N}}
\dmo{\rk}{rank}
\dmo{\tr}{tr}
\dmo{\llangle}{\langle\langle}
\dmo{\rrangle}{\rangle\rangle}
\dmo{\Unif}{Unif}
\dmo{\Out}{Out}
\dmo{\Homeo}{Homeo}
\dmo{\sumRho}{\mathcal{N}}
\dmo{\stopping}{\vartheta}
\dmo{\diam}{\operatorname{diam}}
\tikzset{->-/.style={decoration={
  markings,
  mark=at position #1 with {\arrow{>}}},postaction={decorate}}}
\nc{\nt}{\newtheorem}
\newtheorem{thm}{{\bf Theorem}}[section]
\newtheorem{lem}[thm]{{\bf Lemma}}
\newtheorem{cor}[thm]{{\bf Corollary}}
\newtheorem{prop}[thm]{{\bf Proposition}}
\newtheorem{fact}[thm]{Fact}
\newtheorem{claim}[thm]{Claim}
\newtheorem{question}[thm]{Question}
\newtheorem{conj}[thm]{Conjecture}
\newtheorem{dfn}[thm]{Definition}
\numberwithin{equation}{section}
\newtheorem{obs}[thm]{Observation}
\title{Percolation in acylindrically hyperbolic groups}
\date{\today}
\author{Inhyeok Choi}
\address{%
		School of Mathematics, KIAS\\
		85 Hoegi-ro, Dongdaemun-gu, Seoul 02455, South Korea
}
\email{
        inhyeokchoi48@gmail.com
        }
\author{Donggyun Seo}
\address{%
		Research Institute of Mathematics, Seoul National University, Seoul, Korea}
\email{
        seodonggyun@snu.ac.kr, seodonggyun7@gmail.com
        }
\begin{document}
\begin{abstract}
Let $G$ be an acylindrically hyperbolic group. We prove that Bernoulli bond percolation on every Cayley graph of $G$ has a nonuniqueness phase, in which there are infinitely many infinite clusters. This generalizes Hutchcroft's result for Gromov hyperbolic graphs to relatively hyperbolic groups, mapping class groups and rank-1 CAT(0) groups for example.

\noindent{\bf Keywords.} relative hyperbolicity, CAT(0) cube complex, percolation

\noindent{\bf MSC classes:} 20F67, 57K20, 60K35
\end{abstract}

\maketitle

%%%%%%%%%%%%%%%%%%%%%%%%%%%%%%%%%%%%%%%%%%%%%%%%
%
%							Introduction
%
%%%%%%%%%%%%%%%%%%%%%%%%%%%%%%%%%%%%%%%%%%%%%%%%

\section{Introduction}

In geometric group theory, groups are often studied as a geometric object. Some groups resemble $\Z^{d}$ while some others resemble free groups and surface groups. In between them there is a varying degree of hyperbolicity. Here, the notion of hyperbolicity can either be phrased internally using the Dehn function, small cancellation or the prevalence of Morse elements, or by using the group action on hyperbolic spaces, e.g., word hyperbolicity, relative hyperbolicity, hierarchical hyperbolicity and acylindrical hyperbolicity. For an overview in this aspect, we refer to M. Bestvina's survey \cite{bestvina0groups}. There have been efforts to study these hyperbolicity of groups  by means of stochastic processes such as random walks and Markov chains (\cite{sisto2018contracting}, \cite{maher2018random}, \cite{mathieu2020deviation}, \cite{goldsborough2021markov}).

On the other hand, groups naturally arise in probability theory as sources of many homogeneous graphs with vertex-transitive automorphism group. In this paper, we study percolation in groups. It was classically studied for Euclidean lattices $\Z^{d}$ in relation to physical situations where liquid passes through a porous medium. I. Benjamini and O. Schramm considered its generalization to Cayley graphs of groups and sketched the general landscape of the expected phenomena \cite{benjamini1996percolation}. See also papers by I. Benjamini, R. Lyons, Y. Peres and O. Schramm (\cite{benjamini1999critical}, \cite{benjamini1999group-invariant}) regarding the critical percolation in general groups.

Given a connected, locally finite (simplicial) graph $\Gamma$, \emph{Bernoulli bond percolation} on $\Gamma$ is defined by endowing independent Bernoulli random variables with expectation $p$ to the edges. Edges whose Bernoulli RV takes value 0 are deleted, and those with Bernoulli RV taking value 1 are retained. We can then ask how the connected components, i.e., clusters, of the resulting random subgraph $\Gamma[p]$ are shaped. To this end, we define two parameters, called the \emph{critical parmeter}
\[
p_{c} = p_{c}(\Gamma):= \inf \left\{ p \in [0, 1] : \textrm{$\Gamma[p]$ contains an infinite cluster almost surely}\right\}.
\]
and the \emph{uniqueness threshold}
\[
p_{u} = p_{u}(\Gamma):= \inf \left\{ p \in [0, 1] : \textrm{$\Gamma[p]$ contains a unique infinite cluster almost surely}\right\}.
\]
See Subsection \ref{subsection:perc} for further basics of the percolation theory.

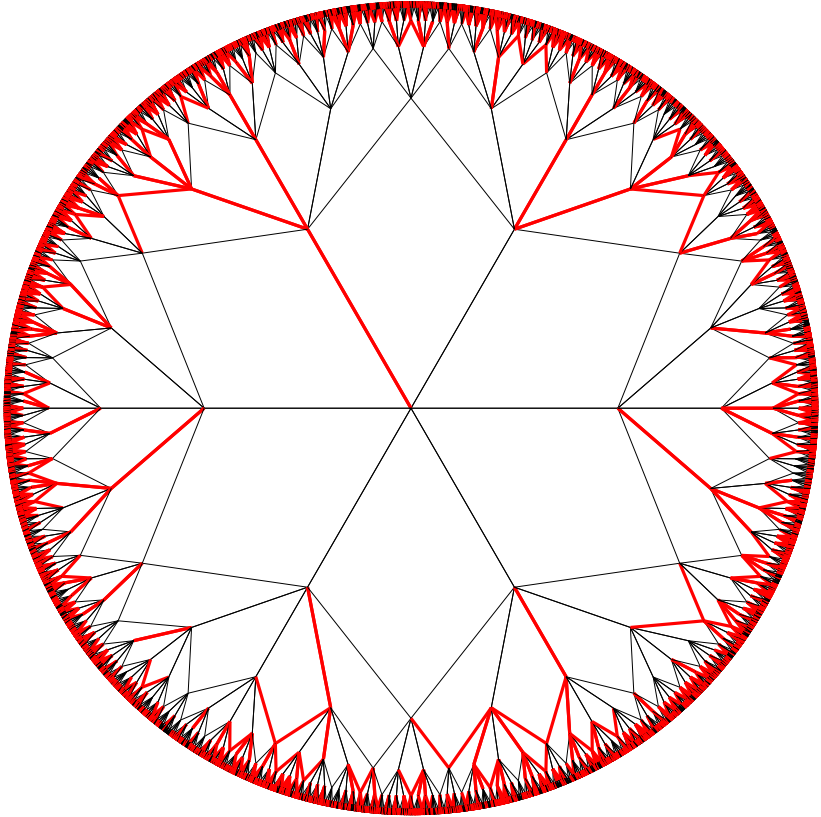
\begin{figure}
\begin{tikzpicture}[very thin, scale=0.55]% Example:
	\pgfmathsetmacro{\r}{10} %radius
	\pgfmathsetmacro{\depth}{6} %depth
	\pgfmathsetmacro{\prob}{0.387} %probability
  
  % \draw (0:0) circle (\r);
  
	\foreach \d in {1, ..., \depth}
	{
		\pgfmathsetmacro{\l}{int(max(round(sqrt(3)*((2+sqrt(3))^(\d-1)-(2-sqrt(3))^(\d-1))), 1))}
		\pgfmathsetmacro{\k}{int(round(sqrt(3)*((2+sqrt(3))^\d-(2-sqrt(3))^\d)))}
		\foreach \x in {1, ..., \k}
		{
			\pgfmathsetmacro{\p}{random()}
			\ifdim \p pt > \prob pt
				\draw ({(round((\x/\k)*\l+0.13684)/\l)*360}:{(1-(1/2)^(\d-1))*\r}) -- ({(\x/\k)*360}:{(1-(1/2)^\d)*\r});
			\else
				\draw [red, very thick] ({(round((\x/\k)*\l+0.13684)/\l)*360}:{(1-(1/2)^(\d-1))*\r}) -- ({(\x/\k)*360}:{(1-(1/2)^\d)*\r});
			\fi
			
			\ifdim \p pt > \prob pt
				\draw ({(round((\x/\k)*\l-0.1309)/\l)*360)}:{(1-(1/2)^(\d-1))*\r}) -- ({(\x/\k)*360}:{(1-(1/2)^\d)*\r});
			\else
				\draw [red, very thick] ({(round((\x/\k)*\l-0.1309)/\l)*360)}:{(1-(1/2)^(\d-1))*\r}) -- ({(\x/\k)*360}:{(1-(1/2)^\d)*\r});
			\fi
		}
		}
		
\end{tikzpicture}
\caption{Percolation in the Cayley graph of a surface group.}
\end{figure}

Benjamini and Schramm posed several conjectures regarding percolation in the Cayley graphs of groups beyond $\mathbb{Z}^{d}$. Among them is the following:

\begin{conj}[{\cite[Conjecture 6]{benjamini1996percolation}}]\label{conj:nonamen}
A connected, locally finite, quasi-transitive graph  $\Gamma$  is non-amenable if and only if $p_{c}(\Gamma) < p_{u}(\Gamma)$.
\end{conj}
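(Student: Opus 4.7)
The plan is to split the conjecture into its two implications and treat them separately. The direction \emph{amenable $\Rightarrow p_{c} = p_{u}$} is classical, and I would proceed via the Burton-Keane argument: on any amenable transitive graph the almost-sure number of infinite clusters lies in $\{0,1,\infty\}$, and the presence of infinitely many infinite clusters is excluded by a trifurcation count combined with an invariant mean on $G$. Hence for every $p > p_{c}$ uniqueness already holds, which forces $p_{c} = p_{u}$.

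The converse \emph{non-amenable $\Rightarrow p_{c} < p_{u}$} is the serious direction and is the central open content of the conjecture. My plan is a classification-style attack. After a standard reduction from a quasi-transitive graph to a transitive Cayley graph, I would first ask whether the group is acylindrically hyperbolic; if so, I would run the program of the present paper, itself modelled on Hutchcroft's work for Gromov-hyperbolic graphs. The key steps are: (i) use the non-elementary acylindrical action on a hyperbolic space together with a WPD contracting element $g$ to show that, at a density slightly above $p_{c}$, the infinite cluster of the identity reaches the Gromov boundary in many independent directions; (ii) translate this cluster by iterates of $g$ to produce a family of candidate infinite clusters whose boundary accumulations are pairwise disjoint; (iii) close the argument with insertion-tolerance and a mass-transport identity to show that with positive probability the translates are genuinely disjoint, yielding infinitely many infinite clusters and hence $p_{c} < p_{u}$.

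For the remaining non-amenable groups I would look for any ``contracting direction'' the group may nevertheless possess, for instance via a Morse element in some coarsely convex action or via a non-amenable free subgroup with good separation properties, and repeat the same scheme with the weaker geometric input. The main obstacle is precisely the class of non-amenable groups that are not known to admit any such structure, such as Tarski monsters, certain Burger-Mozes lattices, and some random groups; for these the geometric playbook appears to be unavailable. Handling them seems to require a genuinely new ingredient, for example a purely spectral criterion bounding $p_{u}$ below and $p_{c}$ above in terms of the spectral radius of the simple random walk on $G$, analogous to the Benjamini-Lyons-Peres-Schramm framework but uniform over all non-amenable Cayley graphs. Producing such a criterion is, in my view, the deepest obstruction to a complete resolution of the conjecture, and the present paper is best viewed as realizing the geometric half of the program in the broadest hyperbolicity-flavoured setting currently available.
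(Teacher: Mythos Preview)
The statement is recorded in the paper as a \emph{conjecture}, not a theorem; the paper does not prove it and explicitly treats the non-amenable direction as open (see the discussion following Conjecture~\ref{conj:nonamen} and Subsection~1.2). Your proposal reflects this correctly: the amenable direction is Burton--Keane, and the converse remains open in general. There is therefore no ``paper's own proof'' to compare against.

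That said, your account of how the paper handles the acylindrically hyperbolic case is not accurate. The paper does \emph{not} argue by showing that clusters reach the boundary in many directions, translating them by a WPD element, and finishing with insertion tolerance or mass transport. Instead it verifies Hutchcroft's analytic criterion (Theorem~\ref{thm:hutchcroft}): one shows $\limsup_{p\nearrow p_c}(p_c-p)\chi_p<\infty$ via a supporting-hyperplane argument (Theorem~\ref{thm:hutchcroft1plus2}, Section~\ref{section:supporting}) and $\iota_p\to 1$ via the hyperbolic magic lemma together with roughly branching barriers (Theorem~\ref{thm:hutchcroftIotaGen}, Sections~\ref{section:properMagic}--\ref{section:barrierAcyl}). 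The inequality $p_c<p_u$ then follows from the operator-theoretic bound $p_c<p_{2\to 2}\le p_u$, not from a direct construction of disjoint infinite clusters. Your steps (i)--(iii) describe a different, and as far as I know unrealised, strategy.

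One further issue: the ``standard reduction from a quasi-transitive graph to a transitive Cayley graph'' you invoke is not available in general. Quasi-transitive graphs need not be quasi-isometric to any Cayley graph, and even when they are, $p_c$ and $p_u$ are not quasi-isometry invariants. So this reduction step would already be a gap before the hard part begins.
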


See \cite{haggstrom2006uniqueness} for an overview of this conjecture.

Let us list some facts for the context. Given a connected graph $\Gamma$, we have $0 \le p_{c} \le p_{u} \le 1$ by definition. It is a fact that for each $p > p_{c}$, the random graph $\Gamma[p]$ almost surely has an infinite cluster. When $\Gamma$ is quasi-transitive in addition, for each $p > p_{u}$ the random graph $\Gamma[p]$ almost surely has a unique infinite cluster. This is due to O. H{\"a}ggstr{\"o}m and Y. Peres \cite{haggstrom1999monotonicity} for unimodular cases, and due to R. H. Schonmann \cite{schonmann1999stability} in general (see also \cite{haggstrom1999percolation}).  Lastly, C. M. Newman and L. S. Schulman proved in \cite{newman1981infinite} for quasi-transitive $\Gamma$ that, for each $p \in (0, 1)$ there exists $N_{\infty}(p) \in \{0, 1, +\infty\}$ such that the number of infinite clusters in $\Gamma[p]$ is almost surely $N_{\infty}(p)$. 

Hence for quasi-transitive graphs, $N_{\infty}(p) = 0$ almost surely for $p < p_{c}$, $N_{\infty}(p) = +\infty$ almost surely for $p_{c} < p < p_{u}$ and $N_{\infty}(p) = 1$ almost surely for $p > p_{u}$. In particular, if $p_{c} < p_{u}$ then there exists (uncountably many) $p$ such that $\Gamma[p]$ has infinitely many infinite clusters almost surely.

Now, for \emph{non-amenable} quasi-transitive graphs, it is known that $N_{\infty}(p_{c}) = 0$ almost surely. This is due to I. Benjmaini, R. Lyons, Y. Peres and O. Schramm \cite[Theorem 1.1]{benjamini1999critical} and is generalized by T. Hutchcroft to graphs with exponential growth \cite[Theorem 1.2]{hutchcroft2016critical}. Hence, for a non-amenable quasi-transitive graph, $p_{c} < p_{u}$ if and only if $N_{\infty}(p) = +\infty$ for some (countably many) $p$'s.

Let us go back to the conjecture. The equality $p_{c}(\Z^{d}) = p_{u}(\Z^{d})$ was first achieved in a deep theorem by M. Aizenman, H. Kesten and C. M. Newman \cite{aizenman1987uniqueness}, and a simpler method was later obtained by R. M. Burton and M. Keane  \cite{burton1989density}. A. Gandolfi, M. S. Keane and C. M. Newman observed in \cite{gandolfi1992uniqueness} that Burton and Keane's method generalizes to amenable graphs. Hence, the only nontrivial direction is the ``only if" direction. A significant breakthrough was made by T. Hutchcroft, who showed the conjecture for non-amenable quasi-transitive graphs that admit an action by a non-unimodular group \cite{hutchcroft2020nonuniqueness}. We note that the first example of a quasi-transitive graph $\Gamma$ for which all the three cases—$N_{\infty} = 0$, $N_{\infty} = \infty$ and $N_{\infty} = 1$—take place was the direct product of trees and $\Z$, which admits a non-unimodular automorphism group \cite{grimmett1990percolation}.

Hence, the remaining case is non-amenable graphs with unimodular automorphism groups. The most natural examples in this category are Cayley graphs of non-amenable groups.

We thus focus on Cayley graphs of non-amenable groups. Let $G$ be a group with a finite generating set $S$. The Cayley graph $\Gamma(G, S)$ consists of the vertex set $G$ and the edge set $\{ \overline{vw} : \exists s \in S \cup S^{-1} : v = ws\}$. The group $G$ naturally acts as graph automorphisms and the action is vertex transitive, i.e., for each $v, w \in V(\Gamma) = G$ there exists $g \in G$ such that $gv = w$. The action is also vertex-faithful, i.e., each vertex has trivial stabilizer.

A strong evidence for Conjecture \ref{conj:nonamen} is given by I. Pak and T. Smirnova-Nagnibeda \cite{pak2000on-non-uniqueness}, who proved that every non-amenable group has a Cayley graph for which $p_{c} < p_{u}$. Furthermore, A. Nachmias and Y. Peres proved that Cayley graphs of non-amenable groups with high girth relative to the spectral radius satisfy Conjecture \ref{conj:nonamen} \cite{nachmias2012non-amenable}. One can then ask if there are groups all of whose \emph{every} Cayley graph satisfy Conjecture \ref{conj:nonamen}.

In this regard, Benjamini and Schramm proved that every transitive, nonamenable, planar graph with one end has a nonuniqueness phase \cite{benjamini2001percolation}. This generalizes S. Lalley's earlier work about Fuchsian groups \cite{MR1614583}. We also note that the nonuniqueness is known by D. Gaboriau \cite{MR2221157} for graphs admitting nonconstant harmonic Dirichlet functions and R. Lyons (\cite{MR1757952}, \cite{MR3009109}) for every Cayley graph of groups admitting an action with cost $> 1$.

Beyond planar graphs, Hutchcroft showed that the non-uniqueness phase exists for every Cayley graph of non-amenable word hyperbolic groups \cite{hutchcroft2019percolation}. In fact, Hutchcroft proved the result for more general quasi-transitive Gromov hyperbolic graphs, by using the Bonk-Schramm embedding of such graphs into a real hyperbolic space $\mathbb{H}^{d}$ \cite{bonk2000embeddings}. See also J. Czajkowski's independent work on certain groups acting on $\mathbb{H}^{3}$ \cite{czajkowski2024non-uniqueness}.

The main point of this paper is to generalize Hutchcroft's result to other non-amenable groups. Namely, we have:

\begin{theorem}\label{thm:main}
Let $G$ be an acylindrically hyperbolic group and let $\Gamma$ be its Cayley grpah. Then we have $p_{c}(\Gamma) < p_{u}(\Gamma)$; in particular, there exist uncountably many $p \in (0, 1)$ such that $\Gamma[p]$ has infinitely many infinite clusters.
\end{theorem}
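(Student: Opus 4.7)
The plan is to adapt Hutchcroft's argument for quasi-transitive Gromov hyperbolic graphs \cite{hutchcroft2019percolation} to the acylindrically hyperbolic setting. The decisive new difficulty is that the Cayley graph $\Gamma$ is in general \emph{not} Gromov hyperbolic---examples such as mapping class groups, right-angled Artin groups and rank-$1$ CAT(0) groups contain substantial flats---so the Bonk--Schramm embedding into $\mathbb{H}^{d}$ is unavailable. The remedy is to replace the intrinsic hyperbolicity of $\Gamma$ by the \emph{external} geometric data of an acylindrical action on a hyperbolic space $X$, and to transport percolation clusters onto the Gromov boundary $\partial X$ through the axes of contracting elements.

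\textbf{Step 1: Geometric backbone.} By Osin's characterization, $G$ acts acylindrically and non-elementarily on a Gromov hyperbolic space $X$; by Dahmani--Guirardel--Osin, $G$ contains a hyperbolically embedded free subgroup $F = \langle g_{1}, g_{2}\rangle$, and each $g_{i}$ is a contracting (Morse) element in $\Gamma$. From the conjugates of $\langle g_{1}\rangle$ I would assemble a Bestvina--Bromberg--Fujiwara projection complex $\mathcal{P}$, a quasi-tree on which $G$ acts with $g_{1}$ loxodromic. The hyperbolic embedding supplies the projection estimates that render the translated axes ``quasi-disjoint'' inside $\Gamma$, and $\mathcal{P}$ serves as a tree-like bookkeeping device replacing the missing ambient hyperbolicity of $\Gamma$.

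\textbf{Step 2: Boundary traces and mass transport.} To each infinite cluster $K$ of $\Gamma[p]$ I would associate a limit set $\Lambda(K) \subset \partial X$, defined as the accumulation points on the Gromov boundary of contracting quasi-geodesic rays carried by $K$. Using the BBF projection bounds, the goal is to show that $\Lambda(K)$ is almost surely nonempty for $p$ slightly above $p_{c}$ and that $K \mapsto \Lambda(K)$ is a closed, $G$-equivariant functional. A Benjamini--Lyons--Peres--Schramm-type mass transport between pairs of contracting axes, combined with the Lyons--Schramm cluster indistinguishability theorem, should then force that a hypothetical unique infinite cluster $K$ must satisfy $\Lambda(K) \supseteq \overline{\{\text{loxodromic fixed points}\}}$, producing infinite mass flow through a typical vertex---a contradiction. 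This yields $p < p_{u}$ for some $p > p_{c}$, hence $p_{c} < p_{u}$; uncountably many parameters with infinitely many infinite clusters then follow from Newman--Schulman.

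\textbf{Main obstacle.} I expect the hardest part to be the construction and nontriviality of $\Lambda(K)$. Because the action $G \curvearrowright X$ is only acylindrical---in particular neither proper nor cocompact---the existence of boundary limits cannot be read off from any Bonk--Schramm-type embedding of $\Gamma$ into a hyperbolic target. One must instead prove a quantitative ``a percolation cluster cannot escape an axis transversely'' estimate, controlled by the BBF projection bounds, so that with positive probability an infinite cluster leaves a consistent trace on $\partial X$ rather than dissipating through the flat regions of $\Gamma$. The bulk of the technical work should go into establishing this transversal control and then reconciling it with the invariant mass transport principle over the nonproper action $G \curvearrowright X$.
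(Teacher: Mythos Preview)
Your strategy diverges substantially from the paper's, and in its current form it has a genuine gap beyond the one you flag.

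\textbf{What the paper does instead.} The paper never touches $\partial X$, limit sets, or cluster indistinguishability. It stays entirely on the analytic side of Hutchcroft's machinery: it verifies the two inequalities
\[
\limsup_{p\nearrow p_c}(p_c-p)\chi_p<\infty,\qquad \lim_{p\nearrow p_c}\iota_p=1,
\]
which together feed into Hutchcroft's $L^2$-boundedness criterion and give $p_c<p_u$. Both inequalities are reduced to purely combinatorial statements about \emph{finite} subsets $A\subset G$ (a supporting-hyperplane lemma and a magic lemma), proved using the WPD property of a single loxodromic $f$ and the BBF-style bounded projection between translates of $Ax(f)$. The geometric heart is the construction of \emph{roughly branching barriers}: subsets $B\subset G$ that separate $id$ from a halfspace yet satisfy $\E_p[\#(C(id)\cap B)]\le M$ uniformly in $p<p_c$. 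No statement about individual infinite clusters is ever needed.

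\textbf{The gap in your route.} You correctly identify the nontriviality of $\Lambda(K)$ as hard; in fact it may simply be false at the relevant $p$. Since the action $G\curvearrowright X$ is not proper, $G$ can contain infinite subgroups with bounded $X$-orbit (curve stabilizers in mapping class groups, peripheral $\mathbb{Z}^d$ in relatively hyperbolic groups), and there is no mechanism preventing an infinite cluster from living entirely inside a coset of such a subgroup, where it would leave no trace on $\partial X$. Indistinguishability only tells you that \emph{all} infinite clusters share the same almost-sure properties; it does not manufacture a boundary point if the geometry does not force one. The second, unacknowledged gap is the step ``$\Lambda(K)$ large $\Rightarrow$ infinite mass through a vertex.'' In the Benjamini--Schramm and BLPS arguments this relies on a conformal or Patterson--Sullivan measure on the boundary together with proper cocompact geometry to convert boundary density into vertex-level expectations. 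For an acylindrical, non-proper, non-cobounded action there is no such measure with the requisite transformation rule, and no obvious way to run a mass-transport argument \emph{on $\partial X$} that yields a finite-expectation contradiction on $\Gamma$. Without a concrete transport function and a replacement for the conformal measure, Step~2 is a hope rather than an argument.
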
 

Acylindrically hyperbolic groups encompass word hyperbolic groups, relatively hyperbolic groups and many other groups that act on a Gromov hyperbolic space in a nontrivial way. These groups have shown to exhibit interesting dynamical and group-theoretical properties, (\cite{bestvina2002bounded}, \cite{hamenstadt2008bounded}, \cite{dahmani2017hyperbolically}), as well as probabilistic behaviour (\cite{sisto2018contracting}, \cite{mathieu2020deviation}, \cite{choi2025acylindrically}), that are shared with word hyperbolic groups. We list some examples of acylindrically hyperbolic groups:\begin{itemize}
\item  (non-elementary) relatively hyperbolic groups;
\item non-elementary Kleinian groups (possibly with $\mathbb{Z}^{d}$ subgroups);
\item free products of nontrivial groups;
\item the mapping class group of a finite-type hyperbolic surface;
\item the outer automorphism group $\Out(F_{n})$ of the free group $F_{n}$ \cite{bestvina2014hyperbolicity};
\item the automorphism group $\Aut(G)$ of a hyperbolic group $G$ (\cite{genevois2019negative}, \cite{genevois2021acylindrical});
\item rank-1 CAT(0) groups such as irreducible CAT(0) cubical groups \cite{bestvina2009higher}, 
\item many Artin groups and 3-manifold groups \cite{hagen2024extra-large}, \cite{minasyan2015acylindrical}
\end{itemize}
We refer readers to D. Osin's survey \cite{osin2016acylindrically} for more details.

Fix a vertex $v \in G$. We denote by $\chi_{p}(v)$ the expected size (the number of vertices) of the cluster of $v$. This does not depend on the choice of $v$ in the case of Cayley graphs, so we will drop $v$ and write $\chi_{p}$. Note that $\chi_{p} < +\infty$ for $p < p_{c}$ and $\chi_{p} \gtrsim (p_{c} - p)^{-1}$ are by M. Aizenman and C. M. Newman \cite[Proposition 3.1]{aizenman1984tree}. In the course of the proof, we show that $\chi_{p} \lesssim (p_{c} - p)^{-1}$, following Hutchcroft's criterion. This is one example of the so-called \emph{mean-field critical behavior} of the percolation in acylindrically hyperbolic groups. More consequences follow from the following fact:

\begin{theorem}\label{thm:mainDelta}
Let $G$ and $\Gamma$ be as in Theorem \ref{thm:main}. Then at $p_{c} = p_{c}(\Gamma)$, we have $\nabla_{p_{c}}(\Gamma) < +\infty$.
\end{theorem}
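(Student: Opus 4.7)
\textbf{Proof plan for Theorem \ref{thm:mainDelta}.} The plan is to derive $\nabla_{p_c} < \infty$ from an $\ell^2$-boundedness estimate on the two-point function, via Hutchcroft's $L^2$-criterion for nonamenable transitive graphs, with the geometric input provided by the acylindrically hyperbolic machinery that already underlies the proof of Theorem \ref{thm:main}.

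First, I would set up an operator-theoretic reformulation of the triangle diagram. Let $T_p$ denote the convolution operator on $\ell^2(G)$ with kernel $\tau_p(x,y) = \Prob_p(x \leftrightarrow y)$. Since $G$ acts transitively and isometrically on $\Gamma$, $T_p$ commutes with the regular representation and is positive and self-adjoint on $\ell^2(G)$ whenever it is bounded. Writing $o$ for the identity vertex, the triangle diagram then becomes
\[
\nabla_p \;=\; \langle \delta_o,\, T_p^{3} \delta_o\rangle \;\leq\; \|T_p\|_{2\to 2}^{3}.
\]
So the problem reduces to establishing the single operator bound $\|T_{p_c}\|_{2\to 2} < \infty$.

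Second, I would invoke Hutchcroft's $L^2$-bootstrap for nonamenable transitive graphs, which says that $\|T_{p_c}\|_{2\to 2} < \infty$ already follows once one exhibits some $p > p_c$ with $\|T_p\|_{2\to 2} < \infty$. The plan is to verify this stronger statement throughout the non-uniqueness phase $(p_c, p_u)$, which is nonempty by Theorem \ref{thm:main}. Concretely, the quantitative nonuniqueness estimate that underlies Theorem \ref{thm:main}—giving the sharp linear bound $\chi_p \lesssim (p_c - p)^{-1}$ mentioned in the introduction—should be packaged with an $\ell^2$-summability estimate for $\tau_p(o,\cdot)$ valid for each $p \in (p_c, p_u)$, which by transitivity upgrades to the required operator bound on $T_p$. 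The triangle condition at $p_c$ is then obtained by letting $p \downarrow p_c$ and applying Fatou's lemma or the monotonicity of the operator norm.

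The main obstacle, as in the proof of Theorem \ref{thm:main}, is that the Cayley graph $\Gamma$ itself need not be Gromov hyperbolic: the hyperbolicity lives only in the action of $G$ on an auxiliary hyperbolic space $X$. One therefore has to import the relevant exponential-decay / shadow-counting estimates from $X$ through the orbit map $G \to X$, whose fibers may be unbounded and whose image need not be cobounded. The decisive tools here—contracting elements, projection arguments, and shadow bounds available in every acylindrically hyperbolic group—are precisely those developed for Theorem \ref{thm:main}, so the remaining work is mostly a careful bookkeeping to extract the $\ell^2$-estimate on $\tau_p$ from the same combinatorial estimates that produced nonuniqueness.
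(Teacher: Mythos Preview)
Your reduction to $\|T_{p_c}\|_{2\to 2}<\infty$ is correct, but the route you propose to this bound is circular. You want to find some $p>p_c$ with $\|T_p\|_{2\to 2}<\infty$ and then descend by monotonicity; the existence of such a $p$ is exactly the statement $p_c<p_{2\to 2}$. You suggest extracting this from the nonuniqueness phase $(p_c,p_u)$, but the only general relation is Hutchcroft's inequality $p_{2\to 2}\le p_u$, which goes the wrong way: knowing $p_c<p_u$ does not give $p_c<p_{2\to 2}$. Nor does the subcritical bound $\chi_p\lesssim(p_c-p)^{-1}$ by itself say anything about $\ell^2$-summability of $\tau_p$ above $p_c$; your phrase ``should be packaged with an $\ell^2$-summability estimate for each $p\in(p_c,p_u)$'' is precisely the conclusion you are trying to reach, not an available input.

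In the paper, Theorems~\ref{thm:main} and~\ref{thm:mainDelta} are not deduced one from the other; they are simultaneous consequences of the same \emph{subcritical} criterion. The geometric work (supporting hyperplanes in Section~\ref{section:supporting}, branching barriers in Sections~\ref{section:branching}--\ref{section:barrierAcyl}) establishes Equations~\eqref{eqn:hutchcroftGamma1} and~\eqref{eqn:hutchcroftGamma2}, hence Equation~\eqref{eqn:nablaPre}, all of which involve only $p\nearrow p_c$. Hutchcroft's Proposition~2.7 (Theorem~\ref{thm:hutchcroft} here) then converts Equation~\eqref{eqn:nablaPre} into $p_c<p_{2\to 2}$, from which both $p_c<p_u$ and $\nabla_{p_c}<\infty$ follow at once. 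So the missing idea in your plan is the second geometric ingredient, the control of $\iota_p$ via the acylindrical magic lemma and branching barriers (Theorem~\ref{thm:hutchcroftIotaGen}); without it you cannot pass from the linear $\chi_p$ bound to the $L^2$ bound.
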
 
See Subsection \ref{subsection:Hutchcroft} for the definition of the triangle diagram $\nabla_{p_{c}}$. This notion is introduced by Aizenman and Newman to study various mean-field critical behavior.

In fact, the triangle condition $\nabla_{p_{c}}<+\infty$ is derived from yet another fact, namely, the $L^{2}$-boudedness of an operator associated with the transition probability. In \cite[Section 2]{hutchcroft2019percolation}, Hutchcroft considered a quantity $p_{2 \rightarrow 2}(\Gamma)$ for a connected, locally finite, quasi-transitive graph $\Gamma$. This quantity has a property that $p_{2\rightarrow 2} \le p_{u}$ and $\nabla_{p} < +\infty$ for each $p < p_{2 \rightarrow 2}$. Hence, once $p_{c} < p_{2 \rightarrow 2}$ holds, then both the triangle condition $\nabla_{p_{c}} < +\infty$ and the inequality $p_{c} < p_{u}$ follow: \[
\big[ \textrm{$L^{2}$-boundedness} : p_{2}(\Gamma) < p_{2 \rightarrow 2}(\Gamma) \big] \,\,\Rightarrow \,\, [\nabla_{p_{c}}(\Gamma) <+\infty ] \,\,\wedge  \,\, [p_{c}(\Gamma) < p_{u} (\Gamma)].
\]
We will not explain the $L^{2}$-boundedness in detail. Nonetheless, the $L^{2}$-boundedness and the triangle condition guarantee several mean-field critical behaviors. We summarize them in Subsection \ref{subsection:Hutchcroft}. We refer to \cite{hutchcroft2019percolation} and \cite{hutchcroft2020the-l2-boundedness} for a deeper story.

\subsection{The hitchhicker's guide to the nonuniqueness}

This paper is concerned with probabilistic phenomena on geometric objects that entail hyperbolicity. Luckily, probabilistic ingredients were already given by Hutchcroft \cite{hutchcroft2019percolation}. Namely, the gap $p_{c} < p_{u}$ and the triangle diagram bound $\nabla_{p_{c}} < +\infty$ follow from Equation \ref{eqn:hutchcroftGamma1} and \ref{eqn:hutchcroftGamma2}. Theorem \ref{thm:hutchcroft1plus2} and \ref{thm:hutchcroftIotaGen} provide a way to guarantee these equations. The rest of the paper will focus on the proof that acylindrically hyperbolic groups satisfy the assumptions of Theorem \ref{thm:hutchcroft1plus2} and \ref{thm:hutchcroftIotaGen}.

Thus, for readers familiar with the theory of acylindrically hyperbolic groups, the quickest way to read this paper is as follows: \begin{enumerate}
\item read Definition \ref{dfn:barrier} and \ref{dfn:branching}, 
\item read Theorem \ref{thm:hutchcroft1plus2} and Theorem \ref{thm:hutchcroftIotaGen}, 
\item read Section \ref{section:supporting} and study Proposition \ref{prop:supportingWPD},  \item read Section \ref{section:branching} and study Proposition \ref{prop:barrierContracting}, and 
\item read Section \ref{section:barrierAcyl} and combine Proposition \ref{prop:magicAcyl}, \ref{prop:NFRough} and \ref{prop:DEThenEPrime}. 
\end{enumerate}

Notwithstanding, we recommend the readers to read from Section \ref{section:probPre} to Section \ref{section:barrierAcyl}. This is because:
\begin{itemize}
\item Section \ref{section:probPre} contains the basics of percolation theory and overview of Hutchcroft's theory. This helps understand Theorem \ref{thm:hutchcroft1plus2} and Theorem \ref{thm:hutchcroftIotaGen}.
\item Subsection \ref{subsection:plans} describes in detail the intuition behind our strategy.
\item Section \ref{section:hypPre} provides necessary hyperbolic geometry.
\item Section \ref{section:properMagic} gives a different proof of Hutchcroft's hyperbolic magic lemma without using Benjamini-Schramm's Euclidean magic lemma nor Bonk-Schramm's embedding theorem. This is not needed for general acylindrically hyperbolic group but is an important prototype containing essential ideas.

Moreover, when restricted to \emph{groups acting properly on a Gromov hyperbolic space}, Proposition \ref{prop:magicProper} and \ref{prop:barrierContracting} are sufficient for the main theorem. Hence, readers who are mainly interested in relatively hyperbolic groups (such as non-elementary Kleinian groups and free products of groups) may read Section \ref{section:properMagic} and Section \ref{section:branching} only.
\end{itemize}

This paper is mostly self-contained but hides two secret ingredients. First, Hutchcroft's approach is eventually based on the analysis of the transfer operator. We invite readers to \cite[Section 2]{hutchcroft2019percolation} for details of operator analysis. Second, the argument for barriers (Proposition \ref{prop:barrierContracting} and \ref{prop:DEThenEPrime}) can be explained by the fact that acylindrically hyperbolic groups act on a quasi-tree that enjoys the bottleneck property, which arises from Bestvina-Bromberg-Fujiwara's construction (\cite{bestvina2015quasi}, \cite{MR4057354}). 

\subsection{Discussion}

Our result in this article can be seen as a positive response to Benjamini--Schramm conjecture in the realm of acylindrically hyperbolic groups.
As we see, this provokes a variety of problems that relate percolation theory to geometric group theory.
Here we introduce some of the problems that have been chosen based on our preferences.

In Theorem \ref{thm:main}, we prove that nonuniqueness phase exists for every Cayley graph of certain class of groups. However, such an argument cannot be directly generalized to general finitely generated groups.

\begin{question}
Is the existence of non-uniqueness phase a group-invariant? That is to say, for a finitely generated group $G$, if there exists a finite generating set $S$ such that there exists a nonuniqueness phase for $Cay(G, S)$, does the same phenomenon happens for all Cayley graph of $G$?
\end{question}

If this is the case, Pak and Smirnova-Nagnibeda's theorem will settle Benjamini-Schramm's conjecture.

As we mentioned, many groups are known to be acylindrically hyperbolic.
Nevertheless, on the other hand, we have known a plenty of groups that are nonamenable and not acylindrically hyperbolic. In particular, we note that Theorem \ref{thm:main} deals with no \emph{products}. Given a non-amenable group, a cheap recipe to create a new non-amenable group is to take product with another group. Our method does not handle such groups, including $F_{2} \times \Z$. Hence, an innocent question follows: 

\begin{question}\label{ques:f2z}
Does every Cayley graph of $F_{2} \times \Z$ have a nonuniqueness phase?
\end{question}

This question can be viewed from different perspectives, as we now elaborate.

A group $G$ is called \emph{free-by-cyclic} if the following short exact sequence \[ 1 \to F \to G \xrightarrow{\phi} \mathbb{Z} \to 1 \] holds for some free group $F$.
The easiest example is $F \times \mathbb{Z}$.
Genevois and Horbez \cite{MR4503954} and Ghosh \cite{MR4541452} show that a free-by-cyclic group $G$ is acylindrically hyperbolic if and only if $\phi$ is of infinite order.
Currently, there is no strategy to prove the non-uniqueness of infinite clusters when $\phi$ is of finite order.
We note that generalizing Grimmett and Newman \cite{grimmett1990percolation}'s study on $T_{k} \times \mathbb{Z}^{d}$ for a regular tree $T_{d}$ with valency $k \gg 1$, Hutchcroft \cite{hutchcroft2020nonuniqueness} showed that Benjamini-Schramm's conjecture holds for all quasi-transitive graphs that admit a nonunimodular automorphism group, which includes $T_{k} \times \mathbb{Z}^{d}$ for $k \ge 3$.

\begin{question}
Does Benjamini--Schramm conjecture hold for all free-by-cyclic groups?
\end{question}

If $\Gamma=(V, E)$ is a simple graph and $\mathcal{G} = (G_v)_{v \in V}$ is a system of groups labelled by vertices of $\Gamma$, the \emph{graph product} $\Gamma\mathcal{G}$ of $\mathcal{G}$ is defined by \[ \Gamma\mathcal{G} := \left( *_{v \in V} G_v \right) / \llangle [g, h] \mid g \in G_u, h \in G_v, \{u, v\} \in E \rrangle. \]
In particular, if all groups in the system $\mathcal{G}$ are infinite cyclic, the corresponding graph product is known as a right-angled Artin group.
A. Minasyan and D. Osin \cite[Corollary 2.13]{minasyan2015acylindrical} show $\Gamma\mathcal{G}$ is acylindrically hyperbolic in many cases.
Precisely, $\Gamma\mathcal{G}$ satisfies one of the following:
\begin{enumerate}
\item $\Gamma$ is a join of two nontrivial subgraphs, that is, $\Gamma\mathcal{G}$ cannot be decomposed into a direct product of groups.
\item $\Gamma\mathcal{G}$ is virtually cyclic.
\item $\Gamma\mathcal{G}$ is acylindrically hyperbolic.
\end{enumerate}
The first case includes direct products of free groups where Benjamini--Schramm conjecture is still unknown.

\begin{question}
Does Benjamini--Schramm conjecture hold for all graph product of groups?
\end{question}

We are also interested in Benjamini--Schramm conjecture in the class of fundamental groups of (connected compact) $3$-dimensional manifolds, briefly, 3-manifold groups.
Minasyan--Osin \cite[Theorem 2.8]{minasyan2015acylindrical} reveal that most 3-manifold groups are acylindrically hyperbolic.
In detail, they show that every 3-manifold whose fundamental group is neither polycyclic nor acylindrically hyperbolic is an extension of an acylindrically hyperbolic group by an infinite cyclic group.
What we want to know is the following.

\begin{question}
Does Benjamini--Schramm conjecture hold for all $3$-manifold groups? In particular, for every Seifert fibered space $M$, does Benjamini--Schramm conjecture hold for $\pi_1(M)$?
\end{question}

Lastly, $F_{2} \times \Z$ serves as the simplest non-amenable, reducible CAT(0) cubical group. Contrary to general groups acting on a Hadamard manifold, the rank rigidity theorem is known for CAT(0) cube complexes by P-E. Caprace and M. Sageev \cite{caprace2011rank}. In particular, all irreducible, non-virtually cyclic CAT(0) cubical groups are subject to Theorem \ref{thm:main}. Thus, the nonuniqueness problem for percolation in CAT(0) cubical groups will be completely understood once it is understood for metric products. The groups $F_{2} \times \Z$ and $F_{2} \times F_{2}$ serve as barometers for this question. Let us summarize this section with the following question: \begin{question}
Does Benjamini--Schramm conjecture hold for CAT(0) cubical groups, or more generally, quasi-transitive median graphs? In particular, for a CAT(0) cubical group $G$, does $p_{c}(Cay(G, S)) = p_{u}(Cay(G, S))$ hold for every finite generating set $S$ of $G$ if and only if $G$ is virtually $\Z^{d}$?
\end{question}

\subsection*{Acknowledgement}

The authors thank Tom Hutchcroft, Mahan Mj and Alessandro Sisto for their comments on the earlier draft of the current manuscript, and Itai Benjamini for helpful discussion on percolation theory. In particular, Hutchcroft has informed the authors that Question \ref{ques:f2z} is still not answered, and have explained to the authors deeper aspects of percolation theory.

This work was initiated while the first author was visiting the June E Huh Center for Mathematical Challenges of KIAS. The first author is partly supported by the Mid-Career Researcher Program (RS-2023-00278510) through the National Research Foundation funded by the government of Korea.

The second author was supported by the National Research Foundation of Korea (NRF) grant funded by the Korea government(MSIT) 2021R1C1C2005938.

\section{Preliminaries I: Probability theory}\label{section:probPre}

We assume that readers are familiar with finitely generated groups and simplicial graphs. When a group $G$ is given a generating set $S$, we define the word metric $d_{S}$ as \[\begin{aligned}
d_{S}(g, h) &:= \inf \big\{ n \ge 0 : \exists s_{1}, \ldots, s_{n} \in S \cup S^{-1} [h=gs_{1} \cdots s_{n}] \big\}, \\
 \|g\|_{S} &:= d_{S}(id, g).
 \end{aligned}
\]

\subsection{Basics of percolation theory}\label{subsection:perc}

This subsection is intended as a quick introduction to percolation theory. We refer to \cite{grimmett1989percolation} for further details. Readers who are familiar with percolation theory or want to keep it as a blackbox can skip this subsection.

Let $\Gamma = (\mathcal{V}, \mathcal{E})$ be a connected simplicial graph. We will focus on the case that $\mathcal{V}$ is countable and $\Gamma$ has uniformly bounded valence. Let $0 \le p \le 1$. On the product space $\Omega = \{0, 1\}^{\mathcal{E}}$ indexed by edges of $\Gamma$, we can endow the product  of Bernoulli measures with expectation $p$. That means, $\Prob(\omega \in \Omega : \omega(e) = 1) = p$ for each $e$, and $\omega(e)$ and $\omega(e')$ are independent for $e \neq e'$. Each $\omega \in \Omega$ gives rise to a graph $\Gamma(\omega)$, which is the subgraph of $\Gamma$ after removing those edges $e$ with $\omega(e) = 0$.

When $p$ is small, we remove many edges in probability. Hence the clusters, the connected components of $\Gamma(\omega)$, are likely to be bounded. One can imagine that the expected It  is convenient to use the notation:\[
v \leftrightarrow_{\omega} w\,\, \Leftrightarrow \,\,\textrm{``$v$ and $w$ are connected in $\Gamma(\omega)$"}
\]
for $v, w \in \mathcal{V}$. Given $v \in \mathcal{V}$ we define the cluster \[
C_{\omega}(v) := \{ w \in \mathcal{V} : w \leftrightarrow_{\omega} v\}.
\]
Then by Fubini's theorem, we have \[
\E_{p} \#C_{\omega}(v) = \sum_{w \in \mathcal{V}} \Prob_{p}(v \leftrightarrow_{\omega} w).
\]
By convention, we will write \[
v \leftrightarrow_{\omega} \infty \,\,\Leftrightarrow \#C_{\omega}(v) = +\infty.
\]
(Recall that we focus on locally finite graphs.)

Note that the space $\Omega = \{0, 1\}^{\mathcal{E}}$ and the random graph $\Gamma(\omega) \subseteq \Gamma$ for $\omega \in \Omega$ are defined without reference to $p$. The parameter $p$ affects the underlying probability measure only. To express the role of $p$ more explicitly, we denote the random graph by $\Gamma[p]$ and the underlying measure by $\Prob_{p}$.

As we mentioned just before, one can ask if the expected size of clusters increase as $p$ increases. Furthermore, one can ask if the expected size of clusters depends on the choice of the root vertex. These questions can be answered using the following tools.

The space $\Omega$ is given a natural order: for $\omega, \omega' \in \Omega$ we write $\omega \le \omega'$ if $\omega(e) \le \omega(e')$ for each $e \in \mathcal{E}$. We say that an event $A \subseteq \Omega$ is \emph{increasing} if \[
\forall \omega, \omega' \in \Omega \big[ [ \omega \in A \wedge \omega \le \omega'] \Rightarrow \omega' \in A\big].
\]
Standard examples of increasing events include $\{\omega : v \leftrightarrow_{\omega} w\}$ for given $v, w\in \mathcal{V}$, or $\{\omega : v \leftrightarrow_{\omega} +\infty\}$ for a given $v \in \mathcal{V}$.
\begin{fact}\label{fact:increasing}
Let $A \subseteq \Omega$ be an increasing event. Then $\Prob_{p}(A) \le \Prob_{p'}(A)$ holds for each $0 \le p \le p' \le 1$.
\end{fact}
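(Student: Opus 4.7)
The plan is to prove this via the standard monotone (Harris) coupling of Bernoulli percolations at all parameters on a single probability space. Once the coupling is set up, the conclusion becomes tautological: an inclusion of events will replace the inequality of probabilities, and the whole argument reduces to checking that the coupled configurations have the correct marginals.

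Concretely, I would enlarge the sample space and introduce a family $(U_{e})_{e \in \mathcal{E}}$ of i.i.d.\ $\Unif[0,1]$ random variables indexed by the (countable) edge set $\mathcal{E}$. For each $q \in [0,1]$ define $\tilde{\omega}_{q} \in \Omega = \{0,1\}^{\mathcal{E}}$ by
\[
\tilde{\omega}_{q}(e) := \mathbf{1}[U_{e} \le q].
\]
Since $\Prob(U_{e} \le q) = q$ and the $U_{e}$ are independent across $e$, the coordinates $\tilde{\omega}_{q}(e)$ are independent Bernoulli variables of parameter $q$, so the pushforward of the underlying measure under $\tilde{\omega}_{q}$ is exactly $\Prob_{q}$.

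The key point is the pointwise monotonicity built into the coupling: for every realisation of $(U_{e})_{e}$ and every $0 \le p \le p' \le 1$, the implication $U_{e} \le p \Rightarrow U_{e} \le p'$ yields $\tilde{\omega}_{p}(e) \le \tilde{\omega}_{p'}(e)$ for all $e$, i.e.\ $\tilde{\omega}_{p} \le \tilde{\omega}_{p'}$ in the product order on $\Omega$. Since $A$ is increasing, this gives the deterministic set inclusion $\{\tilde{\omega}_{p} \in A\} \subseteq \{\tilde{\omega}_{p'} \in A\}$. Taking probabilities and using the marginals computed above,
\[
\Prob_{p}(A) = \Prob(\tilde{\omega}_{p} \in A) \le \Prob(\tilde{\omega}_{p'} \in A) = \Prob_{p'}(A),
\]
as required. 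There is no genuine obstacle here — this is a textbook argument — and the only mild technicality is measurability of the map $(U_{e})_{e} \mapsto \tilde{\omega}_{q}$ into the product $\sigma$-algebra on $\Omega$, which is immediate since each coordinate depends on only one $U_{e}$.
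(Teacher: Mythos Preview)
Your argument via the standard Harris coupling is correct. The paper itself states this as a Fact without proof, so there is nothing to compare against; your write-up supplies exactly the textbook justification one would expect.
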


We now state the Harris-FKG inequality, which was first described by T. E. Harris \cite{harris1960a-lower} and later generalized by C. M. Fortuin, P. W. Kasteleyn and J. Ginibre \cite{fortuin1971correlation}: \begin{prop}\label{prop:FKG}[Harris-FKG]
Let $A, B\subseteq \Omega$ be increasing events. Then \[
\Prob_{p}(A \cap B) \ge \Prob_{p}(A) \cdot \Prob_{p}(B)
\]
for each $p$. 
\end{prop}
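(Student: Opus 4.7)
The plan is to prove the more general \emph{Harris inequality for functions}: for bounded Borel measurable functions $f, g \colon \Omega \to \R$ that are both monotone nondecreasing with respect to the coordinatewise order on $\Omega$, we have $\E_p[fg] \ge \E_p[f]\,\E_p[g]$. The proposition then follows by taking $f = \mathbf{1}_A$ and $g = \mathbf{1}_B$, which are increasing whenever $A$ and $B$ are increasing.

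I will first handle the case where $f$ and $g$ depend on only finitely many coordinates, say $e_1, \ldots, e_n$, proceeding by induction on $n$. For the base case $n=1$, a direct computation shows
\[
\E_p[fg] - \E_p[f]\,\E_p[g] = p(1-p)\bigl(f(1) - f(0)\bigr)\bigl(g(1) - g(0)\bigr) \ge 0,
\]
where the nonnegativity uses that both differences have the same sign by monotonicity. This is the only place where the actual inequality enters; the rest of the argument is structural.

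For the inductive step, suppose the inequality holds for all bounded increasing functions on $\{0,1\}^{n-1}$. Given $f, g$ on $\{0,1\}^n$, define
\[
F(x_n) := \E_p\!\left[f(\,\cdot\,, x_n)\right], \qquad G(x_n) := \E_p\!\left[g(\,\cdot\,, x_n)\right],
\]
where expectations are with respect to the first $n-1$ coordinates. Since $f$ and $g$ are increasing in every variable, $F$ and $G$ are increasing functions of $x_n$, so the base case yields $\E_p[FG] \ge \E_p[F]\,\E_p[G]$. On the other hand, for each fixed $x_n$, the slices $f(\,\cdot\,, x_n)$ and $g(\,\cdot\,, x_n)$ are increasing on $\{0,1\}^{n-1}$, so by the inductive hypothesis
\[
\E_p\!\left[f(\,\cdot\,, x_n)\, g(\,\cdot\,, x_n)\right] \,\ge\, F(x_n)\, G(x_n).
\]
Integrating over $x_n$ and combining the two estimates gives $\E_p[fg] \ge \E_p[f]\,\E_p[g]$.

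To pass from cylinder-based (finite-coordinate) functions to general increasing measurable functions on the infinite product, enumerate the edges $e_1, e_2, \ldots$ and let $\mathcal{F}_n := \sigma(\omega(e_1), \ldots, \omega(e_n))$. The martingale $f_n := \E_p[f \mid \mathcal{F}_n]$ is bounded and converges to $f$ almost surely and in $L^2$ by the Doob martingale convergence theorem, and it is immediate from the definition that $f_n$ is again an increasing function of $(\omega(e_1), \ldots, \omega(e_n))$. The same holds for $g_n$. Applying the finite case to $(f_n, g_n)$ gives $\E_p[f_n g_n] \ge \E_p[f_n]\,\E_p[g_n]$, and taking $n \to \infty$ preserves the inequality. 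The only delicate point in the argument — and the main bookkeeping obstacle — is verifying this approximation step cleanly; everything else is a one-line base computation plus a standard inductive conditioning argument.
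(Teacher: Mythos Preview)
Your proof is correct and is essentially the standard textbook argument (induction on the number of coordinates via conditioning, then a martingale limit). The paper itself does not supply a proof of this proposition: it merely states the Harris--FKG inequality and cites Harris \cite{harris1960a-lower} and Fortuin--Kasteleyn--Ginibre \cite{fortuin1971correlation}, so there is nothing to compare against beyond noting that your argument matches the classical one found in, e.g., Grimmett's book \cite{grimmett1989percolation} referenced in the paper.
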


The Harris-FKG inequality can be used, for example, to show that the average size of clusters does not depend on the choice of the root vertex.

The Harris-FKG inequality looks like a generalization of the strict equality for independent events. One can ask if the reverse inequality also holds in certain circumstances. The BK inequality partially explains this.

Given an increasing event $A$ and $\omega \in A$, there can be a set $W \subseteq \{e \in \mathcal{E} : \omega(e) = 1\} \subseteq \mathcal{E}$ such that $1_{W} \subseteq A$ holds, i.e., \[
\forall \omega' \in \Omega \big[ \forall e \in W [\omega'(e) = 1] \Rightarrow \omega' \in A \big].
\] 
In this situation, we call $W$ a \emph{witness} for $A$ in $\omega$. If $W' \subseteq W$ are both witnesses for $E$ in $\omega$, we say that $W'$ is a \emph{sub-witness} of $W$.

For example, let $v, w \in \mathcal{V}$, let $\omega \in \Omega$ and suppose that there exists a path $(e_{1}, \ldots, e_{n})$ in $\Gamma(\omega)$ connecting $v$ to $w$. Then this path becomes a witness for $A := \{v \leftrightarrow w\}$ in $\omega$: $\{e_{1}, \ldots, e_{n}\}$ are all given value 1 by $\omega$, and for $\omega' \in \Omega$, $\omega'(e_{i}) = 1$ for each $i$ implies $\omega' \in A$.

Now, for two increasing events $A, B \subseteq \Omega$, we define another measurable set $A \circ B \subseteq \Omega$, called the \emph{disjoint occurrence} of $A$ and $B$, as follows: \[
A \circ B := \left\{ \omega \in \Omega : \begin{array}{c} \omega \in A \cap B, \,\, \exists \,\textrm{witness $W \subseteq \mathcal{E}$ for $A$ in $\omega$ and} \\
\exists\, \textrm{witness $W' \subseteq \mathcal{E}$ for $B$ in $\omega$ such that $W\cap W' = \emptyset$}\end{array} \right\}.
\]
For example, in Figure \ref{fig:disjtOcc} shows two different configurations in $A \cap B$, where $A := \{u \leftrightarrow v\}$ and $B := \{v \leftrightarrow w\}$. The left configuration lies in $A \circ B$, whereas the right one does not.

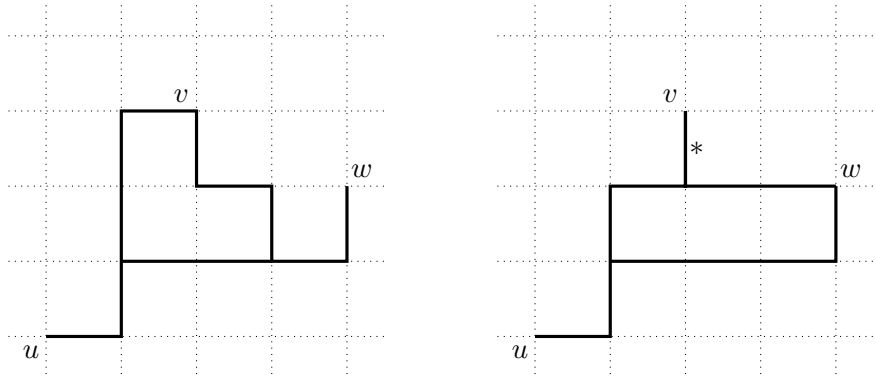
\begin{figure}
\begin{tikzpicture}

\begin{scope}
\foreach \i in {-2, ..., 2}{
\draw[dotted] (-2.5, \i) -- (2.5, \i);
\draw[dotted] (\i, -2.5) -- (\i, 2.5);
}
\draw (-2.2, -2.2) node {$u$};
\draw (-0.2, 1.2) node {$v$};
\draw (2.2, 0.2) node {$w$};

\draw[very thick] (-2, -2) -- (-1, -2) -- (-1, 1) -- (0, 1) -- (0, 0) -- (1, 0) -- (1, -1) -- (2, -1) -- (2, 0);
\draw[very thick] (-1, -1) -- (1, -1);

\end{scope}

\begin{scope}[shift={(6.5, 0)}]
\foreach \i in {-2, ..., 2}{
\draw[dotted] (-2.5, \i) -- (2.5, \i);
\draw[dotted] (\i, -2.5) -- (\i, 2.5);
}
\draw (-2.2, -2.2) node {$u$};
\draw (-0.2, 1.2) node {$v$};
\draw (2.2, 0.2) node {$w$};

\draw[very thick] (-2, -2) -- (-1, -2) -- (-1, 0) -- (0, 0) -- (0, 1) -- (0,0) --(2, 0);
\draw[very thick] (-1, -1) -- (2, -1) -- (2, 0);
\draw (0.15, 0.5) node {$\ast$};

\end{scope}

\end{tikzpicture}
\caption{Two configurations from percolation in $\mathbb{Z}^{2}$. In the left configuration, the path $u \rightarrow \uparrow \uparrow \uparrow \rightarrow v$ and $v \downarrow \rightarrow \downarrow \rightarrow \uparrow w$ are disjoint witnesses for $A:= \{u \leftrightarrow v\}$ and $B:= \{v \leftrightarrow w\}$, respectively.
In the right configuration, there are several witnesses for the events $A$ and $B$ in $\omega$, but none of them are disjoint; they all contain Edge $(\ast$).}
\label{fig:disjtOcc}
\end{figure}

We can now state the BK inequality, which is due to J. van den Berg and H. Kesten \cite{berg1985inequalities}. \begin{prop}\label{prop:BK}[BK inequality]
Let $A, B\subseteq \Omega$ be increasing events for which every witness has a finite sub-witness. Then for each $p$ we have \[
\Prob_{p}(A \circ B) \le \Prob_{p}(A) \cdot \Prob_{p}(B).
\]
\end{prop}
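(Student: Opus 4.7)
The plan is to reduce first to events depending on finitely many edges, and then to prove the finite case by induction on the number of edges, following the classical strategy of van den Berg and Kesten.

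For the reduction, fix an enumeration $e_{1}, e_{2}, \ldots$ of $\mathcal{E}$ and let $A^{(n)}$ (resp.\ $B^{(n)}$) denote the event that $A$ (resp.\ $B$) admits a witness contained in $\{e_{1}, \ldots, e_{n}\}$. The sub-witness hypothesis gives $A^{(n)} \uparrow A$ and $B^{(n)} \uparrow B$ monotonically; since disjoint finite sub-witnesses of disjoint witnesses remain disjoint, $A^{(n)} \circ B^{(n)} \uparrow A \circ B$ as well. Hence, once the inequality is established for events supported on a finite edge set, taking $n \to \infty$ yields the general statement.

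For the finite case, suppose $A$ and $B$ depend only on the finite edge set $E = \{e_{1}, \ldots, e_{n}\}$ and proceed by induction on $n$. Write $E' := E \setminus \{e_{n}\}$ and, for $\omega' \in \{0,1\}^{E'}$ and $b \in \{0,1\}$, let $\omega' \cdot b$ denote the extension to $E$ assigning $b$ to $e_{n}$. For any event $C$ on $E$, define the slices
\[
C_{b} := \{\omega' \in \{0,1\}^{E'} : \omega' \cdot b \in C\}.
\]
When $C$ is increasing, $C_{0} \subseteq C_{1}$. A casework analysis on whether $e_{n}$ belongs to either of the two disjoint witnesses yields the slice identity
\[
(A \circ B)_{0} = A_{0} \circ B_{0}, \qquad (A \circ B)_{1} = (A_{1} \circ B_{0}) \cup (A_{0} \circ B_{1}),
\]
together with the inclusion $A_{0} \circ B_{0} \subseteq (A_{1} \circ B_{0}) \cap (A_{0} \circ B_{1})$; the monotonicity of $A, B$ lets one drop $e_{n}$ from a witness that contains it.

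Conditioning on $\omega(e_{n})$ expresses $\Prob_{p}(A \circ B)$ as a convex combination of slice probabilities, and applying the inductive hypothesis to each constituent $\Prob_{p}(A_{i} \circ B_{j})$ reduces the target inequality to a polynomial inequality in the four quantities $\Prob_{p}(A_{b})$ and $\Prob_{p}(B_{b})$ for $b \in \{0, 1\}$, subject to the slice monotonicity $\Prob_{p}(A_{0}) \le \Prob_{p}(A_{1})$ and $\Prob_{p}(B_{0}) \le \Prob_{p}(B_{1})$. The main obstacle is verifying this polynomial inequality uniformly over $p \in [0, 1]$. A direct inclusion--exclusion estimate dispatches $p \le 1/2$, where the bound collapses to the nonnegativity of $\bigl(\Prob_{p}(A_{1}) - \Prob_{p}(A_{0})\bigr)\bigl(\Prob_{p}(B_{1}) - \Prob_{p}(B_{0})\bigr)$. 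The range $p > 1/2$ demands more careful bookkeeping, exploiting also the ancillary inclusion $(A_{1} \circ B_{0}) \cap (A_{0} \circ B_{1}) \subseteq A_{1} \circ B_{1}$ so that the inductive hypothesis constrains the intersection from both above and below; the required algebraic manipulation is elementary but delicate.
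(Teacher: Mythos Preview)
The paper does not prove this statement; it records the BK inequality as a classical fact with a reference to van den Berg and Kesten, so there is no in-paper argument to compare against. I will therefore evaluate your proposal on its own terms.

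Your reduction to finitely many edges is fine, and your slice identities for $(A\circ B)_0$ and $(A\circ B)_1$ are correct. The argument for $p\le 1/2$ is also correct: combining $y:=\Prob_p\big((A_1\circ B_0)\cap(A_0\circ B_1)\big)\ge \Prob_p(A_0\circ B_0)$ with the inductive upper bounds $\Prob_p(A_i\circ B_j)\le a_i b_j$ (where $a_i=\Prob_p(A_i)$, $b_j=\Prob_p(B_j)$) yields
\[
\Prob_p(A\circ B)\le (1-2p)\,a_0b_0+p(a_1b_0+a_0b_1),
\]
and the gap to $(qa_0+pa_1)(qb_0+pb_1)$ is exactly $p^2(a_1-a_0)(b_1-b_0)\ge 0$.

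The gap is in the range $p>1/2$. Your plan is to reduce to a polynomial inequality in $a_0,a_1,b_0,b_1$ using, in addition, the bound $y\le \Prob_p(A_1\circ B_1)\le a_1b_1$. But $y$ enters the expression for $\Prob_p(A\circ B)$ with a \emph{negative} sign, so an upper bound on $y$ pushes the estimate the wrong way; only a lower bound on $y$ helps, and the only lower bound available is $y\ge \Prob_p(A_0\circ B_0)$, whose coefficient $(1-2p)$ is now negative. Concretely, the polynomial inequality you are aiming for---with only the constraints $x_{ij}\le a_ib_j$, $x_{00}\le y\le x_{11}$, $a_0\le a_1$, $b_0\le b_1$---is \emph{false}: at $p=1$ with $a_0=a_1=1$, $b_0=0.6$, $b_1=1$, $x_{00}=y=0$, $x_{10}=0.6$, $x_{01}=1$ one gets $1.6\le 1$. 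These parameter values are incompatible with actual events (if $a_0=1$ then $A_0$ is almost sure and $x_{00}=b_0$), but that only shows the reduction to four real parameters discards essential information; no ``elementary but delicate'' algebra in $a_i,b_j$ alone can close the gap.

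What is missing is the van den Berg--Kesten duplication step: introduce an independent copy $e_n'$ of $e_n$, let $\tilde A$ read $e_n$ and $\tilde B$ read $e_n'$, and show \emph{conditionally on $\omega'$} that $\Prob_p(A\circ B\mid\omega')\le \Prob_p(\tilde A\circ \tilde B\mid\omega')$. This pointwise comparison is a short case check (three cases, each one line), after which $\Prob_p(\tilde A\circ\tilde B)=q^2x_{00}+pq(x_{10}+x_{01})+p^2x_{11}$ has all nonnegative coefficients, so the inductive bounds apply uniformly in $p$ and give $(qa_0+pa_1)(qb_0+pb_1)$ directly. Replacing your $p>1/2$ paragraph with this duplication argument makes the proof complete.
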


Often, we want a more precise information about the growth of $\Prob_{p}(A)$ for a given event $A$. For this it is beneficial to have a formula for derivatives of $\Prob_{p}(A)$. Russo's formula serves this purpose.

Given an increasing event $A \subseteq \Omega$ and $w \in \Omega$, we say that $e \in \mathcal{A}$ is \emph{pivotal} for the event $A$ if $\omega$ enters $A$ after turning $e$ on, and $\omega$ is excluded from $A$ by turning $e$ off. More formally, $e$ is pivotal for $A$ (given $\omega$) if $\omega^{e} \in A$ and $\omega_{e} \notin A$ for \[
\omega^{e}(f) := \left\{ \begin{array}{cc} 1 & f = e \\ \omega(f) & f \in \mathcal{E} \setminus \{e\} \end{array}\right., \quad \omega_{e}(f) := \left\{ \begin{array}{cc} 0 & f = e \\ \omega(f) & f \in \mathcal{E} \setminus \{e\}  \end{array}\right..
\]
We now record Russo's formula \cite{russo1981on-the-critical}: \begin{prop}\label{prop:Russo}
Let $A \subseteq \Omega$ be an increasing event. Then 
 \[
\left(\frac{d}{dp} \right)_{+} \Prob_{p}(A) \ge \sum_{e \in \mathcal{E}} \Prob_{p}(\textrm{$e$ is pivotal for $A$}) = \frac{1}{1-p} \sum_{e \in \mathcal{E}} \Prob_{p} (\textrm{$e$ is closed and pivotal for $A$}).
\]
\end{prop}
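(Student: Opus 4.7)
The statement splits into two parts: the termwise equality relating pivotal and closed-and-pivotal probabilities, and the inequality bounding the right derivative from below.

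For the equality, the key observation is that for each edge $e$, the event ``$e$ is pivotal for $A$'' depends only on $\{\omega(f) : f \neq e\}$, since both $\omega^e$ and $\omega_e$ agree with $\omega$ off $\{e\}$. Under the product measure $\Prob_p$ this event is therefore independent of $\{\omega(e) = 0\}$, so termwise
\[
\Prob_p(e \text{ is closed and pivotal}) = (1-p)\,\Prob_p(e \text{ is pivotal}),
\]
and summing over $e \in \mathcal{E}$ yields the equality.

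For the inequality, the plan is to use the standard monotone coupling of the percolation measures. Let $(U_e)_{e \in \mathcal{E}}$ be i.i.d.\ uniform on $[0,1]$, and set $\omega_q(e) := \mathbf{1}_{U_e \le q}$, so $\omega_q \sim \Prob_q$ and the family is coordinate-wise monotone in $q$. Since $A$ is increasing and $\omega_p \le \omega_{p+h}$,
\[
\Prob_{p+h}(A) - \Prob_p(A) = \Prob(\omega_p \notin A,\ \omega_{p+h} \in A).
\]
For each $e$ in a prescribed finite subset $F \subseteq \mathcal{E}$, I would then produce an event $P_e^h$ inside the right-hand side whose probability is exactly $h\,\Prob_p(e \text{ is pivotal})$, and combine these via a Bonferroni lower bound.

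The natural choice is $P_e^h := \{e \text{ is pivotal for } A \text{ in } \omega_p\} \cap \{U_e \in (p, p+h]\}$. On $P_e^h$ we have $\omega_p(e) = 0$, so pivotality gives $\omega_p \notin A$; meanwhile $(\omega_p)^e \in A$ together with $\omega_{p+h} \ge (\omega_p)^e$ and the monotonicity of $A$ force $\omega_{p+h} \in A$. The pivotality event depends only on $\{U_f : f \neq e\}$, hence is independent of $U_e$, giving $\Prob(P_e^h) = h\,\Prob_p(e \text{ is pivotal})$. For distinct $e, e' \in F$ the crude bound $\Prob(P_e^h \cap P_{e'}^h) \le \Prob(U_e, U_{e'} \in (p, p+h]) = h^2$ holds, so Bonferroni and the previous display give
\[
\Prob_{p+h}(A) - \Prob_p(A) \ge h \sum_{e \in F} \Prob_p(e \text{ is pivotal}) - |F|^2 h^2.
\]
Dividing by $h$, letting $h \downarrow 0$, and then taking the supremum over finite $F \subseteq \mathcal{E}$ completes the proof (with the convention that the right derivative and the sum are both allowed to be $+\infty$). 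The only obstacle worth flagging is that $\mathcal{E}$ may be infinite, which prevents working with all edges at once and forces the truncation-to-finite-$F$ step; this is also precisely why the conclusion is phrased as a lower bound rather than as Russo's equality.
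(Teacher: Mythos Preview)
Your proof is correct. The paper does not supply its own proof of this proposition; it simply records Russo's formula as a known fact with a citation to \cite{russo1981on-the-critical}, so there is nothing to compare against. Your argument via the standard monotone coupling, together with the Bonferroni truncation over finite $F \subseteq \mathcal{E}$ to handle the possibly infinite edge set, is exactly the textbook route and matches the paper's interpretation of $(d/dp)_+$ as the lower right Dini derivative.
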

Here, $(d/dp)_{+}$ denotes the lower right Dini derivative.

We now focus on percolation in groups. For this purpose, let $G$ be a finitely generated group and let $\Gamma$ be its Cayley graph. We then consider the Bernoulli percolation on $\Gamma$. In this setting, $\Prob_{p}$ is $G$-ergodic: every $G$-invariant event occurs with probability 0 or 1.

We now define the \emph{critical parameter} for $\Gamma$: \[
p_{c} = p_{c}(\Gamma) := \inf\{ p \in [0, 1] : \Prob_{p}(\textrm{$C(id)$ is infinite}) > 0 \}.
\]
Then all clusters are almost surely finite for $p < p_{c}$. By ergodicity of $\Prob_{p}$ under the $G$-action, there is almost surely an infinite cluster for $p > p_{c}$.

We then define the \emph{uniqueness threshold} for $\Gamma$: \[
p_{u} = p_{u}(\Gamma) := \inf\{ p \in [0, 1] : \Prob_{p} (\textrm{there is a unique infinite cluster}) >0\}.
\]
For every value of $p$, the number $N_{\infty}(p)$ of infinite clusters is almost surely constant and is among $\{0, 1, \infty\}$. Furthermore, $N_{\infty}(p) = +\infty$ holds throughout $p_{c} < p < p_{u}$, and $N_{\infty}(p) = 1$ holds throughout $p > p_{u}$. What happens at $p = p_{c}$ and $p = p_{u}$ is mysterious: we refer the readers to \cite{hutchcroft2024percolation} for a summary of history and  recent progress.

\subsection{Overview of Hutchcroft's strategy} \label{subsection:Hutchcroft}

We now explain Hutchcroft's theory in \cite{hutchcroft2019percolation}. Throughout, $\Gamma$ will be a Cayley graph of a finitely generated group $G$. This graph is connected, has a uniformly bounded valency and is vertex-transitive. 

Recall that for a given parameter $0 \le p \le 1$ we defined the random graph $\Gamma[p]$ by randomly deleting edges from $\Gamma$. We define the two-point function \[
\tau_{p} (g, h) := \Prob_{p} ( g \leftrightarrow h) = \Prob_{p} (\exists \, \textrm{path connecting $g$ and $h$ in $\Gamma[p]$}).
\]
We abbreviate $\tau_{p}(id, g)$ by $\tau_{p}(g)$. Then $\tau_{p}(g, h) = \tau_{p}(g^{-1} h)$ for each $g, h \in G$. We now introduce the \emph{triangle diagram} \[
\nabla_{p} := \sup_{g \in G} \sum_{h, k \in G} \tau_{p} (g, h) \tau_{p}(h, k) \tau_{p}(k, g).
\]
We call the expected size of the identity cluster the \emph{susceptibility}: \[
\chi_{p} := \E_{p} [\#C(id)]= \sum_{g \in G} \tau_{p}(g).
\]
It is a fact that $\chi_{p} <+\infty$ for $0 \le p < p_{c}$ and $\lim_{p \nearrow p_{c}}\chi_{p} = +\infty$.

Let us now define \[
\iota_{p} := 1 - \sup \left\{ \frac{\sum_{g, h \in K} \tau_{p} (g, h)}{\chi_{p} \cdot \#A} : A \subseteq G\,\,\textrm{finite} \right\}.
\]
A naive counting shows that $\iota_{p} \ge 0$ always holds. It is however nontrivial to show that $\iota_{p}$ gets closer to $1$ as $p\nearrow p_{c}$, which is one of our main  goals.

We can now state:

\begin{thm}[{\cite[Proposition 2.7]{hutchcroft2019percolation}}]\label{thm:hutchcroft}
Let $\Gamma$ be a Cayley graph of a finitely generated group. If \begin{equation}\label{eqn:nablaPre}
\liminf_{p \nearrow p_{c}} \frac{p_{c} - p}{1-p} \chi_{p} \sqrt{1 - \iota_{p}^{2}} = 0,
\end{equation}
then $p_{c}(\Gamma) < p_{u}(\Gamma)$ and $\nabla_{p_{c}}(\Gamma) < +\infty$.

Moreover, we have the following mean-field critical behaviors: \begin{align}
\chi_{p} := \E_{p}(\#C(id)) &\asymp (p_{c} - p)^{-1} & p \nearrow p_{c}, \\
\E_{p} \left[\left( \#C(id) \right)^{k+1}\right] / 
\E_{p} \left[\left( \#C(id) \right)^{k}\right] &\asymp (p_{c} - p)^{-2} & k \ge 1, p \nearrow p_{c},\\
\Prob_{p}(\#C(id) = +\infty) &\asymp p-p_{c} & p \searrow p_{c}, \\
\Prob_{p_{c}}(\#C(id) \ge n ) &\asymp n^{-1/2} & n \nearrow +\infty, \\
\Prob_{p_{c}} (\textrm{intrinsic radius of $K_{id}$} \ge n) &\asymp n^{-1} & n \nearrow +\infty.
\end{align}
\end{thm}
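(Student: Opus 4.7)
The plan is to run a differential-inequality argument in the Aizenman--Newman spirit, combining Russo's formula (Proposition \ref{prop:Russo}) with the BK inequality (Proposition \ref{prop:BK}), and then using the smallness hypothesis to close things off. For a finite $A \subseteq G$, apply Russo's formula to the increasing event $\{id \leftrightarrow A\}$:
\[
\left(\frac{d}{dp}\right)_{+}\Prob_{p}(id \leftrightarrow A) \;\geq\; \frac{1}{1-p} \sum_{e \in \mathcal{E}} \Prob_{p}(e \text{ is closed and pivotal for } id \leftrightarrow A).
\]
If $e = \{u,v\}$ is closed and pivotal for $\{id \leftrightarrow A\}$, then opening $e$ produces disjoint witnesses for $\{id \leftrightarrow u\}$ and $\{v \leftrightarrow A\}$, so BK bounds each pivotal probability by $\tau_{p}(u) \cdot \Prob_{p}(v \leftrightarrow A)$. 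Summing over edges and using vertex-transitivity converts the right-hand side into a product involving $\chi_{p}$ and $\Prob_{p}(id \leftrightarrow A)$, times a factor that measures how much of the cluster mass of $id$ concentrates on vertices near $A$.

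The quantity $\iota_{p}$ is engineered precisely to quantify that concentration: by choosing $A$ close to a maximizer in its definition and performing a Cauchy--Schwarz split on $\sum_{g,h \in A} \tau_{p}(g,h)$, the complementary factor $\sqrt{1 - \iota_{p}^{2}}$ is extracted. Rewriting the resulting inequality in terms of the reciprocal $\Prob_{p}(id \leftrightarrow A)^{-1}$, integrating from $p$ up to $p_{c}$, and letting $A$ exhaust $G$ produces an estimate that is precisely sharp enough that the hypothesis
\[
\liminf_{p \nearrow p_{c}} \frac{p_{c}-p}{1-p}\, \chi_{p}\sqrt{1-\iota_{p}^{2}} \;=\; 0
\]
closes it into a subsequential bound $\chi_{p} \lesssim (p_{c}-p)^{-1}$, which by the Aizenman--Newman lower bound $\chi_{p} \gtrsim (p_c-p)^{-1}$ and monotonicity of $\chi_{p}$ promotes to the mean-field asymptotic $\chi_{p} \asymp (p_{c}-p)^{-1}$.

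With $\chi_{p} \lesssim (p_{c}-p)^{-1}$ and the accompanying spread estimate in hand, the triangle diagram $\nabla_{p_{c}} < +\infty$ follows by estimating a closed triangle via a triple application of BK and tracking the constants as $p \nearrow p_{c}$. The strict inequality $p_{c} < p_{u}$ comes from the same input: the hypothesis really expresses the $L^{2}$-type bound $p_{2 \to 2}(\Gamma) > p_{c}$ alluded to after the statement, and standard transfer-operator arguments show that such $L^{2}$-boundedness at criticality forces nonuniqueness, separating $p_{c}$ from $p_{u}$. The remaining mean-field exponents for higher moments, tail probabilities of $\#C(id)$, and intrinsic radius are then standard consequences of the triangle condition via Aizenman--Barsky and Kozma--Nachmias-type arguments. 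The main obstacle, and the technical core of the proof, is the middle step: identifying the correct near-maximizing $A$ and the correct Cauchy--Schwarz split so that $\sqrt{1-\iota_{p}^{2}}$ appears in the differential inequality without destroying the quadratic $\chi_{p}$ factor that is needed for the mean-field asymptotic. Once this inequality is in hand, the rest of the argument is essentially mechanical.
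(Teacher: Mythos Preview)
The paper does not supply its own proof of this theorem: it is quoted verbatim from \cite[Proposition 2.7]{hutchcroft2019percolation}, and the text immediately following the statement explicitly declines to explain the argument, pointing instead to \cite[Section 2]{hutchcroft2019percolation} for the operator-theoretic core and to \cite[Section 7]{hutchcroft2020nonuniqueness}, \cite{hutchcroft2022on-the-derivation} for how the triangle condition yields the listed mean-field exponents. So there is no in-paper proof to compare your proposal against; the theorem functions as a black box here.

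That said, your sketch has a direction mismatch in its central step. Russo's formula gives a \emph{lower} bound on $\left(\tfrac{d}{dp}\right)_{+}\Prob_p(id\leftrightarrow A)$ in terms of pivotal probabilities, while BK gives an \emph{upper} bound on each pivotal probability by a product. Chaining a lower bound through an upper bound on its summands does not produce a usable differential inequality; you cannot extract $\chi_p \lesssim (p_c-p)^{-1}$ this way. Hutchcroft's actual route is not a Russo--BK argument on $\Prob_p(id\leftrightarrow A)$ but an analysis of the $\ell^2\to\ell^2$ operator norm of the two-point matrix $T_p(u,v)=\tau_p(u,v)$: one shows a differential inequality for $\|T_p\|_{2\to 2}$ in which the factor $\sqrt{1-\iota_p^2}$ appears via $\ell^2$-duality (the supremum defining $\iota_p$ is essentially a comparison of $\|T_p\|_{1\to 2}$ and $\|T_p\|_{2\to 2}$), and the hypothesis then forces $\|T_{p_c}\|_{2\to 2}<\infty$, i.e.\ $p_c<p_{2\to 2}$. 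From there $\nabla_{p_c}<\infty$ and $p_c<p_u$ follow as you say, and the mean-field exponents are indeed standard consequences of the triangle condition. Your final paragraph correctly identifies $p_{2\to 2}$ as the real mechanism; the issue is that the preceding Russo--BK sketch does not actually reach it.
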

To see how the triangle condition implies the mean-field critical behaviors, see \cite[Section 7]{hutchcroft2020nonuniqueness} and \cite{hutchcroft2022on-the-derivation} and the references therein. In fact, there are some remarkably precise estimates of the implied constants for $\asymp$ in the above display: see \cite{hutchcroft2022slightly} for example. 

In fact, Hutchcroft derives the triangle equation from Equation \ref{eqn:nablaPre} by means of the so-called \emph{$L^{2}$ boundedness}. We will not explain this operator-theortical part and refer the interested readers to \cite[Section 2]{hutchcroft2019percolation}. We will nonetheless record some consequences of the $L^{2}$ boundedness:

\begin{thm}[{\cite[Proposition 2.3]{hutchcroft2019percolation}, \cite{hutchcroft2020the-l2-boundedness},  \cite{hutchcroft2022slightly}, \cite{hutchcroft2024slightly}}]\label{thm:hutchcroftMean}
Let $\Gamma$ be a Cayley graph of a finitely generated group. If \[
\liminf_{p \nearrow p_{c}} \frac{p_{c} - p}{1-p} \chi_{p} \sqrt{1 - \iota_{p}^{2}} = 0,
\]
then we have the following: \begin{enumerate}
\item For $p \simeq p_{c}$, the two-point function $\tau_{p}(id, g)$ decays exponentially in $\|g\|_{S}$.
\item At the critical percolation, the intrinsic radius and the extrinsic radius exhibits similar critical behaviours. More precisely, we have \[
\Prob_{p_{c}} \big(\sup\{r : B_{S}(id, r) \subseteq C(id)\} \ge n\big) \asymp n^{-1} \asymp \Prob_{p_{c}}\big(\diam_{S}(C(id)) \ge n\big).
\]
Furthermore, the implied constant for $\asymp$ in the above display depends exponentially on $|p - p_{c}|$.
\item For $p \simeq p_{c}$, the intrinsic distance in $\Gamma[p]$ is not hugely distorted from the word metric in $\Gamma$: huge distortions are exponentially unlikely.
\item For $p \simeq p_{c}$ with $p > p_{c}$, the volume growth of the ball in $C(id)$ is purely exponential.
\end{enumerate}
\end{thm}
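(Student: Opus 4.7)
The plan is to route all four conclusions through the single analytic input of $L^2$-boundedness of the two-point function operator $T_p f(g) := \sum_{h \in G} \tau_p(g,h) f(h)$ on $\ell^2(G)$. The hypothesis $\liminf_{p \nearrow p_c} \frac{p_c-p}{1-p} \chi_p \sqrt{1-\iota_p^2} = 0$ is tailored so that $p_c$ lies strictly below Hutchcroft's threshold $p_{2 \to 2}(\Gamma)$, i.e.\ $\|T_p\|_{2 \to 2} < \infty$ for $p$ slightly larger than $p_c$. Once this is secured, each of (1)--(4) becomes a standard operator-theoretic consequence.

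The main obstacle, and the first step, is this passage from the combinatorial hypothesis on $(\chi_p,\iota_p)$ to a genuine operator-norm bound. The definition of $\iota_p$ records how $T_p$ behaves on indicator functions of finite sets, and a Cauchy--Schwarz argument with the self-adjointness of $T_p$ upgrades such information to the full $2 \to 2$ norm. Coupling this with Russo's formula applied to $(d/dp)\chi_p$ and with a differential inequality of Aizenman--Newman type converts the vanishing lim inf into the inequality $p_c < p_{2 \to 2}$ with quantitative control. I would import this step essentially as a black box from \cite[Section 2]{hutchcroft2019percolation} and \cite{hutchcroft2020the-l2-boundedness}, since its details are purely operator-analytic and independent of the acylindrically hyperbolic setting.

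Granted $L^2$-boundedness, the four items unfold as follows. For (1), a Combes--Thomas type perturbation shows that $e^{\alpha \|\cdot\|_S} T_p e^{-\alpha \|\cdot\|_S}$ remains bounded on $\ell^2(G)$ for sufficiently small $\alpha > 0$, yielding exponential decay of $\tau_p(id,g)$ in $\|g\|_S$. For (2), the $n^{-1}$ tail for the intrinsic radius follows from $\nabla_{p_c} < \infty$ via a tree-graph inequality in the style of Aizenman--Newman; the exponential decay from (1) then transfers this bound to the extrinsic radius, with implied constants depending exponentially on $|p-p_c|$ by the quantitative nature of the perturbation. For (3), a union bound summing $\tau_p(id,g)$ over vertices at word distance $\gg n$ uses the exponential decay to show that intrinsic paths of length $n$ are unlikely to wander far in the word metric. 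For (4), a slightly supercritical analysis combines the $L^2$ bound with FKG-type lower bounds on ball volumes to pin down purely exponential growth inside $C(id)$. In each case I would verify that the precise statement matches what is proved in the cited papers \cite{hutchcroft2020the-l2-boundedness, hutchcroft2022slightly, hutchcroft2024slightly}, rather than reprove them here.
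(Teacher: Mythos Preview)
Your proposal is correct and matches the paper's own treatment: the paper does not prove Theorem~\ref{thm:hutchcroftMean} either, but records it as a black-box consequence of Hutchcroft's $L^2$-boundedness theory, explicitly deferring the operator-theoretic passage from the hypothesis to $p_c < p_{2\to 2}$ and its downstream consequences to \cite[Section 2]{hutchcroft2019percolation} and the companion papers. Your sketch of how items (1)--(4) unfold from $L^2$-boundedness is more detailed than what the paper provides, but the overall strategy---cite rather than reprove---is identical.
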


We now turn to the strategy for Equation \ref{eqn:nablaPre}. It will follow from \begin{align}\label{eqn:hutchcroftGamma1}\limsup_{p \nearrow p_{c}} (p_{c} - p) \chi_{p} &<+\infty, \\
\label{eqn:hutchcroftGamma2}
\lim_{p \nearrow p_{c}} \sup \left\{ \frac{\sum_{g, h \in A} \tau_{p} (g, h)}{\chi_{p} \cdot \#A} : A \subseteq G\,\,\textrm{finite} \right\} &=0.
\end{align}

In order to show Equation \ref{eqn:hutchcroftGamma1}, Hutchcroft proved the following for Gromov hyperbolic graph that admits a vertex-transitive action by a unimodular group. We restrict ourselves to the case of Cayley graphs.

\begin{prop}[Supporting Hyperplane Theorem, {\cite[Corollary 4.3]{hutchcroft2019percolation}}]\label{prop:hutchcroft1}
Let $G$ be a non-elementary word hyperbolic group with a finite generating set $S$. Then there exists $r>0$ such that the following holds.

For each finite set $A \subseteq G$ there exists $A' \subseteq A$ with $\#A' \ge \#A / 2$ such that for each $u \in A'$, there exists $v \in G$ with $ d_{S}(u, v) \le r$ such that $H_{G}(u, v)$ is a proper discrete halfspace with $A \subseteq H_{G}(u, v)$. 
\end{prop}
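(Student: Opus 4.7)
My first instinct would be to import Hutchcroft's original proof from \cite{hutchcroft2019percolation}: embed $\Gamma$ quasi-isometrically into $\mathbb{H}^{d}$ via the Bonk--Schramm theorem \cite{bonk2000embeddings}, apply Benjamini--Schramm's Euclidean magic lemma to a bounded-neighborhood thickening of $A$ inside $\mathbb{H}^{d}$, and pull supporting hyperplanes back to discrete halfspaces in $\Gamma$ with $r$ depending only on $\delta$ and the quasi-isometry constants. The paper however explicitly advertises an intrinsic argument avoiding both Bonk--Schramm and Benjamini--Schramm (see Section~\ref{section:properMagic}), so I would pursue a direct proof through the Gromov boundary of $G$.

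Fix a hyperbolicity constant $\delta$ for $\Gamma(G, S)$, and let $\partial G$ denote the Gromov boundary equipped with a visual measure $\mu$ based at some fixed basepoint $o \in G$. For $u, a \in G$ let $\mathrm{Sh}_{u}(a) \subseteq \partial G$ be the shadow of $a$ from $u$: those boundary points $\xi$ whose geodesic rays from $u$ pass within $C\delta$ of $a$. The Sullivan--Mohsen shadow lemma gives $\mu(\mathrm{Sh}_{u}(a)) \asymp e^{-\lambda d_{S}(u, a)}$ for the critical exponent $\lambda>0$ of $G$. The crucial geometric criterion I would establish first is: if there exists $\xi \in \partial G \setminus \bigcup_{a \in A \setminus \{u\}} \mathrm{Sh}_{u}(a)$, then for $v$ on the geodesic ray $[u, \xi)$ at distance $r = r(\delta)$ from $u$, one has $A \subseteq H_{G}(u, v)$, and $H_{G}(u, v)$ is proper because $\partial G$ is perfect by non-elementarity. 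This reduces to thin-triangle arithmetic: moving from $u$ toward $\xi$ strictly increases the distance to each $a \in A$ because the triangle on vertices $u, v, a$ is $\delta$-thin and $\xi$ is not in the $a$-shadow.

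Call $u \in A$ \emph{exposed} if such a free direction exists. It then remains to prove $|\{u \in A : u \text{ exposed}\}| \geq |A|/2$ by a double counting argument. Summing over $u \in A$,
\[
\sum_{u \in A} \mu\bigl(\partial G \setminus \bigcup_{a \in A \setminus \{u\}} \mathrm{Sh}_{u}(a)\bigr) \geq |A|\,\mu(\partial G) - \sum_{u, a \in A,\, u \neq a} \mu(\mathrm{Sh}_{u}(a)),
\]
and the last double sum is controlled by combining the shadow decay with the exponential volume growth of the non-elementary group $G$. The guiding intuition is that in hyperbolic geometry, exponential growth forces most points of any finite set $A$ to lie near the outer frontier of its quasi-convex hull --- strikingly unlike the Euclidean case where most points of a fat set are interior.

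The main obstacle will be extracting the sharp fraction $1/2$ from the above estimate with a universal $r$ independent of $\#A$. Naive pigeonholing only produces some unspecified positive constant fraction of exposed points, and sharpening to $1/2$ demands careful tuning of $r$ against the shadow decay rate $\lambda$ together with a uniform doubling estimate for $\mu$ on $\partial G$. An alternative route, which the paper appears to take in Section~\ref{section:branching}, is to bypass the measure bookkeeping altogether by projecting $A$ onto a Bestvina--Bromberg--Fujiwara quasi-tree and exploiting its bottleneck property to count ``leaves'' combinatorially --- this seems cleaner and better adapted to the acylindrical generalization pursued in the later sections.
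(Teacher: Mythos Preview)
The proposition as stated is quoted from Hutchcroft \cite{hutchcroft2019percolation}; the paper does not re-prove it verbatim but instead establishes the generalisation Proposition~\ref{prop:supportingWPD} in Section~\ref{section:supporting}. Your first instinct (Bonk--Schramm plus Benjamini--Schramm) is exactly Hutchcroft's original route, so that part is fine as a citation.

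Your intrinsic route via boundary shadows has a genuine gap. Writing $\mu_{u}$ for the visual measure based at $u$, the shadow lemma gives $\mu_{u}(\mathrm{Sh}_{u}(a)) \asymp e^{-\lambda\, d_{S}(u,a)}$ with $\lambda$ the critical exponent; but for a non-elementary word hyperbolic group the volume growth rate is the \emph{same} $\lambda$ (Coornaert). Consequently, if $A$ is the ball $B_{S}(R)$ and $u$ sits near its centre, the union bound reads
\[
\sum_{a \in A \setminus \{u\}} \mu_{u}\big(\mathrm{Sh}_{u}(a)\big) \asymp \sum_{n=1}^{R} e^{\lambda n}\, e^{-\lambda n} \asymp R,
\]
which diverges. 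The double sum $\sum_{u,a} e^{-\lambda d_{S}(u,a)}$ sits exactly at the critical threshold, so ``shadow decay plus exponential growth'' does not close by itself; one must somehow account for the massive overlap among shadows of aligned points, and your sketch supplies no such mechanism. Your caveat about extracting ``the sharp fraction $1/2$'' understates the difficulty: as written the argument does not yield any positive fraction.

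The paper's actual argument (Section~\ref{section:supporting}, building on the prototype in Section~\ref{section:properMagic}) is purely combinatorial and never touches $\partial G$ or Patterson--Sullivan theory. One defines anti-halfspaces $\mathfrak{A}_{\pm}(g)$ using translates of the axis of a WPD loxodromic $f$, enumerates the ``unexposed'' elements by distance from a basepoint, and runs a good/bad/undecided sorting procedure: each bad element $a_{i}$ spawns two distinct undecided children $b_{i}, c_{i}$ strictly farther out, and thin-triangle arguments (Observation~\ref{obs:supportingObs1} and analogues) force distinct bad elements to have disjoint child-pairs. This is the binary-tree inequality ``internal vertices $\le$ leaves'' in disguise, delivering $\#\mathcal{B} \le \#\mathcal{G}$ directly. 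Your final remark mis-locates this machinery: Section~\ref{section:branching} treats barriers and branching sets for Equation~\eqref{eqn:hutchcroftGamma2}, not the supporting-hyperplane statement, and the Bestvina--Bromberg--Fujiwara construction is invoked only as background motivation, not as a proof ingredient.
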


This is accompanied by:

\begin{prop}\label{prop:hutchcroft2}
Let $G$ be a non-elementary word hyperbolic group with a finite generating set $S$. Then there exists $R>0$ such that, for each $u, v \in G$ that gives rise to a proper discrete halfspace $H_{G}(u, v)$, there exists $g \in G$ with $\|g\|_{S} \le 2d_{S}(u, v) + R$ such that $H_{G}(u, v)$ and $g H_{G}(u, v)$ are disjoint.
\end{prop}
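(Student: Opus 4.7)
The plan is to produce $g$ as a loxodromic element of $G$ whose translation axis runs along the geodesic from $u$ through $v$ and well past $v$, with translation length calibrated to $\approx 2 d_S(u,v) + R$. The intuition is that the ``supporting hyperplane'' separating $H_G(u,v)$ from its complement lies somewhere between $u$ and $v$, so a translation by this much in the direction out of $H_G(u,v)^c$ across the hyperplane shifts the halfspace past its own boundary and into a disjoint region.

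First I would take a geodesic segment $[u,v]$ in $\Gamma$ and extend it past $v$ by distance $d_S(u,v) + R_0$, for a constant $R_0$ depending only on $\delta$ and the valency of $\Gamma$, landing at a vertex $w \in G$ with $d_S(u,w) \le 2 d_S(u,v) + R_0$. By non-elementarity of $G$ combined with cocompactness of the $G$-action on $\Gamma$, together with the standard density of loxodromic elements with prescribed quasi-axis (via the Morse lemma and a pigeonhole on a compact fundamental domain), there is a loxodromic $g \in G$ whose quasi-axis tracks $[u,w]$ within bounded Hausdorff distance and whose translation length lies within $O(1)$ of $d_S(u,w)$. Up to replacing $g$ by a conjugate whose axis passes within uniformly bounded distance of the identity, we may assume $\|g\|_S$ is within a uniform additive constant of $\tau(g)$, so that $\|g\|_S \le 2 d_S(u,v) + R$ for a uniform $R = R(G, S)$.

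Next I would verify that $g H_G(u,v) \cap H_G(u,v) = \emptyset$. Equivariance gives $g H_G(u,v) = H_G(gu, gv)$, and by fellow-traveling of $[u,v]$ with $[gu, gv]$ along the axis, the supporting hyperplane of $H_G(gu, gv)$ sits roughly $d_S(u,v) + R_0$ past $v$, deep inside $H_G(u,v)$. Choosing the orientation of $g$ so that it translates ``outward'' — i.e. its attracting fixed point at infinity lies in the shadow of $H_G(u,v)^c$ on $\partial G$ (or, symmetrically, its repelling fixed point lies in the shadow of $H_G(u,v)$) — forces $H_G(gu,gv)$ to lie entirely on the far side of the translated hyperplane, hence disjoint from $H_G(u,v)$ once $R_0$ is larger than a universal constant in $\delta$.

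The hard part is precisely this last step. It is not automatic that any translate of a halfspace is disjoint from itself: in $\mathbb{H}^2$, translating a half-plane along its own boundary geodesic does nothing. What saves us here is the choice of axis \emph{transverse} to the supporting hyperplane, in the correct orientation, paired with sufficient translation length — and this requires using the concrete definition of ``proper discrete halfspace'' from Proposition \ref{prop:hutchcroft1} (defined earlier in the paper) to pin down how $H_G(u,v)$ is related to its hyperplane. I expect the bookkeeping to rest on Morse-stability of quasi-geodesics together with the standard fact that two quasi-geodesics in a $\delta$-hyperbolic space with disjoint pairs of ideal endpoints are separated in a controlled neighborhood, turning a sufficient metric translation into the required combinatorial disjointness of halfspaces.
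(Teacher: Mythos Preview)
There is a genuine gap: a loxodromic translation whose axis runs through the supporting hyperplane of $H=H_G(u,v)$ will \emph{nest} $H$ and $gH$, not separate them. You anticipated the tangential degeneracy (translating along the boundary geodesic), but the transverse case fails too. Concretely, in $\mathbb{H}^2$ let $H$ be the upper half of the disk and let $g$ be loxodromic along the vertical diameter. If the attracting point $g^+$ lies in $H^c$ and $g^-\in H$ (your ``outward'' orientation), then $g\gamma$ lies below $\gamma$ and the side of $g\gamma$ containing $g^-$ strictly contains $H$; reversing orientation gives $gH\subset H$. The same computation works on $\mathbb{R}$ with $H(0,1)=(-\infty,\tfrac12]$: any translation gives $gH\supset H$ or $gH\subset H$. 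The source--sink dynamics of a loxodromic force exactly one of $g^\pm$ into each complementary component of the hyperplane when the axis is transverse, and that forces nesting. So the ``hard part'' you flagged is not a bookkeeping issue; the mechanism you propose cannot produce disjointness.

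The paper's route is different in kind. Rather than moving along the $[u,v]$ axis, it uses a fixed element $w$ whose axis has bounded projection onto $Ax(f)$ (equivalently, $w\notin EC(f)$) and takes $g$ of the form $f^D w f^{-D}$ up to conjugation. The point is that $g$ first runs along $Ax(f)$ past the hyperplane and then \emph{turns off} onto $wAx(f)$; the resulting halfspace $\mathcal{H}_{half}(gx_0,g f^D x_0)$ is rooted on a transverse axis and points away from the original one, so the two complements are disjoint by a short Gromov-product computation (Observation~\ref{obs:wFProj} and the end of Lemma~\ref{lem:pigeon}). In the sketch of Subsection~\ref{subsection:plans} this is phrased as: three independent loxodromics $f_1,f_2,f_3$ suffice so that for any $u$ one of the conjugates $uf_iu^{-1}$ works. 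Either way, the translating element is \emph{not} a pure translation along the $[u,v]$ direction, and the bound $\|g\|_S\le 2d_S(u,v)+R$ comes from the explicit word $f^D w f^{-D}$ (length $\approx 2D+\|w\|_S$) rather than from matching a translation length to an axis.
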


The precise shape of $H_{G}(u, v)$ is not important. We only need that $H_{G}(u, v)$ is large enough to contain $A$, but also small enough such that some reasonably close translates of $H_{G}(u, v)$ do not overlap. Let us put them in a more abstract language:

\begin{thm}[{\cite[Section 5.1]{hutchcroft2019percolation}}]\label{thm:hutchcroft1plus2}
Let $\Gamma = Cay(G, S)$ be the Cayley graph of a finitely generated group $G$. Let $\mathscr{H} = \{H(g) : g \in G\}$ be a collection of subsets of $G$. Suppose that there exists $R>0$ such that the following holds: \begin{quote}
For each finite set $A \subseteq G$ there exists $A' \subseteq A$ with $\#A' \ge \#A / 2$ such that for each $a \in A'$, there exists $g, h \in G$ such that $\|g\|_{S}, \|h\|_{S} \le R$,  $A \subseteq a H(g)$ and $H(g) \cap h H(g) = \emptyset$. 
\end{quote}
Then Equation \ref{eqn:hutchcroftGamma1} holds for $\Gamma$.
\end{thm}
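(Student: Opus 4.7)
The plan is to prove Equation \ref{eqn:hutchcroftGamma1} by establishing a quadratic differential inequality $\chi_{p}' \ge c\,\chi_{p}^{2}/(1-p)$ on a subcritical window; setting $f(p) := 1/\chi_{p}$, this becomes $-f'(p) \ge c$, and integrating from $p$ up to $p_{c}$ (using $f(p_{c}^{-}) = 0$) gives $f(p) \ge c\,(p_{c}-p)$, i.e., $(p_{c}-p)\chi_{p} \le 1/c$. The inequality arises from Russo's formula coupled with a linear isoperimetric bound for finite clusters extracted from the magic hypothesis.

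The first main step converts the hypothesis into an isoperimetric statement. For $p<p_{c}$ the cluster $K := C(\mathrm{id})$ is almost surely finite; apply the hypothesis with $A := K$ to get $A' \subseteq K$ with $\#A' \ge \#K/2$ such that for each $a \in A'$ there are $g_{a}, h_{a}$ with $\|g_{a}\|_{S}, \|h_{a}\|_{S} \le R$, $K \subseteq aH(g_{a})$, and $aH(g_{a}) \cap ah_{a}H(g_{a}) = \emptyset$. Since $a \in A \subseteq aH(g_{a})$, one has $\mathrm{id} \in H(g_{a})$, which forces $ah_{a} \in ah_{a}H(g_{a}) \subseteq G \setminus K$ while $d_{S}(a, ah_{a}) = \|h_{a}\|_{S} \le R$. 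Thus every $a \in A'$ sits within $S$-distance $R$ from the complement of $K$. Because $\Gamma$ has uniformly bounded valence, $S$-balls of radius $R$ have uniform size $N_{R}$, and a standard double-counting yields the deterministic bound $\#K \le C_{0}\cdot \#\partial_{e} K$ with $C_{0} = 2N_{R}^{2}$, and hence $\chi_{p} \le C_{0}\,\E_{p}[\#\partial_{e}C(\mathrm{id})]$.

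The second main step applies Russo's formula. Summing the pivotal expansion of $\tau_{p}'(g)$ over $g \in G$ gives
\[
\chi_{p}' \ \ge\ \frac{1}{1-p}\,\E_{p}\!\Bigl[\sum_{\{u,v\}\in \partial_{e}C(\mathrm{id}),\,u \in C(\mathrm{id})} \#C(v)\Bigr],
\]
since opening a closed boundary edge merges the far cluster $C(v) \subseteq G \setminus C(\mathrm{id})$ into $C(\mathrm{id})$. The spatial Markov property for Bernoulli percolation --- conditional on $K := C(\mathrm{id})$ being a prescribed finite set and its edge boundary being closed, the configuration on edges with both endpoints outside $K$ is an independent Bernoulli percolation --- together with a sprinkling / second-moment argument upgrades the trivial bound $\#C(v) \ge 1$ to $\E_{p}[\#C(v) \mid K,\, v \notin K] \gtrsim \chi_{p}$ for a positive fraction of boundary vertices $v$. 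Combining this with the isoperimetric bound of the first step gives $\chi_{p}' \ge c\,\chi_{p}^{2}/(1-p)$, from which integration concludes.

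The main technical obstacle is the quadratic upgrade in the second step. The far cluster $C(v)$ is correlated with $C(\mathrm{id})$ through the shared configuration, and a naive application of the FKG inequality actually goes in the wrong direction. The resolution uses the domain Markov property to reduce to an independent Bernoulli percolation on $G \setminus K$, together with a geometric comparison ensuring that the complement retains enough susceptibility (via a second application of the magic hypothesis to boundary neighborhoods). A cleaner alternative, and perhaps closer to the spirit of the magic lemma, is to bypass the quadratic inequality entirely: establish the exponential tail estimate $\Prob_{p}(\#C(\mathrm{id}) \ge n) \le C e^{-c n (p_{c}-p)}$ by a sharp-threshold argument extracting $\gtrsim n$ pivotal edges for $\{\#C(\mathrm{id}) \ge n\}$ directly from the hypothesis, and then recover $\chi_{p} \le C/(p_{c}-p)$ by summation over $n$.
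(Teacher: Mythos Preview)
Your first step correctly extracts an isoperimetric inequality $\#K \le C_{0}\,\#\partial_{e}K$ from the hypothesis, but in doing so you discard the essential structure: you only remember that $ah_{a}\notin K$, forgetting that the cluster is \emph{confined to the halfspace} $aH(g_{a})$ and that $ah_{a}H(g_{a})$ is a \emph{disjoint translate} of it. What survives is exactly nonamenability of $\Gamma$, and nonamenability alone is not known to imply the mean-field bound $(p_{c}-p)\chi_{p}\le C$; indeed that bound is essentially equivalent to the triangle condition, which is the whole point of the theorem. Your Step~2 then faces the genuine obstacle you identify: conditional on $C(\mathrm{id})=K$, the susceptibility from a boundary vertex $v$ is that of Bernoulli percolation on the induced subgraph $G\setminus K$, and there is no general mechanism forcing $\chi_{p}^{G\setminus K}(v)\gtrsim\chi_{p}$. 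The phrase ``a second application of the magic hypothesis to boundary neighborhoods'' does not supply one, and the alternative sharp-threshold route has the same defect (closed boundary edges are not pivotal for $\{\#C(\mathrm{id})\ge n\}$, since opening them only enlarges the cluster).

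The paper's proof uses the halfspace structure directly to manufacture independence. A mass-transport argument (summing $I(a,C(\mathrm{id}))\,1_{a\in C(\mathrm{id})}$ over $a$ and relabelling) plus pigeonhole over the boundedly many choices of $(g,h)$ produces a \emph{fixed} pair $g,h$ with $H(g)\cap hH(g)=\emptyset$ and $\E_{p}\bigl[\#C(\mathrm{id})\cdot 1_{\{C(\mathrm{id})\subseteq H(g)\}}\bigr]\gtrsim\chi_{p}$. By translation the analogous bound holds for $C(h)$ inside $hH(g)$. Because the two confinement events are measurable with respect to edges in disjoint regions, they are independent, yielding
\[
\E_{p}\bigl[\#C(\mathrm{id})\cdot\#C(h)\cdot 1_{\{C(\mathrm{id})\subseteq H(g),\ C(h)\subseteq hH(g)\}}\bigr]\ \gtrsim\ \chi_{p}^{2}.
\]
A local surgery along an $S$-geodesic from $\mathrm{id}$ to $h$ converts this into $\E_{p}\bigl[\#C(\mathrm{id})\cdot\#C(s)\cdot 1_{\{\mathrm{id}\not\leftrightarrow s\}}\bigr]\gtrsim\chi_{p}^{2}$ for some generator $s$, which is exactly the Russo lower bound you need. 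The disjoint translate $hH(g)$, not merely the existence of a nearby exit point, is what makes the quadratic inequality go through.
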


This is proven in \cite[Subsection 5.1]{hutchcroft2019percolation} for proper discrete halfspaces in $G$. We present Hutchcroft's proof in Appendix \ref{appendix:hutchcroft1} for completeness.

The proof of Equation \ref{eqn:hutchcroftGamma2} is more involved. Using Benjamini and Schramm's magic lemma for Euclidean spaces, Hutchcroft proved a magic lemma for real hyperbolic space $\mathbb{H}^{d}$:

\begin{prop}[hyperbolic magic lemma, {\cite[Proposition 4.1]{hutchcroft2019percolation}}]\label{prop:hutchcroftMagic}
Let $X$ be a closed convex set of $\mathbb{H}^{d}$ and let $Y$ be a coarsely dense and uniformly locally finite subset of $X$. Then for every $\epsilon >0$ there exists a constant $N(\epsilon)$ such that for every finite set $A \subset Y$ there exists a subset $A' \subseteq A$ with the following properties:

\begin{enumerate}
\item $\#A' \ge (1-\epsilon)\#A$.
\item For each $v \in A'$, there exists a pair of halfspaces $H_{1}, H_{2} \subseteq \mathbb{H}^{d}$ such that $d_{\mathbb{H}^{2}} (v, H_{1} \cup H_{2}) \ge \epsilon^{-1}$ and $\#\big(A \setminus (H_{1} \cup H_{2}) \big) \le N(\epsilon)$.
\end{enumerate}
\end{prop}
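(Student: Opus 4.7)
The plan is to derive this hyperbolic magic lemma from Benjamini and Schramm's Euclidean magic lemma by transferring the problem to the visual boundary $\partial_\infty \mathbb{H}^d \cong S^{d-1}$ in the ball model. The guiding correspondence is that a hyperbolic halfspace at distance $\geq R$ from a point $v$ subtends, on the visual sphere at $v$, a spherical cap of angular radius $O(e^{-R})$; so the requested pair of halfspaces far from $v$ whose union covers $A \setminus \{v\}$ up to a bounded error becomes, on the visual sphere, a covering of the directions of $A \setminus \{v\}$ by the complement of a small bi-angle around $v$.

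First I would reduce to the case $X = \mathbb{H}^d$: the coarse density and uniform local finiteness of $Y$ in $X$ only affect the constant $N(\epsilon)$. Fixing a basepoint $o$, associate to each $a \in A$ the ideal endpoint $\vec a \in S^{d-1}$ of the geodesic ray from $o$ through $a$. Apply Benjamini and Schramm's Euclidean magic lemma to the finite set $\{\vec a : a \in A\} \subset S^{d-1} \subset \mathbb{R}^d$ with parameter $\epsilon/2$, producing $A' \subseteq A$ with $\#A' \geq (1-\epsilon/2)\#A$ such that for each $v \in A'$ all but $N(\epsilon/2)$ of the $\vec a$ lie outside a prescribed small spherical neighborhood of $\vec v$.

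For each such $v$ I would construct $H_1$ and $H_2$ as follows. Pick a totally geodesic hyperplane $P_1$ whose ideal boundary is a small spherical cap around $\vec v$, positioned along the $o$-to-$\vec v$ geodesic \emph{beyond} $v$ by a margin $\epsilon^{-1}$; the far-side halfspace $H_1$ captures exactly the points $a$ whose direction from $o$ is near $\vec v$ and whose radial position exceeds $d(o,v) + \epsilon^{-1}$. A symmetric construction, using the geodesic through $o$ and $v$ \emph{beyond} $o$, produces $H_2$ capturing points $a$ whose direction from $v$ points back through $o$. By choosing the radii appropriately we ensure $d(v, H_i) \geq \epsilon^{-1}$, and combining the magic lemma bound on exceptional angular directions with uniform local finiteness of $Y$ to control the bounded radial neighborhood of $v$, we obtain $\#(A \setminus (H_1 \cup H_2)) \leq N(\epsilon)$.

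The main obstacle will be the Euclidean-to-hyperbolic scale mismatch: the Euclidean magic lemma separates points in the Euclidean metric on $S^{d-1}$ viewed from the basepoint $o$, whereas $d(v, H_i) \geq \epsilon^{-1}$ demands separation in the visual metric from $v$, and these visual metrics differ by a conformal factor that grows exponentially with $d(o,v)$. I would handle this by stratifying $A$ into hyperbolic annuli $A_k = \{a : d(o,a) \in [k, k+1)\}$ around $o$ and applying the magic lemma scale-by-scale within each annulus (with a rescaled parameter), then summing over $k$ using the exponential volume growth of $\mathbb{H}^d$ and the uniform local finiteness of $Y$ to keep the total exceptional count bounded by a single $N(\epsilon)$. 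The requirement of \emph{two} halfspaces in the conclusion is essential precisely to absorb the forward/backward dichotomy along each radial ray from $o$ that appears in this stratified picture.
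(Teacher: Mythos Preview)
Your strategy---reduce to the Euclidean magic lemma via the visual boundary---is precisely Hutchcroft's original route, which the paper cites but deliberately does \emph{not} follow. The paper's own proof (Proposition~4.1 in Section~4) is intrinsic to $\delta$-hyperbolicity: it collects the ``problematic'' points $\mathcal{A}_1\subseteq A$ for which no good pair of halfspaces exists, passes to a maximally $100D$-separated subset $\mathcal{A}_2$, and then runs an iterative good/bad/undecided classification on $\mathcal{A}_2$ ordered by distance from a fixed basepoint. A point $a$ is \emph{bad} if its anti-halfspace $\mathfrak{A}_D(a,x_0)$ and a second anti-halfspace $\mathfrak{A}_D(a,b)$ (with $b$ the nearest element of $\mathcal{A}_2$ in the first) both contain further elements of $\mathcal{A}_2$; otherwise $a$ is \emph{good}. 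A tree-like counting argument (each bad point spawns two distinct fresh undecided children) gives $\#\mathcal{B}\le\#\mathcal{G}$, and the good points have pairwise disjoint remainder sets $K_i$ each of size $\ge N$, yielding $\#\mathcal{A}_1\le \epsilon\,\#A$ directly. This buys generality (arbitrary Gromov hyperbolic spaces, no Bonk--Schramm embedding, no Euclidean input) and an explicit constant $N=2M/\epsilon$.

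Your sketch, by contrast, has a genuine gap at the point you flag yourself. You apply the Euclidean magic lemma to the radial projections $\vec a\in S^{d-1}$ from a \emph{fixed} basepoint $o$, but the conclusion demands halfspaces at distance $\ge\epsilon^{-1}$ from \emph{each} $v\in A'$. The visual metrics at $o$ and at $v$ differ by a factor $\asymp e^{\pm d(o,v)}$, so angular separation at $o$ says nothing about the hyperbolic distance from $v$ to a halfspace cutting off that direction. Your proposed fix---stratify into annuli $A_k$ around $o$ and run the magic lemma in each---does not repair this: within a single annulus the points can still be spread over the whole sphere, and the halfspaces you build (one ``beyond $v$'', one ``beyond $o$'') do not cover the bulk of $A$, which typically lies \emph{between} $o$ and the sphere of radius $d(o,v)$ in directions transverse to $\vec v$. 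Hutchcroft's actual argument does not project from a single basepoint; if you want to pursue this route you would need to recenter at each $v$ (or use horospherical coordinates), and the uniformity of $N(\epsilon)$ over all $v$ then becomes the real work.
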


The precise shape of halfspaces $H_{i}$ in Proposition \ref{prop:hutchcroftMagic} is again not important, but we will have to impose some ``smallness" of $H_{i}$ in terms of $\epsilon$. More precisely, we need that $\E_{p} \# \{g \in C(id) : gx_{0} \in H_{i}\} \lesssim \epsilon \chi_{p}$. Let us introduce some terminology.

\begin{dfn}\label{dfn:barrier}
Let $G$ be a group with a finite generating set $S$. For subsets $A, B, C \subseteq G$, we say that $B$ is a \emph{$d_{S}$-barrier between $A$ and $C$} if every $d_{S}$-path $(g_{0}, g_{1}, \ldots, g_{n}) \subseteq G$ starting at $A$ (i.e., $g_{0} \in A$) and ending at $C$ (i.e., $g_{n} \in C$) intersects $B$ (i.e., $\exists i [g_{i} \in B]$).
\end{dfn}

We record Hutchcroft's observation about barriers:
\begin{lem}[{\cite[Proof of Lemma 5.4]{hutchcroft2019percolation}}]\label{lem:BKBarrierChi}
Let $G$ be a group with a finite generating set $S$. Let $A,B \subseteq G$ be such that $B$ is a $d_{S}$-barrier between $id$ and $A$. Then \[
\E_{p} \# \big( C(id) \cap A \big) \le \E_{p} \# \big( C(id)\cap B \big) \cdot \chi_{p}
\]
for each $0 \le p < p_{c}$.
\end{lem}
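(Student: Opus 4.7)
The plan is to reduce the inequality to a pointwise estimate on two-point functions and then apply the BK inequality (Proposition \ref{prop:BK}). By linearity of expectation, the left-hand side expands as $\sum_{a \in A} \tau_p(id, a)$, while the right-hand side equals $\chi_p \sum_{b \in B} \tau_p(id, b)$. So it suffices to prove
\[
\sum_{a \in A} \tau_p(id, a) \le \chi_p \sum_{b \in B} \tau_p(id, b).
\]

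For each $a \in A$, I will exploit the barrier property to bound $\tau_p(id, a)$. On the event $\{id \leftrightarrow_\omega a\}$, fix any simple path from $id$ to $a$ in $\Gamma(\omega)$. Since $B$ is a $d_S$-barrier between $id$ and $A$, this path must visit $B$; let $b$ be its first vertex lying in $B$. Splitting the path at $b$ produces two edge-disjoint subpaths, serving as disjoint witnesses for $\{id \leftrightarrow b\}$ and $\{b \leftrightarrow a\}$. Hence
\[
\{id \leftrightarrow a\} \subseteq \bigcup_{b \in B} \{id \leftrightarrow b\} \circ \{b \leftrightarrow a\}.
\]
Paths being finite sub-witnesses of themselves takes care of the finite sub-witness hypothesis in Proposition \ref{prop:BK}. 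Combining the BK inequality with the union bound yields $\tau_p(id, a) \le \sum_{b \in B} \tau_p(id, b)\, \tau_p(b, a)$.

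Summing this over $a \in A$ and swapping the order of summation gives
\[
\sum_{a \in A} \tau_p(id, a) \le \sum_{b \in B} \tau_p(id, b) \sum_{a \in A} \tau_p(b, a).
\]
By left $G$-invariance of $\Prob_p$ on the Cayley graph, $\sum_{a \in A} \tau_p(b, a) \le \sum_{g \in G} \tau_p(b, g) = \chi_p$, which completes the bound. The argument is essentially mechanical once BK is available; the only mild subtlety is choosing the \emph{first} intersection with $B$ along the path, without which the two subpaths would not be guaranteed edge-disjoint. I do not foresee any further obstacle.
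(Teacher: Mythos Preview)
Your proof is correct and follows essentially the same route as the paper's: expand both expectations as sums of two-point functions, use the barrier property to get $\{id\leftrightarrow a\}\subseteq\bigcup_{b\in B}\{id\leftrightarrow b\}\circ\{b\leftrightarrow a\}$, apply BK, and bound the inner sum by $\chi_p$. The paper phrases the path argument with a shortest path rather than a simple path, but this is cosmetic. One small remark: your comment that choosing the \emph{first} $B$-vertex is needed for edge-disjointness is overcautious --- since the path is already simple, splitting at \emph{any} vertex on it yields vertex-disjoint (hence edge-disjoint) subpaths.
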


\begin{proof}
For each $a \in A$ and $b \in B$ we define $E_{b} := \{ \omega : id \leftrightarrow b\}$ and $F_{b, a} := \{ \omega : b \leftrightarrow a\}$. Then \[
\begin{aligned}
\sum_{a \in A, b \in B} \Prob_{p} (E_{b} \circ F_{b, a}) &\le 
\sum_{a \in A, b \in B} \Prob_{p} (E_{b}) \Prob_{p}(F_{b, a}) = \sum_{b \in B} \Prob_{p}(E_{b}) \cdot \sum_{a \in A} \Prob_{p} (F_{b, a}) \\
&\le \sum_{b\in B} \Prob_{p}(E_{b}) \cdot \E_{p} \#C(b) = \sum_{b \in B} \Prob_{p}(E_{b}) \cdot \chi_{p} = \E_{p} \# \big( C(id)\cap B \big) \cdot \chi_{p}.
\end{aligned}
\]
Meanwhile, for each $a \in A$ we claim that \[
\cup_{b \in B} (E_{b} \circ F_{b, a}) = \{\omega : id \leftrightarrow a\}.
\]
The inclusion ``$\subseteq$" is clear. Now for ``$\supseteq$", let $\omega \in \Omega$ be a configuration such that $id \leftrightarrow a$. Take a shortest path $P$ in $\Gamma(\omega)$ connecting $id$ and $a$, which does not revisit a vertex twice. Since $B$ is a $d_{S}$-barrier between $id$ and $A$, $P$ visits a vertex $b \in B$. Then the subpaths of $P$ between $id$ and $b$, and between $b$ and $a$, are disjoint (finite) witnesses for $E_{b}$ and $F_{b, a}$, respectively. Hence, $\omega \in E_{b} \circ F_{b, a}$ as desired.

In conclusion, we have \[\begin{aligned}
\E_{p} \#\big(C(id) \cap A \big) &= \sum_{a \in A} \Prob_{p} \{ id \leftrightarrow a\} \\
&\le \sum_{a \in A,  b\in B} \Prob_{p} (E_{b} \circ F_{b, a}) \le \E_{p} \# \big( C(id)\cap B \big) \cdot \chi_{p}.\qedhere
\end{aligned}
\]
\end{proof}

Having Lemma \ref{lem:BKBarrierChi} in hand, it is desirable to construct a barrier $B$ between the origin and a halfspace whose ``capacity" $\E_{p} \#(C(id) \cap B)$ is uniformly small for all $0 \le p < p_{c}$. For example, $B$ should not be the entire $G$; the susceptibility $\chi_{p} = \E_{p}\#C(id)$ tends to infinity as $p \nearrow p_{c}$. Likewise, $B$ should not contain an arbitrarily large $d_{S}$-metric ball. A geometric intuition is that if $B$ is a \emph{codimension 1} subset of the ambient set, then the portion of the cluster in $B$ is finite because the cluster tends to escape $B$ before growing large in it. The following notion captures this phenomenon.

\begin{dfn}\label{dfn:branching}
Let $G$ be a group with a finite generating set $S$. We say that a set $B \subseteq G$ is \emph{$r$-roughly branching} if there exists a subset $B' \subseteq G$ such that: 
\begin{enumerate}
\item $B$ is contained in the $r$-neighborhood of $B'$ in the word metric $d_{S}$.
\item For every $k \ge 1$, if $g_{1}, \ldots, g_{k}$ and $h_{1}, \ldots, h_{k}$ are distinct sequences of elements of $B'$, then $g_{1} \cdots g_{k} \neq h_{1} \cdots h_{k}$.
\end{enumerate}
\end{dfn}

\begin{lem}[{\cite[Lemma 5.5]{hutchcroft2019percolation}}]\label{lem:branchingCapacity}
Let $G$ be a group with a finite generating set $S$. Then for each $r > 0$ there exists $M$ such that for every $r$-roughly branching subset $B \subseteq G$ and for every $0 \le p \le p_{c}$ we have $\E_{p} \#\big(C(id) \cap B\big) \le M$.
\end{lem}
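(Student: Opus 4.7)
The goal is a bound $\E_{p}\#(C(id) \cap B) \le M$ with $M = M(r, G, S)$, uniform over $p \in [0, p_{c}]$ and over $r$-roughly branching $B \subseteq G$. I would proceed in three stages.

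First I reduce to the case $B = B'$ (the branching witness of Definition \ref{dfn:branching}). Since $B$ lies in the $r$-neighborhood of $B'$, a Harris--FKG comparison (Proposition \ref{prop:FKG}) yields $\tau_{p}(id, g) \le p^{-r} \tau_{p}(id, b')$ whenever $d_{S}(g, b') \le r$: the event $\{b' \leftrightarrow g\}$ contains the increasing event ``every edge of a fixed $d_{S}$-geodesic from $b'$ to $g$ is open'', which has probability at least $p^{r}$. Since each $b' \in B'$ is the nearest witness for at most $|B_{S}(id, r)|$ elements of $B$, we obtain
\[
\E_{p}\#(C(id) \cap B) \;\le\; p^{-r}\,|B_{S}(id, r)|^{2} \sum_{b' \in B'} \tau_{p}(id, b').
\]
For $p \in [0, p_{c}/2]$ the trivial bound $\E_{p}\#(C(id) \cap B) \le \chi_{p_{c}/2} < +\infty$ suffices, while on $[p_{c}/2, p_{c}]$ the factor $p^{-r}$ is bounded. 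It therefore remains to bound $T_{p}(B') := \sum_{b' \in B'} \tau_{p}(id, b')$ uniformly over branching $B'$ and $p \in [p_{c}/2, p_{c}]$.

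Second, I set up a first-passage decomposition at $B'$. For $b' \in B' \cap C(id)$, let $b^{(1)}$ be the first element of $B'$ visited by a deterministic shortest open path from $id$ to $b'$. The event $\{id \leftrightarrow b'\}$ decomposes as a disjoint union over $b^{(1)}$ of events of the form $\{id \leftrightarrow b^{(1)} \text{ avoiding } B' \setminus \{b^{(1)}\}\} \circ \{b^{(1)} \leftrightarrow b'\}$, and the BK inequality (Proposition \ref{prop:BK}) combined with summation over $b'$ gives
\[
T_{p}(B') \;\le\; \sum_{b^{(1)} \in B'} \tau_{p}^{\ast}(id, b^{(1)}) \cdot T_{p}\big((b^{(1)})^{-1} B'\big),
\]
where $\tau_{p}^{\ast}(id, b^{(1)})$ denotes the probability of connection via a path avoiding $B' \setminus \{b^{(1)}\}$. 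Iterating this unfolds $T_{p}(B')$ as a geometric-type series whose $k$-th term is a product of two-point functions along a sequence of visits in $B'$.

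The crucial third step is to close this series. The branching hypothesis on $B'$ provides a disjoint-cone property: injectivity of $(B')^{2} \to G$ implies $b_{1} B' \cap b_{2} B' = \emptyset$ for distinct $b_{1}, b_{2} \in B'$, and more generally no two distinct sequences in $(B')^{k}$ share an endpoint. I would leverage this to bound each per-generation sum uniformly and to extract a contraction constant $q < 1$ for $p \le p_{c}$, whereupon a geometric series gives the desired uniform bound. The main obstacle is that branching applies to $B'$ itself but is fragile under left-translation: the translate $(b^{(1)})^{-1} B'$ need not be branching, so the recursion does not iterate trivially. My proposed remedies are (i) to strengthen the inductive hypothesis to a translation-stable version involving the semigroup closure of $B'$, using the disjoint-cone property to control overlaps, or (ii) to defer to the $L^{2}$-boundedness framework of \cite[Section 2]{hutchcroft2019percolation}, which extracts uniform subcritical estimates of this shape from an operator-theoretic criterion at $p_{c}$. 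In both routes, the branching condition functions as a ``free semigroup'' structure converting the probabilistic expansion into a convergent geometric series.
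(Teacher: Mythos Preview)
Your reduction in the first stage is essentially the right move, and the paper does the same: pass from $B$ to the branching witness $B'$ at the cost of a multiplicative constant depending on $r$ and $S$, and treat small $p$ trivially. (One factor of $|B_{S}(id,r)|$ suffices rather than two, but this is cosmetic.)

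The genuine gap is in the third stage. You correctly identify that your recursion does not iterate: the translate $(b^{(1)})^{-1}B'$ need not be branching, so the first-passage/BK scheme produces an inequality you cannot feed back into itself. Remedy (i) is not a plan but a hope, and remedy (ii) is circular --- the $L^{2}$-boundedness criterion in \cite[Section~2]{hutchcroft2019percolation} is the \emph{conclusion} this lemma is working toward, not an input you may invoke.

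The missing idea is to run FKG in the opposite direction. Rather than upper-bounding $\tau_{p}(id,b')$ by decomposing paths, use Proposition~\ref{prop:FKG} to \emph{lower}-bound products: $\tau_{p}(g_{1}\cdots g_{k}) \ge \tau_{p}(g_{1})\cdots\tau_{p}(g_{k})$ for any $g_{1},\dots,g_{k}\in B'$. The branching hypothesis is exactly the statement that the map $(g_{1},\dots,g_{k})\mapsto g_{1}\cdots g_{k}$ is injective on $(B')^{k}$, so summing over $(B')^{k}$ gives
\[
\chi_{p} \;\ge\; \sum_{(g_{1},\dots,g_{k})\in (B')^{k}} \tau_{p}(g_{1}\cdots g_{k}) \;\ge\; \Big(\sum_{g\in B'}\tau_{p}(g)\Big)^{k}.
\]
For $p<p_{c}$ the left side is finite while $k$ is arbitrary, forcing $\sum_{g\in B'}\tau_{p}(g)\le 1$. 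Lower semicontinuity in $p$ extends this to $p=p_{c}$. No recursion, no translation of $B'$, no contraction constant to extract --- the branching condition is used once, globally, as injectivity of the $k$-fold product.
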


We sketch the proof for a $0$-roughly branching set $B$; see \cite{hutchcroft2019percolation} for a full proof. By the Harris-FKG inequality, $\tau_{p}(gh) \ge \tau_{p}(g) \cdot \tau_{p}(h)$ for each $g, h \in G$. Hence, for $g_{1}, \ldots, g_{k} \in B$, we have $\tau_{p} (g_{1} \cdots g_{k}) \ge \tau_{p}(g_{1}) \cdots \tau_{p}(g_{k})$. Meanwhile, the $k$-th convolution map from $B^{k}$ to $b$:  $(g_{1}, \ldots, g_{k}) \mapsto g_{1} \cdots g_{k}$  is injective by the assumption. This implies \[
\chi_{p} \ge \sum_{g \in B^{k}} \tau_{p}(g) = \sum_{g_{1}, \ldots, g_{k} \in B} \tau_{p}(g_{1} \cdots g_{k}) \ge \prod_{i=1}^{k} \left( \sum_{g_{i} \in B} \tau_{p}(g_{i}) \right).
\]
For a given $0 \le p <p_{c}$, this is true regardless of $k$. Note that $\chi_{p} < +\infty$. This forces that $\sum_{g \in B} \tau_{p}(g) \le 1$ for each $0 \le p < p_{c}$. Since $p \mapsto \sum_{g \in B} \tau_{p}(g)$ is a lower semicontinous function on $[0, 1]$ (cf. \cite[Lemma 5]{hutchcroft2016critical}), the same bound holds for $p = p_{c}$ as well.

We finally state the ``smallness" of halfspaces in $\mathbb{H}^{d}$ in terms of nested barriers.

\begin{prop}[{\cite[Lemma 5.6]{hutchcroft2019percolation}}]\label{prop:hutchcroftHdNested}
Let $X \ni x_{0}$ be a closed convex set of $\mathbb{H}^{d}$ and suppose that $G \le \Isom(X)$ properly and coboundedly embeds into $X$ by the orbit map. Let $S$ be a finite generating set of $G$. Then there exist $r, R>0$ such that for each halfspace $H \subseteq \mathbb{H}^{d}$, there exists an $r$-roughly branching subset \[
B = B_{1} \sqcup B_{2} \sqcup \ldots \sqcup B_{\lfloor d(x_{0}, H) / R\rfloor} \subseteq G
\]
such that $B_{i}$ is a $d_{S}$-barrier between $id$ and $\{g : gx_{0} \in H\}$ for each $i=1, \ldots, \lfloor d(x_{0}, H) / R\rfloor$.
\end{prop}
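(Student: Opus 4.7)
The plan is to construct the nested barriers as slabs around hyperplanes equidistant to $\partial H$, and to address the roughly branching condition by combining a choice of orbit representatives with a free-semigroup coding inherited from a Schottky subgroup of $G$.

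\textbf{Step 1: Barriers as level slabs.} Let $D := d(x_0, H)$, and let $\beta : \mathbb{H}^d \to \mathbb{R}$ be the signed distance to $\partial H$, positive on the $x_0$-side. Then $\beta$ is $1$-Lipschitz and each level set $\{\beta = c\}$ is a totally geodesic hyperplane equidistant to $\partial H$. Let $K := \max_{s \in S} d(x_0, sx_0) < +\infty$. Fix $R := 10K$ and for $i = 1, \ldots, N := \lfloor D/R \rfloor$ set
\[
B_i := \bigl\{g \in G : |\beta(gx_0) - (D - iR)| \le K\bigr\}.
\]
Since the intervals $[D - iR - K, D - iR + K]$ are pairwise disjoint, the $B_i$ are pairwise disjoint.

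\textbf{Step 2: Barrier property.} For any Cayley graph path $id = g_0, g_1, \ldots, g_n$ with $g_n x_0 \in H$, the bound $d(g_k x_0, g_{k+1} x_0) \le K$ combined with the $1$-Lipschitz property of $\beta$ gives $|\beta(g_{k+1}x_0) - \beta(g_k x_0)| \le K$. Since $\beta(g_0 x_0) = D$ and $\beta(g_n x_0) \le 0$, the sequence of values must pass within $K$ of each $D - iR$, placing some intermediate $g_k$ in $B_i$.

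\textbf{Step 3: Roughly branching --- the main obstacle.} The nontrivial task is to find $B' \subseteq G$ with $B \subseteq N_r(B')$ and injective multiplication $(B')^k \hookrightarrow G$ for every $k$, with $r$ independent of $H$. I would appeal to the word hyperbolicity of $G$ (which follows from its proper cocompact action on the CAT($-1$) space $X$) to extract a Schottky subgroup $F = \langle \phi_1, \phi_2 \rangle \le G$ with loxodromic generators whose axes are transverse and have fellow-traveling quality. Within $F$, one can construct a uniquely-decodable coding: for instance, take $F$-words of the form $\phi_1^{a} \phi_2$ with $a \ge 1$, whose products yield distinct reduced words in $F$. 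Setting $B'_i$ equal to such codewords of $F$-length approximately $iR/L$, pre- or post-multiplied by a fixed bounded set $\mathcal{F}_0 \subseteq G$ of coset representatives for the $F$-orbit in the $G$-orbit, the concatenation map $(B')^k \to G$ is injective up to the bounded $\mathcal{F}_0$-layer, which contributes only a bounded multiplicative constant absorbable into $r$.

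The principal difficulty lies in reconciling the geometric spread of $B_i$ (a neighborhood of an exponentially large hyperplane $\{\beta = D - iR\} \cap X$) with the comparatively sparse Schottky orbit $F \cdot x_0$ (concentrated near the convex hull of two axes). To bridge this, one uses cocompactness of the $G$-action to decompose any $g \in B_i$ as a Schottky codeword times a bounded translation, and then verifies that this decomposition remains injective under concatenation by appealing to the Morse property of Schottky axes in $\mathbb{H}^d$: distinct codewords produce orbit paths that diverge exponentially and thus cannot recombine into equal products. This parallels in spirit the bottleneck-property argument for Bestvina–Bromberg–Fujiwara quasi-trees hinted at in the paper's introduction, and it is the step where negative curvature of $\mathbb{H}^d$ (versus, say, $\mathbb{R}^d$) is essential.
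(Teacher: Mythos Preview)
Your Steps 1--2 are fine: level slabs of the Busemann/signed-distance function are indeed $d_S$-barriers, for exactly the intermediate-value reason you give.

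Step 3 has a genuine gap. Each slab $B_i = \{g : |\beta(gx_0) - (D-iR)| \le K\}$ is (the $G$-preimage of) a neighborhood of an entire hyperplane in $\mathbb{H}^d$ intersected with $X$, hence typically \emph{unbounded}. Schottky codewords of any fixed length form a bounded set, so ``Schottky codeword times a bounded translation'' cannot cover $B_i$ within bounded $d_S$-distance. Your sentence ``use cocompactness \ldots\ to decompose any $g \in B_i$ as a Schottky codeword times a bounded translation'' is precisely where the argument breaks: cocompactness gives $G \cdot x_0$ coarsely equal to $X$, not $F \cdot x_0$ coarsely equal to a hyperplane. Worse, the naive slab $B_i$ can contain an entire cyclic subgroup $\{f^n\}$ (take $f$ with axis parallel to $\partial H$), and no bounded perturbation of such a set has injective $k$-fold multiplication.

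The paper does not prove this proposition directly (it is cited from Hutchcroft), but it proves the generalization Proposition~\ref{prop:barrierContracting}, and the strategy there is instructive. The barriers are \emph{not} the naive slabs: one imposes the extra condition that $g \in B_i$ does not project far onto any translate of a fixed loxodromic axis (unless that translate already separates $x_0$ from $H$). This restriction is exactly what kills the $\{f^n\}$ obstruction above. The price is that the barrier property is no longer a one-line intermediate-value argument; it requires the chain-of-axes escape argument of Claim~\ref{claim:pathBlock}. The payoff is that roughly branching becomes accessible: one post-multiplies each element of a separated net in $B$ by a fixed long power of $f$ (suitably twisted), and the no-long-projection condition guarantees that distinct $k$-fold products stay distinct (Claim~\ref{claim:chainFInj}). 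The key idea you are missing is that one must \emph{trade} some of the ease of the barrier property for tractability of the branching property, by cutting the slab down to its ``non-fellow-traveling'' part.
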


In the above, the capacity of $B$ is uniformly bounded in $p$ and $H$; hence, there exists $B_{i}$ such that $\E_{p}(\#C(id) \cap B_{i}) \lesssim 1/d(x_{0}, H)$. By Lemma \ref{lem:BKBarrierChi} we conclude

\begin{cor}[{\cite[Lemma 5.4]{hutchcroft2019percolation}}]\label{cor:hutchcroftHdNested}
Let $X \subseteq \mathbb{H}^{d}$ and $G \le \Isom(X)$ be as in Proposition \ref{prop:hutchcroftHdNested}. Then there exists $K>0$ such that for each halfspace $H \subseteq \mathbb{H}^{d}$ we have \[
\E_{p} \big(\#C(id) \cap \{ g \in G : gx_{0} \in H\}\big) \le \frac{K}{d(x_{0}, H)} \chi_{p}
\]
for each $0 \le p < p_{c}$.
\end{cor}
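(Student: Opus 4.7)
The plan is to combine the three tools just assembled, namely Proposition~\ref{prop:hutchcroftHdNested}, Lemma~\ref{lem:branchingCapacity} and Lemma~\ref{lem:BKBarrierChi}, via a clean pigeonhole. Fix a halfspace $H \subseteq \mathbb{H}^{d}$ and let $A(H) := \{g \in G : gx_{0} \in H\}$. I may assume $d(x_{0}, H) \ge R$, since otherwise the inequality is vacuous for any $K$ with $K \ge R$: the left-hand side is at most $\chi_{p}$, and $\chi_{p} \le (K/d(x_{0},H))\chi_{p}$ when $d(x_{0},H) \le K$.

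Let $r, R > 0$ be the constants from Proposition~\ref{prop:hutchcroftHdNested}, set $n := \lfloor d(x_{0}, H)/R\rfloor \ge 1$, and let $B_{1}, \ldots, B_{n}$ be the pairwise disjoint subsets of $G$ produced by that proposition. Each $B_{i}$ is a $d_{S}$-barrier between $id$ and $A(H)$, and the union $B = B_{1} \sqcup \cdots \sqcup B_{n}$ is $r$-roughly branching. Let $M = M(r)$ be the constant from Lemma~\ref{lem:branchingCapacity}; then
\[
\sum_{i=1}^{n} \E_{p} \#\bigl(C(id) \cap B_{i}\bigr) \;=\; \E_{p} \#\bigl(C(id) \cap B\bigr) \;\le\; M
\]
for every $0 \le p \le p_{c}$, since the $B_{i}$ are disjoint.

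By pigeonhole there exists $i^{\ast} \in \{1, \ldots, n\}$ with $\E_{p} \#\bigl(C(id) \cap B_{i^{\ast}}\bigr) \le M/n$. Since $B_{i^{\ast}}$ is a $d_{S}$-barrier between $id$ and $A(H)$, Lemma~\ref{lem:BKBarrierChi} applied with $A = A(H)$ and $B = B_{i^{\ast}}$ yields
\[
\E_{p} \#\bigl(C(id) \cap A(H)\bigr) \;\le\; \E_{p} \#\bigl(C(id) \cap B_{i^{\ast}}\bigr) \cdot \chi_{p} \;\le\; \frac{M}{n}\, \chi_{p}
\]
for every $0 \le p < p_{c}$. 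Finally, $n = \lfloor d(x_{0}, H)/R \rfloor \ge d(x_{0}, H)/(2R)$ whenever $d(x_{0},H) \ge R$, so $M/n \le 2MR/d(x_{0}, H)$. Choosing $K := \max\{2MR, R\}$ gives the claimed bound uniformly in $H$ and $p$.

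There is essentially no obstacle here: the nontrivial content has already been packaged into Proposition~\ref{prop:hutchcroftHdNested} (the geometric construction of many disjoint barriers of uniform branching type between $id$ and $H$) and Lemma~\ref{lem:branchingCapacity} (the Harris--FKG-based capacity estimate). The only thing worth double-checking is that the $p = p_{c}$ endpoint is legitimate in Lemma~\ref{lem:branchingCapacity}, which is why the capacity bound applies uniformly throughout $[0, p_{c})$, and that the pigeonhole is valid even though the splitting index $i^{\ast}$ may depend on $p$ and $H$ — this is fine, since the final estimate only uses the existence of such $i^{\ast}$.
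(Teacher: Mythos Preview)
Your proof is correct and follows exactly the approach sketched in the paper: apply Proposition~\ref{prop:hutchcroftHdNested} to obtain the disjoint barriers $B_1,\ldots,B_n$ whose union is $r$-roughly branching, use Lemma~\ref{lem:branchingCapacity} to bound the total capacity by $M$, pigeonhole to find a barrier of capacity $\le M/n$, and conclude via Lemma~\ref{lem:BKBarrierChi}. Your handling of the edge case $d(x_0,H) < R$ and the remark that $i^\ast$ may depend on $p$ are clean additions that the paper leaves implicit.
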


Proposition \ref{prop:hutchcroftMagic} and Corollary \ref{cor:hutchcroftHdNested} describe all we need for halfspaces. Let us now state an abstract version:

\begin{thm}[{\cite[Proof of Proposition 5.2]{hutchcroft2019percolation}}]\label{thm:hutchcroftIotaGen}
Let $\Gamma = Cay(G, S)$ be the Cayley graph of a finitely generated group $G$. Suppose that there exists $r>0$, and for each $D, E > 0$ there exist\[
S_{D} = \sqcup_{i=1}^{\infty} S_{D; i} \subseteq G, \quad \mathcal{G}_{D, E} \subseteq G
\]
and a collection $\mathscr{H}_{D}$  of subsets  of $G$ such that \begin{enumerate}
\item $S_{D}$ is $r'$-roughly branching for some $r' = r'_{D}$,
\item for each $\mathcal{H} \in \mathscr{H}_{D}$ there exists an $r$-roughly branching subset $B = B_{1} \sqcup \ldots \sqcup B_{D} \subseteq G$ such that $B_{i}$ is a $d_{S}$-barrier between $id$ and $\mathcal{H}$ for $i=1, \ldots, D$;
\item for each $D, E>0$, $\sqcup_{i \ge E} S_{D; i}$ is a $d_{S}$-barrier between $id$ and $\mathcal{G}_{D, E}$.
\end{enumerate}

Suppose that for each $\epsilon>0$ and $D, E > 0$, there exists a constant $N = N(\epsilon, D, E)$ such that for every finite set $A \subseteq G$ there exists $A' \subseteq A$ satisfying: \begin{enumerate}
\item $\#A' \ge (1-\epsilon)\#A$;
\item For each $a \in A'$ there exist $\mathcal{H}_{1}, \mathcal{H}_{2} \in \mathscr{H}_{D}$ such that \[
\# \big( A \setminus a \cdot ( \mathcal{H}_{1} \cup \mathcal{H}_{2} \cup \mathcal{G}_{D, E}) \big) \le N.
\]
\end{enumerate}
Then Equation \ref{eqn:hutchcroftGamma2} holds for $\Gamma$.
\end{thm}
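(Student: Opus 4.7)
The plan is, after fixing $\epsilon > 0$ and calibrating parameters $D, E$ appropriately, to bound for each $a$ in the magic set $A' \subseteq A$ the sum $\sum_{h \in A} \tau_p(a, h)$ by splitting $a^{-1} A$ into its intersection with $\mathcal{H}_1 \cup \mathcal{H}_2 \cup \mathcal{G}_{D,E}$ (contributing $\lesssim \epsilon \chi_p$ uniformly in $p$) plus at most $N$ exceptional vertices (contributing $\le N$). Summing over $a \in A$ and dividing by $\chi_p \#A$ produces a bound of the form $2\epsilon + N/\chi_p$, which tends to $2\epsilon$ as $p \nearrow p_c$ provided $N$ is chosen independently of $p$; letting $\epsilon \to 0$ then yields Equation \ref{eqn:hutchcroftGamma2}.

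Step one is to calibrate $D$. Lemma \ref{lem:branchingCapacity} produces a constant $M = M(r)$ such that every $r$-roughly branching subset has capacity $\le M$. Choose $D$ with $M/D \le \epsilon/3$. For each $\mathcal{H} \in \mathscr{H}_D$, the union $B_1 \sqcup \cdots \sqcup B_D$ of the barriers between $id$ and $\mathcal{H}$ is $r$-roughly branching, so some $B_i$ satisfies $\E_p \#(C(id) \cap B_i) \le M/D$; Lemma \ref{lem:BKBarrierChi} then yields $\E_p \#(C(id) \cap \mathcal{H}) \le (\epsilon/3)\chi_p$ for every $p \le p_c$ and every $\mathcal{H} \in \mathscr{H}_D$.

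Step two is to calibrate $E$ uniformly in $p$. Monotonicity of $\tau_p$ in $p$ (Fact \ref{fact:increasing}) gives $\tau_p(g) \le \tau_{p_c}(g)$ for all $p \le p_c$, and since $S_D = \sqcup_i S_{D;i}$ is $r'_D$-roughly branching,
\[
\sum_{i=1}^{\infty} \sum_{g \in S_{D;i}} \tau_{p_c}(g) \;=\; \E_{p_c}\,\#\bigl(C(id) \cap S_D\bigr) \;\le\; M(r'_D) < \infty.
\]
Hence there is a $p$-independent $E$ with $\sum_{i \ge E}\sum_{g \in S_{D;i}} \tau_{p_c}(g) \le \epsilon/3$, and by monotonicity the same inequality holds with $\tau_p$ in place of $\tau_{p_c}$ for all $p \le p_c$. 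Applying Lemma \ref{lem:BKBarrierChi} to the barrier $\sqcup_{i \ge E} S_{D;i}$ yields $\E_p \#(C(id) \cap \mathcal{G}_{D,E}) \le (\epsilon/3) \chi_p$ uniformly in $p \le p_c$.

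With $D, E$ thus fixed, the hypothesis supplies a finite $N = N(\epsilon, D, E)$ and, for each finite $A$, a subset $A' \subseteq A$ with $\#A' \ge (1-\epsilon)\#A$ together with $\mathcal{H}_1, \mathcal{H}_2 \in \mathscr{H}_D$ for each $a \in A'$. Translation invariance $\tau_p(a,h) = \tau_p(a^{-1}h)$ combined with the three uniform bounds gives, for $a \in A'$,
\[
\sum_{h \in A} \tau_p(a, h) \;\le\; \sum_{h' \in \mathcal{H}_1 \cup \mathcal{H}_2 \cup \mathcal{G}_{D, E}} \tau_p(h') \;+\; N \;\le\; \epsilon \chi_p + N,
\]
while the trivial bound $\sum_{h \in A} \tau_p(a, h) \le \chi_p$ handles $a \in A \setminus A'$. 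Summing and dividing by $\chi_p \#A$ yields $2\epsilon + N/\chi_p$; since $\chi_p \to \infty$ as $p \nearrow p_c$ and $N$ is $p$-independent, the $\limsup$ is $\le 2\epsilon$, and arbitrariness of $\epsilon$ closes the argument. The only genuine subtlety is the uniformity of step two: without the monotonicity $\tau_p \le \tau_{p_c}$, one would be forced to let $E$ and hence $N$ depend on $p$, losing control over $N/\chi_p$, so exploiting this monotonicity to convert a pointwise tail bound into a uniform one is the key technical point.
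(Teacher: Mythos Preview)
Your argument is correct and follows essentially the same route as the paper's proof: calibrate $D$ so that each $\mathcal{H}\in\mathscr{H}_D$ has capacity $\lesssim\epsilon\chi_p$ via the pigeonhole on the $r$-roughly branching barrier stack, then calibrate $E$ so that the tail $\sqcup_{i\ge E}S_{D;i}$ has $\tau_{p_c}$-mass $\lesssim\epsilon$ and hence (via Lemma~\ref{lem:BKBarrierChi} and monotonicity) $\mathcal{G}_{D,E}$ has capacity $\lesssim\epsilon\chi_p$ uniformly in $p<p_c$, and finally split $\sum_{g,h\in A}\tau_p(g,h)$ exactly as the paper does. Your explicit emphasis on the monotonicity $\tau_p\le\tau_{p_c}$ to make $E$ (and hence $N$) independent of $p$ is a point the paper leaves implicit, and your observation that this is the only real subtlety is accurate.
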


In Hutchcroft's original formulation for Gromov hyperbolic graphs, the set $S_{D}$ and $\mathcal{G}_{D, E}$ are not needed. It is not hard to adapt Hutchcroft's proof to the current version; we include it for completeness.

\begin{proof}
By Lemma \ref{lem:branchingCapacity}, for each $D$ we have $\sum_{g \in S_{D}} \tau_{p_{c}}(g) < +\infty$. Furthermore, note that $\sqcup_{i=1}^{E} S_{D;i}$ exhausts $S_{D}$ as $E$ increases. Hence, for each $D>0$ and $\eta>0$ there exists $E=E(D, \epsilon)>0$ such that $\sum_{g \in \sqcup_{i \ge E} S_{D;i}} \tau_{p_{c}}(g) \le \epsilon$. Then by Lemma \ref{lem:BKBarrierChi}, we have $\sum_{g \in \mathcal{G}_{D, E}} \tau_{p}(g) \le \epsilon \cdot \chi_{p}$ for each $0 < p< p_{c}$.

Now, let $M = M(r)$ for $r$ as in Lemma \ref{lem:branchingCapacity}. Then by Assumption (2) and Lemma \ref{lem:BKBarrierChi}. we have $\sum_{g \in \mathcal{H}} \tau_{p}(g) \le M \chi_{p}/D$ for each $0<p<p_{c}$ and  for each $\mathcal{H} \in \mathscr{H}_{D}$.

Let us now fix $\epsilon>0$. We take $D > M/\epsilon$,  and then $E=E(D, \epsilon)$. Now let $N = N(\epsilon, D, E)$. Lastly, recall that $\lim_{p \nearrow p_{c}} \chi_{p} = +\infty$; there exists $p_{0}$ such that $\chi_{p} \ge N/\epsilon$ for $p_{0} < p < p_{c}$.

We now claim that \[
\frac{\sum_{g, h \in A} \tau_{p}(g, h)}{\#A} \le 5\epsilon \chi_{p} \quad (\forall \textrm{ finite $A \subseteq G$}, \forall p_{0} < p < p_{c}).
\]
To observe this, let $A \subseteq G$ be a finite set and let $p_{0} < p < p_{c}$. Let $A' \subseteq A$ be as in the proposition for $\epsilon, D, E$. Then we have \[\begin{aligned}
\sum_{g, h \in A} \tau_{p}(g, h) &\le \sum_{g \in A \setminus A', h \in G} \tau_{p}(g, h) + \sum_{g \in A', h \in  A \setminus g \cdot ( \mathcal{H}_{1}(g) \cup \mathcal{H}_{2}(g) \cup \mathcal{G}_{D, E})} \tau_{p}(g, h) \\
&+ \sum_{g \in A', k \in  \mathcal{H}_{1}(g) \cup \mathcal{H}_{2}(g) \cup \mathcal{G}_{D, E}} \tau_{p}(id, k) \\
&\le \epsilon (\#A)\cdot \chi_{p} +  (\#A')\cdot N + (\#A') \cdot \left( \frac{M}{D} \chi_{p} + \frac{M}{D}\chi_{p} + \epsilon \chi_{p}\right) \le 5\epsilon (\#A)\chi_{p}.
\end{aligned}
\]Since $\epsilon$ is arbitrary, we conclude that  Equation \ref{eqn:hutchcroftGamma2} holds.
\end{proof}

Combining the aforementioned facts about word hyperbolic groups and convex subsets of $\mathbb{H}^{d}$, together with the Bonk-Schramm embedding theorem, Hutchcroft showed that word hyperbolic groups satisfy Equation \ref{eqn:hutchcroftGamma1} and \ref{eqn:hutchcroftGamma2} in Theorem \ref{thm:hutchcroft}.

\subsection{Intuition and examples} \label{subsection:plans}

We now explain our strategy in detail.

Our primary example will be the free group $F_{2} \simeq \langle a, b\rangle$ with the generating set $S = \{a, b\}$. Its Cayley graph $\Gamma = Cay(F_{2}, S)$ is a regular 6-valent tree whose each edge is labeled with $a$ or $b$. Now, if we quotient out all the edges labeled with $a$, then the resulting graph $\Gamma'$ becomes a regular $\infty$-valent tree. The identity vertex $id$ is now connected with countably infinitely many vertices $\{a^{i} b^{\pm 1}: i \in \Z\}$. One can instead consider the Cayley graph with respect to an infinite generating set $S' = S \cup \{ a^{i} : i \in \Z\}$; this Cayley graph and $\Gamma'$ are quasi-isometric. 

At first it seems confusing to consider this $\infty$-valent tree instead of the original 6-valent tree.
But this construction is natural for acylindrically hyperbolic groups. Acylindricallly hyperbolic groups may have non-Gromov hyperbolic Cayley graphs, but they act on a Gromov hyperbolic space that comes from this construction.

 Let us first discuss the strategy for Equation \ref{eqn:hutchcroftGamma1}.

The classical halfspaces in $\mathbb{H}^{d}$ or Gromov hyperbolic spaces work for Proposition \ref{prop:hutchcroft2}. To be precise, given a  $\delta$-hyperbolic space $X \ni x_{0}$ and $x, y \in X$, we define \[
\mathcal{H}_{half}(x, y) := \left\{ g \in G : d_{X}(gx_{0}, x) \le d_{X} (gx_{0}, y) \right\}.
\]
Then for every non-elementary isometry group $G \le \Isom(X)$, there exist independent loxodromics $\{f_{1}, f_{2}, f_{3}\} \subseteq G$ and $R>0$ such that, for every pair of elements $u \in G$ such that $d_{X}(x_{0}, ux_{0})  \ge R$, there exists $i \in \{1, 2, 3\}$ such that $\mathcal{H}_{half}(x_{0}, ux_{0})$ and $uf_{i}u^{-1} \mathcal{H}_{half}(x_{0}, ux_{0})$ are disjoint. Hence, it is straightforward to generalize Proposition \ref{prop:hutchcroft2} to non-elementary isometry groups of Gromov hyperbolic spaces.

Meanwhile, it is harder to generalize Proposition \ref{prop:hutchcroft1} in terms of $\mathcal{H}_{half}(x, y)$ for non-elementary actions on a Gromov hyperbolic space. To illustrate this, consider $G =F_{2} \times \Z$, a group acting on the Cayley graph $\Gamma = Cay(F_{2}, S)$ by left multiplication of the first factor: $(a, b) \cdot x := ax$. Let $x_{0} = id \in \Gamma$. Now given $D>0$, consider a set $A \subseteq G$ whose $>99\%$ is concentrated on $id \in \Gamma$ and the remaining $<1\%$ covers $\{ x \in \Gamma : d_{S}(id, x) < D\}$. In other words, we consider \[
A = \{ (id, k) : 0 \le k \le 5^{D+10}\} \cup \{(g, 0) : g \in F_{2}, \|g\|_{S} \le D\}.
\]
Let us give a word metric on $G$, say, by the generating set $S' := \{(a, 0), (b, 0), (0, 1)\}$. Then for each $g =(id, k)$ for some $k$, there is no $h \in G$ such that $d_{S'}(g, h)\le D$ and $\mathcal{H}_{D}( g x_{0}, hx_{0})$ contains $A$. So $>99\%$ of elements of $A$ cannot satisfy the condition in Proposition \ref{prop:hutchcroft1}.

Roughly speaking, this is because of the distortion between the geometry of $G$ and $\Gamma$. It is possible to charge a single vertex $u$ in $\Gamma$ with arbitrarily many elements of $G$. For each $vx_{0} \in \Gamma$ with $d_{S}(vx_{0}, ux_{0}) \le D$, it is also easy to make $\mathcal{H}_{half}(ux_{0}, vx_{0})$ fail the condition in Proposition \ref{prop:hutchcroft1}; we just charge $vx_{0}$ with one element of $G$. This does not cost too much, as the number of $D$-neighbors of $ux_{0}$ in $\Gamma$ is bounded. 

This pathology is remedied when we impose the so-called \emph{weak proper discontinuity (WPD)}. Let us go back to the example $F_{3}$ acting on $\Gamma' \ni x_{0}=id$. It is possible that a single vertex $id \in \Gamma'$ can be charged by many elements of $G$, namely, $\{a^{i} : i \in \Z\}$. But for these elements, $\{\Gamma' \setminus \mathcal{H}_{half}(a^{i} x_{0}, a^{i} b^{D} x_{0}): i \in \Z\}$ are all \emph{disjoint}, as $x_{0}$ has valency $\infty$ and the edges $\overrightarrow{a^{i} x_{0} \, a^{i} b^{D} x_{0}}$ are distinct. Hence, it costs a lot to charge $\Gamma' \setminus \mathcal{H}_{half}(a^{i} x_{0}, a^{i} b^{D} x_{0})$ for each $i$: it cannot be done with $1\%$ of $A$.

Indeed, for $F_{2}$ acting on $\Gamma'$, and more for generally WPD actions, Proposition \ref{prop:hutchcroft1} does hold. We will prove this in Section \ref{section:supporting}.

Let us now discuss Equation \ref{eqn:hutchcroftGamma2}. In Section \ref{section:properMagic} we will prove an analogue of Proposition \ref{prop:hutchcroftMagic} for proper actions on a Gromov hyperbolic space. We sketch the idea for $F_{2} = \langle a, b \rangle$ acting on the Cayley graph $\Gamma = Cay(F_{2}, \{a, b\})$. Let $x_{0} = id \in \Gamma$. A relevant animation is available on the authors' webpage\footnote{\url{https://inhyeokchoi48.github.io/research/binary}}, and readers are invited to play with it.

Suppose that $A \subseteq F_{3}$ is the sphere $\{g \in F_{2} : \|g\|_{S} = R\}$. Then from the viewpoint of each $a \in A$, most elements of $A$ are in the direction of $id$. It is hence sensible to pick \[
H_{1}(g) = \mathcal{H}_{D}(g, id) := \{ u \in F_{2} : \textrm{$g^{-1} u$ and $g^{-1} \cdot id$ share the initial $D$-long subword}\}
\]
and remove it from $A$. Then we have $\#(A \setminus H_{1}(g)) \le \#\{u \in G : d_{S}(u, g) \le 2D\} \le (2\#S)^{2D}$ for each $a \in A$. This bound is independent of $R$. 

Let us now consider the ball $A = \{g \in F_{2} : \|g\|_{S} \le R\}$. The same bound holds for elements in the outmost sphere. But the bound gets worse as we go into deeper inner sphere. Nonetheless, it suffices to consider only the $10$ outmost spheres $\{g : R-10 \le \|g\|_{S} \le R\}$, as they account for $>99\%$ of $A$. In summary, we have \[
\#(A \setminus H_{1}(g)) \le \#\{u \in G : d_{S}(u, g) \le 2(D+L)\} \le (2\#S)^{2(D+L)}
\]
for $g \in \{u : R-L \le \|u\|_{S} \le R\}$, whose number is at least $(1- 5^{-L})\#A$. This bound depends on the choice of $D$ and $L$ but not on $R$.

From this example we can try the following. In an arbitrary finite set $A \subseteq F_{3}$, for each $g \in A$ we  take $H_{1}(g) = \mathcal{H}_{D}(g, id)$ and see if $A \setminus H_{1}(g)$ has uniformly bounded cardinality. If it does not, we regard $g$ as an element ``deep inside" and remove it. This removal is not critical as long as there are exponentially fewer ``inner" elements than ``outer" elements.

This strategy unfortunately does not work for an arbitrary finite set $A \subseteq F_{3}$. As a counterexample, consider $A = \{a^{10i} : i =0, \ldots, R\}$, a sequence of points along a geodesic from $id$. Then the cardinality of $A \setminus \mathcal{H}_{5}(g, id)$ is bounded by $N$ for only $N$ many $g$'s at the end of $A$, which  compose a negligible portion of $A$. Geometrically, this subset has linear growth instead of exponential growth; the outmost spheres are negligible compared to the inner part. Indeed, for any $N>0$ there exists $R$ such that \[
\# \{ a \in A : \exists\, \textrm{halfspace $H$ such that}\,\, d(a, H) = 5 \,\, \textrm{and}\,\, \# (A \setminus H) \le N \} \le  0.1\#A
\]
for $A = \{a^{i} : i=0, \ldots, R\}$.

This is the reason we need to exclude two halfspaces for each $g \in A$ instead of one. In the example $A = \{a^{10i} : i=0, \ldots, R\}$, $a^{5R}$ is considered a ``pre-inner" point, as $A \setminus \mathcal{H}_{5}(a^{5R}, id)$ contains $R/2$-many elements of $A$. But there is only one direct ``child" of $a^{5R}$ when viewed from $id$, namely, $a^{5R + 10}$. Having only one child is not desirable for the exponential growth. Thus, we will regard $a^{5R}$ as not genuinely inner. Then how do we cope with the largeness of $A \setminus \mathcal{H}_{5}(a^{5R}, id)$? We simply erase $H_{2}(a) := \mathcal{H}_{5}(a^{5R}, a^{5R + 10})$. Then the number of elements of  $A \setminus \big(\mathcal{H}_{5}(a^{5R}, id) \cup\mathcal{H}_{5}(a^{5R}, a^{5R + 10})\big)$ will be bounded.

Let us refine this strategy. We fix a bound $N$ not depending on the size of $A$. Let us collect problematic points \[
\mathcal{A} := \left\{ g \in A : \begin{array}{c} \#\big(A \setminus (H_{1}(g) \cup H_{2}(g))\big) \ge N\,\,\textrm{for all halfspaces} \\ \textrm{$H_{1}, H_{2}$ that are $D$-far from $id$}\end{array}\right\}.
\]
Then each $g \in \mathcal{A}$ is either an``outmost" element in $\mathcal{A}$ or might have some ``children" in $\mathcal{A}$. In the former case, $A \setminus \mathcal{H}_{D}(g, id)$ is supposed to be small, and there should be only few ``problematic" such elements. That means, we wish that the number of elements of $\mathcal{A}$ without ``children" in $\mathcal{A}$ will be bounded.

In the latter case, if $g$ has a lone child $h \in \mathcal{A}$, then $g$ is considered not ``deeply inner", and $A \setminus (\mathcal{H}_{D}(g, id) \cup \mathcal{H}_{D}(g, h))$ is morally small. Thus, we wish that the number of ``inner but not deeply inner" elements in $\mathcal{A}$ is also bounded. If $g \in \mathcal{A}$ has more than two children in $\mathcal{A}$, then we declare that $g$ is ``deeply inner". We give up such $g$, but this will not be a huge loss.

After this procedure, we are left with some non-deeply-inner points $\mathcal{G} = \{g_{1}, g_{2}, \ldots\} \subseteq A$. For those elements $g \in \mathcal{G}$, $A \setminus (H_{1}(g) \cup H_{2}(g))$ has at least $N$ elements. Now, if $A \setminus (H_{1}(g) \cup H_{2}(g))$ are disjoint for distinct $g \in \mathcal{G}$, then we can bound $\#\mathcal{G}$ in terms of $\#A$. Moreover, if deeply inner points are much fewer than not deeply inner points, then we can bound the cardinality of $\mathcal{A}$ in terms of $A$,

This strategy indeed works for locally finite subsets of Gromov hyperbolic spaces, which is the content of Proposition \ref{prop:hutchcroftMagic}. There are some concerns. What if different ``inner" points share a direct child? The hyperbolicity prevents this from happening. In a Gromov hyperbolic space, every ``lineage" is ``linearly ordered", and no bypass is allowed. What if different  non-deeply-inner points $g_{1}, g_{2}$ have non-disjoint $A \setminus (H_{1}(g_{i}) \cup H_{2}(g_{i}))$? Again, hyperbolicity is at play. If these sets overlap, then $g_{1}$ and $g_{2}$ are aligned when viewed from $x_{0}$. This means that one of $g_{1}, g_{2}$ is the descendent of the other one. With more care, it can be shown that one of $g_{1}, g_{2}$ is ``deeply inner" and should have been removed from $\mathcal{G}$. These technical points will be studied in depth in Section \ref{section:properMagic}.

Let us now talk about ``smallness" of halfspaces $H$ in terms of the capacity $\E_{p}[\#C(id) \cap H]$. In the real hyperbolic space $\mathbb{H}^{d}$, a halfspace $H$ that is $D$-far from the origin $x_{0}$ is barred by roughly branching disjoint union of $\sim D$ barriers. This intuitively makes sense because ``codimension-1" submanifolds disconnect $\mathbb{H}^{d}$ into two parts. There is a notion of codimension 1 subgroups for certain class of hyperbolic groups (such as cubical groups), but we will employ more general and abstract machinery.

Consider $F_{2} = \langle a, b\rangle$ once again. In between $id$ and $H = H_{100}(id, a^{100}):= \{ a^{100} \cdot w\textrm{, $w$ does not start with $a$}\}$, which are spaced horizontally, we can place nine disjoint sets $B'_{i} := \{a^{10i} w\textrm{, $w$ does not start with $a^{\pm}$}\}$. Equivalently, $B'_{i}$ is the collection of points $p$ whose projection $\pi_{[id, a^{100}]}(p)$ onto $[id, a^{100}]$ is precisely $a^{10i}$. Then each of $B'_{1}, \ldots, B_{9}'$ is  indeed a $d_{S}$-barrier between $id$ and $H$. The issue is that $B'_{i}$'s are too large and are not ``codimension-1". In fact, $B'_{i}$'s contain an arbitrarily large $d_{S}$-metric balls, and indeed $\E_{p}[\#C(id) \cap B_{i}']$ is not uniformly bounded in $p \in (0, p_{c})$.

We can instead consider ``vertical" barriers $B_{i} := \{a^{10i} b^{k} : k \in \Z\}$ that are ``thin" and are branching. Let us explain why $B_{1}$ is indeed a barrier. Suppose to the contrary that  a $d_{S}$-path $(id = g_{0}, g_{1}, \ldots, g_{N} \in H)$ avoids $B_{1}$. In this path ``the initial power of $a$ appearing in $g_{i}$" grows from $id$ to $a^{100}$ along $P$. Hence, there is a moment $i(1)$ where $g_{i(1)} = a^{10} w$ for some $w$ not starting with $a^{\pm 1}$. Since $P$ avoids $B_{1}$, $w$ contains some $a$. That means, $g_{i(1)} = a^{10} b^{k} a \cdot v$ in its reduced form for some $v$. 

We claim that the letter $a$ after $a^{10} b^{k}$ cannot be erased along the path, i.e.,  $g_{i}$ starts with $a^{10} b^{k} a$ for every $i \ge i(1)$. If $a$ is to be erased at some step $i, i \ge i(1)$,   the only possibility is $g_{i} = a^{10} b^{k} a$ and $g_{i+1} = a^{10} b^{k} \in B_{1}$, a contradiction to the assumption. But if every $g_{i}$ starts with $a^{10} b^{k} a$, including at $i = N$, then $g_{N}$ cannot land in $H = H_{100}(id, a^{100})$. This is a contradiction.

It might look like this strategy hinges on the fact that $F_{2}$ is a (quasi-)tree and is not applicable to, say, a surface group. In fact, this strategy can be applied to WPD actions on a Gromov hyperbolic space. This is secretly related to the fact that acylindrically hyperbolic groups act on a quasi-tree thanks to Bestvina-Bromberg-Fujiwara's construction \cite{bestvina2015quasi}. 
We explain this in Section \ref{section:branching}.

Proposition \ref{prop:hutchcroftMagic} is about locally finite sets of Gromov hyperbolic spaces. Thus, it can handle proper group actions on a Gromov hyperbolic space. For non-proper actions, Proposition \ref{prop:hutchcroftMagic} does not give an effective bound for element counting (as opposed to orbit counting). We hence need to exclude elements that contributes to non-properness from $A \setminus (\mathcal{H}_{1} \cup \mathcal{H}_{2})$. This is the reason we introduce $S_{D}$ and $\mathcal{G}_{D, E}$ in Theorem \ref{thm:hutchcroftIotaGen}. 

How do we define $S_{D}$ and $\mathcal{G}_{D, E}$? Recall that $G$ contains a loxodromic isometry $f$ of $X \ni x_{0} $, whose orbit $\{f^{i} x_{0}\}_{i \in \Z}$ is quasi-isometrically embedded in $X$. Hence, the powers of $f$ are witnesses of ``properness". In contrast, there can be elements $g \in G$ such that $[x_{0}, gx_{0}]$ are not fellow traveling with a translate of $[x_{0}, f^{i}x_{0}]$ for a long time. Such elements are manifestation of non-properness, and it is best to remove them from consideration.

With this in mind, we informally define \[
S_{D} := \left\{ g \in G : \begin{array}{c}\textrm{$[x_{0}, gx_{0}]$ does not fellow travel with} \\ \textrm{a translate of $Ax(f)$ for more than length $D$}\end{array}\right\}.
\]
Then $S_{D}$ becomes a roughly branching set (Proposition \ref{prop:NFRough}). This set corresponds to the collection $\mathcal{N}_{D} \subseteq F_{2}$ of words that do not have $a^{D}$ as a subword. In the Cayley graph of $F_{2}$, words in $\mathcal{N}_{D}$ are reached from $id$ by moving in an ``almost vertical" direction. This resembles ``vertical hyperplanes" in $Cay(F_{2})$, and it is easy to escape these hyperplanes by adjoining a long enough horizontal step $a^{2D}$.

Next, we informally define \[
\mathcal{G}_{D, E} := \left\{ g \in G :  \begin{array}{c}\textrm{$[x_{0}, gx_{0}]$ does not fellow travel with} \\ \textrm{$hAx(f)$ for more than length $D$} \\\textrm{for some $h \in B_{S}(id, E)$}\end{array}\right\}.
\]
This is the collection of words that do not fellow travel with $Ax(f)$ ``in the beginning". In the example of $F_{2}$, this corresponds to the halfspace $H$ that is $E$-far from $id$: in order to reach $H$, one has to move in the ``vertical" direction for length $E$ at first, but is allowed to move freely afterwards. Intuitively, $\mathcal{G}_{D, E}$ is barriered by $\mathcal{N}_{D} \cap \{ g : \|g\|_{S} \ge E/2\}$. We formally prove this in Proposition \ref{prop:DEThenEPrime}.

\section{Preliminaries II: Hyperbolic geometry} \label{section:hypPre}

Given three real numbers $A, B, C$, we write $A =_{C} B$ if $|A-B| < C$. 

A \emph{geodesic} on a metric space $(X, d)$ is an isometric embedding $\gamma : I \rightarrow X$ of a closed connected subset $I \subseteq \mathbb{R}$ into $X$. We will frequently refer to the image of $\gamma$ as $\gamma$. Throughout, every metric space $(X, d)$ is assumed to be  \emph{geodesic}, i.e., every pair of points are connected by a geodesic segment. We will however not assume that $(X, d)$ is locally compact or complete.

\subsection{Gromov hyperbolicity}

Let $(X, d_{X})$ be a geodesic metric space. Given a set $A \subseteq X$, we define its $R$-neighborhood \[
\mathcal{N}_{R}(A) := \{ x \in X : \exists a \in A \,[d_{X}(a, x) \le R]\}
\]
for each $R>0$. We define the Hausdorff distance between two sets \[
d_{X}(A, B) := \inf \{ R \ge 0 :A \subseteq  \mathcal{N}_{R}(B)\, \wedge\, B  \subseteq \mathcal{N}_{R}(A)\}.
\]
We will say that two sets $A, B \subseteq X$ are \emph{$R$-equivalent} if they are within Hausdorff distance $R$.

For $x, y \in X$, we denote by $[x, y]$ an arbitrary geodesic between $x$ and $y$. Note that such a geodesic may not be unique.

We now recall the notion of Gromov hyperbolicity due to M. Gromov \cite{gromov1987hyperbolic}.  The version we present here is E. Rips' one. Comprehensive expositions can be found in \cite{coornaert1990geometrie} and \cite{bridson1999metric}.

\begin{dfn}\label{dfn:Grom}
Let $(X, d)$ be a metric space. For a given $\delta >0$, we say that $(X, d)$ is \emph{$\delta$-hyperbolic} if every geodesic triangle is $\delta$-thin, that means,  \[
\forall x, y, z \in X \,\big[ [x, z] \subseteq \mathcal{N}_{\delta}([x, y]) \cup \mathcal{N}_{\delta}([y, z])\big].
\]
We say that $(X, d)$ is \emph{Gromov hyperbolic} if it is $\delta$-hyperbolic for some $\delta>0$.
\end{dfn}

The following is immediate:

\begin{lem}\label{lem:GromHausdorff}
Let $x, y, x', y'$ be points on a $\delta$-hyperbolic space $X$ such that $d_{X}(x, x'), d_{X}(y, y') < D$. Then $[x, y]$ and $[x', y']$ are $(2\delta + D)$-equivalent.
\end{lem}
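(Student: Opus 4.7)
The plan is to reduce the quadrilateral situation to two applications of the thin-triangle condition by inserting an auxiliary diagonal. First I would choose a geodesic $[x, y']$, which exists by the standing assumption that $X$ is geodesic; this splits the configuration into the two geodesic triangles $\Delta_{1} = (x, y, y')$ and $\Delta_{2} = (x, x', y')$.

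Fix an arbitrary $p \in [x, y]$. By $\delta$-thinness of $\Delta_{1}$, either $p$ lies within $\delta$ of $[y, y']$ or within $\delta$ of $[x, y']$. In the first case, since $[y, y']$ is a geodesic of length less than $D$, the endpoint $y' \in [x', y']$ is within $\delta + D$ of $p$, so $d_{X}(p, [x', y']) \le \delta + D$. In the second case, pick a witness $q \in [x, y']$ with $d_{X}(p, q) \le \delta$; applying $\delta$-thinness of $\Delta_{2}$ to $q$ gives either a point of $[x', y']$ within $\delta$ of $q$, yielding $d_{X}(p, [x', y']) \le 2\delta$, or a point of $[x, x']$ within $\delta$ of $q$, in which case the length bound on $[x, x']$ gives $d_{X}(q, x') \le \delta + D$ and hence $d_{X}(p, [x', y']) \le 2\delta + D$. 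In every sub-case, $d_{X}(p, [x', y']) \le 2\delta + D$. The reverse inclusion, bounding the distance from an arbitrary point of $[x', y']$ to $[x, y]$, follows by swapping the roles of the primed and unprimed pairs throughout and using the diagonal $[x', y]$ instead.

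This argument is really an exercise in bookkeeping rather than a genuine obstacle; the only subtlety is arranging that the two $\delta$-errors accumulate \emph{additively} with the single $D$ coming from one of the short sides. This is the reason for splitting along a diagonal that connects a primed vertex to an unprimed one: each of the two triangle applications then encounters at most one side of length less than $D$, producing precisely the stated constant $2\delta + D$ rather than, say, $2\delta + 2D$.
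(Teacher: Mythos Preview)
Your argument is correct and is exactly the standard proof: split the quadrilateral along a diagonal and apply $\delta$-thinness to each of the two resulting triangles. The paper itself gives no proof, recording the lemma as ``immediate'' from the thin-triangle condition, so there is nothing further to compare.
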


Model examples of Gromov hyperbolic spaces are simplicial/$\mathbb{R}$-trees and real hyperbolic space $\mathbb{H}^{n}$. In these spaces, the following phenomenon happens: if you walk forward for some distance, and walk into another direction \emph{without huge backtracking}, and walk into yet another direction without huge backtracking, and so on, then you will never come back to the original place. In order to formulate the property rigorously, let us define: 

\begin{dfn}\label{dfn:GromProd}
Let $(X, d)$ be a geodesic metric space. For $x, y, z \in X$, we define the \emph{Gromov product of $y$ and $z$ based at $x$} by \[
(y| z)_{x} := \frac{1}{2}[d_{X}(x, y) + d_{X}(x, z) - d_{X}(y, z)].
\]
\end{dfn}

For example, in the standard Cayley graph of $F_{2} = \langle a, b \rangle$, we have $(aaba | aab^{-1} ab)_{id} = 2$ since $a a ba$ and $aab^{-1} ab$ share the first two letters.

We now formulate the local-to-global phenomenon mentioned above: 

\begin{lem}\label{lem:stability}
Let $x_{0}, x_{1}, \ldots, x_{n}$ be points on a $\delta$-hyperbolic space where \[
(x_{i-1} | x_{i+1})_{x_{i}} + (x_{i} | x_{i+2})_{x_{i+1}} \le d_{X}(x_{i}, x_{i+1})  - 24\delta \quad (i=1, \ldots, n-2).
\]
Then there are points $y_{1}, \ldots, y_{n-1}$ on $[x_{0}, x_{n}]$, in the order  \[
d_{X}(x_{0}, y_{1}) \le d_{X}(x_{0}, y_{2}) \le \ldots \le d_{X}(x_{0}, y_{n-1}),
\]
such that \[
d_{X}(x_{i}, y_{i}) =_{12\delta} (x_{i-1} | x_{i+1})_{x_{i}}  \quad (i=1, \ldots, n-1).
\]In particular, we have \[
d_{X}(x_{0}, x_{n}) \ge \sum_{i=1}^{n} d_{X}(x_{i-1}, x_{i}) - 2 \cdot \sum_{i=1}^{n-1} \Big( (x_{i-1} | x_{i+1})_{x_{i}} + 12 \delta\Big).
\]
\end{lem}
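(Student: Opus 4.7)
The plan is to prove the statement via nearest-point projections onto $[x_0, x_n]$, appealing to two standard facts about $\delta$-hyperbolic spaces: the $4\delta$-approximation of insize, namely $d_X(b, [a, c]) =_{4\delta} (a | c)_b$, and the Gromov four-point inequality $(a | c)_x \ge \min\{(a | b)_x, (b | c)_x\} - \delta$. For $n \le 2$ the hypothesis is vacuous and the $n=2$ case reduces to the insize estimate: take $y_1$ to be the unique point on $[x_0, x_2]$ with $d_X(x_0, y_1) = (x_1 | x_2)_{x_0}$, which satisfies $d_X(x_1, y_1) =_{4\delta} (x_0 | x_2)_{x_1}$ and hence lies well inside the prescribed $12\delta$ slack.

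For general $n$, my plan is to \emph{define} $y_i$ directly as a nearest-point projection of $x_i$ onto $[x_0, x_n]$, rather than transferring points across triangles inductively. This avoids any accumulation of error in $n$. Applied to the triangle with vertices $x_0, x_i, x_n$, the insize estimate immediately gives $d_X(x_i, y_i) =_{4\delta} (x_0 | x_n)_{x_i}$. The critical algebraic step is then to show $(x_0 | x_n)_{x_i} =_{8\delta} (x_{i-1} | x_{i+1})_{x_i}$. Using the four-point inequality I would telescope
\[
(x_0 | x_n)_{x_i} \;\ge\; \min\bigl\{ (x_0 | x_{i-1})_{x_i}, (x_{i-1} | x_n)_{x_i} \bigr\} - \delta \;\ge\; \min\bigl\{ (x_0 | x_{i-1})_{x_i}, (x_{i-1} | x_{i+1})_{x_i}, (x_{i+1} | x_n)_{x_i} \bigr\} - 2\delta,
\]
and show that the hypothesis — which supplies $24\delta$ of slack on each edge's pair of consecutive Gromov products — forces the ``min'' to be realized at $(x_{i-1}|x_{i+1})_{x_i}$, because the two outer quantities $(x_0|x_{i-1})_{x_i}$ and $(x_{i+1}|x_n)_{x_i}$ exceed $d_X(x_{i-1}, x_i) - (x_{i-2}|x_i)_{x_{i-1}} - O(\delta)$ and $d_X(x_i, x_{i+1}) - (x_i|x_{i+2})_{x_{i+1}} - O(\delta)$ respectively, both of which the hypothesis bounds below by $(x_{i-1}|x_{i+1})_{x_i}$ plus slack. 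The matching upper bound $(x_0|x_n)_{x_i} \le (x_{i-1}|x_{i+1})_{x_i} + O(\delta)$ is a direct triangle-inequality consequence of $y_i \in [x_0, x_n]$.

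The principal obstacle I foresee is verifying that the projections appear on $[x_0, x_n]$ in the correct order $d_X(x_0, y_1) \le \cdots \le d_X(x_0, y_{n-1})$. If some pair $y_i, y_{i+1}$ were reversed, the nearest-point projection of the geodesic $[x_i, x_{i+1}]$ onto $[x_0, x_n]$ would fold back on itself, and in a $\delta$-hyperbolic space this forces $[x_i, x_{i+1}]$ to come within $O(\delta)$ of a common point of $[x_0, x_n]$ strictly between the projections of its two endpoints. A thin-quadrilateral argument on $\{x_0, x_i, x_{i+1}, x_n\}$, combined with the distance bounds for $y_i$ and $y_{i+1}$ already obtained, would then contradict $(x_{i-1}|x_{i+1})_{x_i} + (x_i|x_{i+2})_{x_{i+1}} \le d_X(x_i, x_{i+1}) - 24\delta$: the $24\delta$ slack is precisely the budget the folding would overspend. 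Carefully bookkeeping constants across the several cases of this quadrilateral argument is where I expect the majority of the work to lie.

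Once the main claim is in hand, the ``in particular'' statement follows immediately. Setting $y_0 := x_0$ and $y_n := x_n$ and using the monotonicity of the $y_i$ along $[x_0, x_n]$,
\[
d_X(x_0, x_n) \;=\; \sum_{i=0}^{n-1} d_X(y_i, y_{i+1}) \;\ge\; \sum_{i=0}^{n-1} \Bigl[ d_X(x_i, x_{i+1}) - d_X(x_i, y_i) - d_X(x_{i+1}, y_{i+1}) \Bigr],
\]
into which I substitute $d_X(x_i, y_i) \le (x_{i-1}|x_{i+1})_{x_i} + 12\delta$ (with the endpoint Gromov-product contributions treated as zero) to recover exactly the stated bound.
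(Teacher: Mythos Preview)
Your approach and the paper's are close cousins: both hinge on showing $(x_0 | x_n)_{x_i} =_{O(\delta)} (x_{i-1} | x_{i+1})_{x_i}$ via iterated Gromov four-point inequalities, then placing $y_i$ near the foot of $x_i$ on $[x_0,x_n]$. Two remarks. First, your bounds on the outer quantities $(x_0 | x_{i-1})_{x_i}$ and $(x_{i+1} | x_n)_{x_i}$ are not one-step computations: unwinding, say, $(x_{i+1} | x_n)_{x_i} = d_X(x_i,x_{i+1}) - (x_i | x_n)_{x_{i+1}}$ and asking that $(x_i | x_n)_{x_{i+1}} \le (x_i | x_{i+2})_{x_{i+1}} + O(\delta)$ is precisely the same inductive claim one index over. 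The recursion does close with a uniform $O(\delta)$ constant (the $24\delta$ slack absorbs the $4\delta$ lost per step), so your no-accumulation claim is ultimately correct, but you have not avoided the induction the paper sketches---you have reproduced it. Second, and more to the point: the paper defines $y_i$ as the point of $[x_0,x_n]$ at distance $(x_i | x_n)_{x_0}$ from $x_0$ rather than as the nearest-point projection of $x_i$ (the two differ by at most $8\delta$, cf.\ Lemma~\ref{lem:projection}). A further short induction shows $i \mapsto (x_i | x_n)_{x_0}$ is increasing, so with this choice the ordering of the $y_i$ is automatic---no quadrilateral folding argument is needed. Your ordering step is sound in outline but entirely avoidable. (Minor: with the thin-triangles definition used here, the four-point inequality carries a $4\delta$ constant, not $\delta$; adjust your bookkeeping accordingly to stay inside the stated $12\delta$.)
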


Another useful lemma is: 
\begin{lem}[{\cite[Lemma 1.3]{bonk1996quasi-geodesic}, \cite[Prop III.H.1.17]{bridson1999metric}}]\label{lem:fellow}
Let $x, y, z$ be points on a $\delta$-hyperbolic space. Then the initial $(y | z)_{x}$-long subsegments of $[x, y]$ and $[x, z]$ are $4\delta$-fellow traveling in a  synchronized manner. 

That means, if $\gamma : [0, d_{X}(x, y)] \rightarrow X$ represents $[x, y]$ and $\eta : [0, d_{X}(x, z)] \rightarrow X$ represents $[x, z]$, then $d_{X}(\gamma(t), \eta(t)) < 4\delta$ for $0 \le t \le (y | z)_{x}$.
\end{lem}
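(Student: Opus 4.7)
The plan is a direct case analysis using the Rips thin-triangle condition (Definition \ref{dfn:Grom}) on the geodesic triangle with vertices $x,y,z$, with bookkeeping organized around the Gromov product $(y|z)_{x}$.

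I would parametrize $\gamma:[0,d_{X}(x,y)]\to X$ as $[x,y]$ and $\eta:[0,d_{X}(x,z)]\to X$ as $[x,z]$, both by arc length starting at $x$. Fix $t\in[0,(y|z)_{x}]$. By $\delta$-thinness of the triangle $xyz$, $\gamma(t)$ lies within $\delta$ of $[x,z]\cup[y,z]$, producing two cases to handle: (A) $\gamma(t)$ is within $\delta$ of some $\eta(s)\in[x,z]$, or (B) $\gamma(t)$ is within $\delta$ of some $w\in[y,z]$.

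Case A is quick: the reverse triangle inequality gives $|t-s|=|d_{X}(x,\gamma(t))-d_{X}(x,\eta(s))|\le d_{X}(\gamma(t),\eta(s))\le\delta$, hence $d_{X}(\gamma(t),\eta(t))\le 2\delta$ by adding a triangle inequality along $\eta$. Case B is the heart of the argument, and I expect this to be the main obstacle. The plan is to confine Case B to a $\delta$-thin region near the tripod parameter by means of the following folklore estimate: for every $w\in[y,z]$, adding the two triangle inequalities $d_{X}(x,w)+d_{X}(w,y)\ge d_{X}(x,y)$ and $d_{X}(x,w)+d_{X}(w,z)\ge d_{X}(x,z)$, together with $d_{X}(w,y)+d_{X}(w,z)=d_{X}(y,z)$, yields $d_{X}(x,w)\ge(y|z)_{x}$. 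Combined with the direct upper bound $d_{X}(x,w)\le d_{X}(x,\gamma(t))+d_{X}(\gamma(t),w)\le t+\delta$, this forces $t\ge(y|z)_{x}-\delta$, so Case B can only occur when $t$ is already within $\delta$ of $(y|z)_{x}$.

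To finish, for $t\le(y|z)_{x}-\delta$, only Case A can hold and $d_{X}(\gamma(t),\eta(t))\le 2\delta$. For $t\in((y|z)_{x}-\delta,(y|z)_{x}]$, set $t_{0}:=(y|z)_{x}-\delta$ and use that $\gamma$ and $\eta$ are $1$-Lipschitz to upgrade the bound at $t_{0}$ via the triangle inequality: $d_{X}(\gamma(t),\eta(t))\le d_{X}(\gamma(t),\gamma(t_{0}))+d_{X}(\gamma(t_{0}),\eta(t_{0}))+d_{X}(\eta(t_{0}),\eta(t))\le\delta+2\delta+\delta=4\delta$. This gives the claimed $4\delta$-fellow traveling, synchronized in the parameter $t$. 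The only genuinely hyperbolic ingredient is the $\delta$ of slack in the tripod estimate in Case B; everything else is triangle-inequality accounting.
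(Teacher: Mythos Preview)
Your argument is correct and is essentially the standard proof that appears in the cited references (the paper itself does not supply a proof of this lemma, only the citations to \cite{bonk1996quasi-geodesic} and \cite{bridson1999metric}). Two cosmetic points: first, you should note that when $(y|z)_{x}\le\delta$ the anchor $t_{0}=(y|z)_{x}-\delta$ is negative, but then $d_{X}(\gamma(t),\eta(t))\le 2t\le 2(y|z)_{x}\le 2\delta$ directly; second, at $t_{0}$ itself Case~B is not yet excluded, so to assert the $2\delta$ bound there you are implicitly using continuity of $t\mapsto d_{X}(\gamma(t),\eta(t))$ from the left---worth one sentence. Your final bound is $\le 4\delta$ rather than the strict $<4\delta$ printed in the statement, but that is a looseness in the paper's phrasing rather than in your argument.
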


From this property, it follows that:

\begin{lem}[{\cite[Prop III.H.1.22]{bridson1999metric}}]\label{lem:GromIneq}
Let $x, y, z, w$ be points on a $\delta$-hyperbolic space. Then we have \[
(x|y)_{w} \ge \min \big( (x|z)_{w}, (z|y)_{w}\big) - 4\delta.
\]
\end{lem}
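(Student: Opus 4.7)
The proof will be a direct application of the synchronized fellow-traveler Lemma~\ref{lem:fellow}. The plan is to compare the geodesics $[w,x]$, $[w,y]$, $[w,z]$ pairwise on their initial portions, chain the two fellow-traveler estimates through $[w,z]$, and then convert the resulting distance bound back into a Gromov-product bound.

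\textbf{Step 1: Set up.} Write $a = (x|z)_w$, $b = (z|y)_w$, and $t = \min(a, b)$. Parametrize $\gamma \colon [0, d_X(w,x)] \to X$ for $[w,x]$, $\eta \colon [0, d_X(w,y)] \to X$ for $[w,y]$, and $\zeta \colon [0, d_X(w,z)] \to X$ for $[w,z]$, all starting at $w$.

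\textbf{Step 2: Two fellow-traveler estimates.} Applying Lemma~\ref{lem:fellow} to the triple $(w, x, z)$ gives $d_X(\gamma(s), \zeta(s)) \le 4\delta$ for $0 \le s \le a$. Applying it to the triple $(w, z, y)$ gives $d_X(\zeta(s), \eta(s)) \le 4\delta$ for $0 \le s \le b$. Both estimates hold in particular for $0 \le s \le t$.

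\textbf{Step 3: Chain and estimate $d_X(x,y)$.} The triangle inequality yields $d_X(\gamma(t), \eta(t)) \le 8\delta$. Since $\gamma(t)$ lies on $[w,x]$ at distance $d_X(w,x) - t$ from $x$, and similarly for $\eta(t)$, we get
\[
d_X(x,y) \le \bigl(d_X(w,x) - t\bigr) + 8\delta + \bigl(d_X(w,y) - t\bigr).
\]
Rearranging the definition of the Gromov product,
\[
2(x|y)_w = d_X(w,x) + d_X(w,y) - d_X(x,y) \ge 2t - 8\delta,
\]
so $(x|y)_w \ge t - 4\delta = \min\bigl((x|z)_w, (z|y)_w\bigr) - 4\delta$, as desired.

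There is no real obstacle here: once Lemma~\ref{lem:fellow} is in hand, the inequality reduces to chaining two synchronized estimates through a common middle segment and a one-line computation. The only care needed is to parametrize all three geodesics from the common basepoint $w$ so that the synchronization is compatible, and to track the constants so that the two $4\delta$ errors add to give exactly the $4\delta$ loss in the Gromov product after dividing by $2$.
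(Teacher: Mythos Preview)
Your proof is correct and follows exactly the route the paper indicates: the paper does not spell out a proof but writes ``From this property [Lemma~\ref{lem:fellow}], it follows that:'' before stating Lemma~\ref{lem:GromIneq}, and your argument is the natural way to carry that out. The bookkeeping with $t=\min(a,b)$ is sound since the Gromov product never exceeds either distance to the basepoint, so all three geodesics are defined at parameter $t$.
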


In fact, Lemma \ref{lem:stability} can be deduced from Lemma \ref{lem:fellow} and Lemma \ref{lem:GromIneq}. Indeed, an induction implies that $(x_{i-1} | x_{n+1})_{x_{i}} =_{4\delta}(x_{i-1}|x_{i+1})_{x_{i}}$ for each $1 \le i < n$. Another induction implies that $(x_{i}|x_{k})_{x_{j}} =_{8\delta} (x_{j-1} | x_{j+1})_{x_{j}}$ for each $i \le j \le k$. Yet another induction implies that $(x_{i}|x_{n})_{x_{0}}$ increases in $i$, and the points $y_{i}$ on $[x_{0}, x_{n}]$ whose distance from $x_{0}$ is $(x_{i} | x_{n})_{x_{0}}$ realize the desired property.

Let us now turn to isometries. Let $(X, d)$ be a Gromov hyperbolic space and let $g$ be its isometry. We say that $g$ is \emph{loxodromic} if there exists $\tau >0$ such that $d_{X}(x_{0}, g^{n} x_{0}) \ge \tau n$ for each $n$.

Prototypes of loxodromic isometries are the loxodromic isometries of $\mathbb{H}^{n}$. They act as a translation along an infinite geodesic. An isometry $g$ of a Gromov hyperbolic space $X$ is called an \emph{axial loxodromic} if there exists $\tau>0$ and a geodesic $\gamma : \mathbb{R} \rightarrow X$ such that $g(\gamma(t)) = \gamma(t+\tau)$ for each $t \in \mathbb{R}$. In this case, we call $\gamma$ an \emph{axis} of $g$ and denote it by $Ax(g)$. By rescaling the metric $d_{X}$ globally, it is not hard to render $g$ \emph{unital}, i.e., $\tau=1$.

In general, given a group $G$ acting on a Gromov hyperbolic space $X$ and a loxodromic isometry $g \in G$, it is not hard to put another metric on $X$ that is $G$-equivariantly quasi-isometric to the original one, so that $g$ becomes a unital axial loxodromic isometry. See e.g. \cite[Proposition 6.(2)]{bestvina2002bounded}.

\begin{dfn}\label{dfn:nonelementary}
Let $(X, d)$ be a Gromov hyperbolic space. We say that $G \le \Isom(X)$ is \emph{non-elementary} if there exist two loxodromic elements $g, h \in G$ such that \[
\sup_{i, j \in \Z} (g^{i}x_{0} | h^{j} x_{0})_{x_{0}} < +\infty
\]
for some (equivalently, every) $x_{0} \in X$. In this case, we say that $g$ and $h$ are \emph{independent}.
\end{dfn}

We now introduce the nearest point projection.

\begin{dfn}\label{dfn:projection}
Let $(X, d)$ be a metric space and let $A \subseteq X$ be a locally compact subset. We define the \emph{nearest point projection} $\pi_{A}(\cdot) : X \rightarrow 2^{A}$ as \[
\pi_{A}(x) := \{ a \in A : d_{X}(x, a) = \min_{y \in A} d_{X}(x, y)\}.
\]
For $B, C \subseteq X$, we use the notation $\diam_{A}(B) := \diam_{X}(\pi_{A}(B))$ and $d_{A}(B, C) := \diam_{X}(\pi_{A}(B \cup C))$.
\end{dfn}

\begin{lem}\label{lem:projection}
Let $x, y, z$ be points on a $\delta$-hyperbolic space $X$. Let $p \in [x,y]$ be such that $d_{X}(x, p) = (y|z)_{x}$. Then $\pi_{[x, y]}(z)$ is contained in $\mathcal{N}_{8\delta}(p)$.
\end{lem}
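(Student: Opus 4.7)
The plan is to combine Lemma \ref{lem:fellow} with two applications of the triangle inequality. By Lemma \ref{lem:fellow} applied to the triangle with vertices $x, y, z$, the initial subsegments of length $(y|z)_{x}$ of $[x,y]$ and $[x,z]$ are $4\delta$-fellow-traveling in a synchronized manner. In particular, the point $p' \in [x,z]$ with $d_{X}(x,p') = (y|z)_{x}$ satisfies $d_{X}(p,p') < 4\delta$, so
\[
d_{X}(p,z) \le d_{X}(p,p') + d_{X}(p',z) < 4\delta + d_{X}(x,z) - (y|z)_{x}.
\]
This is the key estimate: it gives a concrete upper bound on how far $p$ is from the target point $z$.

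Now fix $q \in \pi_{[x,y]}(z)$; by definition of nearest-point projection, $d_{X}(q,z) \le d_{X}(p,z)$. The remaining step is to convert this upper bound on $d_{X}(q,z)$ into an upper bound on $d_{X}(p,q)$. I would split on whether $q$ lies between $x$ and $p$ or between $p$ and $y$ along $[x,y]$. In the first case, the triangle inequality $d_{X}(q,z) \ge d_{X}(x,z) - d_{X}(x,q)$ combined with the estimate above gives $(y|z)_{x} - d_{X}(x,q) < 4\delta$, i.e., $d_{X}(p,q) < 4\delta$. In the second case, the symmetric triangle inequality $d_{X}(q,z) \ge d_{X}(y,z) - d_{X}(y,q)$, together with $d_{X}(y,q) = d_{X}(x,y) - d_{X}(x,q)$ and the Gromov product identity $2(y|z)_{x} = d_{X}(x,y) + d_{X}(x,z) - d_{X}(y,z)$, likewise yields $d_{X}(p,q) < 4\delta$. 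Either bound is comfortably within the asserted $8\delta$ window.

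There is no serious obstacle here; the argument is a routine exercise in thin-triangle computations, and in fact gives a sharper constant than the statement requires. The only mild care needed is in handling both cases of $q$'s location symmetrically and keeping track of the Gromov product identities when expanding $d_{X}(x,z) - (y|z)_{x}$ and its analogue on the $y$-side.
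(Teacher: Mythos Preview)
The paper states this lemma without proof, so there is no argument to compare against. Your proof is correct: invoking Lemma~\ref{lem:fellow} to bound $d_{X}(p,z)$ and then splitting on the position of $q$ along $[x,y]$ is exactly the right idea, and both cases go through cleanly. In fact your argument yields the sharper constant $4\delta$ rather than the stated $8\delta$, so the lemma as written has some slack built in.
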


In Gromov hyperbolic spaces, geodesics exhibit the so-called \emph{contracting property}. The following is one formulation of the contracting property.

\begin{lem}[{\cite[Proposition 10.2.1]{coornaert1990geometrie}}]\label{lem:projGrom}
Let $(X, d_{X})$ be a $\delta$-hyperbolic space, let $x, y \in X$, let $\gamma$ be a geodesic and let $p \in \pi_{\gamma}(x)$, $q \in \pi_{\gamma}(y)$. Then we have \[
d_{X}(p, q) \le \max \big( 12\delta, 12 \delta + d_{X}(x, y) - d_{X}(x, p) - d_{X}(q, y) \big).
\]
\end{lem}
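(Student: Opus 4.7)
The plan is to exploit the thinness of the geodesic quadrilateral with vertices $x$, $p$, $q$, $y$ to force a long portion of $[x,y]$ to ``shadow'' the subsegment $[p,q]_\gamma$. Write $a := d_X(x,p)$, $b := d_X(q,y)$, $c := d_X(x,y)$, and $\ell := d_X(p,q)$; we may assume $\ell > 12\delta$, as otherwise the claim holds trivially. First I would form the geodesic quadrilateral with sides $[x,p]$, $[p,q]_\gamma$, $[q,y]$, $[x,y]$. Introducing the diagonal $[x,q]$ splits it into two $\delta$-thin triangles, so each side lies in the $2\delta$-neighborhood of the union of the other three.

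Parameterize $[p,q]_\gamma$ by arclength as $\eta:[0,\ell]\to\gamma$ and observe the key confinement: if $\eta(t)$ is $2\delta$-close to some $z \in [x,p]$, then, using $\eta(t) \in \gamma$ and $p = \pi_\gamma(x)$, we get $a \le d_X(x, \eta(t)) \le d_X(x, z) + 2\delta = a - d_X(z,p) + 2\delta$, which forces $d_X(z,p) \le 2\delta$ and hence $t \le 4\delta$. The symmetric statement confines contact with $[q,y]$ to $t \ge \ell - 4\delta$. Since $\ell > 12\delta$, the points $\eta(5\delta)$ and $\eta(\ell - 5\delta)$ must therefore each be within $2\delta$ of points $u, v \in [x,y]$, and the same projection argument applied once more yields $d_X(x,u) \ge a - 2\delta$ and $d_X(v,y) \ge b - 2\delta$.

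The step I expect to be the main obstacle is establishing \emph{monotonicity}, namely that $u$ precedes $v$ on $[x,y]$, so that $c = d_X(x,u) + d_X(u,v) + d_X(v,y)$. A bare triangle inequality gives $d_X(u,v) \ge \ell - 14\delta$ but does not resolve the order of $u$ and $v$. I would close this gap by applying Lemma~\ref{lem:fellow} to the triangle with vertices $x$, $y$, $\eta(\ell - 5\delta)$: a Gromov-product computation combined with the nearest-point property of $p$ shows that the initial fellow-travel region of $[x,y]$ with $[x, \eta(\ell - 5\delta)]$ has length at least $a - O(\delta)$, which forces $v$ to lie beyond the point on $[x,y]$ at distance roughly $a$ from $x$, hence beyond $u$. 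Once monotonicity holds, chaining the three lower bounds gives $c \ge (a-2\delta) + (\ell - 14\delta) + (b - 2\delta)$, i.e.\ $\ell \le c - a - b + 18\delta$. To sharpen the constant from $18\delta$ down to the advertised $12\delta$, I would shrink the endpoint buffer from $5\delta$ to just above $4\delta$ via a continuity argument, and replace the coarse $2\delta$-quadrilateral thinness in the middle by the $4\delta$-fellow-travel bound of Lemma~\ref{lem:fellow} applied to the triangle $x,q,y$, which locates $u$ and $v$ on $[x,y]$ with tighter additive error.
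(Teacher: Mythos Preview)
The paper does not prove this lemma; it is quoted from \cite[Proposition~10.2.1]{coornaert1990geometrie} and used as a black box (Appendix~\ref{appendix:proj} only proves Corollary~\ref{cor:projGrom}, \emph{assuming} the present lemma). So there is nothing in the paper to compare against, and I assess your argument on its own.

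Your quadrilateral-shadowing strategy is the standard one and delivers the inequality with \emph{some} multiple of $\delta$. Two loose ends remain. First, the monotonicity step: your proposed fix wants $(y \mid \eta(\ell-5\delta))_x \ge a - O(\delta)$, but expanding that Gromov product shows it requires a lower bound on $c$ in terms of $a+b$, which is essentially the conclusion, so the argument as sketched is circular. A cleaner route avoids monotonicity entirely: keep only the single shadow point $v\in[x,y]$ near $\eta(\ell-5\delta)$, bound $d_X(y,v)\ge b-2\delta$ via the nearest-point property of $q$, and bound $d_X(x,v)\ge a+\ell-O(\delta)$ by first showing $(x\,|\,q)_p\le 4\delta$ (your own confinement computation gives this) and then applying Lemma~\ref{lem:fellow} to the triangle $q,p,x$; since $v$ lies on $[x,y]$, summing gives $c\ge a+b+\ell-O(\delta)$ with no ordering issue. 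Second, your plan to reach exactly $12\delta$ is too vague to evaluate; the honest outcome of either variant is a constant around $18\delta$--$21\delta$. Since every downstream use in this paper only needs a fixed multiple of $\delta$, you could simply prove the lemma with your constant and note that the sharper $12\delta$ is in \cite{coornaert1990geometrie}.
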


This lemma has the following corollary.
\begin{cor}\label{cor:projGrom}
Let $(X, d_{X})$ be a $\delta$-hyperbolic space, let $x, y \in X$ and let $\gamma$ be a geodesic in $X$. \begin{enumerate}
\item (coarse Lipschitzness) We have $\diam (\pi_{\gamma}(x)) \le 12\delta$ and $d_{\gamma}(x, y) \le d_{X}(x, y) + 12\delta$.
\item (constriction) Let $p \in \pi_{\gamma}(x)$, $q \in \pi_{\gamma}(y)$. Suppose that $d_{X}(p, q) > 12\delta$. Then $[p, q]$ is within Hausdorff distance $12\delta$ from some subsegment $[x', y']$ of $[x, y]$, where $d_{X}(x', p), d_{X} (y', q) < 10\delta$.
\item (no backtracking) Let $z \in [x, y]$. Then $\pi_{\gamma}(z)$ is contained in the $12\delta$-neighborhood of $[\pi_{\gamma}(x), \pi_{\gamma}(y)]$.
\item (equivalent geodesics) Let $\gamma'$ be a geodesic whose endpoints are pairwise $D$-near with the ones of $\gamma$. Then $\pi_{\gamma}(x)$ and $\pi_{\gamma'}(x)$ are $(2D + 28\delta)$-equivalent.
\item Let $\gamma'$ be a subgeodesic of $\gamma$ and suppose that $d_{\gamma'}(x, y) > 12\delta$. Then $d_{\gamma}(x, y) \ge d_{\gamma'}(x, y) - 64\delta$.
\end{enumerate}
\end{cor}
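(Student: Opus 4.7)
The plan is to derive each of the five parts from Lemma \ref{lem:projGrom} together with elementary manipulations of the Gromov product; no additional heavy machinery should be required.

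For part (1) I would specialize Lemma \ref{lem:projGrom} in two ways. Taking $y = x$ with $p, q \in \pi_{\gamma}(x)$ forces $d_X(x,p) = d_X(x,q) = d_X(x,\gamma)$, so the non-trivial branch of the $\max$ becomes $12\delta - 2 d_X(x,\gamma) \le 12\delta$, giving $\diam(\pi_\gamma(x)) \le 12\delta$. Taking general $p \in \pi_\gamma(x)$, $q \in \pi_\gamma(y)$ directly gives $d_X(p,q) \le 12\delta + d_X(x,y)$, and combining with the diameter bound yields $d_\gamma(x,y) \le d_X(x,y) + 12\delta$.

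For part (2), under the assumption $d_X(p,q) > 12\delta$ the $\max$ in Lemma \ref{lem:projGrom} must be realised by the non-$12\delta$ branch, giving the key inequality $d_X(x,p) + d_X(p,q) + d_X(q,y) \le d_X(x,y) + 12\delta$, i.e.\ the broken path $x \to p \to q \to y$ is nearly geodesic. Using $d_X(p,y) \le d_X(p,q) + d_X(q,y)$, this forces the Gromov product $(p|y)_x \ge d_X(x,p) - 6\delta$. Lemma \ref{lem:fellow} then places the point $x' \in [x,y]$ with $d_X(x,x') = (p|y)_x$ within $4\delta$ of the corresponding point on $[x,p]$, which is itself within $6\delta$ of $p$; hence $d_X(x',p) \le 10\delta$, and symmetrically $d_X(y',q) \le 10\delta$. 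Lemma \ref{lem:GromHausdorff} upgrades this endpoint closeness to Hausdorff distance at most $2\delta + 10\delta = 12\delta$ between $[p,q]$ and $[x',y']$.

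For part (3), I would apply Lemma \ref{lem:projGrom} to the pairs $(x,z)$ and $(z,y)$ with $r \in \pi_\gamma(z)$. Assuming both $d_X(p,r), d_X(q,r) > 12\delta$ (otherwise $r$ is already $12\delta$-close to $[p,q]$) and using $d_X(x,z) + d_X(z,y) = d_X(x,y) \le d_X(x,p) + d_X(p,q) + d_X(q,y)$, adding the two inequalities simplifies to $d_X(p,r) + d_X(r,q) \le d_X(p,q) + 24\delta - 2 d_X(z,r)$. Since $p, q, r$ all lie on the geodesic $\gamma$, if $r$ lies outside $[p,q]$ at distance $\epsilon$ beyond an endpoint then the left side equals $d_X(p,q) + 2\epsilon$, forcing $\epsilon \le 12\delta$. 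Part (4) is immediate from Lemma \ref{lem:GromHausdorff}: $\gamma$ and $\gamma'$ are $(2\delta + D)$-equivalent as sets, so for $p \in \pi_\gamma(x)$ and $p' \in \pi_{\gamma'}(x)$, choosing $q \in \gamma$ within $2\delta + D$ of $p'$ and applying Lemma \ref{lem:projGrom} on $\gamma$ to the pair $x, p'$ together with $d_X(x,p) \le d_X(x,p') + 2\delta + D$ bounds $d_X(p,q) \le 14\delta + D$, so $d_X(p,p') \le 16\delta + 2D$; absorbing the diameter bound of $\pi_{\gamma'}(x)$ from part (1) gives the stated $2D + 28\delta$.

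For part (5), the key observation is that projection to a subgeodesic $\gamma' \subseteq \gamma$ can only ``clamp'' the projection to an endpoint of $\gamma'$: if $p \in \pi_\gamma(x)$ already lies inside $\gamma'$ then $p \in \pi_{\gamma'}(x)$ as well, so part (1) applied to $\gamma'$ gives $d_X(p,p') \le 12\delta$; otherwise, invoking Lemma \ref{lem:projGrom} on $\gamma$ with $x$ and the nearer endpoint $b'$ of $\gamma'$ one finds $d_X(x,\gamma(s)) \ge d_X(x,p) + |t_p - s| - 12\delta$ for $s \in [a',b']$, which forces the minimiser $s$ to sit within $12\delta$ of $b'$. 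A case analysis on the parameter-ordering of $p, p', q', q$ along $\gamma$ then completes the argument: if $p, q$ bracket $[p',q']$ the inequality $d_X(p,q) \ge d_X(p',q')$ is immediate; in the mixed and both-inside cases the clamping estimates combine to give $d_X(p,q) \ge d_X(p',q') - 64\delta$; and the case where $p, q$ lie outside $\gamma'$ on the same side is ruled out because clamping would force $d_X(p',q') = O(\delta)$, contradicting $d_X(p',q') > 12\delta$. The main obstacle is precisely this last part, where the behaviour of the nearest-point projection under restricting the target space must be unpacked via a non-obvious application of Lemma \ref{lem:projGrom} and a multi-case bookkeeping; the other four parts are essentially direct consequences of Lemma \ref{lem:projGrom} and standard Gromov-product gymnastics.
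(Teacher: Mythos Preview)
Your arguments for parts (1)--(3) match the paper's essentially line for line. For part (4) you take a genuinely different route: the paper chains six auxiliary points via Lemma~\ref{lem:projection} and Lemma~\ref{lem:fellow} (so that $\pi_\gamma(x)$ and $\pi_{\gamma'}(x)$ are linked through Gromov-product landmarks on $[x,y]$, $[x,z']$, $[y',z']$), while you argue more directly from the Hausdorff equivalence $\gamma \sim_{2\delta+D} \gamma'$ and a single application of Lemma~\ref{lem:projGrom}. Both yield the same constant; yours is arguably shorter but hides the geometric picture.

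Part (5) is where the two approaches diverge most. The paper does \emph{not} do a case analysis on the position of $\pi_\gamma(x)$ relative to $\gamma'$; instead it recycles the parts already proved. It picks $p\in\pi_{\gamma'}(x)$, $q\in\pi_{\gamma'}(y)$ realising $d_{\gamma'}(x,y)>12\delta$, applies part (2) \emph{with $\gamma'$ playing the role of $\gamma$} to produce $x',y'\in[x,y]$ that are $10\delta$-close to $p,q$, observes that $\pi_\gamma(x'),\pi_\gamma(y')$ are then $20\delta$-close to $p,q$ (since $p,q\in\gamma'\subseteq\gamma$), and finally invokes part (3) on $x',y'\in[x,y]$ to get $d_\gamma(x,y)\ge d_\gamma(x',y')-24\delta\ge d_X(p,q)-64\delta$. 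This avoids all the endpoint bookkeeping.

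Your clamping argument can be made to work, but the last sentence is not correct as stated: when $p,q\in\pi_\gamma$ land on the same side of $\gamma'$, clamping only gives $d_{\gamma'}(x,y)\le 24\delta$ (each of $\pi_{\gamma'}(x),\pi_{\gamma'}(y)$ lies within $12\delta$ of the same endpoint), which is perfectly compatible with the hypothesis $d_{\gamma'}(x,y)>12\delta$ and is not a contradiction. The case is not ruled out --- it is merely harmless, since then $d_\gamma(x,y)\ge 0\ge d_{\gamma'}(x,y)-64\delta$ holds trivially. So your proof survives once you replace ``ruled out'' by ``trivially satisfied'', but the paper's recycling of (2) and (3) sidesteps this entire case split.
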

We include the proof in Appendix \ref{appendix:proj} for completeness.

We now review the notion of weak proper discontinuity introduced in \cite{bestvina2002bounded}.

\begin{dfn}\label{dfn:proper}
Let $(X, d_{X})$ be a Gromov hyperbolic space and let $G \le \Isom(X)$. We say that the action of $G$ on $X$ is \emph{proper} if \[
\forall R > 0\Big[ \# \Big\{ g \in G : d_{X}(x_{0}, gx_{0}) < R\big\} < +\infty\Big].
\]

Let $f \in G$ be a loxodromic isometry. We say that $G$ has \emph{weakly properly discontinuous (WPD)} along $f$, or that $f$ has the WPD property, if  \[
\forall R>0\, \exists L>0 \Big[\# \big\{ g \in G : d_{X}(x_{0}, gx_{0}) < R \,\,\textrm{and}\,\, d_{X}(f^{L}x_{0}, gf^{L} x_{0}) < R\big\} < +\infty \Big].
\]
If $f$ is axial in addition, we call it an \emph{axial WPD loxodromic}.

If $G$ has WPD action on a Gromov hyperbolic space and is not virtually cyclic, then we call it an \emph{acylindrically hyperbolic group}.
\end{dfn}

If $G$ acts properly on a Gromov hyperbolic space, then every loxodromic element has the WPD property automatically. We record a theorem by M. Bestvina, K. Bromberg and K. Fujiwara. \begin{dfn}
\label{dfn:eclosure}
Let $G$ be a group and let $f \in G$. We define the \emph{elementary closure} of $f$ by \[
EC(f) := \big\{ g \in G :\exists N>0  [g f^{N} g^{-1} = f^{N} ] \vee [gf^{N} g^{-1} = f^{-N}] \big\}.
\]
\end{dfn}

\begin{thm}[{\cite[Theorem H]{bestvina2015quasi}}]\label{thm:elemClos}
Let $G$ be an acylindrically hyperbolic group. Then $G$ contains an element $f$ and admits an isometric action on a Gromov hyperbolic space $X$, and there exists a constant $K>0$ such that the following holds:
\begin{enumerate}
\item  $f$ is a unital, axial WPD loxodromic isometry of $X$;
\item for each $g \in G$, either\begin{enumerate}
\item (bounded projection) the nearest point projection of $Ax(f)$ onto $g Ax(f)$ has diameter $\le K$, or
\item $g \in EC(f)$ and $gAx(f)= Ax(f)$.
\end{enumerate}
Moreover, the cyclic subgroup $\langle f \rangle$ is a finite-index subgroup of $EC(f)$.
\end{enumerate}
\end{thm}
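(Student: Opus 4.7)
The plan is to invoke the Bestvina--Bromberg--Fujiwara (BBF) quasi-tree-of-spaces construction and use the WPD hypothesis as the key analytic input. Since $G$ is acylindrically hyperbolic, Osin's equivalent characterization provides an isometric action of $G$ on some Gromov hyperbolic space $Y$ admitting a loxodromic element $f_0$ with the WPD property. The goal is to massage this action until we land in the hyperbolic space $X$ claimed in the theorem.

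First, I would establish the statement about $EC(f_0)$. The WPD hypothesis implies that the stabilizer of the pair of fixed points of $f_0$ on $\partial Y$ is virtually $\langle f_0\rangle$: an element $g \in EC(f_0)$ coarsely preserves the axis, hence translates $x_0$ and $f_0^L x_0$ by bounded amounts, so WPD forces finiteness of the coset, giving $[EC(f_0):\langle f_0\rangle]<\infty$. After replacing $f_0$ by a suitable power we may additionally arrange that $EC(f_0)$ equals the centralizer (up to finite index behavior) and that $f_0$ is axial on a quasi-axis; standard straightening then promotes a quasi-axis to an honest axis $Ax(f_0)$ in a slight modification of $Y$, and a global rescaling of the metric makes $f_0$ unital. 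Call the resulting element $f$.

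Next, the heart of the matter is the bounded projection property (2)(a). Let $\mathbb{Y} := \{gAx(f) : gEC(f) \in G/EC(f)\}$ be the $G$-orbit of $Ax(f)$ modulo its setwise stabilizer $EC(f)$. I would show: there exists $K_0>0$ such that for distinct $\alpha,\beta \in \mathbb{Y}$, the diameter of $\pi_\alpha(\beta)$ is at most $K_0$. Suppose not; then we can find $g_n \notin EC(f)$ with $\operatorname{diam}(\pi_{Ax(f)}(g_n Ax(f))) \to \infty$. By sliding along $f$ and using that $\langle f\rangle$ acts cocompactly on $Ax(f)$, I can assume these long projections all sit near a fixed segment $[x_0, f^L x_0]$ for arbitrarily large $L$. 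This produces arbitrarily many group elements $h_n$ (built from $g_n$ and powers of $f$) that move both $x_0$ and $f^L x_0$ a bounded amount, contradicting the WPD property of $f$. This is the step I expect to be the main obstacle and where all the genuine content lives; the argument is essentially the standard ``bounded geodesic image'' phenomenon for WPD elements.

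With bounded projections in hand, I would feed the projection system $\{\pi_\alpha : \alpha \in \mathbb{Y}\}$ into the BBF machinery of \cite{bestvina2015quasi} (one must verify the BBF projection axioms, which follow from the coarse-Lipschitz, no-backtracking, and Behrstock-type inequalities that hold in any Gromov hyperbolic space together with the uniform bound $K_0$). The output is a Gromov hyperbolic quasi-tree of spaces $X = \mathcal{C}(\mathbb{Y})$, on which each $\alpha \in \mathbb{Y}$ embeds isometrically, distinct embeddings have projection diameter $\le K$ for some $K \ge K_0$, and $G$ acts by isometries permuting the embedded copies according to its action on $\mathbb{Y}$. The copy of $Ax(f)$ is preserved setwise by $EC(f)$ and by translation under $f$; thus $f$ remains a unital axial loxodromic of $X$ with axis the embedded $Ax(f)$. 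Alternative (2)(a) is exactly the BBF bounded projection output for $gEC(f) \neq EC(f)$, and alternative (2)(b) together with the finite-index claim recovers the cases $g \in EC(f)$, established in the first step. This yields the theorem.
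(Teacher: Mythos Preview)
The paper does not prove this theorem; it is quoted from the literature with a citation to \cite[Theorem H]{bestvina2015quasi} and used as a black box throughout. So there is no ``paper's own proof'' to compare against.

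That said, your sketch is a reasonable outline of how such a result is established, and the essential analytic step --- deducing bounded projections between distinct translates of $Ax(f)$ from the WPD hypothesis --- is correctly identified and correctly argued. One remark on efficiency: the full BBF quasi-tree $\mathcal{C}(\mathbb{Y})$ is more than you need for the statement as written. Once you have (i) made $f$ axial and unital on a modified $Y$ (the paper itself notes this is standard just before Definition~\ref{dfn:nonelementary}), (ii) shown $[EC(f):\langle f\rangle]<\infty$ from WPD, and (iii) proved the bounded projection dichotomy in $Y$ via your WPD contradiction argument, you are already done with $X=Y$. The BBF construction upgrades this to an action on a quasi-tree, which is strictly stronger and is what the paper alludes to elsewhere (see the remarks at the end of Subsection~\ref{subsection:plans}), but the theorem as stated does not require it. Your route is correct, just longer than necessary.
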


Note that in Theorem \ref{thm:elemClos}, every finite generating set $S$ of $G$ contains an element $g \in G$ that falls into Case (2-a), as $S$ generates a non-virtually cyclic group.

The following is a well-known fact about acylindrically hyperbolic groups and is a basic ingredient of the quasi-tree construction in \cite{bestvina2015quasi}. We sketch the proof for reader's convenience.

\begin{lem}\label{lem:chainBBF}
Let $X$ be a $\delta$-hyperbolic space and let $\gamma_{1}, \ldots, \gamma_{n}$ be geodesics with mutually $K_{0}$-bounded projections. Let $z \in X$ be a point such that $d_{\gamma_{i}}(z, \gamma_{i+1}) \ge 5K_{0} + 100\delta$ for each $i=1, \ldots, n-1$. Then $d_{\gamma_{i}}(z, q) \ge  d_{\gamma_{i}}(z, \gamma_{i+1}) -(2K_{0} + 112\delta)$ for each $i$ and for each $q \in \gamma_{n}$.
\end{lem}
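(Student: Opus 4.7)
My plan is to prove Lemma \ref{lem:chainBBF} by reverse induction on $i$, running from $i = n-1$ down to $i = 1$, with $z$ and $q \in \gamma_n$ held fixed throughout. The central tools are the constriction and no-backtracking properties of nearest-point projection in $\delta$-hyperbolic spaces (Corollary \ref{cor:projGrom}(2) and (3)), together with the bounded-projection hypothesis $\diam \pi_{\gamma_i}(\gamma_j) \le K_0$.

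The base case $i = n-1$ is immediate: since $\gamma_{i+1} = \gamma_n \ni q$, we have $\pi_{\gamma_i}(q) \subseteq \pi_{\gamma_i}(\gamma_{i+1})$, and a direct comparison of the diameters $\diam\bigl(\pi_{\gamma_i}(z) \cup \pi_{\gamma_i}(q)\bigr)$ and $\diam\bigl(\pi_{\gamma_i}(z) \cup \pi_{\gamma_i}(\gamma_{i+1})\bigr)$ loses at most $K_0$, well within the allowed $2K_0 + 112\delta$.

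For the inductive step, assume the conclusion for index $i+1$. Combined with the hypothesis $d_{\gamma_{i+1}}(z, \gamma_{i+2}) \ge 5K_0 + 100\delta$, the inductive hypothesis yields $d_{\gamma_{i+1}}(z, q) \ge 3K_0 - 12\delta$; assuming without loss of generality $K_0 \ge 12\delta$ (the hypotheses are monotone in $K_0$), this comfortably exceeds the $12\delta$ threshold needed for Corollary \ref{cor:projGrom}(2). Applying that corollary to $[z,q]$ and $\gamma_{i+1}$, there is a subsegment of $[z,q]$ that is $12\delta$-Hausdorff-close to a subsegment of $\gamma_{i+1}$; in particular some point $z' \in [z,q]$ lies within $10\delta$ of $\gamma_{i+1}$. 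Coarse Lipschitzness of projection puts $\pi_{\gamma_i}(z')$ within roughly $22\delta$ of $\pi_{\gamma_i}(\gamma_{i+1})$, while no-backtracking (Corollary \ref{cor:projGrom}(3)) puts $\pi_{\gamma_i}(z')$ within $12\delta$ of $[\pi_{\gamma_i}(z), \pi_{\gamma_i}(q)]$. Combining, some point $p^* \in [\pi_{\gamma_i}(z), \pi_{\gamma_i}(q)]$ lies within $K_0 + 46\delta$ of $\pi_{\gamma_i}(\gamma_{i+1})$.

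The conclusion then follows by a positional argument: the hypothesis $d_{\gamma_i}(z, \gamma_{i+1}) \ge 5K_0 + 100\delta$ forces $p^*$ to be far from $\pi_{\gamma_i}(z)$, hence $p^*$ lies near the $\pi_{\gamma_i}(q)$-end of the geodesic. This gives $d(\pi_{\gamma_i}(z), \pi_{\gamma_i}(q)) \ge d(\pi_{\gamma_i}(z), \pi_{\gamma_i}(\gamma_{i+1})) - (K_0 + 46\delta)$, and converting back to the $d_{\gamma_i}$-diameter notation costs an additional $K_0 + 12\delta$, yielding $d_{\gamma_i}(z, q) \ge d_{\gamma_i}(z, \gamma_{i+1}) - (2K_0 + 58\delta)$, safely within the stated tolerance. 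The main technical obstacle is the constant bookkeeping: the slack in the hypothesis ($5K_0 + 100\delta$ versus the tolerance $2K_0 + 112\delta$) is calibrated precisely so that the inductive hypothesis keeps delivering a projection diameter well above the $12\delta$ constriction threshold at every step.
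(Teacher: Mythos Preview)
Your inductive approach is sound and genuinely different from the paper's proof. One small slip: the claim that ``the hypotheses are monotone in $K_0$'' is not correct as stated --- enlarging $K_0$ \emph{strengthens} the lower bound $d_{\gamma_i}(z,\gamma_{i+1}) \ge 5K_0+100\delta$, which you are not entitled to assume. The fix is painless: rather than invoking a WLOG, carry the sharper estimate you actually prove, namely $d_{\gamma_i}(z,q) \ge d_{\gamma_i}(z,\gamma_{i+1}) - (K_0 + 46\delta)$, through the induction. Then the inductive hypothesis at level $i+1$ yields $d_{\gamma_{i+1}}(z,q) \ge 4K_0 + 54\delta > 12\delta$ for every $K_0 \ge 0$, and constriction (Corollary~\ref{cor:projGrom}(2)) applies at every step without any restriction on $K_0$. (Your ``additional cost of $K_0 + 12\delta$'' in the last line is also unnecessary: since $a \in \pi_{\gamma_i}(z)$ and $b \in \pi_{\gamma_i}(q)$, one has $d_{\gamma_i}(z,q) \ge d(a,b)$ directly.)

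For comparison, the paper takes a global rather than inductive route: it selects points $p_i \in \pi_{\gamma_i}(z)$ and $q_i \in \pi_{\gamma_i}(\gamma_{i+1})$, verifies that the Gromov products $(q_{i-1} \mid q_{i+1})_{q_i}$ are uniformly small, and then applies the stability lemma (Lemma~\ref{lem:stability}) to the broken path $(z, q_1, \ldots, q_n)$ all at once, concluding that $[z, q_n]$ passes close to every $q_i$. Your argument sidesteps the stability lemma entirely, relying only on the projection properties of Corollary~\ref{cor:projGrom}, which is arguably more elementary; the paper's argument is slightly more conceptual, exhibiting the whole chain $(z, q_1, \ldots, q_n)$ as a coarse geodesic in one stroke.
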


Before proving it, let us observe a simple fact:

\begin{lem}\label{lem:neighborGromProd}
Let $X$ be a geodesic metric space, let $x, y, z \in X$ and let $w \in [x, y]$. Then $(w|z)_{y} \le (x|z)_{y}$ holds.

Let $u \in \mathcal{N}_{K}([x, y])$. Then $(u | z)_{y} \le (x|z)_{y} + K$ holds.
\end{lem}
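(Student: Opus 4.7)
The plan is to reduce both inequalities directly to the triangle inequality by unpacking the definition of the Gromov product. Recall that $(a|b)_c = \tfrac{1}{2}\bigl[d(c,a)+d(c,b)-d(a,b)\bigr]$.

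For the first statement, I would compute
\[
(x|z)_y - (w|z)_y = \tfrac{1}{2}\bigl[d(y,x) - d(y,w) + d(w,z) - d(x,z)\bigr].
\]
Since $w \in [x,y]$, the geodesic equality $d(y,x) = d(y,w)+d(w,x)$ replaces the first two terms by $d(w,x)$, giving
\[
(x|z)_y - (w|z)_y = \tfrac{1}{2}\bigl[d(w,x) + d(w,z) - d(x,z)\bigr] \ge 0
\]
by the triangle inequality applied to $x,w,z$. This yields $(w|z)_y \le (x|z)_y$.

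For the second statement, I would pick $w \in [x,y]$ with $d(u,w) \le K$, which exists by hypothesis. The first part gives $(w|z)_y \le (x|z)_y$, so it suffices to show $(u|z)_y \le (w|z)_y + K$. Expanding,
\[
(u|z)_y - (w|z)_y = \tfrac{1}{2}\bigl[d(y,u)-d(y,w) + d(w,z)-d(u,z)\bigr],
\]
and each of $|d(y,u)-d(y,w)|$ and $|d(w,z)-d(u,z)|$ is bounded by $d(u,w) \le K$ by the triangle inequality. Hence the right side is bounded by $\tfrac{1}{2}(K+K) = K$, completing the proof. There is no real obstacle here; both claims are immediate consequences of the triangle inequality once the Gromov product is expanded, and the only substantive input is the geodesic additivity $d(y,x) = d(y,w)+d(w,x)$ used in the first part.
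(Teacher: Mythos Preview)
Your proof is correct and follows essentially the same approach as the paper: both arguments use the geodesic additivity $d(x,y)=d(x,w)+d(w,y)$ together with the triangle inequality for the first part, and then deduce the second from the first by a $K$-perturbation via the triangle inequality. You have simply spelled out the computations in more detail.
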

\begin{proof}
The first statement follows from $d_{X}(x, y) =d_{X}(x, w) + d_{X}(w, y)$ and $d_{X} (x, z) \le d_{X}(x, w) + d_{X}(w, z)$. The second statement follows from the first one and the triangle inequality.
\end{proof}

\begin{proof}[Proof of Lemma \ref{lem:chainBBF}]
Let $p_{i} \in \pi_{\gamma_{i}}(z)$ and $q_{i} \in \pi_{\gamma_{i}}(\gamma_{i+1})$ be the ones such that $d_{X}(p_{i}, q_{i}) = d_{\gamma_{i}}(z, \gamma_{i+1})$; for $i = n$, we pick arbitrary $q_{n} \in \gamma_{n}$. 

Then for $i=1, \ldots, n-1$, $p_{i}$ and $q_{i}$ are $(5K_{0} + 100\delta)$-far. Furthermore, $q_{i}\in \pi_{\gamma_{i}}(\gamma_{i+1})$ and $\pi_{\gamma_{i}}(q_{i+1}))\in \pi_{\gamma_{i}}(\gamma_{i+1})$ are $K_{0}$-close because of the bounded projection assumption. Corollary \ref{cor:projGrom}(2) tells us that $[z, q_{i+1}]$ passes through the $10\delta$-neighborhood of $p_{i}$ and $(10\delta + K_{0})$-neighborhood of $q_{i}$, in order. This implies that $(z | q_{i+1})_{q_{i}} < K_{0} + 10\delta$ for $i=1, \ldots, n-1$.

For the same reason, $q_{i-1}$ is $(K_{0}+10\delta)$-close to $[z, q_{i}]$ for each $i \ge 2$. Lemma \ref{lem:neighborGromProd} tells us that \[
(q_{i-1}|q_{i+1})_{q_{i}} \le (z | q_{i+1})_{q_{i}} + (K_{0} + 10\delta) \le 2K_{0} + 20\delta.
\]
Moreover, $q_{i-1}$ is also $(K_{0} +10\delta)$-close to $[z, p_{i}]$ for each $i \ge 2$, and $p_{i}$ is $10\delta$-close to $[z, q_{i}]$ by Corollary \ref{cor:projGrom}(2). This implies that  \[
(q_{i-1} | q_{i})_{p_{i}} \le (z | q_{i})_{p_{i}} + (K_{0} + 10\delta) \le K_{0} + 20\delta.
\]
Since $d_{X}(p_{i}, q_{i}) \ge 5K_{0} + 100\delta$, we conclude $d_{X}(q_{i-1}, q_{i}) \ge 4K_{0} + 80\delta$ for $i=2, \ldots, n-1$.

We can now apply Lemma \ref{lem:stability} to the points \[
(z, q_{1}, \ldots, q_{n}).
\]
It follows that $(z | q_{n})_{q_{i}} < 2K_{0} + 32\delta$. Meanwhile, since $d_{X}(p_{i}, q_{i}) > 10K_{0} + 130\delta$ and $d_{X}(p_{i}, [z, q_{i}]) < 10\delta$, we have $(p_{i} | z)_{q_{i}} \ge 10K_{0} + 120\delta$. Lemma \ref{lem:GromIneq} tells us that $(p_{i} | q_{n})_{q_{i}} \le 2K_{0} + 36\delta$. By Lemma \ref{lem:projection}, $q_{i}$ is $(2K_{0} + 44\delta)$-close to $\pi_{[p_{i}, q_{i}]}(q_{n})$ and $d_{[p_{i}, q_{i}]}(z, q_{n})\ge d_{\gamma_{i}}(z, \gamma_{i+1}) - (2K_{0} + 44\delta)$. By Corollary \ref{cor:projGrom}(5), we have $d_{\gamma_{i}} (z, q_{n}) \ge d_{\gamma_{i}}(z, \gamma_{i+1}) - (2K_{0} + 110\delta)$ as desired.
\end{proof}

We now record another consequence of the WPD property. This intuitively tells us that the axis $\gamma$ of a WPD element does not behave like a factor of a metric product. That means, 
 there exists a uniform threshold $D_{0}$ such that, if an object is too far from $\gamma$ compared to its size, then the object has $D_{0}$-small projection on $\gamma$. Here, it does not matter if the diameter of object is larger than $D_{0}$.

\begin{lem}[{\cite[Lemma 3.3]{sisto2016quasi-convexity}, \cite[Lemma 3.2]{choi2025acylindrically}}]\label{lem:amplifyWPD}
Let $G$ be a non-virtually cyclic group with a finite generating set  $S \subseteq G$. Suppose that $G$  acts on a Gromov hyperbolic space $X \ni x_{0}$ with a WPD loxodromic element $f \in G$. Then there exists $D_{0} > 0$, and for each $k, M>0$ there exists $R=R(k, M)>0$, such that the following holds.

Let $g, h \in G$ be such that $\|g\|_{S} > R$ and $\|h\|_{S} \le M$. Then $\pi_{[x_{0}, f^{k} x_{0}]} (\{gx_{0}, ghx_{0}\})$ has diameter at most $D_{0}$. 
\end{lem}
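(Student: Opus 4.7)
I would prove this by contradiction, so suppose $\|g\|_S > R$, $\|h\|_S \le M$, and $\diam(\pi_\gamma(\{gx_0, ghx_0\})) > D_0$ for $\gamma := [x_0, f^k x_0]$ and a universal $D_0$ to be specified. The first step is to pick $p \in \pi_\gamma(gx_0)$ and $q \in \pi_\gamma(ghx_0)$ with $d_X(p, q) > D_0 - O(\delta)$ and apply Lemma \ref{lem:projGrom} to the pair $\{gx_0, ghx_0\}$: since $d_X(gx_0, ghx_0) \le M S_{\max}$ (with $S_{\max} := \max_{s \in S} d_X(x_0, sx_0)$), one gets both the upper bound $d_X(p, q) \le M S_{\max} + 12\delta$ (so the conclusion is vacuous whenever $D_0 \ge M S_{\max} + 12\delta$, and we may focus on the nontrivial regime) and the slack estimate $d_X(gx_0, p) + d_X(ghx_0, q) \le M S_{\max} - d_X(p, q) + 12\delta$. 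Choose $n_p, n_q \in \Z$ approximating the axial parameters of $p, q$ within distance $1$, so $L := n_q - n_p$ satisfies $L \ge D_0 - O(1)$.

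The argument then proceeds by a dichotomy coming from Theorem \ref{thm:elemClos}. If $g \in EC(f)$, write $g = f^n u$ with $u$ drawn from a fixed finite set of $\langle f \rangle$-coset representatives in $EC(f)$; then $\|g\|_S > R$ forces $|n|$ to exceed $R - C$ for an absolute constant $C$. Taking $R = R(k, M) > k + M S_{\max} + C + O(\delta)$ guarantees that both $gx_0$ and $ghx_0$ lie past the same endpoint of $\gamma$ on $Ax(f)$, so Corollary \ref{cor:projGrom}(1) pins their projections to within $O(\delta)$ of that endpoint, contradicting $\diam > D_0$ once $D_0$ is larger than a universal $O(\delta)$ constant. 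A parallel analysis handles the case $gh \in EC(f)$ using the estimate $\|gh\|_S \ge \|g\|_S - M$.

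The remaining case $g, gh \notin EC(f)$ is the heart of the proof and the expected main obstacle. Here Theorem \ref{thm:elemClos} gives bounded-projection windows $W_g := \pi_{Ax(f)}(gAx(f))$ and $W_{gh} := \pi_{Ax(f)}(ghAx(f))$, each of diameter $\le K$. Since $\pi_\gamma(gx_0)$ lies in the interior of $\gamma$ (as $d_X(p,q) > D_0$ is much larger than $O(\delta)$) and is $O(\delta)$-close to $\pi_{Ax(f)}(gx_0) \in W_g$, the window $W_g$ must be confined to a subsegment of $Ax(f)$ of length $k + K + O(\delta)$; the same holds for $W_{gh}$. I would then use WPD along the axis: for each integer position $j$ in this bounded subsegment (of which there are $O(k)$), any $\tilde g \in G \setminus EC(f)$ with $W_{\tilde g}$ meeting $j$ satisfies $d_X(f^j x_0, \tilde g f^{j'} x_0) \le K + O(\delta)$ for some $j'$ close to $j$, and a chain-type argument like Lemma \ref{lem:chainBBF} combined with WPD applied at scale $L_1 = L_1(R_1)$ (for a fixed small $R_1$) bounds the number of such $\tilde g$ by a constant depending only on $R_1$. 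Summing over positions gives a finite set $\mathcal{F}(k,M) \subseteq G$ containing every such $g$; choosing $R = R(k,M) > \max\{\|g'\|_S : g' \in \mathcal{F}(k,M)\}$ yields the final contradiction. The universal constant $D_0$ is ultimately taken as a combination of $K$, $L_1$, and the hyperbolicity constant $\delta$, independent of $k$ and $M$.
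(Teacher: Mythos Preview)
The paper does not give its own proof of this lemma; it is imported wholesale from the cited references \cite{sisto2016quasi-convexity} and \cite{choi2025acylindrically}. So there is no in-paper argument to compare against.

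That said, your outline has a genuine gap in the main case $g, gh \notin EC(f)$. You assert that if $W_{\tilde g} = \pi_{Ax(f)}(\tilde g Ax(f))$ meets position $j$ then $d_X(f^j x_0, \tilde g f^{j'} x_0) \le K + O(\delta)$ for some $j'$. This does not follow: knowing that $\tilde g Ax(f)$ \emph{projects} near $f^j x_0$ says nothing about how \emph{close} $\tilde g Ax(f)$ actually comes to $Ax(f)$; two distinct translates of the axis can be arbitrarily far apart while still having $K$-bounded mutual projections. You do possess closeness information from your first-paragraph slack estimate, namely $d_X(gx_0, f^{n_p} x_0) \le M S_{\max} + O(\delta)$, but that bound depends on $M$. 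Feeding this into WPD makes the WPD length $L_1$ depend on $M$, and then your $D_0$---which you build from $L_1$---would depend on $M$ too, contradicting the statement. (A secondary issue: Theorem \ref{thm:elemClos} as stated here \emph{produces} a particular $f$ and action rather than applying to the given one; the bounded-projection dichotomy for an arbitrary WPD loxodromic is a separate standard fact you would need to invoke.)

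The argument in the references runs closer to the following. Use Corollary \ref{cor:projGrom}(2) to extract a subsegment of $[gx_0, ghx_0]$ that fellow-travels $\gamma$ at the \emph{universal} scale $O(\delta)$; translating by $g^{-1}$, a length-$(D_0 - O(\delta))$ piece of $g^{-1}Ax(f)$ lies $O(\delta)$-close to the fixed geodesic $[x_0, hx_0]$. Now compare two such $g$'s for the same $h$: since $[x_0, hx_0]$ has length $\le M S_{\max}$, two of their axis-segments must overlap by more than $K$, forcing the two $g$'s into the same $EC(f)$-coset by bounded projection. WPD applied at the universal scale $O(\delta)$ then controls the relation within a coset, and the constraint $n_p \in [0,k]$ bounds the $\langle f\rangle$-part of the coset representative. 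This is precisely what lets $D_0$ depend only on $\delta$, $K$, and the WPD constant at scale $O(\delta)$, while all dependence on $k$ and $M$ is pushed into $R(k,M)$.
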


\section{Hyperbolic magic lemma} \label{section:properMagic}

The following is called a hyperbolic magic lemma \cite[Proposition 4.1]{hutchcroft2019percolation}.
Hutchcroft proved it under the assumption that $X$ is the real hyperbolic space $\mathbb{H}^{n}$ and $A$ lies in a quasi-convex set. Our aim is to generalize it to general Gromov hyperbolic spaces. Readers are once again invited to play with the animation\footnote{\url{https://inhyeokchoi48.github.io/research/binary}}.

A subset $Y \subseteq X$ of a metric space is \emph{uniformly locally finite} if \[
\sup_{y \in Y} \# \big( \mathcal{N}_{R}(y) \cap Y \big) < +\infty \quad (\forall R > 0).
\]
The vertex set of a Cayley graph of a  finitely generated group is uniformly locally finite. More generally, if a group $G$ acts properly on a metric space $X\ni x_{0}$, then the $G$-orbit $G\cdot x_{0}$ is uniformly locally finite.

Given $x, y\in X$ and $D>0$, we define \[
\mathcal{H}_{D}(x, y) := \{ z \in X : (z|y)_{x} \ge D\}.
\]
(Note that this set can well be empty). Sets of this sort are called \emph{halfspaces} rooted at $x$  with radius parameter $D$.

\begin{prop}\label{prop:magic}
Let $X$ be a $\delta$-hyperbolic space and let $Y$ be a uniformly locally finite subset of $X$. Then for each $\epsilon, D>0$ there exists a constant $N = N(\epsilon, D, Y)$ such that for every finite set $A \subseteq Y$ there exists a subset $A' \subseteq A$ satisfying:

\begin{enumerate}
\item $\#A' \ge (1-\epsilon) \#A$;
\item For each $a \in A'$ there exist halfspaces $\mathcal{H}_{1}, \mathcal{H}_{2} \in X$ rooted at $a$ with radius parameter $D$ such that $\#\big( A \setminus (\mathcal{H}_{1} \cup \mathcal{H}_{2}) \big) \le N$.
\end{enumerate}
\end{prop}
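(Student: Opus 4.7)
Following Subsection~\ref{subsection:plans}, the plan is to identify a ``bad'' subset $\mathcal{A}\subseteq A$ of points for which no pair of halfspaces rooted at $a$ with radius $D$ covers all but $N$ points of $A$, and take $A' := A \setminus \mathcal{A}$. The goal is $\#\mathcal{A}\le\epsilon\#A$ for a suitable $N = N(\epsilon, D, Y)$.

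Fix a basepoint $o \in X$ and equip $\mathcal{A}$ with a forest structure: $a' \in \mathcal{A}$ is a \emph{child} of $a$ if $a$ lies within $O(\delta)$ of $[o, a']$ and no other element of $\mathcal{A}$ lies strictly between $a$ and $a'$ on that geodesic; the parent is then unique up to the $\delta$-scale by Corollary~\ref{cor:projGrom}. Classify $\mathcal{A}$ into \emph{outmost} (no children), \emph{pre-inner} (exactly one), and \emph{deeply inner} ($\ge 2$), and let $\mathcal{G}$ be the union of outmost and pre-inner points.

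For $a \in \mathcal{G}$, set $\mathcal{H}_1(a) := \mathcal{H}_D(a, o)$ and $\mathcal{H}_2(a) := \mathcal{H}_D(a, \text{child}(a))$ (taking an arbitrary degenerate halfspace if $a$ is outmost). Because $a$ is bad, the residual $R(a) := A \setminus(\mathcal{H}_1(a)\cup\mathcal{H}_2(a))$ contains more than $N$ points. The key claim, proved by tracking geodesic alignment, is that the residuals $\{R(a)\}_{a\in\mathcal{G}}$ are pairwise disjoint. Indeed, a common $b \in R(a_1)\cap R(a_2)$, with $a_1$ closer to $o$, would by Lemma~\ref{lem:stability} force $a_2$ to be a forest-descendant of $a_1$ along $[o,b]$; then either $a_2$ is the unique child of $a_1$, in which case $b \in \mathcal{H}_2(a_1)$ contradicting $b \in R(a_1)$, or $a_1$ has a second bad child among its descendants along $[o,b]$, contradicting $a_1\in\mathcal{G}$. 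Disjointness yields $N\#\mathcal{G} \le \#A$, hence $\#\mathcal{G} \le \#A/N$. A standard forest-counting argument bounds $\#(\text{deeply inner}) \le \#\mathcal{G}$ (branching internal nodes cannot outnumber leaves), so $\#\mathcal{A} \le 2\#A/N \le \epsilon\#A$ once $N \ge 2/\epsilon$; uniform local finiteness of $Y$ enters only through bounded contributions from $\mathcal{N}_{O(\delta)}(a) \cap Y$ needed to convert between direction classes and halfspaces via Lemma~\ref{lem:GromIneq} and Lemma~\ref{lem:projection}.

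\textbf{Main obstacle.} The technical core is the disjointness of the residuals $R(a)$, which demands that the forest structure on $\mathcal{A}$ genuinely controls how distinct bad points can share downstream points of $A$. This requires careful hyperbolic bookkeeping: the ``direction'' of $b$ from $a$ is only defined up to the $\delta$-scale, so halfspaces with slightly different roots can overlap, and the parent relation must be verified well-defined modulo $O(\delta)$ so that the forest-ancestry argument produces the promised contradiction. The constriction and no-backtracking estimates of Corollary~\ref{cor:projGrom}, together with the fellow-traveling property of Lemma~\ref{lem:fellow}, are the main tools; the pre-inner case, where the unique child's direction replaces the greedy choice, requires additional case analysis to absorb ambiguities into the local finiteness bound.
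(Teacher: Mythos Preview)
Your outline follows the same blueprint as the paper's proof (and the intuition in Subsection~\ref{subsection:plans}): isolate the bad set, give it a tree-like ordering with respect to a basepoint, separate good (leaf or lone-child) from deeply inner points, and exploit disjointness of residuals for good points. However, one step is genuinely missing, and it is not a technicality.

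\textbf{The gap: no separation step.} Your forest structure and your disjointness claim both silently assume that distinct elements of $\mathcal{A}$ are far apart on the scale of $D$, not merely $\delta$. They are not. Take two bad points $a_{1},a_{2}\in\mathcal{G}$ with $d_{X}(a_{1},a_{2})<D$: neither is a descendant of the other in any meaningful sense, yet their residuals $R(a_{1})$ and $R(a_{2})$ are essentially the same set (both consist of points $b$ with $(b\,|\,o)_{a_{i}}<D$ and $(b\,|\,c_{i})_{a_{i}}<D$, and these conditions are stable under moving $a_{i}$ by $<D$). So the inequality $N\cdot\#\mathcal{G}\le\#A$ fails, and with it the bound $\#\mathcal{A}\le 2\#A/N$. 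The same clustering also breaks the parent relation: ``unique up to the $\delta$-scale'' is not a forest.

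The paper handles this by first extracting a maximal $100D$-separated subset $\mathcal{A}_{2}\subseteq\mathcal{A}_{1}$, running the entire good/bad argument on $\mathcal{A}_{2}$, and only at the end invoking uniform local finiteness to get $\#\mathcal{A}_{1}\le M\cdot\#\mathcal{A}_{2}$ with $M=\sup_{y}\#(\mathcal{N}_{100D}(y)\cap Y)$. This $M$ is exactly what enters the final constant $N=2M/\epsilon$; your proposed $N\ge 2/\epsilon$ cannot work, and your description of where local finiteness enters (``bounded contributions from $\mathcal{N}_{O(\delta)}(a)\cap Y$'') misidentifies both the scale and the mechanism. Once you insert the $100D$-separation step, the rest of your outline becomes essentially the paper's argument, though the paper replaces the static forest with a dynamic procedure (choosing $b_{i},c_{i}$ as \emph{closest} elements in successive anti-halfspaces) whose non-redundancy (Claim~\ref{claim:unredun}) is proved by a minimality argument rather than forest ancestry.
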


\begin{proof}
For simplicity, we assume $1 \le \delta \le 0.0001D$. Because $Y$ is locally uniformly finite, we have \[
\sup_{y \in Y} \# \big( \mathcal{N}_{100D}(y) \cap Y\big) =: M < +\infty.
\]
We set $N = 2M/\epsilon$. Note that $N$ depends on $\epsilon, D, Y$ but not on the choice of $A \subseteq Y$.

Now let $A \subseteq Y$ be a finite set. Let $A'$ be the collection of the elements of $A$ that satisfy the condition (2) in the statement. Our goal is to show $\#A' \ge (1-\epsilon)\#A$.

For a technical reason we introduce a variation of the notion of halfspace. Given $x, y \in X$ and $r > 0$, let us define the \emph{anti-halfspace}\[
\mathfrak{A}_{D}(x, y) := \left\{ z \in X \setminus \mathcal{N}_{6D}(x) : \begin{array}{c} \textrm{$\exists$ a geodesic $\gamma : [0, \tau] \rightarrow X$ and $0 \le \tau_{1} \le \tau$ such that}\\
\textrm{$\gamma(0) = y$, $d_{X}(\gamma(\tau_{1}), x) < D+100\delta$ and $d_{X}(\gamma(\tau), z) < D + 200\delta$} \end{array}\right\}.
\]
This is morally the complement of $\mathcal{H}_{D}(x, y)$ but not quite exactly.

\begin{figure}
\begin{tikzpicture}
\fill (0, 0) circle (0.07);
\fill (3, 0) circle (0.07);
\fill[opacity=0.15] (2.3, 3) -- (2.3, 0.7*1.7230508) arc (120:-120:1.4) -- (2.3, -3) -- (6.5, -3) -- (6.5, 3) -- cycle;
\draw[thick] (3, 0) circle (0.5);
\draw[thick] (0, 0) -- (6, -0.75);
\draw[thick] (5.2, -1.2) circle (0.7);
\fill[thick] (5.2, -1.2) circle (0.07);
\draw (2.85, -0.8) node {$\gamma(\tau)$};
\draw (5.3, -0.2) node {$\gamma(\sigma)$};
\draw (3, 0.22) node {$x$};
\draw (0, 0.25) node {$y$};
\draw (5.2, -1.42) node {$z$};
\end{tikzpicture}
\caption{Definition of anti-halfspace $\mathfrak{A}_{D}(x, y)$.}
\end{figure}
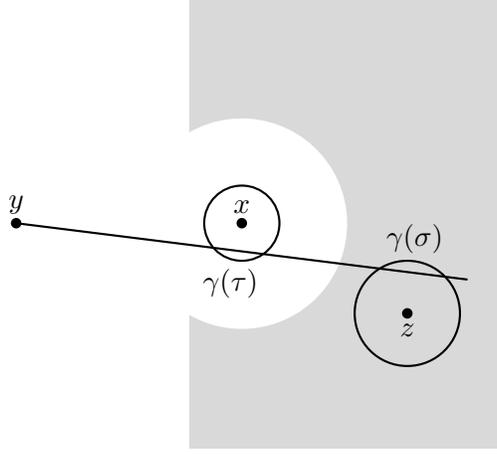

We record two elementary observations.
\begin{obs}\label{obs:magicObs1}
For every $x, y \in X$ we have $x  \notin \mathfrak{A}_{D}(x, y)$.  If $d_{X}(x, y) \ge 100D$ moreover, then $y \notin \mathfrak{A}_{D}(x, y)$.
\end{obs}

\begin{obs}\label{obs:magicObs2}
Let $x, y \in X$ and  let $z \in \mathfrak{A}_{D}(x, y)$. Then $d_{X}(y, z) \ge d_{X}(x, y) + D$.
\end{obs}

Let us now collect problematic elements, i.e., \[
\mathcal{A}_{1} := A \setminus A' = \left\{a \in A :  \begin{array}{c}\#\big( A  \setminus (\mathcal{H}_{1} \cup \mathcal{H}_{2}) \big) \ge N\,\, \textrm{for every halfspaces }\\
\textrm{$\mathcal{H}_{1}, \mathcal{H}_{2}$  rooted at $a$ with distance parameter $D$}\end{array}\right\}.
\]
We now pick a maximally $100D$-separated subset $\mathcal{A}_{2}$ of $\mathcal{A}_{1}$, i.e., we have \begin{enumerate}
\item $d_{X}(a, a') \ge 100D$ for each pair of distinct elements $a, a' \in \mathcal{A}_{2}$;
\item $\mathcal{A}_{2}$ is a maximal subset of $\mathcal{A}_{1}$ satisfying this property.
\end{enumerate}
Then $\bigcup_{a \in \mathcal{A}_{2}} (\mathcal{N}_{100D}(a) \cap Y)$ covers entire $\mathcal{A}_{1}$ (if not, a missed element can be added to $\mathcal{A}_{2}$ and break the maximality). Since $\mathcal{N}_{100D}(a) \cap Y$ has at most $M$ elements for each $a \in \mathcal{A}_{2}$, we have \[
\#\mathcal{A}_{2} \ge \frac{1}{M} \cdot \#\mathcal{A}_{1}.
\]

The proof will be done once we show that $\#\mathcal{A}_{2} \le \frac{\epsilon}{M} \# A$. For this one might wish to create disjoint complements of halfspaces rooted at  each element of  $\mathcal{A}_{2}$. However, the complement of halfspaces rooted at distinct elements of $\mathcal{A}_{2}$ might intersect. Our next goal is to extract some portion of $\mathcal{A}_{2}$ for which we can create disjoint complements of halfspaces.

Let us first prepare empty collections $\mathcal{B}= \mathcal{U} = \mathcal{G} = \emptyset$. They are meant to be collections of bad, undecided and good elements. Fix a basepoint $x_{0} \in X$. Enumerate $\mathcal{A}_{2}$ by the distance from $x_{0}$, i.e., let $\mathcal{A}_{2} = \{a_{1}, a_{2}, \ldots, a_{\#\mathcal{A}_{2}}\}$ be such that $d_{X}(x_{0}, a_{i}) \le d_{X}(x_{0}, a_{i+1})$ for each $i$. At each step $i=1, \ldots, \#\mathcal{A}_{2}$, we will define a point $b_{i} \in X$ and  put $a_{i}$ in either $\mathcal{B}$ or $\mathcal{G}$; this decision is final and shall not be modified further. We may put some other elements of $\mathcal{A}_{2}$ in $\mathcal{U}$, whose their classification will change later. 
We will keep the balance $\#\mathcal{B} \le \#\mathcal{U} + \#\mathcal{G}$ throughout. After the last step there will be no element of $\mathcal{U}$, so we will have $\#\mathcal{B} \le \#\mathcal{G}$.

We now describe the procedure. At step $i$, we first declare $\mathfrak{A}_{i}:= \mathfrak{A}_{D}(a_{i}, x_{0})$. \begin{enumerate}
\item If $\mathcal{A}_{2} \cap \mathfrak{A}_{i}$ has no element, then we declare that $a_{i} \in \mathcal{G}$ and $b_{i} := x_{0}$.
\item If not, pick $b_{i} \in \mathcal{A}_{2} \cap \mathfrak{A}_{i}$ that is the \emph{closest} to $x_{0}$. We then declare $\mathfrak{A}_{i}' := \mathfrak{A}_{D}( a_{i}, b_{i})$. 
\begin{enumerate}
\item If $\mathcal{A}_{2} \cap \mathfrak{A}_{i} \cap \mathfrak{A}_{i}'$ has no element, then we declare that $a_{i} \in \mathcal{G}$. 
\item If not, we pick $c_{i} \in \mathcal{A}_{2} \cap \mathfrak{A}_{i} \cap \mathfrak{A}_{i}'$ that is the \emph{closest} to $x_{0}$. We then declare $a_{i} \in \mathcal{B}$ and $b_{i}, c_{i} \in \mathcal{U}$.
\end{enumerate}
\end{enumerate}
(If an element in $\mathcal{U}$ is declared good or bad, it is not undecided anymore; we remove it from $\mathcal{U}$.)

Till step $i$, $\mathcal{G} \cup \mathcal{B}$ comprises of elements from $\{a_{1}, \ldots, a_{i}\}$; they do not contain any of $a_{i+1}, a_{i+2}, \ldots$. ($\ast$) Let us observe what happens at step $i$.

In case (1), $\mathcal{G}$ gains one more element that might be from  $\mathcal{U}$ or not. $\mathcal{B}$ does not change. Overall, $\#\mathcal{B}$ stays the same and $\#\mathcal{U} + \#\mathcal{G}$ does not decrease. Similar situation happens in Case (2-a).

In case (2-b), $\mathcal{B}$ gains one element $a_{i}$, which might be from $\mathcal{U}$. In exchange, $\mathcal{U}$ gains elements $b_{i}$ and $c_{i}$. Observation \ref{obs:magicObs2} guarantees that $d_{X}(x_{0}, b_{i}), d_{X}(x_{0}, c_{i}) \ge d_{X}(x_{0}, a_{i}) + D$. Since $\mathcal{A}_{2}$ was labelled with respect to the distance from $x_{0}$, we conclude that $b_{i}, c_{i} \in \{a_{i+1}, a_{i+2}, \ldots\}$; in other words, neither $b_{i}$ nor $c_{i}$ come from $\mathcal{G} \cup \mathcal{B}$. We thus confirm that elements are never re-classified once they are put in $\mathcal{G} \cup \mathcal{B}$.

Furthermore, note that $b_{i} \in \mathfrak{A}_{D}(a_{i}, x_{0}) \cap \mathcal{A}_{2}$ and $c_{i} \in \mathfrak{A}_{D}(a_{i}, b_{i}) \cap \mathcal{A}_{2}$. By Observation \ref{obs:magicObs1}, the former membership implies that $b_{i} \neq a_{i}$ and $d_{X}(b_{i}, a_{i}) \ge 100D$ (as $A_{2}$ is $100D$-separated), and the latter membership implies that $c_{i} \notin \{a_{i}, b_{i}\}$. In particular, $b_{i}, c_{i}$ are distinct. If $b_{i}, c_{i}$ are not from $\mathcal{U}$ at step $i-1$ and are genuinely new additions to $\mathcal{U}$, then we can conclude that $\#\mathcal{U}$ increases at least by 1 in Case (2-b). It remains to show\begin{claim}\label{claim:unredun}
For $i< j$ such that $a_{i}, a_{j} \in \mathcal{B}$, we have $\{b_{i}, c_{i}\} \cap \{b_{j}, c_{j}\} =\emptyset$.
\end{claim} 

\begin{proof}[Proof of Claim \ref{claim:unredun}]
Suppose first to the contrary that $b_{i} \in \{b_{j}, c_{j}\}$. Then by the construction of $b_{i}$, $b_{j}$ and $c_{j}$, we have \[
b_{i} \in \mathfrak{A}_{D}(a_{i}, x_{0}) \cap \mathfrak{A}_{D}(a_{j}, x_{0}).
\]
Let $\gamma : [0, \tau] \rightarrow X$ be a geodesic starting at $x_{0}$ and $0 \le \tau_{1} \le \tau$ be such that \[
d_{X}\big(\gamma(\tau_{1}), a_{i}\big) < D+100\delta, \quad d_{X}\big(\gamma(\tau), b_{i}\big) < D+200\delta.
\] Let $\gamma' : [0, \sigma] \rightarrow X$ be a geodesic starting at $x_{0}$ and $0 \le \sigma_{1} \le \sigma$ be such that \[
d_{X}\big(\gamma'(\sigma_{1}), a_{j}\big) < D+100\delta, \quad d_{X}\big(\gamma'(\sigma), b_{i}\big) < D+200\delta.
\] 

Let $L_{min} := \big(\gamma(\tau) \big| \gamma'(\sigma) \big)_{x_{0}}$. Lemma \ref{lem:fellow} tells us that $d_{X}(\gamma(t), \gamma'(t)) < 4\delta$ for $0 \le t \le L_{min}$. Note that  \[
\tau- L_{min} =  \big(x_{0} \big| \gamma'(\sigma) \big)_{\gamma(\tau) } \le d_{X}\big(\gamma(\tau), \gamma'(\sigma)\big) \le 2D + 400\delta.
\] We conclude that $L_{min} \ge \tau - 3D$. Similarly $L_{min} \ge \sigma- 3D$.

Meanwhile, recall that $a_{j}$ and $b_{i} \in \{b_{j}, c_{j}\}$ are distinct elements of a $100D$-separated set  $\mathcal{A}_{2}$. It follows that  $d_{X}(a_{j}, b_{i}) > 100D$ and $\sigma - \sigma_{1} \ge 97D$. In other words, we have $\sigma_{1} \le \sigma - 97D \le L_{min} - 94D \le \tau - 94D$. 

We now have \begin{equation}\label{eqn:GammaSigma}
d_{X}\big(\gamma(\sigma_{1}), a_{j}\big) \le d_{X}\big(\gamma(\sigma_{1}), \gamma'(\sigma_{1})\big) + d_{X}\big(\gamma'(\sigma_{1}), a_{j}\big) \le D + 120\delta.
\end{equation}
This implies that \[
100D \le d_{X}(a_{i}, a_{j}) \le d_{X}\big(a_{i}, \gamma(\tau_{1})\big) + d_{X}\big(\gamma(\tau_{1}), \gamma(\sigma_{1})\big) + d_{X}\big(\gamma(\sigma_{1}), a_{j}\big) \le 3D + |\tau_{1} - \sigma_{1}|,
\]
i.e., $\tau_{1} \ge \sigma_{1} + 97D$ or $\tau_{1} \le \sigma_{1} - 97D$. In the former case, we have \[
\begin{aligned}
d_{X}(x_{0}, a_{i}) &\ge \tau_{1} - d_{X}\big(\gamma(\tau_{1}), a_{i}\big) \\
&\ge \tau_{1} - (D + 100\delta) \ge \sigma_{1} +95D\\
& \ge d_{X}\big(x_{0}, \gamma'(\sigma_{1})\big) +d_{X}\big(\gamma'(\sigma_{1}), a_{j}\big) + 90D \ge d_{X}(x_{0}, a_{j})+90D,
\end{aligned}
\]
contradicting the ordering of $\{a_{1}, a_{2}, \ldots\}$. Hence, the latter case holds.

We now have timing $0\le \tau_{1} \le \sigma_{1} \le L_{min} \le \tau$ for the geodesic $\gamma$. Recall also Inequality \ref{eqn:GammaSigma}. We conclude $a_{j} \in \mathcal{A}_{2} \cap \mathfrak{A}_{D}(a_{i}, x_{0})$. Moreover, \[\begin{aligned}
d_{X}(x_{0}, a_{j}) &\le d_{X}(x_{0},  \gamma'(\sigma_{1}')) + d_{X}(\gamma'(\sigma_{1}'), a_{j}) \\
&\le \sigma_{1} + D + 100\delta \le \sigma - 95D\\
& \le d_{X}(x_{0}, \gamma'(\sigma)) - d_{X}(\gamma'(\sigma), b_{i})- 90D \le d_{X}(x_{0}, b_{i}) - 90D.
\end{aligned}
\]
This contradicts the minimality of $b_{i}$ with respect to the distance from $x_{0}$. Hence, we have $b_{i} \notin \{b_{j}, c_{j}\}$.

Now suppose to the contrary that $c_{i} \in \{b_{j}, c_{j}\}$. This implies that \[
c_{i} \in \mathfrak{A}_{D}(a_{i}, b_{i}) \cap \mathfrak{A}_{D}(a_{i}, x_{0}) \cap  \mathfrak{A}_{D}(a_{j}, x_{0}).
\]
We pick a geodesic $\gamma : [0, \tau] \rightarrow X$ starting at $x_{0}$ and $0 \le \tau_{1} \le \tau$ such that \[
d_{X}(\gamma(\tau_{1}), a_{i}) < D + 1000\delta, \quad d_{X}(\gamma(\tau), c_{i}) < D + 200\delta.
\] The previous argument tells us the following: since $c_{i} \in \mathfrak{A}_{D}(a_{i}, x_{0}) \cap \mathfrak{A}_{D}(a_{j}, x_{0})$, there exists $\tau_{1} +97D \le \sigma_{1} \le \tau - 90D$ such that $d_{X}(\gamma(\sigma_{1}), a_{j}) \le D + 120\delta$. In particular, $a_{j} \in \mathfrak{A}_{D}(a_{i}, x_{0})$. Moreover, $a_{j}$ is closer than $c_{i}$ to $x_{0}$.

Now consider a geodesic $\eta : [0, L'] \rightarrow X$ starting at $b_{i}$ and $0 \le \tau_{1}' \le \tau'$ such that \[
d_{X}(\eta(\tau_{1}'), a_{i}) < D + 100\delta, \quad d_{X}(\eta(\tau'), c_{i}) < D + 200\delta.
\]

We first consider the geodesic triangle connecting $\gamma(\tau_{1}), \gamma(\tau)$ and $\eta(\tau_{1}')$. By the $\delta$-slimness of the triangle, $\gamma(\sigma_{1}) \in \gamma|_{[\tau_{1}, \tau]}$ is $\delta$-close to either $[\gamma(\tau_{1}), \eta(\tau_{1}')]$ or $[\eta(\tau_{1}'), \gamma(\tau)]$. Meanwhile, the former one is contained in $\mathcal{N}_{3D}(\gamma(\tau_{1}))$, whereas $\gamma(\sigma_{1})$ is at least $97D$-far from $\gamma(\tau_{1})$. Hence, $\gamma(\sigma_{1})$ is $\delta$-close to some point $p \in [\eta(\tau_{1}'), \gamma(\tau)]$.

Next, we observe the geodesic triangle connecting $\eta(\tau_{1}'), \eta(\tau')$ and $\gamma(\tau)$. This time, $p$ is $\delta$-close to either $[\eta(\tau_{1}'), \eta(\tau')]$ or $[\eta(\tau'), \gamma(\tau)]$. The latter one is contained in $\mathcal{N}_{1.5D}(c_{i})$ and hence in $\mathcal{N}_{3D}(\gamma(\tau))$. Meanwhile, $\gamma(\sigma_{1})$ is $90D$-far from $\gamma(\tau)$, so $p$ is $89D$-far from $\gamma(\tau)$. Hence, $p$ cannot be $\delta$-close to $[\eta(\tau'), \gamma(\tau)]$, and is rather $\delta$-close to $[\eta(\tau_{1}'), \eta(\tau')]$.

In conclusion, $\gamma(\sigma_{1})$ is $2\delta$-close to some point $q \in \eta|_{[\tau_{1}', \tau']}$. This $q$ is $(D + 122\delta)$-close to $a_{j}$. It follows that $a_{j} \in \mathfrak{A}_{D}(a_{i}, b_{i})$.

In conclusion, $a_{j} \in \mathfrak{A}_{D}(a_{i}, b_{i}) \cap \mathfrak{A}_{D}(a_{i}, x_{0})$ and is closer than $c_{i}$ to $x_{0}$. This contradicts the minimality of $c_{i}$. 
\end{proof}

Thanks to the claim, we conclude that $\#\mathcal{B} \le \#\mathcal{U} + \#\mathcal{G}$ at each step. Recall that $a_{i} \in \mathcal{A}_{2}$ is declared good or bad at step $i$ and is not affected thereafter. Hence, after the last step, there is no element of $\mathcal{U}$ left. This means that $\#\mathcal{B} \le \#\mathcal{G}$, and $\mathcal{G}$ takes up at least half of $\mathcal{A}_{2}$.

Now, with the final $\mathcal{G}$ in hand, for each $a_{i} \in \mathcal{G}$ we define \[
K_{i} := X \setminus \big(\mathcal{H}_{D}(a_{i}, x_{0}) \cup \mathcal{H}_{D}(a_{i}, b_{i})\big).
\]
Since $a_{i} \in \mathcal{G} \subseteq \mathcal{A}_{2}$, we have $\#(K_{i} \cap A) \ge N$. The remaining claim is: 

\begin{claim}\label{claim:magicDisjt}
For every pair of distinct elements $a_{i}, a_{j} \in \mathcal{G}$, $K_{i}$ and $K_{j}$ do not intersect.
\end{claim}

To check this claim, suppose to the contrary that $K_{i}$ and $K_{j}$ has a common element $z$ for some $i < j$ such that $a_{i}, a_{j} \in \mathcal{G}$. This means that $(x_{0} | z)_{a_{i}}, (x_{0} | z)_{a_{j}} < D$. Now let $\gamma : [0, L] \rightarrow X$ be the geodesic connecting $x_{0}$ to $z$. Lemma \ref{lem:stability} guarantees timings $\tau, \tau' \in [0, L]$ such that $d_{X}(\gamma(\tau), a_{i}), d_{X}(\gamma(\tau'), a_{j}) < D+12\delta$. 

Recall that $a_{i}$ and $a_{j}$ are $100D$-apart. This implies that $|\tau - \tau'| > 97D$. If $\tau' \le \tau - 97D$, then we have \[
d_{X}(x_{0}, a_{j}) < d_{X}(x_{0}, a_{i}) - 97D + 2(D + 12\delta) \le d_{X}(x_{0}, a_{i}) - 90D,
\]
which contradicts our labelling convention of elements of $\mathcal{A}_{2}$. Hence, $\tau \le \tau' - 97D$ holds. In particular, $a_{j} \in \mathfrak{A}_{D}(a_{i}, x_{0})$.

Now note that $(x_{0} | z)_{a_{j}} < D$ and that \[(x_{0} | a_{i})_{a_{j}} \ge  \big(\gamma(0) \big| \gamma(\tau)\big)_{\gamma(\tau')} - d_{X}(\gamma(\tau), a_{i}) - d_{X}(\gamma(\tau'), a_{j}) \ge 97D - (2D + 24\delta) \ge 90D.
\]
Gromov's 4-point condition (Lemma \ref{lem:GromIneq}) tells us that $(a_{i} | z)_{a_{j}} \le D + 4\delta$. 

Meanwhile,  $(b_{i} | z)_{a_{i}} < D$ because $z \notin \mathcal{H}_{D} (a_{i}, b_{i})$. This time, $(z | a_{j})_{a_{i}}$ is similar to $(z | \gamma(\tau'))_{\gamma(\tau)} \ge 97D$; we have $(z| a_{j})_{a_{i}} \ge 90D$. Another application of Gromov's 4-point condition leads to $(b_{i} | a_{j})_{a_{i}} < D + 4\delta$.

Now, Lemma \ref{lem:stability} applies to the sequence $(b_{i}, a_{i}, a_{j}, z)$ as $d_{X}(a_{i}, a_{j}) \ge 90D \ge 2 (D+ 4\delta)$. We obtain a geodesic $\eta$ from $b_{i}$ to $z$ that passes through the $(D + 16\delta)$-neighborhoods of $a_{i}$ and $a_{j}$ in order. We conclude $a_{j} \in \mathfrak{A}_{D}(a_{i}, b_{i})$. 

In summary, $a_{j} \in \mathfrak{A}_{2} \cap \mathfrak{A}_{D}(a_{i}, x_{0}) \cap \mathfrak{A}_{D}(a_{i}, b_{i})$. This contradicts the goodness of $a_{i}$. Hence, $z$ cannot exist, and $K_{i}$ and $K_{j}$ are disjoint.

With Claim \ref{claim:magicDisjt} in hand, we have \[
\# A \ge \sum_{i : a_{i} \in \mathcal{G}} \# (K_{i} \cap A) \ge N \cdot \#\mathcal{G} \ge N \cdot \frac{\#\mathcal{A}_{2}}{2} \ge N \cdot \frac{\#\mathcal{A}_{1}}{2M} \ge \frac{1}{\epsilon}(\#A - \#A').
\]
This ends the proof.\end{proof}

Now suppose that a group $G$ is acting properly on $X \ni x_{0}$. Then the stabilizer of $x_{0}$ is finite, and the $G$-orbit of $x_{0}$ is uniformly locally finite. By Proposition \ref{prop:magic}, we conclude that:

\begin{prop}\label{prop:magicProper}
Let $X$ be a $\delta$-hyperbolic space with a basepoint $x_{0}$ and let $G$ be a group acting properly on $X$. Then for each $\epsilon, D>0$ there exists a constant $N = N(\epsilon, D, Y)$ such that for every finite set $A \subseteq G$ there exists a subset $A' \subseteq A$ satisfying:
\begin{enumerate}
\item $\#A' \ge (1-\epsilon) \#A$;
\item For each $a \in A'$ there exist halfspaces $\mathcal{H}_{1}, \mathcal{H}_{2} \subseteq  X$ rooted at $ax_{0}$ with radius parameter $D$ such that $\#\big( \{ g \in A: gx_{0} \notin  (\mathcal{H}_{1} \cup \mathcal{H}_{2})\} \big) \le N$.
\end{enumerate}
\end{prop}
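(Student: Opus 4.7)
The plan is to reduce the group-element statement to the orbit statement given by Proposition \ref{prop:magic}, using the fact that the orbit map $G \to G \cdot x_{0}$ is finite-to-one with uniform fiber size.

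First I would observe that since $G$ acts properly on $X$, the stabilizer $F := \mathrm{Stab}_{G}(x_{0})$ is finite; set $K := \#F$. Then for every $y \in G \cdot x_{0}$, the preimage $\{g \in G : g x_{0} = y\}$ is a coset of $F$ and hence has exactly $K$ elements. Moreover, the orbit $Y := G \cdot x_{0}$ is uniformly locally finite in $X$: for every $R>0$, the set $\{g \in G : d_{X}(x_{0}, g x_{0}) \le R\}$ is finite by properness, and the number of orbit points in any $R$-ball is bounded by this constant because $G$ acts by isometries. Hence Proposition \ref{prop:magic} applies to $Y$.

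Next, given $\epsilon, D > 0$, let $\epsilon' := \epsilon / K$ and apply Proposition \ref{prop:magic} with parameters $(\epsilon', D, Y)$ to obtain a constant $N' = N(\epsilon', D, Y)$. Now let $A \subseteq G$ be a finite subset. The image $A x_{0} \subseteq Y$ is finite, so Proposition \ref{prop:magic} supplies a subset $B' \subseteq A x_{0}$ with $\#B' \ge (1-\epsilon') \#(A x_{0})$ such that for each $y \in B'$ there exist halfspaces $\mathcal{H}_{1}(y), \mathcal{H}_{2}(y)$ rooted at $y$ with radius parameter $D$ satisfying $\#\big( A x_{0} \setminus (\mathcal{H}_{1}(y) \cup \mathcal{H}_{2}(y)) \big) \le N'$.

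Finally, define $A' := \{ a \in A : a x_{0} \in B'\}$. Since each point of $A x_{0}$ has at most $K$ preimages in $A$, we estimate
\[
\#(A \setminus A') \le K \cdot \#(A x_{0} \setminus B') \le K \epsilon' \cdot \#(A x_{0}) \le K \epsilon' \cdot \#A = \epsilon \cdot \#A,
\]
which gives (1). For (2), fix $a \in A'$ and take $\mathcal{H}_{1} = \mathcal{H}_{1}(a x_{0})$, $\mathcal{H}_{2} = \mathcal{H}_{2}(a x_{0})$, which are halfspaces rooted at $a x_{0}$ with radius parameter $D$. Then
\[
\#\{ g \in A : g x_{0} \notin \mathcal{H}_{1} \cup \mathcal{H}_{2}\} \le K \cdot \#\big( A x_{0} \setminus (\mathcal{H}_{1} \cup \mathcal{H}_{2})\big) \le K N',
\]
so setting $N := K N'$ finishes the proof. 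There is no serious obstacle here; the whole content lies in Proposition \ref{prop:magic}, and the passage from orbit points to group elements is merely a bookkeeping step using the finite uniform multiplicity of the orbit map.
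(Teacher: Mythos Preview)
Your proposal is correct and follows exactly the approach the paper takes: the paper simply observes that properness makes the stabilizer finite and the orbit uniformly locally finite, then invokes Proposition~\ref{prop:magic}. You have merely filled in the bookkeeping (the factor $K$ and the adjusted $\epsilon' = \epsilon/K$) that the paper leaves implicit.
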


\section{Supporting hyperplane lemma and the critical exponent $\gamma$} \label{section:supporting}

With an additional assumption that $G$ is non-elementary, the hyperbolic magic lemma implies the following supporting hyperplane lemma. Still, we will prove it for general acylindrical actions.

We will work with the following form of halfspaces: given $x, y \in X$, let \[
\mathcal{H}_{half}(x, y) := \{ z \in X : \textrm{$z$ is closer to $x$ than $y$}\}.
\]

\begin{prop}\label{prop:supportingWPD}
Let $X$ be a $\delta$-hyperbolic space with a basepoint $x_{0}$ and let $G$ be a non-virtually cyclic group acting on $X$ with a WPD loxodromic element $f$. Let $S$ be a finite generating set. Then there exists $D_{0}$ such that, for each $\epsilon>0$ and $D > D_{0}$ there exists a constant $N =N(\epsilon, D)$  such that for every finite set $A \subseteq G$ there exists a subset $A' \subseteq A$ satisfying:
\begin{enumerate}
\item $\#A' \ge (1-\epsilon) \#A$;
\item For each $a \in A'$ there exist $b \in G$ such that $d_{S}(a, b) \le N$ and 
 \[\begin{aligned}
\{gx_{0} : g \in A\}  &\subseteq  \mathcal{H}_{half}( bx_{0}, b\cdot f^{D} x_{0})
\end{aligned}
\]
and such that $\mathcal{H}_{half}(bx_{0}, bf^{D} x_{0})$ and $bf^{D} w f^{-D} b^{-1} \cdot \mathcal{H}_{half}(bx_{0}, bf^{D} x_{0})$ are disjoint.
\end{enumerate}
\end{prop}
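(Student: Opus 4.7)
My plan is to combine the hyperbolic magic lemma (Proposition \ref{prop:magic}) applied to the orbit of $A$ in $X$ with a conversion of the resulting abstract halfspaces into ones of the form $\mathcal{H}_{half}(bx_0, bf^D x_0)$, invoking the bounded-projection property from Theorem \ref{thm:elemClos}. The principal new ingredient beyond Proposition \ref{prop:magicProper} is a preliminary reduction that handles the non-properness of the $G$-action via Lemma \ref{lem:amplifyWPD}.

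First I would reduce to an effectively uniformly locally finite orbit. For finite $A \subseteq G$, consider the orbit image $Y_A = \{ax_0 : a \in A\} \subseteq X$. Under a proper action $Y_A$ is uniformly locally finite with constant independent of $A$, but under WPD many $a \in A$ may send $x_0$ close to a single point of $X$. Using Lemma \ref{lem:amplifyWPD} to bound, at scale $D$, the number of elements of $G$ that both map $x_0$ into a common $R$-ball and send $f^k x_0$ into another common bounded region, I would partition $A$ by the ``direction'' of $a f^k x_0$ and discard a subset of size $\leq (\epsilon/3)\#A$ to obtain $A_1 \subseteq A$ whose orbit image is $R_1$-separated and thus effectively uniformly locally finite with constants depending only on $D$ and the WPD data. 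Applying Proposition \ref{prop:magic} with parameter $\epsilon/3$ to this image then produces $A_2 \subseteq A_1$ of size $\geq (1-2\epsilon/3)\#A$ with, for each $a \in A_2$, two halfspaces $\mathcal{H}_D(ax_0, y_1(a)), \mathcal{H}_D(ax_0, y_2(a)) \subseteq X$ whose union contains all but $N_1(\epsilon, D)$ of the orbit points of $A$.

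Next I convert these abstract halfspaces into the prescribed form. Since $G$ is non-virtually cyclic, Theorem \ref{thm:elemClos} lets me pick finitely many elements $w_1, \ldots, w_\ell \in G$ of bounded word length such that the translates $w_j Ax(f)$ have pairwise $K$-bounded projections on $Ax(f)$, pointing in $\ell$ distinct directions. For each $a \in A_2$, the directions of $a w_j f^D x_0$ as $j$ varies provide more than two options, so by pigeonhole some $j$ gives a direction disjoint from both $\mathcal{H}_D(ax_0, y_1(a))$ and $\mathcal{H}_D(ax_0, y_2(a))$. Setting $b = a w_j$, $d_S(a,b) \leq \max_j \|w_j\|_S$ is bounded. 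Using Gromov product inequalities (Lemma \ref{lem:GromIneq}) and the no-backtracking/constriction properties (Corollary \ref{cor:projGrom}), I check that for $D > D_0$ large enough (depending on $\delta$, $K$, and $\max_j \|w_j\|_S$), every orbit point $gx_0$ with $g \in A$ lies on the $bx_0$-side of the midpoint of $[bx_0, bf^D x_0]$, yielding $\{gx_0 : g \in A\} \subseteq \mathcal{H}_{half}(bx_0, bf^D x_0)$. A further removal absorbing the $N_1$ exceptional orbit points completes the construction of $A' \subseteq A$ of size $\geq (1-\epsilon)\#A$.

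Finally I verify the disjointness condition by choosing $w = w_j \notin EC(f)$. The translated halfspace equals $\mathcal{H}_{half}(bf^D wf^{-D}x_0, bf^D w x_0)$, and the four points $bx_0, bf^D x_0, bf^D w f^{-D} x_0, bf^D w x_0$ form a chain of geodesic segments whose consecutive members have pairwise $K$-bounded projections by Theorem \ref{thm:elemClos}. Lemma \ref{lem:chainBBF} then ensures the endpoints of this chain are separated by distance growing linearly in $D$, forcing the two halfspaces to be disjoint once $D > D_0$ is chosen large relative to $K$ and $\delta$. The hard part will be the first step: extracting an effectively uniformly locally finite subset of the orbit under WPD. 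Unlike the proper case, where Proposition \ref{prop:magicProper} applies directly, the WPD setting requires careful accounting of how group elements can ``stack'' near a given orbit point, controlled indirectly via how they translate long axis segments of $f$.
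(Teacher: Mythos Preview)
Your proposal has a genuine gap at the final stage. The conclusion of Proposition~\ref{prop:supportingWPD} requires that \emph{every} orbit point of $A$ lie in $\mathcal{H}_{half}(bx_0, bf^D x_0)$, not merely all but $N_1$ of them. After applying the magic lemma you are left with up to $N_1$ exceptional points of $A$ outside the halfspace, and these exceptions depend on $a$. Your phrase ``a further removal absorbing the $N_1$ exceptional orbit points'' cannot work: shrinking $A'$ does nothing to move the remaining points of $A$ into the halfspace, and there is no bound on how many $a$'s have a nonempty exceptional set. The paper closes this gap with a separate pigeonhole step (Lemma~\ref{lem:pigeon}): given at most $N$ exceptions, one builds $2^N$ pairwise disjoint halfspaces of the required form by concatenating words in $\{f^D, wf^D\}$, and one of them must miss all the exceptions. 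Without such a mechanism your argument stalls at the weak version (Proposition~\ref{prop:supportingWPDWeak}).

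There is also a softer issue with your first step. Under a WPD action the stabilizer of $x_0$ can be infinite, so an arbitrary finite $A$ may map entirely to a single orbit point; no removal of an $\epsilon$-fraction produces a uniformly locally finite image in $X$. Your workaround of partitioning by the direction of $af^k x_0$ is plausible in spirit but is not what Lemma~\ref{lem:amplifyWPD} gives you directly, and you have not explained how to turn that lemma into the claimed separation. The paper sidesteps this entirely: rather than forcing local finiteness in $X$, it reruns the magic-lemma bookkeeping on the group itself, taking $\mathcal{A}_2$ to be maximally $R$-separated in the \emph{word metric} $d_S$. The nesting of anti-halfspaces (Observation~\ref{obs:supportingObs1}) then follows from the bounded-projection dichotomy of Theorem~\ref{thm:elemClos}, with the $d_S$-separation replacing the $d_X$-separation used in Proposition~\ref{prop:magic}. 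This is the substantive new idea in the WPD setting, and your outline does not capture it.
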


This will follow from a weaker statement:

\begin{prop}\label{prop:supportingWPDWeak}
Let $X$ be a $\delta$-hyperbolic space and let $G$ be a non-virtually cyclic group acting on $X$ with a unital, axial WPD loxodromic element $f$. Let $x_{0} \in Ax(f)$. Let $S$ be a finite generating set. Then there exists $D_{0}$ such that, for each $\epsilon>0$ and $D > D_{0}$ there exists a constant $N =N(\epsilon, D)$  such that for every finite set $A \subseteq G$ there exists a subset $A' \subseteq A$ satisfying:
\begin{enumerate}
\item $\#A' \ge (1-\epsilon) \#A$;
\item For each $a \in A'$ there exist $b \in G$ such that $d_{S}(a, b) \le N$ and  
 \[
\# \big(\{gx_{0} : g \in A\}  \setminus  \mathcal{H}_{half}( bx_{0}, bf^{D} x_{0}) \big) \le N.
 \]
\end{enumerate}
\end{prop}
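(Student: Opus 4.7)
The plan is to derive Proposition \ref{prop:supportingWPDWeak} from the hyperbolic magic lemma (Proposition \ref{prop:magic}) applied to the orbit of $x_0$, followed by a geometric conversion step that uses Theorem \ref{thm:elemClos} and the non-virtually-cyclic hypothesis on $G$ to trade the pair of Gromov-product cones produced by the magic lemma for a single metric halfspace of the required form $\mathcal{H}_{half}(bx_0, bf^Dx_0)$.

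The first step is to apply Proposition \ref{prop:magic} with parameter $D' \gg D$ to the orbit $Y := A \cdot x_0 \subseteq X$. In the proper case (Proposition \ref{prop:magicProper}) this is immediate; in the general WPD case a reduction is needed because the orbit $G \cdot x_0$ can fail to be uniformly locally finite. I would first discard from $A$ those elements $a$ for which the orbit point $ax_0$ is shared by more than $C_0 = C_0(\epsilon)$ elements of $A$ within a fixed large $d_S$-ball of $a$, using Lemma \ref{lem:amplifyWPD} to control such coincidences: if $a_1 x_0 = a_2 x_0$ and $\|a_1^{-1}a_2\|_S$ is large, the projection of the pair onto a long segment of $Ax(f)$ collapses to a $D_0$-small set, giving a pigeonhole that keeps multiplicities under control. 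A careful counting shows this discards at most $\epsilon |A|/2$ elements. Proposition \ref{prop:magic} then yields, for all but $(\epsilon/2)|A|$ remaining elements $a$, a pair of cones $\mathcal{H}_i = \mathcal{H}_{D'}(ax_0, y_i(a))$, $i \in \{1,2\}$, covering $Y$ up to a bounded number $N_0 = N_0(\epsilon, D')$ of orbit points.

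The second step converts the pair of cones into a single halfspace. I would fix once and for all three test elements $h_1, h_2, h_3 \in G$ of bounded word-length whose conjugate axes $h_j Ax(f)$ have pairwise bounded projections; such a triple exists by Theorem \ref{thm:elemClos} and a standard ping-pong argument, since $G$ is not virtually cyclic. For each good $a$ with magic-lemma pair $(y_1, y_2)$, a pigeonhole on the three candidate axes $a h_j Ax(f)$ yields an index $j$ for which $a h_j f^D x_0$ is not ``aligned'' with either $y_1$ or $y_2$ viewed from $ax_0$. Setting $b := ah_j$, the thin-triangle property (Definition \ref{dfn:Grom}) and the contracting-projection property (Corollary \ref{cor:projGrom}) applied to triangles with vertices $ax_0$, $z \in \mathcal{H}_i$, and $bf^D x_0$ force every $z \in \mathcal{H}_1 \cup \mathcal{H}_2$ outside a bounded neighborhood of $ax_0$ to satisfy $d(z, bx_0) \le d(z, bf^D x_0)$, provided $D'$ is chosen much larger than $D + \max_j \|h_j\|_S$. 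Hence $|Y \setminus \mathcal{H}_{half}(bx_0, bf^D x_0)| \le N_0 + O(1)$ and $d_S(a, b) \le \max_j \|h_j\|_S$, both bounded independently of $A$.

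The main obstacle, in my view, is the multiplicity reduction in Step 1: for non-proper WPD actions, the orbit $G \cdot x_0$ can be intrinsically not uniformly locally finite (as in the mapping class group action on the curve complex), and Lemma \ref{lem:amplifyWPD} must be used carefully to restrict to a uniformly locally finite subset of $A \cdot x_0$ while losing only at most $\epsilon|A|$ elements. The geometric conversion in Step 2 is a routine application of the thin-triangle property once the three test directions and the parameters $D, D'$ are fixed. Setting $N$ to be the larger of $N_0 + O(1)$ and $\max_j \|h_j\|_S$ then satisfies the proposition.
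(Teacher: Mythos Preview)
Your Step 2 is basically sound, but Step 1 has a genuine gap that the vague appeal to Lemma~\ref{lem:amplifyWPD} does not close. Consider a finite set $A$ in which a single orbit point $p = ax_0$ is hit by $(1-\eta)|A|$ elements of $A$, while the remaining $\eta|A|$ elements map to distinct, well-spread orbit points. Proposition~\ref{prop:magic} applied to $Y = A\cdot x_0$ sees $\eta|A|+1$ points and is allowed to declare $p$ one of the $\epsilon$-fraction of bad points; but then $(1-\eta)|A|$ group elements are bad, and you have lost far more than $\epsilon|A|$. Your proposed cure---discarding $a$ when many $a'\in A$ in a fixed $d_S$-ball share the orbit point $ax_0$---does not help: those $(1-\eta)|A|$ elements can be arbitrarily spread out in $d_S$, so none is discarded, yet the orbit multiplicity at $p$ is still huge. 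Lemma~\ref{lem:amplifyWPD} says nothing here (if $a_1x_0=a_2x_0$ the projection of the pair is a single point regardless of $\|a_1^{-1}a_2\|_S$), so the pigeonhole you describe does not exist.

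The paper's proof avoids this entirely by never passing to orbit points. It runs a magic-lemma-style good/bad/undecided argument \emph{directly on group elements}, separating $\mathcal{A}_2\subseteq\mathcal{A}_1$ in the word metric $d_S$ rather than in $d_X$. The anti-halfspaces $\mathfrak{A}_\pm(g)$ it uses are not the cones $\mathfrak{A}_D(gx_0,x_0)$ of Proposition~\ref{prop:magic}; they encode the \emph{group element} $g$ through the axis $g\,\mathscr{W}(g)\,f^D w^{\pm1}\!Ax(f)$, where $\mathscr{W}(g)\in\{id,w\}$ is chosen so that this axis diverges from $[x_0,gx_0]$. The lineage-is-linear step (the analogue of your ``anti-halfspaces nest'' claim) then becomes: if $\mathfrak{A}_\epsilon(g)\cap\mathfrak{A}_{\epsilon'}(h)\neq\emptyset$ and $d_S(g,h)>R$, one root lies in the other's anti-halfspace. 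This is proved by showing that otherwise two translates of $Ax(f)$ would have large mutual projection, forcing $g^{-1}h$ into a bounded word-ball by Theorem~\ref{thm:elemClos} and the virtual cyclicity of $EC(f)$. That use of $d_S$-separation together with the WPD/elementary-closure finiteness is exactly what absorbs the non-properness, and it is not something you can retrofit onto Proposition~\ref{prop:magic} without essentially reproving it in this stronger form.
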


\begin{proof}
Let $K$ be as in Theorem \ref{thm:elemClos} and let $K_{0} \ge K+100\delta$. By enlarging $K_{0}$ if necessary, we may assume that $d_{X}(x_{0}, sx_{0}) \le K_{0}$ for each $s\in S$.

Note that $\{ g \in G : g^{2} \in EC(f)\}$ contains $EC(f)$ as an index-2 subgroup, which is virtually cyclic. Since $G$ is not virtually cyclic, we can take $w \in S \setminus \{ g \in G : g^{2} \in EC(f)\}$. Then Theorem \ref{thm:elemClos} guarantees that $\diam_{\gamma}(\gamma') \le K$ for distinct axes $\gamma, \gamma' \in \{ Ax(f), w^{-1} Ax(f), w Ax(f)\}$. By Corollary \ref{cor:projGrom}(5), $\diam_{\kappa}(\kappa') \le K_{0}$ for any subgeodesics $\kappa, \kappa'$ of $\gamma$, $\gamma'$ as well. 

We then have: \begin{obs}\label{obs:wFProj}
For $k, l \in \Z$ and  distinct $m, n \in \{1, 0, -1\}$, we have \[
\big(w^{m} f^{k} x_{0} \, \big|\, w^{n} f^{l} x_{0}\big)_{x_{0}} \le 6K_{0} + 8\delta.
\]
\end{obs}
To see this, note that $w^{m} [x_{0}, f^{k} x_{0}]$ has $K_{0}$-small projection onto $w^{n}[x_{0}, f^{l} x_{0}]$, which is $2d_{X}(w^{m} x_{0}, w^{n} x_{0})$-close to $w^{n} x_{0}$. It follows that the projection of $w^{m} f^{k} x_{0}$ onto $w^{n} [x_{0}, f^{l} x_{0}]$ is $(K_{0} + 2K_{0} |m-n|)$-close to $w^{n} x_{0}$, This implies \[
\big(w^{m} f^{k} x_{0} \, \big| \,  w^{n} f^{l} x_{0}\big)_{w^{n} x_{0}} \le 5K_{0} + 8\delta.
\]
Now the desired inequality follows from $d_{X}(x_{0}, w^{n} x_{0}) \le K_{0}$.

It follows that: \begin{obs}\label{obs:wIndepG}
There exists $K_{1} > 0$ such that for each $g \in G$ either\begin{enumerate}
\item  $(gx_{0} | f^{i} x_{0})_{x_{0}} < K_{1}$ for every $i \in \Z$ or 
\item $(g x_{0} | wf^{i} x_{0})_{x_{0}} < K_{1}$ for every $i \in \Z$.
\end{enumerate}
\end{obs}
We define $\mathscr{W} : G \rightarrow \{id, w\}$ using the above observation. Namely, for each $g \in G$ we pick $\mathscr{W}(g) \in \{id, w\}$ such that $(g^{-1} x_{0} |  \mathscr{W}(g)f^{i} x_{0})_{x_{0}} < K_{1}$ for each $i \in \Z$, i.e., $(x_{0} | g\mathscr{W}(g) f^{i} x_{0})_{gx_{0}} < K_{1}$. Let  $D_{0} = 10^{4}(K_{1} + K_{0} + \delta+1)$. 

We now begin the proof. Let $D > D_{0}$. Recall the definition of the elementary closure $EC(f)$ of $f$. Since $EC(f)$ is a finite extension of $\langle f \rangle$ and since $\{f^{i} x_{0} \}_{i \in \Z}$ is locally finite, the set\[
 \big\{ g \in EC(f) : d_{X}(x_{0}, gx_{0}) \le 2D +4\delta\big\}
\]
is finite. Hence, they are contained in $B_{S}(R') \subseteq G$ for some $R' > 0$.  Now let $R := R' + 3 + D \cdot \|f\|_{S}$.

Given $g\in G$ we define the \emph{anti-halfspace}\[
\mathfrak{A}_{\pm }(g) := \left\{ h \in G :  \begin{array}{c} \textrm{$\exists 0\le \tau_{1} \le \tau$, $\exists$ geodesic $\gamma : [0, \tau] \rightarrow X$ starting at $x_{0}$ such that}\\
 \textrm{$d_{X}\big(\gamma(\tau_{1}), \, g \mathscr{W}(g) f^{D} w^{\pm 1} f^{D} x_{0}\big) < 0.02D$}, \\
  \textrm{$d_{X}\big (\gamma(\tau), \,h\mathscr{W}(h) f^{D} x_{0}\big) < 0.02D$}.\end{array}  \right\}.
\]

\begin{obs}\label{obs:supportingObs0}
For each $g \in G$, each element $h \in \mathfrak{A}_{\pm}(g)$ satisfies that $d_{X}(x_{0}, hx_{0}) > d_{X}(x_{0}, gx_{0})+ 0.5D$. Moreover, $ \mathfrak{A}_{+}(g)$  and $\mathfrak{A}_{-}(g)$ are disjoint.
\end{obs}

\begin{proof}[Proof of Observation \ref{obs:supportingObs0}]
Suppose first that $h \in \mathfrak{A}_{+}(g)$. Let $ 0\le \tau_{1} \le \tau$ and let $\gamma : [0, \tau]\rightarrow X$ be the geodesic realizing the membership of $h$ in $\mathfrak{A}_{+}(g)$. Now for the sequence \[
(y_{0}, y_{1}, y_{2}, y_{3}, y_{4}) := 
\big(x_{0}, \,g\mathscr{W}(g) x_{0},  \,g\mathscr{W}(g) f^{D}x_{0},  \,g\mathscr{W}(g) f^{D} w f^{D} x_{0}, \,\gamma(\tau_{1})\big),
\]
we observe that \begin{enumerate}
\item $(y_{0} | y_{2})_{y_{1}} \le 0.01D, (y_{1} | y_{3})_{y_{2}} \le 0.01D, (y_{2} | y_{4})_{y_{3}} \le 0.02D$.
\item $d_{X}(y_{1}, y_{2}), d_{X}(y_{2}, y_{3}) \ge 0.95D$.
\end{enumerate}
The first item follows from the fact that $(y_{0} | y_{2})_{gx_{0}} \le K_{1}$, $d_{X}(y_{1}, gx_{0}) < K_{0}$, $(y_{1} | y_{3})_{y_{2}} \le 6K_{0} + 8\delta$, $d_{X}(y_{3}, y_{4}) \le 0.02D$. The second item is due to the fact $d_{X}(x_{0}, f^{D} x_{0}) = D$ and $d_{X} (x_{0}, sx_{0}) \le K_{0}$.

By Lemma \ref{lem:stability}, there exist $0 \le t_{1} \le t_{2} \le t_{3} \le \tau_{1}$ such that \begin{equation} \label{eqn:closegW}
\begin{aligned}
d_{X}\big( \gamma(t_{1}), y_{1} \big), d_{X}\big( \gamma(t_{2}), y_{2}\big) \le 0.011D, \,\,d_{X}\big( \gamma(t_{3}), y_{3}\big) \le 0.021D.
\end{aligned}
\end{equation}
This implies that
\[\begin{aligned}
\tau_{1} &= d_{X}(x_{0}, \gamma(\tau_{1})) = d_{X}( x_{0}, \gamma(t_{1})) + d_{X}( \gamma(t_{1}), \gamma(t_{2})) + d_{X}( \gamma(t_{2}), \gamma(t_{3})) + d_{X}( \gamma(t_{3}),  \gamma(\tau))\\ 
&\ge d_{X}(x_{0}, y_{1}) + d_{X}(y_{1}, y_{2}) + d_{X}(y_{2}, y_{3}) -2 (0.011D + 0.011D + 0.021D)\\
&\ge \big(d_{X}(x_{0}, gx_{0}) - d_{X}(gx_{0}, g\mathscr{W}(g)x_{0})\big) + 1.9D - 0.09D \ge d_{X}(x_{0}, gx_{0}) + 1.8D.
\end{aligned}
\]
Meanwhile, note that \[\begin{aligned}
\tau_{1} \le \tau &\le d_{X}(x_{0}, h\mathscr{W}(h)f^{D}x_{0}) + 0.02D \\
&\le d_{X}(x_{0}, hx_{0}) +  d_{X}(hx_{0},h\mathscr{W}(h)f^{D}x_{0}) + 0.02D& \\
&\le d_{X}(x_{0}, hx_{0}) + d_{X}(x_{0}, f^{D}x_{0}) + 0.021D = d_{X}(x_{0}, hx_{0}) + 1.021D.
\end{aligned}
\]Comparing these two inequalities lead to $d_{X}(x_{0}, gx_{0}) +0.5D<d_{X}(x_{0}, hx_{0})$.

Note that $\big( \gamma(t_{2})\, \big| \, \gamma(\tau) \big)_{\gamma(\tau_{1})} = 0$ as $t_{2} \le \tau_{1} \le \tau$. Since $y_{2}$, $y_{3}$ and $h\mathscr{W}(h) f^{D} x_{0}$ are $0.02D$-close to $\gamma(t_{2})$, $\gamma(\tau)$ and $\gamma(\tau')$, respectively, we have \begin{equation}\label{eqn:gWgWideOpen}
\big(g\mathscr{W}(g) f^{D} x_{0} \, \big| \, h\mathscr{W}(h) f^{D} x_{0} \big)_{ g \mathscr{W}(g) f^{D} w f^{D} x_{0}} < 0.06D.
\end{equation}

Now, let us suppose that $h \in \mathfrak{A}_{-} (g)$ in addition and deduce contradiction. Just as we had Inequality \ref{eqn:gWgWideOpen}, we have  \begin{equation}\label{eqn:gWgWideOpen2}
(g\mathscr{W}(g) f^{D} x_{0} | h\mathscr{W}(h)f^{D} x_{0})_{g\mathscr{W}(g) f^{D} w^{-1} f^{D} x_{0}} \le 0.06D.
\end{equation}
Finally, Observation \ref{obs:wFProj} tells us that \[
(g\mathscr{W}(g)f^{D} wf^{D} x_{0} | g\mathscr{W}(g)f^{D} w^{-1} f^{D} x_{0} )_{g\mathscr{W}(g)f^{D} x_{0}} \le 0.01D.
\]
Furthermore, we know that $d_{X}(g\mathscr{W}(g)f^{D} x_{0}, g\mathscr{W}(g) f^{D}w^{\pm 1} f^{D} x_{0} ) \ge 0.95D$. By Lemma \ref{lem:stability}, we have $d_{X}(h\mathscr{W}(h) f^{D} x_{0}, h\mathscr{W}(h)f^{D} x_{0}) \ge 0.95D + 0.95D - 2\cdot (0.06D + 0.01 D + 0.06D + 100\delta) \ge 1.8D$, a contradiction.
\end{proof}

We then observe:

\begin{obs}\label{obs:supportingObs1}
Suppose that $\mathfrak{A}_{+}(g)$ and $\mathfrak{A}_{+}(h)$  has nonempty intersection, and suppose that $d_{S}(g, h) > R$. Then either $g \in \mathfrak{A}_{+}(h)$ or $h \in \mathfrak{A}_{+}(g)$.
\end{obs}

\begin{proof}[Proof of Observation \ref{obs:supportingObs1}]
Let us pick an element $a \in \mathfrak{A}_{+}(g) \cap \mathfrak{A}_{+}(h)$.

Without loss of generality, we suppose that $d_{X}(x_{0}, gx_{0}) \ge d_{X}(x_{0}, hx_{0})$. Let $\gamma$ ($\gamma'$, resp.) be the geodesic and let $\tau_{1} \le \tau$ ($\sigma_{1}, \sigma$, resp.) be the timing that realize the membership of $a$ in $\mathfrak{A}_{+}(g)$ ($\mathfrak{A}_{+}(h)$, resp.). Then $\tau$ and $\sigma$ differ by at most $0.04D$, as \[
\tau =_{0.02D} d_{X}\big(x_{0}, a \mathscr{W}(a) f^{D} x_{0} \big) =_{0.02D} \sigma.
\]

We now let $L_{min} := \big(\gamma(\tau)\, \big|\, \gamma'(\sigma)\big)_{x_{0}}$, which satisfies \[
L_{min} : = d\big(x_{0}, \gamma(\tau)\big) - \big(x_{0} \, \big|\, \gamma'(\sigma) \big)_{\gamma(\tau)} \ge \tau - 0.04 D.
\]
Similarly, $L_{min}$ is greater than $\sigma - 0.04D$. By Lemma \ref{lem:fellow}, we have \[
d_{X}(\gamma(t), \gamma'(t)) < 4\delta \quad (0 \le t \le L_{min}).
\]

We observed earlier that Lemma \ref{lem:stability} applies to the sequence  \[
\big(x_{0},\, g\mathscr{W}(g) x_{0},\, g\mathscr{W}(g) f^{D}x_{0},\,  g\mathscr{W}(g) f^{D} w f^{D} x_{0}, \,\gamma(\tau_{1})\big).
\]
In particular, there exist $0\le t_{1} \le t_{2} \le  \tau_{1}$ such that \[\begin{aligned}
d_{X}\big(g\mathscr{W}(g) x_{0}, \, \gamma(t_{1})\big) , d_{X}\big( g\mathscr{W}(g) f^{D}x_{0}, \,\gamma(t_{2})\big) &\le 0.011D.
\end{aligned}
\]
Note that \[
|t_{1}-d_{X}(x_{0}, gx_{0})| \le 0.011D + d_{X}(x_{0}, \mathscr{W}(g)x_{0}) \le 0.012D.
\]

By Lemma \ref{lem:GromHausdorff}, $g\mathscr{W}(g)[x_{0}, f^{D} x_{0}]$ and $\gamma([t_{1}, t_{2}])$ are $0.012D$-equivalent. This forces $t_{2} - t_{1} =_{0.022D} d_{X}(x_{0}, f^{D}x_{0}) = D$. Similarly, $g\mathscr{W}(g) f^{D} w [x_{0}, f^{D}x_{0}]$ and $\gamma([t_{2}, \tau_{1}])$ are $0.021D$-equivalent and  $\tau_{1} - t_{2} =_{0.032D} D$.

Similarly, there exist $0 \le s_{1} \le s_{2} \le  \sigma_{1}$ for $\gamma'$ such that \[
d_{X}\big(h \mathscr{W}(h) x_{0}, \gamma(s_{1}) \big) d_{X}\big( h \mathscr{W}(h) f^{D} x_{0}, \gamma'(s_{2}) \big) \le 0.01D
\]
 Note here that $t_{2}$ or $s_{2}$ are much smaller than $\tau_{1}$ or $\sigma_{1}$, respectively, so they are smaller than $L_{min}$. In particular, $d_{X}(\gamma(t_{2}), \gamma'(t_{2})) < 4\delta$ holds. Moreover, $s_{1}$ is $0.012D$-close to $d_{X}(x_{0}, hx_{0})$. Since we assumed that $hx_{0}$ is closer than $gx_{0}$ to $x_{0}$, we obtain \[
t_{1} \ge s_{1} - 0.024D.
\]

Observe that the geodesic $\gamma'$ and the two timing $\sigma_{1}, t_{2}$ satisfy\[\begin{aligned}
d_{X}\big(\gamma'(\sigma_{1}), h \mathscr{W}(h)f^{D} w f^{D}  x_{0}\big) &< 0.02D, \\
d_{X}\big(\gamma'(t_{2}), g\mathscr{W}(g)f^{D} x_{0}\big) &\le 
d_{X}(\gamma'(t_{2}), \gamma(t_{2})) +
d_{X}\big(\gamma(t_{2}), g\mathscr{W}(g)f^{D} x_{0}\big)\\
& \le 4\delta + 0.011D < 0.02D.
\end{aligned}
\]
In the remaining, we will show that $\sigma_{1} \le t_{2}$. This will guarantee  $g \in \mathfrak{A}_{+}(h)$ and end the proof.

Suppose to the contrary that  $\sigma_{1}  > t_{2}$. Then we have \[
\big[t_{1}+0.024D, t_{2}\big] \subseteq [s_{1}, \sigma_{1}] = [s_{1}, s_{2}] \cup[s_{2}, \sigma_{1}].
\]
In particular, one of $[s_{1}, s_{2}]$ and $[s_{2}, \sigma_{1}]$ should overlap with $[t_{1}+0.024D, t_{2}]$ for length at least $\frac{1}{2} (t_{2} - t_{1} - 0.024D) \ge 0.46D$.
\begin{enumerate}
\item $I:=  [t_{1}, t_{2}] \cap[s_{1}, s_{2}]$ is longer than $0.46D$. Recall that $g\mathscr{W}(g)[x_{0}, f^{D}x_{0}]$ and $\gamma([t_{1}, t_{2}])$ are $0.012D$-equivalent. Hence, there exist $p_{1}, q_{1} \in g\mathscr{W}(g)[x_{0}, f^{D} x_{0}]$ that are $0.012D$-close to $\gamma(\min I)$ and $\gamma(\max I - 0.05D)$, respectively. We can also take $p_{2}, q_{2}\in h \mathscr{W}(h) [x_{0}, f^{D} x_{0}]$ that are $0.012D$-close to $\gamma'(\min I)$ and $\gamma'(\max I- 0.05D)$, respectively.  

Since  $\min I \le \max I - 0.05D \le\tau - 0.05D \le L_{min}$,  $\gamma$ and $\gamma'$ are $4\delta$-fellow traveling at $t = \min I, \max I - 0.05D$. We thus have \[
d_{X}(p_{1}, p_{2}), d_{X}(q_{1}, q_{2}) \le 0.025D, \quad d_{X}(p_{2}, q_{2}) \ge |I| - 0.05D - 2\cdot 0.012D \ge 0.4D.
\]
This means that the projection of $\{p_{1}, q_{1}\} \subseteq g\mathscr{W}(g) [x_{0}, f^{D}x_{0}]$ onto $h\mathscr{W}(h)[x_{0}, f^{D}x_{0}]$ is larger than $0.2D$. By Corollary \ref{cor:projGrom}(5), we have $d_{h\mathscr{W}(h) Ax(f)} (g\mathscr{W}(g) Ax(f)) \ge 0.1D \ge K_{0}$. Theorem \ref{thm:elemClos} implies that $\mathscr{W}(g)^{-1} \cdot g^{-1} h \mathscr{W}(h) \in EC(f)$. Moreover, note that \[\begin{aligned}
d_{X}(g\mathscr{W}(g)x_{0}, h\mathscr{W}(h)x_{0}) &\le d_{X}(g\mathscr{W}(g)x_{0}, p_{1}) + d_{X}(p_{1}, p_{2}) + d_{X}(p_{2},  h\mathscr{W}(h)x_{0})\\
&\le d_{X}(x_{0}, f^{D} x_{0}) + 4\delta + d_{X}(x_{0}, f^{D}x_{0}).
\end{aligned}
\]
In summary, we have \[
\mathscr{W}(g)^{-1} \cdot g^{-1} h \mathscr{W}(h) \in EC(f) \cap \{ u \in G : d_{X}(x_{0},ux_{0}) \le 2D + 4\delta\}.
\] In other words, $\mathscr{W}(g)^{-1} \cdot g^{-1} h \mathscr{W}(h)  \in B_{S}(R')$ and $g^{-1} h \in B_{S}(R)$. This is a contradiction.

\item $I:= [t_{1}, t_{2}] \cap [s_{2}, \sigma_{1}]$ is longer than $0.46D$. In this case, we can similarly take points $p_{1}, q_{1} \in g\mathscr{W}(g)[x_{0}, f^{D} x_{0}]$ that are $0.012D$-close to $\gamma(\min I)$ and $\gamma(\max I-0.05D)$, respectively. We can also pick $p_{2}, q_{2}$ on $h \mathscr{W}(h) f^{D} w [x_{0}, f^{D} x_{0}]$ that are $0.021D$-close to $\gamma'(\min I)$ and $\gamma'(\max I- 0.05D)$, respectively. Again, $d_{X}(\gamma(t), \gamma'(t)) \le 4\delta$ for $t = \min I, \max I - 0.05D$. Then we have \[
d_{X}(p_{1}, p_{2}), d_{X}(q_{1}, q_{2}) \le 0.034D, \quad d_{X}(p_{2}, q_{2}) \ge |I| - 0.05D - 2 \cdot 0.021D \ge 0.36D.
\]
Then the projection of $\{p_{1}, q_{1}\} \subseteq g\mathscr{W}(g) [x_{0}, f^{D}x_{0}]$ onto $h\mathscr{W}(h) f^{D} w [x_{0}, f^{D}x_{0}]$ is larger than $0.2D$. By Corollary \ref{cor:projGrom}(2), $g\mathscr{W}(g) Ax(f)$ has projection $\ge 0.1D$ onto $h \mathscr{W}(h) f^{D} w Ax(f)$. Theorem \ref{thm:elemClos} implies that $\mathscr{W}(g)^{-1} \cdot g^{-1} h \mathscr{W}(h) f^{D} w \in EC(f)$. Moreover, note that \[\begin{aligned}
d_{X}(g\mathscr{W}(g)x_{0}, h\mathscr{W}(h)f^{D} w x_{0}) &\le d_{X}(g\mathscr{W}(g)x_{0}, p_{1}) + d_{X}(p_{1}, p_{2}) + d_{X}(p_{2},  h\mathscr{W}(h)f^{D} w x_{0})\\
&\le d_{X}(x_{0}, f^{D} x_{0}) + 4\delta + d_{X}(x_{0}, f^{D}x_{0}).
\end{aligned}
\]
In summary, we have \[
\mathscr{W}(g)^{-1} \cdot g^{-1} h \mathscr{W}(h)f^{D} w  \in EC(f) \cap \{ u \in G : d_{X}(x_{0}, ux_{0}) \le 2D + 4\delta\}.
\] In other words, $\mathscr{W}(g)^{-1} \cdot g^{-1} h \mathscr{W}(h)  f^{D} w \in B_{S}(R')$ and $g^{-1} h \in B_{S}(R)$. This is a contradiction.
\end{enumerate}
Hence, neither  situation can happen and we conclude $g \in \mathfrak{A}_{+}(h)$.
\end{proof}

For the same reason, we have \begin{obs}\label{obs:supportingObs1.5}
Let $\epsilon, \epsilon' \in \{+, -\}$. Suppose that $\mathfrak{A}_{\epsilon}(g)$ and $\mathfrak{A}_{\epsilon'}(h)$  has nonempty intersection, and suppose that $d_{S}(g, h) > R$. Then either $g \in \mathfrak{A}_{\epsilon'}(h)$ or $h \in \mathfrak{A}_{\epsilon}(g)$.
\end{obs}

We finally need:

\begin{obs}\label{obs:supportingObs2}
For each $g \in G$, we have \[
G \setminus \mathfrak{A}_{\pm}(g) \subseteq 
\mathcal{H}_{half}\big(g \mathscr{W}(g) f^{D} w^{\pm 1} f^{2D} x_{0}, g \mathscr{W}(g) f^{D} w^{\pm 1} f^{3D} x_{0}\big).
\]
\end{obs}

\begin{proof}[Proof of Observation \ref{obs:supportingObs2}]
For convenience, let us write $\mathfrak{g} := g \mathscr{W}(g) f^{D} w^{\pm 1}$. 

Let us pick $u \in G \setminus  \mathcal{H}_{half}(\mathfrak{g}f^{2D} x_{0}, \mathfrak{g} f^{3D} x_{0})$.  Let $k, l \in \Z$ be such that \[
\pi_{\mathfrak{g} Ax(f)}(ux_{0}) \cap[\mathfrak{g}f^{k}, \mathfrak{g}f^{k+1}] \neq \emptyset, \quad 
\pi_{\mathfrak{g} Ax(f)}(u \mathscr{W}(g) f^{D}x_{0}) \cap[\mathfrak{g}f^{l}, \mathfrak{g}f^{l+1}] \neq \emptyset.
\]
 Corollary \ref{cor:projGrom}(2) tells us that \[d_{X}(\mathfrak{g}f^{i} x_{0}, ux_{0}) =_{30\delta+1} d_{X}(\mathfrak{g}f^{i}x_{0},  \mathfrak{g}f^{k} x_{0}) + d_{X}( \mathfrak{g} f^{k} x_{0}, ux_{0}) = |i-k| +d_{X}( \mathfrak{g} f^{k} x_{0}, ux_{0}). \quad (\forall i \in \Z)
\]
Since $ux_{0}$ is not closer to $\mathfrak{g}f^{2D} x_{0}$ than to $\mathfrak{g}f^{3D} x_{0}$, we conclude that $k \ge 2.45D$. Meanwhile, note that $d_{X}(ux_{0}, u \mathscr{W}(u) f^{D}x_{0}) \le 1.01D$. By Corollary \ref{cor:projGrom}(1), $l \ge 1.42D$. This means that \begin{equation}\label{eqn:abitToo}
\big(\mathfrak{g} f^{D}x_{0} \, \big| \,  u \mathscr{W}(u) f^{D}x_{0} \big)_{\mathfrak{g} f^{2D}x_{0}} \le 0.6D.
\end{equation}
Now for \[
(y_{0}, y_{1}, y_{2}, y_{3}, y_{4}) := 
\big(x_{0}, \, g\mathscr{W}(g) x_{0}, g \mathscr{W}(g)f^{D} x_{0}, \mathfrak{g}f^{D}x_{0}, \mathfrak{g} f^{2D}x_{0})
\]
we have $(y_{i-1}|y_{i+1})_{y_{i}} \le 0.01D$ for $i=1, 2, 3$ and $d_{X}(y_{i-1}, y_{i}) \ge 0.95D$ for $i=2, 3, 4$. Combining this with Inequality \ref{eqn:abitToo}, we can apply Lemma \ref{lem:stability} and conclude that $[x_{0}, u \mathscr{W}(u) f^{D}x_{0} ]$ is $0.011D$-close to $g \mathscr{W}(g) f^{D} w^{\pm 1} f^{D} x_{0}$. Hence, $x \in \mathfrak{A}^{\pm}(g)$.
\end{proof}

This time, we will define \[
\mathcal{A}_{1} := \{ a \in A : \# \big(A\setminus \mathcal{H}_{half}(vx_{0}, vf^{D} x_{0}) \big) \ge N \,\,\textrm{for $v = a \mathscr{W}(a)f^{D} w f^{2D}, a \mathscr{W}(a)f^{D} w^{-1} f^{2D}$} \}.
\]

We now pick a subset $\mathcal{A}_{2}$ of $\mathcal{A}_{1}$ that is maximally $R$-separated \emph{in the word metric $d_{S}$}, i.e., we have \begin{enumerate}
\item $d_{S}(a, a') \ge R$ for each pair of distinct elements $a, a' \in \mathcal{A}_{2}$;
\item $\mathcal{A}_{2}$ is a maximal subset of $\mathcal{A}_{1}$ satisfying this property.
\end{enumerate}
Then $\bigcup_{a \in \mathcal{A}_{2}} (a \cdot B_{S}(R)\cap A)$ covers entire $\mathcal{A}_{1}$. We conclude \[
\#\mathcal{A}_{2} \ge \frac{1}{\#B_{S}(R)} \cdot \#\mathcal{A}_{1} \le \frac{1}{(2\#S)^{R}} \#\mathcal{A}_{1}.
\]

As before, we prepare empty collections $\mathcal{B}= \mathcal{U} = \mathcal{G} = \emptyset$. Enumerate $\mathcal{A}_{2}$ by the distance from $x_{0}$, i.e., let $\mathcal{A}_{2} = \{a_{1}, a_{2}, \ldots, a_{\#\mathcal{A}_{2}}\}$ be such that $d_{X}(x_{0}, a_{i}) \le d_{X}(x_{0}, a_{i+1})$ for each $i$. At each step $i=1, \ldots, \#\mathcal{A}_{2}$, we will put $a_{i}$ in either $\mathcal{B}$ or $\mathcal{G}$; this decision is final and shall not be modified further. We may put some other elements of $\mathcal{A}_{2}$ in $\mathcal{U}$, whose their classification will change later. When $a_{i}$ is declared good, then we will also define its sign $\sigma(a_{i}) \in \{+1, -1\}$. This way, we will obtain a  function $\sigma : \mathcal{G} \rightarrow \{+1, -1\}$ in the end.

We will keep the balance $\#\mathcal{B} \le \#\mathcal{U} + \#\mathcal{G}$ throughout. Finally, after the last step there will be no $\mathcal{U}$-element. At the end we will have $\#\mathcal{B} \le \#\mathcal{G}$.

We now describe the procedure. At step $i$, \begin{enumerate}
\item if $\mathcal{A}_{2} \cap \mathfrak{A}_{+}(a_{i})$ has no element, then we declare $a_{i} \in \mathcal{G}$ and $\sigma(a_{i}) = +1$;
\item if not (1) and if $\mathcal{A}_{2} \cap \mathfrak{A}_{-}(a_{i})$ has no element, then we declare $a_{i} \in \mathcal{G}$ and $\sigma(a_{i}) = -1$;
\item if not (1) and (2), we pick $b_{i} \in \mathcal{A}_{2} \cap \mathfrak{A}_{+}(a_{i})$ and $c_{i} \in \mathcal{A}_{2} \cap \mathfrak{A}_{-}(a_{i})$ whose orbit points are the closest to $x_{0}$. We declare $a_{i} \in \mathcal{B}$ and $b_{i}, c_{i} \in \mathcal{U}$.
\end{enumerate}

Till step $i$, $\mathcal{G} \cup \mathcal{B}$ comprises of elements from $\{a_{1}, \ldots, a_{i}\}$; they do not contain any of $a_{i+1}, a_{i+2}, \ldots$. ($\ast$) We now describe what happens at step $i$.

In case (1) or (2), $\mathcal{G}$ gains one more element that might be from  $\mathcal{U}$ or not. $\mathcal{B}$ does not change. Overall, $\#\mathcal{B}$ stays the same and $\#\mathcal{U} + \#\mathcal{G}$ does not decrease. Similar situation happens in Case (2-a).

In case (2-b), $\mathcal{B}$ gains one element $a_{i}$, which might be from $\mathcal{U}$. In exchange, $\mathcal{U}$ gains elements $b_{i}$ and $c_{i}$. Here Observation \ref{obs:supportingObs0} guarantees that $d_{X}(x_{0}, b_{i}x_{0}), d_{X}(x_{0}, c_{i}x_{0}) > d_{X}(x_{0}, a_{i}x_{0})$ and that $b_{i}$ and $c_{i}$ are distinct. Since $\mathcal{A}_{2}$ was labelled with respect to the distance from $x_{0}$, we conclude that $b_{i}, c_{i} \in \{a_{i+1}, a_{i+2}, \ldots\}$; in other words, neither $b_{i}$ nor $c_{i}$ come from $\mathcal{G} \cup \mathcal{B}$. Hence, we conclude that elements in $\mathcal{G} \cup \mathcal{B}$ are never re-classified.

As before, we need to show that for each $i<j$ such that $a_{i}, a_{j} \in \mathcal{B}$, $\{b_{i}, c_{i}\}$ and $\{b_{j}, c_{j}\}$ are disjoint. Suppose to the contrary that $b_{i} = b_{j}$. Then Observation \ref{obs:supportingObs1} tells us that either (1) $a_{i} \in \mathfrak{A}_{+}(a_{j})$ or (2) $a_{j} \in \mathfrak{A}_{+}(a_{i})$. In the latter case, we have $d_{X}(x_{0}, a_{i}) > d_{X}(x_{0}, a_{i})$, contradicting the labelling scheme. In the former case, we have \[
d_{X}(x_{0}, a_{i} x_{0}) < d_{X}(x_{0}, a_{j}x_{0}) < d_{X}(x_{0}, b_{i} x_{0})
\]
by Observation \ref{obs:supportingObs0}. This violates the minimality of $b_{i}$. Hence, $b_{i} = b_{j}$ cannot happen. Likewise, using Observation \ref{obs:supportingObs1.5} we can exclude the cases $b_{i} = c_{j}$, $c_{i} = b_{j}$ and $c_{i} = c_{j}$. Thus, $\{b_{i}, c_{i}\}$ and $\{b_{j}, c_{j}\}$ are disjoint.

By the same logic as in the previous proof, we have $\#\mathcal{G} \ge \#\mathcal{B}$ in the end. Moreover, for distinct $a, b \in \mathcal{G}$, $\mathfrak{A}_{\sigma(a)} (a)$ and $\mathfrak{A}_{\sigma(b)}(b)$ are disjoint; if not, Observation \ref{obs:supportingObs1.5} implies either $b \in \mathfrak{A}_{\sigma(a)}(a)$ or $a \in \mathfrak{A}_{\sigma(b)}(b)$, contradicting the goodness of $a$ and $b$.

Since $X \setminus  \mathcal{H}_{half}(a \mathscr{W}(a) f^{D} w^{\sigma(a)} f^{2D} x_{0}, a \mathscr{W}(a) f^{3D} w^{\sigma(a)} f^{2D} x_{0}) \subseteq \mathcal{A}_{\sigma(a)}(a)$ has at least $N$ elements for $a \in \mathcal{G} \subseteq \mathcal{A}_{1}$, we conclude \[
\#\mathcal{A}\ge N \cdot \#\mathcal{G} \ge N \cdot \frac{\#\mathcal{A}_{2}}{2} \ge \frac{N}{(2\#S)^{R}}\#\mathcal{A}_{1} \ge \frac{1}{\epsilon} \#\mathcal{A}_{1}.
\]
This ends the proof.
\end{proof}

We now need:

\begin{lem}\label{lem:pigeon}
Let $X$ be a $\delta$-hyperbolic space with a basepoint $x_{0}$ and let $G$ be non-virtually cyclic group acting on $X$ with an axial WPD loxodromic element $f$. Let $x_{0} \in Ax(f)$. Then there exists $D_{0}$ such that the following holds for each $D>D_{0}$.

Let $A\subseteq X$ be a finite set in $ X \setminus \mathcal{H}_{half}(x_{0}, f^{D} x_{0})$ and let $N = \#A$. Then there exists $a_{1}, \ldots, a_{N} \in \{f^{D}, wf^{D}\}$ such that \begin{enumerate}
\item$H:= \mathcal{H}_{half}(f^{D} a_{1}\cdots a_{N} x_{0}, f^{D} a_{1} \cdots a_{N} f^{D} x_{0})$ contains $A$, and
\item $H$ and $f^{D} a_{1} \cdots a_{N} \cdot f^{D} \cdot w \cdot f^{-D} \cdot  a_{N}^{-1} \cdots a_{1}^{-1} f^{-D} H$ are disjoint.
\end{enumerate}
\end{lem}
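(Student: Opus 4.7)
The plan is to prove Lemma \ref{lem:pigeon} by induction on $N=\#A$. For the base case $N=0$ we set $g=f^D$; condition (1) is vacuous, and condition (2) asks that the halfspaces $\mathcal{H}_{half}(f^Dx_0, f^{2D}x_0)$ and $\mathcal{H}_{half}(f^{2D}wf^{-D}x_0, f^{2D}wx_0)$ be disjoint. This is a statement about the configuration of the five points $x_0, f^Dx_0, f^{2D}x_0, f^{2D}wx_0, f^{2D}wf^{-D}x_0$: the Gromov products at each corner are controlled by Observation \ref{obs:wFProj} (using that $w$ is chosen so that $Ax(f)$ and $wAx(f)$ have bounded projection), so Lemma \ref{lem:stability} realizes the broken geodesic joining them as a quasi-straight curve of length $\approx 4D$. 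The defining roots of the two halfspaces are then separated by more than the effective radius $D$ of each, giving disjointness once $D_0$ is large enough relative to $K_0$ and $\delta$.

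For the inductive step with $\#A=N\ge 1$, I would select a single pivot $a^*\in A$ together with a first factor $a_1\in\{f^D, wf^D\}$ so that the new halfspace $H_1:=\mathcal{H}_{half}(f^Da_1x_0, f^Da_1f^Dx_0)$ (i) contains $a^*$ and (ii) misses every other element of $A$. To find such a pivot, note that each $a\in A$ satisfies $d_X(a, x_0)\ge d_X(a, f^Dx_0)$ and therefore ``extends in the forward direction'' from $x_0$; applying Lemma \ref{lem:GromIneq} together with Observation \ref{obs:wFProj} partitions $A$ according to which of the two outgoing directions $f^{2D}x_0$ or $f^Dwx_0$ (viewed from $f^Dx_0$) each $a$ continues along. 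If some element of $A$ extends toward $f^Dwx_0$, set $a_1=wf^D$ and pick $a^*$ deepest along that branch; otherwise set $a_1=f^D$ and take $a^*$ to be the element whose axis projection is closest to $f^{2D}x_0$. A hyperbolicity estimate modeled on Observation \ref{obs:supportingObs0} verifies (i) and (ii) in either case. Setting $g_1=f^Da_1$ and translating by $g_1^{-1}$ places $A':=g_1^{-1}(A\setminus\{a^*\})$ in $X\setminus\mathcal{H}_{half}(x_0, f^Dx_0)$ with $\#A'=N-1$; the inductive hypothesis then yields $a_1',\ldots,a_{N-1}'\in\{f^D, wf^D\}$ and $g'=f^Da_1'\cdots a_{N-1}'$ satisfying both conditions, and translating back produces the desired halfspace and disjointness property for $g=g_1g'$.

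The main obstacle I foresee is reconciling the product structure: $g_1g' = f^Da_1\cdot f^Da_1'\cdots a_{N-1}'$ contains an ``extra'' internal $f^D$ factor, so it does not literally have the required form $f^Da_1a_2\cdots a_N$ with $a_i\in\{f^D, wf^D\}$. Resolving this requires either strengthening the inductive statement to allow a variable leading factor in $g'$ that can be merged with $a_1$, or replacing the induction by a direct construction that assigns $(a_1,\ldots,a_N)$ at once by pairing each $a\in A$ with a ``projection address'' onto the quasi-tree generated by the orbits of $\langle f\rangle$ and $\langle wfw^{-1}\rangle$ (the Bestvina--Bromberg--Fujiwara structure referenced in \cite{bestvina2015quasi}). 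This bookkeeping is the main technical challenge; once handled, condition (2) propagates cleanly from $g'$ to $g$ via Observation \ref{obs:wFProj}, because the swung halfspace's root lies in a direction with bounded projection onto the axis defining $H$.
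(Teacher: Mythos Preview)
Your inductive strategy has a genuine gap beyond the product-structure issue you already flag. The claim that one can choose $a_1\in\{f^D,wf^D\}$ and $a^*\in A$ so that $H_1$ contains $a^*$ and no other element of $A$ is false in general. Take $A$ to consist of two points far along the $f$-axis, say near $f^{10D}x_0$ and $f^{11D}x_0$. For $a_1=f^D$ the halfspace $H_1=\mathcal{H}_{half}(f^{2D}x_0,f^{3D}x_0)$ excludes both points; for $a_1=wf^D$ the halfspace $H_1=\mathcal{H}_{half}(f^Dwf^Dx_0,f^Dwf^{2D}x_0)$ contains both. Neither choice isolates a single element, and your rule (``otherwise set $a_1=f^D$ and take $a^*$ closest to $f^{2D}x_0$'') lands precisely in the first case, where (i) already fails. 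The elements of $A$ need not spread themselves one-per-branch at each level; they can cluster arbitrarily, so peeling off exactly one element per step is not achievable.

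The paper's argument is a one-shot pigeonhole (this is why the lemma carries that name). Using Lemma~\ref{lem:stability} and Observation~\ref{obs:wFProj}, one shows that the $2^N$ sets
\[
X\setminus\mathcal{H}_{half}\big(f^Da_1\cdots a_Nx_0,\;f^Da_1\cdots a_Nf^Dx_0\big),\qquad (a_1,\ldots,a_N)\in\{f^D,wf^D\}^N,
\]
are pairwise disjoint subsets of $X\setminus\mathcal{H}_{half}(x_0,f^Dx_0)$. Since $A$ has only $N<2^N$ elements, at least one of these complements misses $A$ entirely, and the corresponding $H$ contains $A$. Condition~(2) is handled separately as a universal statement about any such $H$, via the same five-point estimate you sketch for the base case. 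This sidesteps both the isolation problem and the product-structure bookkeeping, and delivers exactly $N$ factors automatically.
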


\begin{proof}
Let $K_{0}, K_{1}, D_{0}$ be as in the proof of Proposition \ref{prop:supportingWPDWeak}. Suppose $D>D_{0}$. We claim that the $2^{N}$ halfspaces \begin{equation}\label{eqn:halfspaces2N}
\Big\{ X \setminus \mathcal{H}_{half}(f^{D} a_{1}\cdots a_{N} x_{0}, f a_{1}^{D} \cdots a_{N} f^{D} x_{0}) : a_{1}, \ldots, a_{N} \in \{f^{D}, w f^{D} \} \Big\}
\end{equation}
are mutually disjoint subsets of $X \setminus \mathcal{H}_{half}(x_{0}, f^{D} x_{0})$.

To see the disjointness, let $z \notin \mathcal{H}_{half}(f^{D} a_{1}\cdots a_{N} x_{0}, f a_{1}^{D} \cdots a_{N} f^{D} x_{0})$ and $z' \notin \mathcal{H}_{half}(f^{D} b_{1}\cdots b_{N} x_{0}, f b_{1}^{D} \cdots b_{N} f^{D} x_{0})$ for some $(a_{1}, \ldots, a_{N})\neq (b_{1}, \ldots, b_{N}) \in \{f^{D}, w f^{D}\}^{N}$. Let $m$ be the minimal one such that $a_{m} \neq b_{m}$. 

Let $z_{i} := f^{D} a_{1} \cdots a_{i} x_{0}$ and $z_{i}' := f^{D} b_{1} \cdots b_{i} x_{0}$ for $i \ge m-1$. We observe that Lemma \ref{lem:stability} applies to \[
( z, z_{N}, z_{N-1}, \ldots, z_{m}, z_{m-1} := z_{m-1}', z_{m}', \ldots, z_{N}', z').
\]
Indeed, we check that \[\begin{aligned}
(z_{i} | z_{i-2})_{z_{i-1}}, (z_{i}' | z_{i-2}')_{z_{i-1}'} &\le 6K_{0} +8\delta\le 0.01D& (i=m+1, \ldots, N), \\
(z | z_{N-1})_{z_{N}}, (z' | z'_{N-1})_{z'_{N}} &\le \frac{1}{2} d_{X}(x_{0}, f^{D}x_{0}) & \le 0.5D,& \\
(z_{m} | z_{m}')_{z_{m-1}}& \le 0.01D,&
\\ d_{X}(z_{i}, z_{i-1}), d_{X}(z'_{i}, z_{i-1}') &\ge 0.99D  &(i=m, \ldots, N).
\end{aligned}
\]
Consequently, we have $d_{X}(z, z') \ge 2 \cdot 0.9D$ and $z \neq z'$.

For the same reason, for each $a_{1}, \ldots, a_{N} \in \{f^{D}, wf^{D}\}$,  $\mathcal{H}_{half}(x_{0}, f^{D}x_{0})$ and $X \setminus \mathcal{H}_{half}( f^{D} a_{1} \cdots a_{N} x_{0}, f^{D} a_{1} \cdots a_{N} f^{D} x_{0})$ are disjoint. This implies that the latter is contained in $X \setminus \mathcal{H}_{half}(x_{0}, f^{D}x_{0})$. Hence, the sets in Display \ref{eqn:halfspaces2N} are indeed $2^{N}$ disjoint subsets of $X \setminus \mathcal{H}_{half}(x_{0}, f^{D}x_{0}$. One of them should avoid $A$ by the pigeonhole principle. Item (1) of the conclusion now follows.

Moreover, a similar logic shows that $X \setminus \mathcal{H}_{half}( x_{0}, f^{-D}x_{0})$ and $X \setminus \mathcal{H}_{half}(wx_{0}, wf^{-D} x_{0})$ are disjoint, as $Ax(f)$ and $wAx(f)$ have $K_{0}$-bounded projections onto each other. This leads to Item (2) of the conclusion.
\end{proof}

Proposition \ref{prop:supportingWPD} now follows from Proposition \ref{prop:supportingWPDWeak} and Lemma \ref{lem:pigeon}. Therefore acylindrically hyperbolic groups satisfy the assumption of Theorem \ref{thm:hutchcroft1plus2}.

\section{Branching set} \label{section:branching}

Recall the notions of barriers and roughly branching sets (Definition \ref{dfn:barrier}, \ref{dfn:branching}). Recall that $\mathcal{H}_{R}(x_{0}, y) := \{ z \in X : (z | y)_{x_{0}} > R\}$. Our aim is to show:

\begin{prop}\label{prop:barrierContracting}
Let $X$ be a $\delta$-hyperbolic space and let $G \le \Isom(X)$ be a non-virtually cyclic group with a unital, axial WPD element $f \in G$. Let $x_{0} \in Ax(f)$. Let $S$ be a finite generating set of $G$. Then there exists $r>0$ such that the following holds.

Let $R>0$ and $y \in X$. Then there exists an $r$-branching subset $B = B_{1} \sqcup \ldots \sqcup B_{R/r} \subseteq G$ such that, for every $g \in G$ such that $gx_{0} \in \mathcal{H}_{R}(x_{0}, y)$, every $d_{S}$-path connecting $id$ to $g$ passes through each of $B_{1}, \ldots, B_{R/r}$.
\end{prop}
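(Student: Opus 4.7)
The plan is to split the proof into three steps: constructing barriers $B_i$ via Gromov products with $y$, verifying the barrier property by tracking how $(y \mid g_k x_0)_{x_0}$ evolves along a $d_S$-path, and establishing the branching property of the union via the BBF chain structure built from $Ax(f)$ together with a transverse generator $w$.

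For the first two steps, I set $L := \max_{s \in S \cup S^{-1}} d_X(x_0, s x_0)$, let $K_0$ be the bounded-projection constant from Theorem \ref{thm:elemClos}, and pick $w \in S \cup S^{-1}$ with $w \notin EC(f)$, which exists since $G$ is not virtually cyclic. I will fix $r$ as a sufficiently large multiple of $L + K_0 + \delta$, pinning down its exact size in the branching step. For $1 \le i \le \lfloor R/r \rfloor$, define
\[
B_i := \{ g \in G : (y \mid g x_0)_{x_0} \in ((i-1) r,\, i r] \},
\]
and let $B := \bigsqcup_i B_i$. The barrier property will follow from an intermediate-value argument: given a $d_S$-path $(id = g_0, \ldots, g_n = g)$ with $g x_0 \in \mathcal{H}_R(x_0, y)$, the function $\varphi(k) := (y \mid g_k x_0)_{x_0}$ starts at $0$, ends at a value $\ge R$, and changes by at most $L$ per step (by the triangle inequality applied to $d_X(y, g_k x_0)$ and $d_X(x_0, g_k x_0)$); since $r > L$, $\varphi$ cannot jump over the interval $((i-1) r, i r]$, so the path meets each $B_i$.

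The branching of $B$ is the main obstacle and the step I expect to be hardest. I will model the argument on the free-group example of Subsection \ref{subsection:plans}, where every axis point $a^k \in B_i$ sits within $d_S$-distance one of its off-axis neighbor $a^k b$, and the subset $\{a^k b^l : l \ne 0\}$ freely generates a free subsemigroup because its products are reduced words with no cancellation. In the general setting I will take
\[
B' := \big\{ g \cdot w : g \in B,\ gw x_0 \text{ has a long nontrivial projection onto some $G$-translate of } Ax(f)\big\},
\]
which guarantees $B \subseteq \mathcal{N}_1(B')$ in $d_S$. The injectivity of the product map $(B')^k \to G$ will come from the BBF chain lemma (Lemma \ref{lem:chainBBF}) applied to the sequence of axis translates $g_1 Ax(f),\, g_1 g_2 Ax(f), \ldots, g_1 \cdots g_k Ax(f)$: Theorem \ref{thm:elemClos} provides $K_0$-bounded pairwise projections, and the spacing $r \gg K_0 + \delta$ makes the resulting chain long enough for the lemma to force a uniquely reduced form, so that the tuple is recoverable from the product. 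The remaining subtlety is that $y$ is arbitrary and $[x_0, y]$ need not fellow-travel $Ax(f)$, so the chain must be set up relative to the $G$-translate of $Ax(f)$ that best shadows the initial segment of $[x_0, y]$ at each level $i r$; this is where Lemma \ref{lem:amplifyWPD} and Theorem \ref{thm:elemClos} come in, together allowing $r$ to be chosen depending only on $(G, S, f, x_0, \delta, K_0)$ and not on $R$ or $y$.
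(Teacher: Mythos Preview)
Your barrier argument is fine, but the branching claim has a genuine gap that already shows up in the free group. With $G=F_{2}=\langle a,b\rangle$ acting on its Cayley tree and $y=a^{R}$, your shell union
\[
B=\bigsqcup_{i}B_{i}=\{g\in F_{2}:(y\mid g)_{id}\in(0,R]\}
\]
is exactly the set of reduced words beginning with $a$, since $(a^{R}\mid g)_{id}$ is the length of the common prefix and is always $\le R$. By the fourfold symmetry of the tree at $id$,
\[
\E_{p}\bigl[\#(C(id)\cap B)\bigr]=\tfrac{1}{4}(\chi_{p}-1)\xrightarrow[p\nearrow p_{c}]{}\infty,
\]
which by Lemma~\ref{lem:branchingCapacity} rules out $B$ being $r$-roughly branching for any $r$. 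In other words, pure Gromov-product shells are not codimension-one objects; they are fat cones. Your attempted repair via $B'=\{gw:g\in B,\ \text{long projection condition}\}$ cannot work: if the projection condition is nontrivial it excludes many $g\in B$ (for instance $g=ab a^{M}$ with $M$ huge fails any bounded-projection requirement), so $B\not\subseteq\mathcal{N}_{1}(B')$; if you drop the condition, $B'$ is as fat as $B$ and cannot have unique products. The free-group picture you invoke from Subsection~\ref{subsection:plans} uses the \emph{thin} vertical sets $\{a^{k}b^{l}:l\in\Z\}$, not the Gromov-product shells; these are different objects.

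The paper resolves this by building the thinness into the definition of $B_{i}$: one requires $(gx_{0}\mid y)_{x_{0}}\in I_{i}$ \emph{and} that $d_{\gamma}(x_{0},gx_{0})$ is bounded for every translate $\gamma$ of $Ax(f)$ that is ``perpendicular'' to $[x_{0},y]$ (meaning $d_{\gamma}(x_{0},y)$ is small). This second clause forces $B_{i}$ to be genuinely thin and makes the branching argument go through via a map $g\mapsto g\,\mathscr{W}(g)\,f^{N}\,\mathscr{W}$ into $EC(f)$-cosets. The price is that the barrier property is no longer the trivial intermediate-value step you wrote: a $d_{S}$-path can now try to dodge $B_{i}$ by running far along a perpendicular axis, and one needs a BBF chain argument (Lemma~\ref{lem:chainBBF}) to show that once it does so it can never recover the correct Gromov-product level. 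That is the actual content of the proof, and it is precisely what your decomposition moves into the branching step without supplying.
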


\begin{proof}
Let $K_{0} = K > 1000\delta$ be as in Theorem \ref{thm:elemClos} for $G$ and $f$. Recall that $EC(f)$ is a virtually cyclic subgroup of $G$. Now let  \[
\mathscr{A} := \{ g Ax(f) : g \in G\}.
\]
Note that  $\diam_{\gamma}(\gamma') \le K_{0}$ for distinct axes $\gamma, \gamma' \in \mathscr{A}$.

By enlarging $K_{0}$, we can guarantee that $d_{X}(x_{0}, sx_{0}) < K_{0}$ for each $s \in S$. Since $G$ is not virtually cyclic, we can take $w \in S \setminus \{ g \in G : g^{2} \in EC(f)\}$. Then we have $d_{X}(x_{0}, wx_{0}) \le K_{0}$. By enlarging $K_{0}$ once again, we can guarantee the following: 

\begin{obs}\label{obs:2DivergingChoice}for every $x_{1}, x_{2} \in X$ either
\begin{enumerate}
\item $(x_{j} \, | \, p)_{x_{0}} < K_{0}$ for each $j \in \{1, 2\}$ and $p \in Ax(f)$; 
\item $(x_{j} \, | \,w p)_{x_{0}} < K_{0}$ for each $j \in \{1, 2\}$ and $p \in Ax(f)$, or
\item $(x_{j} \, | \,w^{-1} p)_{x_{0}} < K_{0}$ for each $j \in \{1, 2\}$ and $p \in Ax(f)$.
\end{enumerate}
\end{obs}

We now describe the roughly branching barrier. Let  \[
I_{i}:= \big[ 100K_{0}i - 25K_{0}, 100K_{0} i + 25K_{0} \big], \quad J_{i} := \big[ 100K_{0} i - K_{0}, 100K_{0} i + K_{0} \big],
\]\[\begin{aligned}
B_{i} &:= \left\{ g \in G :  \begin{array}{c} \big(gx_{0} \, \big| \, y)_{x_{0}}  \in I_{i},\\ 
\forall \gamma \in \mathscr{A} \big[  d_{\gamma}(x_{0}, y) \ge 5K_{0} \vee d_{\gamma}(x_{0}, g x_{0}) \le 100K_{0}\big] \end{array} \right\}.
\end{aligned}
\]
We claim that:\begin{claim}\label{claim:pathBlock}
Let $P = (g_{1}, g_{2}, \ldots, g_{N})$ be a $d_{S}$-path such that $(g_{1} x_{0}| y)_{x_{0}} \in I_{0}$ and $(g_{N}x_{0}|y)_{x_{0}} \in I_{2}$. Then there exists $i$ such that $g_{i} \in B_{1}$. 
\end{claim}

\begin{proof}[Proof of Claim \ref{claim:pathBlock}]

For this proof, let \[
\mathscr{A}' := \{ \gamma \in \mathscr{A} : d_{\gamma}(x_{0}, y) < 5K_{0}\}.
\]

Suppose to the contrary that $P$ does not pass through $B_{1}$. Recall that for each $z, z' \in X$, $(z | y)_{x_{0}}$ and $(z' | y)_{x_{0}}$ differ by at most $d_{X}(z, z')$. Hence, along the $d_{S}$-path $(g_{1}, g_{2}, \ldots, g_{N})$, the quantity $(g_{i} x_{0}|y)_{x_{0}}$ changes by at most $K_{0}$ at each step $i$. Since $(g_{i} x_{0}|y)_{x_{0}}$ changes from less than $20K_{0}$ to more than $180K_{0}$, there exists a step $i(1)$ for which $(g_{i(1)} x_{0}|y)_{x_{0}}$ lies in $J_{1}$.

Since we supposed that $P$ does not pass through $B_{1}$, $x_{0}$ and $g_{i(1)}x_{0}$ are $100K_{0}$-separated along some $\gamma \in \mathscr{A}'$. In particular, \[
\mathcal{C}_{0} := \big\{ \gamma \in \mathscr{A}' : d_{\gamma} (x_{0}, g_{i(1)} x_{0}) \ge 80K_{0}\big\}
\]
is non-empty. We pick $\gamma_{0} \in \mathcal{C}_{0}$ that is the closest to $x_{0}$.

At this moment, we observe: \begin{obs}\label{obs:chain}
Let $\gamma_{1}, \gamma_{2}, \ldots, \gamma_{n} \in \mathscr{A}'$ and $z \in X$ be such that: \begin{enumerate}
\item $d_{\gamma_{i-1}}(x_{0}, \gamma_{i}) \ge 50K_{0}$ for $1 \le i \le n$;
\item $d_{\gamma_{n}} (x_{0}, z) \ge 50 K_{0}$.
\end{enumerate}
Then $(z | y)_{x_{0}}$ and $(g_{i(1)} x_{0} | y)_{x_{0}}$ are $22K_{0}$-close.  In particular, $(z | y)_{x_{0}}$ lies in $I_{1}$ and not in $I_{2}$.
\end{obs}

To see this, suppose that $\gamma_{1}, \ldots, \gamma_{n} \in \mathscr{A}'$ and $z \in X$ satisfy the assumption. Then $d_{\gamma_{0}}(x_{0}, z) \ge 46K_{0}$ by Lemma \ref{lem:chainBBF}. Let $p \in \pi_{\gamma_{0}}(x_{0})$ and $q \in \pi_{\gamma_{0}}(z)$. Then $[x_{0}, z]$ is $0.01K_{0}$-close to $p$. Meanwhile, recall that $d_{\gamma_{0}}(x_{0}, y) <5K_{0}$, which implies $d_{\gamma_{0}}(y, z) \ge 40K_{0}$. By Corollary \ref{cor:projGrom}(2), $[y, z]$ is $0.01K_{0}$-close to $\pi_{\gamma_{0}}(y)$, which is $5K_{0}$-close to $p$. In conclusion, $[x_{0}, z]$ and $[y, z]$ are $0.01K_{0}$-close and $5.01K_{0}$-close to $p$, respectively. Hence, $(z|y)_{x_{0}}$ and $(p|y)_{x_{0}}$ differ by at most $10.1K_{0}$. 

Now recall that $d_{\gamma_{0}}(x_{0}, g_{i(1)}x_{0}) \ge 100K_{0}$. For the same reason, $(g_{i(1)}x_{0} | y)_{x_{0}}$ and $(p|y)_{x_{0}}$ differ by at most $10.1K_{0}$. Observation \ref{obs:chain} now follows.

Let us now go back to the proof of the claim. If $d_{\gamma_{0}} (x_{0}, g_{j} x_{0}) \ge80K_{0}$ for all $j \ge i(1)$, then Observation \ref{obs:chain} tells us that the $(g_{N}x_{0} | y)_{x_{0}}$ lies in $I_{1}$ and not in $I_{2}$, a contradiction. Hence, we can pick the earliest $i(2) > i(1)$ such that $d_{\gamma_{0}} (x_{0}, g_{i(2)} x_{0}) \le 80K_{0}$. By the coarse Lipschitzness of $\pi_{\gamma_{0}}(\cdot)$, we have $d_{\gamma_{0}}(x_{0}, g_{i(2)} x_{0}) \ge 78K_{0}$, and Observation \ref{obs:chain} still tells us that $(g_{i(2)}x_{0}|y)_{x_{0}}\big)\in I_{1}$. Since $g_{i(2)} \in P$ is assumed not to be in $B_{1}$, the collection \[
\mathcal{C}_{1} := \{ \gamma \in \mathscr{A}' : d_{\gamma}(x_{0}, g_{i(2)}x_{0}) > 100K_{0}\}
\]
is nonempty. We pick $\gamma_{1}  \in \mathcal{C}_{1}$ that is the closest to $x_{0}$. Clearly $\gamma_{1} \neq \gamma_{0}$.

Note that $[x_{0},  g_{i(2)}x_{0}]$ has large projections onto both $\gamma_{0}$ and $\gamma_{1}$. Let $\eta_{0}$ and $\eta_{1}$ be subsegments of $[x_{0}, g_{i(2)}x_{0}]$ that are $12\delta$-equivalent to the two projections, respectively. Then $\eta_{0}$ is at least $77K_{0}$-long and $\eta_{1}$ is at least $99K_{0}$-long. Moreover, recall that $\diam_{\gamma_{0}}(\gamma_{1}) < K_{0}$ as distinct axes in $\mathscr{A}$ have $K_{0}$-bounded projection. This implies that $\eta_{0}$ and $\eta_{1}$ overlap for length less than $2K_{0}$.

Suppose to the contrary that $d_{X}(x_{0}, \gamma_{1}) <d_{X}(x_{0}, \gamma_{0})$. This implies that  $\eta_{1}$ appears earlier than $\eta_{0}$ along $[x_{0}, g_{i(2)} x_{0}]$. Since they do not overlap much and since $\eta_{0}$ is long enough, we can take $p \in \eta_{0}$ such that \[\begin{aligned}
d_{X}(g_{i(2)}x_{0}, p) &\le d_{X}(g_{i(2)} x_{0}, \eta_{1}) - 75K_{0} \\
&\le d_{X}\big(g_{i(2)} x_{0}, \pi_{\gamma_{1}} ([x_{0}, g_{i(2)}x_{0}])\big) - 74K_{0} = d_{X}(g_{i(2)} x_{0}, \gamma_{1}) - 74K_{0}.
\end{aligned}
\]
By Lemma \ref{lem:projGrom} we have $d_{\gamma_{1}}(p, g_{i(2)} x_{0}) \le 12\delta$. Since $p$ is $12\delta$-close to $\gamma_{0}$ and since $\gamma_{0}$ has bounded projection onto $\gamma_{1}$ (as they are distinct!), we conclude that $d_{\gamma_{1}}(\gamma_{0}, g_{i(2)} x_{0}) \le 3K_{0}$. As a result, we have \[
d_{\gamma_{1}}(x_{0}, \gamma_{0}) \ge 97K_{0}.
\]
Let us observe $\mathcal{C}_{0}$ for the moment. Since $d_{\gamma_{0}} (x, g_{i(1)} x_{0}) \ge 100K_{0}$, there exists a point $p \in [x, g_{i(1)} x_{0}]$ that is $12\delta$-close to some $q \in \gamma_{0}$. Since $d_{\gamma_{1}}(x_{0}, \gamma_{0}) \ge 97K_{0}$ and  $\diam_{\gamma_{1}}(\gamma_{0})\le K_{0}$, we have $d_{\gamma_{1}}(x_{0}, q) \ge 96K_{0}$ and $d_{\gamma_{1}}( x_{0}, p) \ge 95K_{0}$. In particular, $d_{\gamma_{1}}([x_{0}, g_{i(1)} x_{0}]) \ge 95K_{0}$, which implies $d_{\gamma_{1}} (x_{0}, g_{i(1)}x_{0}) \ge 94K_{0}$ by Corollary \ref{cor:projGrom}(3). Thus, $\gamma_{1}$ belongs to $\mathcal{C}_{0}$. Since $d_{X}(x_{0}, \gamma_{1}) <d_{X}(x_{0}, \gamma_{0})$, this contradicts the minimality of $\gamma_{0}$.

We therefore conclude that $d_{X}(x_{0}, \gamma_{0}) \le d_{X}(x_{0}, \gamma_{1})$, and $\eta_{0}$ appears earlier than $\eta_{1}$. Then $d_{\eta_{0}}(\eta_{1}, g_{i_{2}}x_{0}) \le 2K_{0}$ and $d_{\gamma_{0}}(g_{i(2)}x_{0}, \gamma_{1}) \le 3K_{0}$. Hence, $d_{\gamma_{0}}(x_{0}, \gamma_{1}) \ge 87K_{0}$.

We keep this manner. If $d_{\gamma_{1}}(x_{0}, g_{j} x_{0}) \ge 80K_{0}$ for all $j > i(2)$, then $(g_{N} x_{0} |y)_{x_{0}}$ lies in $I_{1}$ and not in $I_{2}$ by Observation \ref{obs:chain}, a contradiction. Hence, there is the first moment $i(3) > i(2)$ at which $d_{\gamma_{1}}(x_{0}, g_{j} x_{0}) \le 80K_{0}$. Then $d_{\gamma_{1}}(x_{0}, g_{j}x_{0}) =_{2K_{0}} 80K_{0}$ and Observation \ref{obs:chain} again tells us that $(g_{i(3)}x_{0}, y)_{x_{0}} \in I_{1}$. Since $g_{i_{3}} \in P \notin B_{1}$, the collection \[
\mathcal{C}_{2} := \{ \gamma \in \mathscr{A}' : d_{\gamma}(x_{0}, g_{i(3)} x_{0}) > 100K_{0}\}
\]
is nonempty. We pick $\gamma_{2} \in \mathcal{C}_{2}$ that is the closest to $x_{0}$. If $\gamma_{2}$ is closer than $\gamma_{1}$ to $x_{0}$, then we can argue the same as before that $\gamma_{2} \in \mathcal{C}_{1}$, violating the minimality of $\gamma_{1}$. It follows that $\gamma_{1}$ is closer to $\gamma_{2}$, and $d_{\gamma_{1}}(x_{0}, \gamma_{2}) \ge 90K_{0}$. 

If this process does not halt, we obtain infinite sequence of step numbers $i(1) < i(2) < \ldots$ for the finite path $P$, a contradiction. Hence, the process must halt and the path $P$ should intersect $B_{1}$.
\end{proof}

Similarly, for $g \in G$ such that $gx_{0} \in \mathcal{H}_{R}(x_{0}, y)$, every  $d_{S}$-path from $id$ to $g$ must  pass through each of $B_{1}, B_{2}, \ldots, B_{R/100K_{0}}$.

It remains to show that $\sqcup_{i \ge 1} B_{i}$ is roughly branching. Since $EC(f)$ is a finite extension of a quasi-isometrically embedded subgroup $\langle f \rangle$, the set \[
EC(f) \cap \{ g \in G : d_{X}(x_{0}, gx_{0}) < 200K_{0}\}
\]
is finite. Hence,  it is contained in $\{g : \|g\|_{S} \le R'\}$ for some $R'$.  We claim that $\sqcup_{i \ge 1} B_{i}$ is $(R'+4+ 200K_{0} \|f\|_{S})$-roughly branching.

Let us take a subset $B'$ of $\sqcup_{i \ge 1} B_{i}$ that is maximally $(R'+2)$-separated (in terms of the word metric $d_{S}$). 
We will construct a map $F : B' \rightarrow F(B') \subseteq G$. Given $a \in B'$, Observation \ref{obs:2DivergingChoice} guarantees $\mathscr{W}(a) \in \{w^{-1}, id, w\}$ such that \[
\big(x_{0} \,\big| a\mathscr{W}(a) p\big)_{ax_{0}} <K_{0}, \quad \big(y \,\big|\, a\mathscr{W}(a) p\big)_{ax_{0}} <K_{0} \quad \big(\forall p \in Ax(f)\big).
\]
Furthermore, there exists $\mathscr{W} \in \{w^{-1}, id, w\}$ such that \[
\big(y \,\big|\, \mathscr{W}^{-1}p\big)_{x_{0}} < K_{0}. \quad\big( \forall p \in Ax(f)\big)
\]
Then we define \[
F(a) := a \mathscr{W}(a) f^{200K_{0}} \mathscr{W}.
\]

We now claim that: \begin{claim}\label{claim:chainFInj}
If $a_{1}, a_{2}, \ldots, a_{k}, b_{1}, \ldots, b_{k} \in B'$ are such that \[
F(a_{1}) F(a_{2}) \cdots F(a_{k}) = F(b_{1}) F(b_{2}) \cdots F(b_{k}),
\]
then $a_{1} = b_{1}$.
\end{claim}

\begin{proof}[Proof of Claim \ref{claim:chainFInj}]
Let $U= F(a_{1}) \cdots F(a_{k})$. Note that \[
\left(\begin{array}{c}  
[x_{0}, a_{1} x_{0}],\, a_{1} [x_{0}, \mathscr{W}(a_{1}) f^{200K_{0}} \mathscr{W} x_{0}] ,\, F(a_{1}) [x_{0}, a_{2}x_{0}], \, F(a_{1}) a_{2} [x_{0}, \mathscr{W}(a_{2}) f^{200K_{0}} \mathscr{W} x_{0}], \, \ldots
\end{array}\right)
\]
is a sequence of consecutive geodesics, each longer than $50K_{0}$. (Recall that $(a_{i}x_{0} | y)_{x_{0}} \in I_{1} \cup I_{2} \cup \ldots$ is at least $75K_{0}$.) Next, between each pair of consecutive geodesics the Gromov product is bounded by $2.1K_{0}$. This is because\begin{itemize}
\item $(x_{0} | F(a_{i})\mathscr{W}^{-1}x_{0})_{a_{i} x_{0}} < K_{0}$  and $d_{X}( F(a_{i})\mathscr{W}^{-1}x_{0}, F(a_{i})x_{0}) < K_{0}$. 
\item $\big( a_{i} \mathscr{W}(a_{i}) x_{0}\, \big|\, F(a_{i})y\big)_{F(a_{i})x_{0}} = \big(\mathscr{W}^{-1} f^{-200K_{0}} x_{0} \,\big|\, y\big)_{x_{0}} < K_{0}$ and $(y|a_{i+1}x_{0} )_{x_{0}} \ge 75K_{0}$, which imply $\big( a_{i} \mathscr{W}(a_{i}) x_{0}\, \big|\, F(a_{i})a_{i+1}x_{0}\big)_{F(a_{i})x_{0}} <K_{0} + 4\delta$.

Moreover, $d_{X}(a_{i} \mathscr{W}(a_{i})x_{0}, a_{i}x_{0} ) < K_{0}$.
\end{itemize}
By the stability lemma, there exist points $p_{1}, q_{1}, \ldots, p_{k}, q_{k}$ on $[x_{0}, Ux_{0}]$, in order from closest to farthest from $x_{0}$, such that $d_{X}(p_{1}, a_{1} x_{0}), d_{X}(q_{1}, F(a_{1})x_{0}), \ldots$ are all smaller than $2.2K_{0}$. Similarly, there exist points $p_{1}', q_{1}', \ldots, p_{k}', q_{k}'$ on $[x_{0}, Ux_{0}]$, in order, such that $d_{X}(p_{1}', b_{1}x_{0}), d_{X}(q_{1}', F(b_{1})x_{0}), \ldots \le 2.2K_{0}$.

Suppose to the contrary that $d_{X}(x_{0}, p_{1}) > d_{X}(x_{0}, p_{1}') + 130K_{0}$. Note that \[
d_{X}\big(p_{1}', b_{1} \mathscr{W}(b_{1}) x_{0}\big) \le d_{X}(p_{1}', b_{1}x_{0}) + d_{X}\big(x_{0}, \mathscr{W}(b_{1})x_{0}\big) \le 3.2K_{0},
\]
and similarly $q_{1}'$ and $b_{1} \mathscr{W}(b_{1}) \cdot f^{200K_{0}} x_{0}$ are $3.2K_{0}$-close. By Lemma \ref{lem:GromHausdorff}, $[p_{1}', q_{1}']$ is $3.3K_{0}$-equivalent to $b_{1} \mathscr{W}(b_{1}) [x_{0}, f^{200K_{0}} x_{0}]$.

Let $q$ be $q_{1}'$ or $p_{1}$, whichever coming earlier along $[x_{0}, Ux_{0}]$. Then, \begin{itemize}
\item $p_{1}'$ and $q$ are both closer to $x_{0}$ than $p_{1}$ is. Hence, $p_{1}', q \in [x_{0}, p_{1}]$.
\item $q \in [p_{1}', q_{1}']$ is $3.3K_{0}$-close to $b_{1} \mathscr{W}(b_{1})Ax(f)$, as well as $p_{1}'$. Meanwhile, we have $d_{X}(p_{1}', q_{1}') =_{6.6K_{0}} d_{X}(x_{0}, f^{200K_{0}}) = 200K_{0}$ and $d_{X}(p_{1}', p_{1}) > 130K_{0}$. 
Hence, $p_{1}'$ and $q$ are $130K_{0}$-distant points on $[x_{0}, p_{1}]$ that are $3.3K_{0}$-close to $b_{1}\mathscr{W}(b_{1})Ax(f)$.
\end{itemize}

Now observe that $[x_{0}, a_{1}x_{0}]$ and $[x_{0}, p_{1}]$ are $2.3K_{0}$-equivalent, as $a_{1} x_{0}$ and $p_{1}$ are $2.2K_{0}$-close. Hence,  $b_{1} \mathscr{W}(b_{1}) Ax (f)$ is $5.6K_{0}$-close to points on $[x_{0}, a_{1} x_{0}]$ that are at least $127K_{0}$-distant. This implies that $\diam_{b_{1} \mathscr{W}(b_{1})Ax(f)} ([x_{0}, a_{1}x_{0}]) > 115K_{0}$. Corollary \ref{cor:projGrom}(5) then tells us that $d_{b_{1} \mathscr{W}(b_{1})Ax(f)} (x_{0}, a_{1}x_{0})> 114K_{0}$. Recall our definition of $B_{i}$'s. We are led to $d_{b_{1} \mathscr{W}(b_{1})Ax(f)} (x_{0}, y) \ge 5K_{0}$. 

Meanwhile, our definition of $\mathscr{W}(b_{1})$ tells us that $(x_{0} | p)_{b_{1}\mathscr{W}(b_{1}) x_{0}} < 2K_{0}$ for every $p \in b_{1} \mathscr{W}(b_{1}) Ax(f)$. Lemma \ref{lem:projection} implies that the projection of $x_{0}$ onto $b_{1} \mathscr{W}(b_{1})Ax(f)$ is $(2K_{0} + 8\delta)$-close to $b_{1}\mathscr{W}(b_{1}) x_{0}$. Similarly, because  $(y| p)_{b_{1}\mathscr{W}(b_{1}) x_{0}} < 2K_{0}$ for every $p \in b_{1} \mathscr{W}(b_{1}) Ax(f)$, the projection $\pi_{b_{1} \mathscr{W}(b_{1}) Ax(f)}(y)$ should be $(2K_{0} + 8\delta)$-close to $b_{1}\mathscr{W}(b_{1}) x_{0}$. In conclusion, $d_{b_{1} \mathscr{W}(b_{1}) Ax(f)}(x_{0}, y) < 4.5K_{0}$. This is a contradiction.

A similar contradiction happens if $d_{X}(x_{0}, p_{1}') > d_{X}(x_{0}, p_{1}) + 130K_{0}$. Hence, $p_{1}$ and $p_{1}'$ are $130K_{0}$-close. Since $q_{1}$ ($q_{1}'$, resp.) appears later than $p_{1}$ ($p_{1}'$. resp.) by at least $195K_{0}$, we conclude that $[p_{1}, q_{1}]$ and $[p_{1}', q_{1}']$ overlap for length at least $65K_{0}$. 

Recall that $[p_{1}, q_{1}]$ ($[p_{1}' ,q_{1}']$, resp.) and $a_{1} \mathscr{W}(a_{1})[ x_{0}, f^{200K_{0} }x_{0}]$ ($b_{1} \mathscr{W}(b_{1})[ x_{0}, f^{200K_{0} }x_{0}]$, resp.) are $3.3K_{0}$-equivalent. By Corollary \ref{cor:projGrom}(1), (4), (5), we conclude \[
 \diam_{a_{1} \mathscr{W}(a_{1})Ax(f)} \big(b_{1} \mathscr{W}(b_{1}) Ax(f)\big) \ge 20K_{0}.
 \]
 This implies that $\mathscr{W}(a_{1})^{-1} a_{1}^{-1} b_{1} \mathscr{W}(b_{1})$ lies in $EC(f)$. Meanwhile, note that $d_{X}(a_{1} \mathscr{W}(a_{1})x_{0}, b_{1}\mathscr{W}(b_{1}) x_{0}) \le 140K_{0}$. Hence, $\mathscr{W}(a_{1})^{-1} a_{1}^{-1} b_{1} \mathscr{W}(b_{1})$ lies in $B_{S}(R')$, and $d_{S}(a_{1}, b_{1}) \le R'+2$. Since  $a_{1}, b_{1}$ are chosen from an $(R'+2)$-separated set $B'$, this forces $a_{1} = b_{1}$.
\end{proof}

Now an inductive argument leads to: \begin{claim}\label{claim:chainFInjInd}
If $a_{1}, a_{2}, \ldots, a_{k}, b_{1}, \ldots, b_{k} \in B'$ are such that \[
F(a_{1}) F(a_{2}) \cdots F(a_{k}) = F(b_{1}) F(b_{2}) \cdots F(b_{k}),
\]
then $a_{i} = b_{i}$ for each $i = 1, \ldots, k$.
\end{claim}

It remains to check that $\sqcup_{i} B_{i}$ is contained in a bounded neighborhood of $F(B')$. Given any $a \in \sqcup_{i} B_{i}$, it is $(R'+2)$-close to some $a' \in B'$, as $B'$ is a maximal $(R'+2)$-separated subset of $B$. Now, $F(a')$ and $a'$ are $(2+ 200K_{0} \|f\|_{S})$-close. In summary, $a$ is $(R+4+ 200K_{0} \|f\|_{S})$-close to $F(B')$ as desired.
\end{proof}

Combining Proposition \ref{prop:magicProper} and Proposition \ref{prop:barrierContracting}, we conclude that relatively hyperbolic groups satisfy the assumption of Theorem \ref{thm:hutchcroftIotaGen} with \[
\mathscr{H}_{D} := \big\{\{ g \in G : gx_{0} \in  \mathcal{H}_{100K_{0} D} (x_{0}, y) \} : y \in X\big\}
\]
and $\mathcal{G}_{D, E} := \emptyset$ for each $D, E$. Therefore, Cayley graphs of relatively hyperbolic groups satisfy the assumption of Theorem \ref{thm:hutchcroft}, and $p_{c} < p_{u}$ and $\nabla_{p_{c}} < +\infty$ hold for such graphs.

\section{Barriers in acylindrically hyperbolic group} \label{section:barrierAcyl}

Let $G$ be an acylindrical hyperbolic group with a finite generating set $S$. Then $G$ acts on a suitable $\delta$-hyperbolic space $(X, d_{X})$ with  a unital, axial WPD loxodromic element $f\in G$. Let $x_{0} \in Ax(f)$. We fix these choices throughout the section.

The following is immediate from the $\delta$-hyperbolicity.

\begin{fact}\label{fact:thin1000D}
Let $i<j<k<l$ be integers, let $x \in \mathcal{N}_{j-i - 2\delta}(f^{i}x_{0})$ and let $y \in \mathcal{N}_{l-k - 2\delta}(f^{l} x_{0})$. Then there exists a subsegment $[x', y']$ of $[x, y]$ such that $x' \in \mathcal{N}_{2\delta}(f^{j}x_{0})$ and $y' \in \mathcal{N}_{2\delta}(f^{k} x_{0})$.
\end{fact}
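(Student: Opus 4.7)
The plan is to apply $\delta$-thinness of geodesic quadrilaterals to $Q = (x, f^{i}x_{0}, f^{l}x_{0}, y)$, using that since $f$ is a unital axial loxodromic with $x_{0} \in Ax(f)$, the points $f^{i}x_{0}, f^{j}x_{0}, f^{k}x_{0}, f^{l}x_{0}$ lie on the axis in this order with $d_{X}(f^{m}x_{0}, f^{n}x_{0}) = |m-n|$. In particular, $Ax(f)|_{[i,l]}$ is a legitimate choice for the side joining $f^{i}x_{0}$ and $f^{l}x_{0}$ in $Q$, and it contains both $f^{j}x_{0}$ and $f^{k}x_{0}$.

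Subdividing $Q$ by a diagonal and applying $\delta$-thinness of triangles twice shows that every point on one side of $Q$ lies in the $2\delta$-neighborhood of the union of the other three sides. I would apply this to $f^{j}x_{0} \in Ax(f)|_{[i,l]}$: some point $z$ on $[x, f^{i}x_{0}] \cup [x, y] \cup [y, f^{l}x_{0}]$ satisfies $d_{X}(z, f^{j}x_{0}) \le 2\delta$. The two outer sides are ruled out by the reverse triangle inequality. For $p \in [x, f^{i}x_{0}]$, the hypothesis $d_{X}(f^{i}x_{0}, x) \le j-i-2\delta$ gives
\[
d_{X}(p, f^{j}x_{0}) \ge d_{X}(f^{i}x_{0}, f^{j}x_{0}) - d_{X}(f^{i}x_{0}, p) \ge (j-i) - (j-i-2\delta) = 2\delta,
\]
with equality only in the edge case $p = x$, in which case $x \in [x,y]$ and I would take $x' := x$. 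For $q \in [y, f^{l}x_{0}]$,
\[
d_{X}(q, f^{j}x_{0}) \ge (l-j) - d_{X}(f^{l}x_{0}, q) \ge (l-j) - (l-k-2\delta) = (k-j) + 2\delta > 2\delta
\]
since $k > j$ are integers. Therefore $z$ lies on $[x,y]$ and I set $x' := z$. The symmetric argument applied to $f^{k}x_{0}$ produces $y' \in [x,y]$ with $d_{X}(y', f^{k}x_{0}) \le 2\delta$ (this time the edge case is $y' = y$).

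The only remaining subtlety is that $x'$ should precede $y'$ along $[x,y]$ so that $[x',y']$ is a genuine subsegment. This follows from coarse monotonicity of the nearest-point projection to $Ax(f)$ given by Corollary \ref{cor:projGrom}(3): the projection $\pi_{Ax(f)}$ of $[x,y]$ lies in the $12\delta$-neighborhood of the segment from $\pi_{Ax(f)}(x)$ (within $O(\delta)$ of $f^{i}x_{0}$) to $\pi_{Ax(f)}(y)$ (within $O(\delta)$ of $f^{l}x_{0}$), and since $\pi_{Ax(f)}(x')$ is within $O(\delta)$ of $f^{j}x_{0}$ while $\pi_{Ax(f)}(y')$ is within $O(\delta)$ of $f^{k}x_{0}$, choosing $x'$ as the first point of $[x,y]$ within $2\delta$ of $f^{j}x_{0}$ and $y'$ as the last such point for $f^{k}x_{0}$ forces $x' \le y'$ along $[x,y]$. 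No serious obstacle arises; the proof is a routine application of $\delta$-hyperbolicity of the quadrilateral together with the axis structure of $f$.
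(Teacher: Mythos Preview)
Your argument is correct and is precisely the routine $\delta$-hyperbolicity computation the paper intends: the authors give no proof at all, declaring the fact ``immediate from the $\delta$-hyperbolicity,'' and your thin-quadrilateral argument with reverse triangle inequalities to rule out the two short sides is the standard way to unpack that. One small remark: the literal statement only asks for two points $x',y'$ on $[x,y]$ and does not formally require $x'$ to precede $y'$, so your ordering discussion is extra; in the paper's actual uses (Observations~\ref{obs:acylObs0} and~\ref{obs:acylObs2.1}) the gap $k-j$ is of order $180D \gg \delta$, and there the crude estimate $d_X(x,f^k x_0) - d_X(x,f^j x_0) \ge (k-j) - 2\delta$ (from $\delta$-thinness of the triangle $x, f^i x_0, f^k x_0$) already forces the correct order without any appeal to projection monotonicity.
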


For $D, E \ge 0$ and $u \in G$, we consider two versions of anti-halfspaces:\[\begin{aligned}
 \mathfrak{A}_{D, E, f}^{\pm}(g) &:= \left\{ u \in G :\begin{array}{c} \textrm{$\exists$ geodesic  $\gamma :[0, \tau] \rightarrow X$, $\exists 0 \le \tau_{1} \le \tau_{2} \le \tau_{3} \le \tau$,  $\exists h \in G$}\\
\textrm{such that}\,\, \|h\|_{S} \le E,   \gamma(0) = gx_{0}, \gamma(\tau_{1}) \in \mathcal{N}_{1.1D}(x_{0}),\\
\gamma(\tau_{2}) \in \mathcal{N}_{20\delta}(hx_{0}), \gamma(\tau_{3}) \in \mathcal{N}_{20\delta}(hf^{\pm 200D}x_{0}), \gamma(\tau) \in \mathcal{N}_{5D}(ux_{0})
\end{array}\right\}, \\
\mathfrak{A}_{D, E, f}(g) &:=  \mathfrak{A}_{D, E, f}^{+}(g) \cup  \mathfrak{A}_{D, E, f}^{-}(g),
\end{aligned}
\]
\[\begin{aligned}
 \mathfrak{B}_{D, E, f}^{\pm}(g) &:= \left\{  u \in G :\begin{array}{c} \textrm{$\exists$ geodesic  $\gamma :[0, \tau] \rightarrow X$, $\exists 0 \le \tau_{1} \le \tau_{2} \le \tau_{3} \le \tau$,  $\exists h \in G$}\\
\textrm{such that}\,\, \|h\|_{S} \le 2E,   \gamma(0) = gx_{0}, \gamma(\tau_{1}) \in \mathcal{N}_{3D}(x_{0}),\\
\gamma(\tau_{2}) \in \mathcal{N}_{20\delta}(hx_{0}), \gamma(\tau_{3}) \in \mathcal{N}_{20\delta}(hf^{\pm 180D}x_{0}), \gamma(\tau) \in \mathcal{N}_{5D}(ux_{0})
\end{array}\right\}, \\
\mathfrak{B}_{D, E, f}(y) &:=  \mathfrak{B}_{D, E, f}^{+}(g) \cup  \mathfrak{B}_{D, E, f}^{-}(g).
\end{aligned}
\]

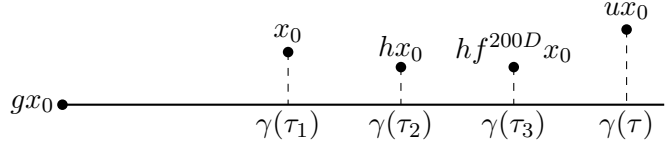
\begin{figure}
\begin{tikzpicture}
\draw[thick] (0, 0) -- (8, 0);
\fill (0, 0) circle (0.07);
\fill (3, 0.7) circle (0.07);
\fill (4.5, 0.5) circle (0.07);
\fill (6, 0.5) circle (0.07);
\fill (7.5, 1) circle (0.07);
\draw (3, -0.27) node {$\gamma(\tau_{1})$};
\draw (4.5, -0.27) node {$\gamma(\tau_{2})$};
\draw (6, -0.27) node {$\gamma(\tau_{3})$};
\draw (7.5, -0.27) node {$\gamma(\tau)$};
\draw (-0.4, 0) node {$gx_{0}$};

\draw[dashed] (3, 0.7) -- (3, 0);
\draw[dashed] (4.5, 0.5) -- (4.5, 0);
\draw[dashed] (6, 0.5) -- (6, 0);
\draw[dashed] (7.5, 1) -- (7.5, 0);

\draw (3, 0.95) node {$x_{0}$};
\draw (4.5, 0.75) node {$hx_{0}$};
\draw (6, 0.75) node {$hf^{200D} x_{0}$};
\draw (7.5, 1.25) node {$ux_{0}$};
\end{tikzpicture}
\caption{Schematics for $\mathfrak{A}_{D, E, f}^{+}(g)$.}
\label{fig:ADEfScheme}
\end{figure}

Some observations are in order. 
\begin{obs}\label{obs:acylObs0}
Let $D \ge 1000(\delta+1)$ and $E > 10D\|f\|_{S}$. Then for each $g \in G$ we have \[
\mathfrak{A}_{D, E, f}^{\pm}(g) \subseteq \mathfrak{B}_{D, E, f}^{\pm} (g). \quad (\forall g \in G)
\]\end{obs}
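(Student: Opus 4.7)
The inclusion is essentially bookkeeping: the $\mathfrak{B}$-conditions are strict relaxations of the $\mathfrak{A}$-conditions in every slot except the location of $\gamma(\tau_3)$, which must sit near $hf^{\pm 180D}x_0$ in $\mathfrak{B}$ as opposed to near $hf^{\pm 200D}x_0$ in $\mathfrak{A}$. The plan is therefore to retain the witnessing data $(\gamma, \tau_1, \tau_2, \tau, h)$ unchanged and to produce a new interior time $\tau_3' \in [\tau_2, \tau_3]$ whose image lies in the prescribed $20\delta$-neighborhood of $hf^{\pm 180D}x_0$.

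To produce $\tau_3'$, I would restrict to the subgeodesic $\gamma|_{[\tau_2, \tau_3]}$, translate by $h^{-1}$ so its endpoints fall in $\mathcal{N}_{20\delta}(f^{0}x_0)$ and $\mathcal{N}_{20\delta}(f^{\pm 200D}x_0)$, and invoke Fact \ref{fact:thin1000D}. For the $+$ case I would apply it with $i = 0$, an integer $j \in [\lceil 22\delta\rceil, \lceil 25\delta\rceil]$, $k = \lfloor 180D\rfloor$, and $l = \lceil 200D\rceil$; the assumption $D \ge 1000(\delta+1)$ renders $i<j<k<l$ as well as $j-i-2\delta \ge 20\delta$ and $l-k-2\delta \ge 20\delta$ true by a very wide margin. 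The Fact then supplies $\tau_3' \in [\tau_2, \tau_3]$ with $h^{-1}\gamma(\tau_3') \in \mathcal{N}_{2\delta}(f^{k}x_0) \subseteq \mathcal{N}_{20\delta}(f^{+180D}x_0)$, i.e.\ $\gamma(\tau_3') \in \mathcal{N}_{20\delta}(hf^{+180D}x_0)$. The $-$ case is identical after replacing $f$ by $f^{-1}$.

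With $\tau_3'$ in hand, the remaining checks for $\mathfrak{B}^{\pm}_{D,E,f}(g)$ are trivial: the chain $\tau_1 \le \tau_2 \le \tau_3' \le \tau_3 \le \tau$ is automatic, the bound $\|h\|_S \le E$ gives $\|h\|_S \le 2E$, and $\mathcal{N}_{1.1D}(x_0) \subseteq \mathcal{N}_{3D}(x_0)$ handles $\gamma(\tau_1)$. The only obstacle worth naming is ensuring the axial gap $200D - 180D = 20D$ comfortably exceeds $22\delta$ so that Fact \ref{fact:thin1000D} can actually reach an interior axis point; this is exactly the role of the threshold $D \ge 1000(\delta+1)$. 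No further hyperbolic-geometric input nor the WPD hypothesis is needed at this step.
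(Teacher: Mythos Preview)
Your proof is correct and follows the same core idea as the paper: apply Fact~\ref{fact:thin1000D} to the translated subgeodesic $h^{-1}\gamma|_{[\tau_2,\tau_3]}$ to locate the needed axis point. The paper's version differs only in bookkeeping: rather than keeping $h$ fixed and finding a single new time $\tau_3'$ near $hf^{\pm 180D}x_0$, it applies Fact~\ref{fact:thin1000D} with $(i,j,k,l)=(0,\pm 10D,\pm 190D,\pm 200D)$ to obtain \emph{both} new times $\tau_2',\tau_3'$ and then replaces $h$ by $h' = hf^{\pm 10D}$, so that $\gamma(\tau_2')\in\mathcal N_{2\delta}(h'x_0)$ and $\gamma(\tau_3')\in\mathcal N_{2\delta}(h'f^{\pm 180D}x_0)$; the bound $\|h'\|_S\le 2E$ then consumes the hypothesis $E>10D\|f\|_S$. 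Your variant is marginally more economical---since you never change $h$, the hypothesis $E>10D\|f\|_S$ is not actually used in your argument---whereas the paper's symmetric choice of parameters avoids the somewhat awkward $j\approx 22\delta$. Both are entirely valid.
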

\begin{proof}
Let $u \in \mathfrak{A}_{D, E, f}^{\pm}(g)$. Let $\gamma : [0, \tau] \rightarrow X$, $0\le \tau_{1} \le \tau_{2} \le \tau_{3} \le \tau$ and $h \in G$ be the ingredients for the membership. In particular, $\gamma(\tau_{2})$ is $20\delta$-close to $hx_{0}$ and $\gamma(\tau_{3})$ is $20\delta$-close to $hf^{\pm 200D} x_{0}$. By Fact \ref{fact:thin1000D}, there exist $\tau_{2} \le \tau_{2}' \le \tau_{3}' \le \tau_{3}$ such that \[
d_{X}\big( hf^{\pm 10 D} x_{0}, \gamma(\tau_{2}') \big) \le 2\delta, \,\,
d_{X}\big( hf^{\pm 190 D} x_{0}, \gamma(\tau_{3}') \big) \le 2\delta.
\]
Furthermore, $\|hf^{\pm 10D}\|_{S} \le \|h\|_{S} + 10D \|f\|_{S} \le 2E$ holds. It is now clear that $u \in \mathfrak{B}_{D, E, f}^{\pm}(g)$.
\end{proof}

In the definition of $\mathfrak{B}_{D, E, f}^{\pm} (g)$ we have $d_{X}(\gamma(\tau_{2}), \gamma(\tau_{3})) \ge 180D - 2\cdot 20\delta$, whereas $d(x_{0}, \gamma(\tau_{1})), d_{X}(ux_{0}, \gamma(\tau)) \le 5D$. This leads to:

\begin{obs}\label{obs:acylObs1}
Let $D \ge 1000(\delta+1)$ and let $u \in \mathfrak{B}_{D, E, f}(g)$. Then $d_{X}(x_{0}, ux_{0})$ and $d_{X}(gx_{0}, ux_{0})$ are at least $100D$.  In particular, $id, g \notin \mathfrak{B}_{D, E, f}(g)$. Moreover, we have $d_{X}(gx_{0}, ux_{0}) \ge d_{X}(gx_{0}, x_{0}) + 100D$.
\end{obs}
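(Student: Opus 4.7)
\medskip
\noindent\textbf{Proof proposal for Observation \ref{obs:acylObs1}.}
The plan is to exploit the fact that $\gamma$ is a genuine geodesic, so arclength along $\gamma$ equals distance in $X$, and then to show that the ``middle portion'' $\gamma([\tau_2,\tau_3])$ forces the endpoints $\gamma(\tau_1)$ and $\gamma(\tau)$ to be far apart, overwhelming the slack in the neighborhood conditions at $x_0$, $hx_0$, $hf^{\pm 180D}x_0$, and $ux_0$.

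Concretely, I would first note that since $\gamma(\tau_2)$ and $\gamma(\tau_3)$ lie respectively in $\mathcal{N}_{20\delta}(hx_0)$ and $\mathcal{N}_{20\delta}(hf^{\pm 180D}x_0)$, the triangle inequality gives
\[
\tau_3-\tau_2 \;=\; d_X(\gamma(\tau_2),\gamma(\tau_3)) \;\ge\; d_X(hx_0, hf^{\pm 180D}x_0) - 40\delta \;=\; 180D - 40\delta.
\]
Since $\tau_1\le\tau_2\le\tau_3\le\tau$, this yields $\tau-\tau_1 \ge 180D-40\delta$ and in particular $\tau \ge 180D - 40\delta$. Under the hypothesis $D\ge 1000(\delta+1)$ we have $40\delta \le D/25$, so $\tau-\tau_1, \tau \ge 179D$.

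Next I would combine this with the near-endpoint conditions. Using $\gamma(\tau_1)\in\mathcal{N}_{3D}(x_0)$, $\gamma(\tau)\in\mathcal{N}_{5D}(ux_0)$, and $d_X(\gamma(\tau_1),\gamma(\tau))=\tau-\tau_1$, the triangle inequality gives
\[
d_X(x_0, ux_0) \;\ge\; (\tau-\tau_1) - 3D - 5D \;\ge\; 179D - 8D \;>\; 100D.
\]
The same computation based at $gx_0=\gamma(0)$ gives $d_X(gx_0,ux_0) \ge \tau - 5D \ge 179D - 5D > 100D$. This establishes the first two distance bounds, and hence $id,g\notin \mathfrak{B}_{D,E,f}(g)$.

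Finally, for the ``moreover'' clause I would bound $d_X(gx_0,x_0)$ from above: since $d_X(\gamma(0),\gamma(\tau_1))=\tau_1$ and $\gamma(\tau_1)$ is $3D$-close to $x_0$, one gets $d_X(gx_0,x_0) \le \tau_1 + 3D$. Subtracting this from $d_X(gx_0,ux_0)\ge \tau - 5D$ yields
\[
d_X(gx_0,ux_0) - d_X(gx_0,x_0) \;\ge\; (\tau-\tau_1) - 8D \;\ge\; 180D - 40\delta - 8D \;\ge\; 100D,
\]
as desired. There is no real obstacle here: the statement is essentially an accounting exercise along the defining geodesic, and the margin between $180D$ and $100D$ is comfortably large enough to absorb the $20\delta$, $3D$, and $5D$ errors once $D\gg\delta$.
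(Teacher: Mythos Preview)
Your argument is correct and is exactly the approach the paper takes: the paper's proof is the one-line remark that $d_X(\gamma(\tau_2),\gamma(\tau_3)) \ge 180D - 2\cdot 20\delta$ while $d_X(x_0,\gamma(\tau_1)), d_X(ux_0,\gamma(\tau)) \le 5D$, and you have simply written out the triangle-inequality bookkeeping in full.
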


Recall that $EC(f)$ is a finite extension of a cyclic subgroup $\langle f \rangle$, which is quasi-isometrically embedded in $X$. Hence,  the set \[
EC(f) \cap \{g : d_{X}(x_{0}, gx_{0}) \le 500D + 2E + 20\delta \}
\]
is finite. The following observation tells us that ``lineage is linear".

\begin{obs}\label{obs:acylObs2}
For each large enough $D>0$ and for each $E>0$ there exists $R>0$ such that the following holds. 

Let $u,v \in G$ and suppose that there exists $w\in u\mathfrak{A}_{D, E, f}(u^{-1}) \cap v \mathfrak{B}_{D, E, f}(v^{-1})$. Then one of the following holds.
\begin{enumerate}
\item $v \in u\mathfrak{A}_{D, E, f}(u^{-1})$ and $d_{X}(wx_{0}, vx_{0}) < d_{X}(wx_{0}, ux_{0})$;
\item $u \in v \mathfrak{B}_{D, E, f}(v^{-1})$ and $d_{X}(wx_{0},  ux_{0}) < d_{X}(wx_{0}, vx_{0})$, or
\item $d_{S}(u, v) \ge R$.
\end{enumerate}
\end{obs}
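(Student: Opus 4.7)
My plan is to translate the two memberships into geodesics of $X$ and compare them using Gromov hyperbolicity. Left-translating $w\in u\mathfrak{A}_{D,E,f}(u^{-1})$ by $u$ produces a geodesic $\bar\gamma:[0,\tau]\to X$ from $x_{0}$ to within $5D$ of $wx_{0}$, together with an element $h\in G$ of $S$-norm $\le E$ and times $\tau_{1}\le\tau_{2}\le\tau_{3}\le\tau$ placing $\bar\gamma(\tau_{1}),\bar\gamma(\tau_{2}),\bar\gamma(\tau_{3})$ within $1.1D$ of $ux_{0}$, within $20\delta$ of $uhx_{0}$, and within $20\delta$ of $uhf^{\pm 200D}x_{0}$ respectively. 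Similarly $w\in v\mathfrak{B}_{D,E,f}(v^{-1})$ yields $\bar\gamma':[0,\sigma]\to X$ with an analogous structure, an element $h'$ of $S$-norm $\le 2E$, and a $180D$-long fellow-travel stretch on $vh'Ax(f)$. Since $d_{X}(\bar\gamma(\tau),\bar\gamma'(\sigma))\le 10D$, Lemma \ref{lem:fellow} gives $d_{X}(\bar\gamma(t),\bar\gamma'(t))\le 4\delta$ for $0\le t\le L_{\min}:=(\bar\gamma(\tau)\mid\bar\gamma'(\sigma))_{x_{0}}$ with $L_{\min}\ge\min(\tau,\sigma)-5D$.

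The next step is a three-way split on $\tau_{1}$ versus $\sigma_{1}$, with threshold $C_{0}=O(D+EK_{0})$ where $K_{0}:=\max_{s\in S}d_{X}(x_{0},sx_{0})$, chosen so that $\tau_{3}-\tau_{1}$ and $\sigma_{3}-\sigma_{1}$ are $\le C_{0}$. If $\sigma_{1}\ge\tau_{1}+C_{0}$, the entire stretch $[\tau_{2},\tau_{3}]$ lies inside $[0,\sigma_{1}]\subseteq[0,L_{\min}]$, so $\bar\gamma(\sigma_{1})$ is $5D$-close to $vx_{0}$ by fellow-travelling, and the initial arc $\bar\gamma|_{[0,\sigma_{1}]}$, translated by $u^{-1}$, directly witnesses $v\in u\mathfrak{A}_{D,E,f}(u^{-1})$. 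An arc-length comparison along $\bar\gamma$ then yields $d_{X}(wx_{0},vx_{0})<d_{X}(wx_{0},ux_{0})$, establishing Case (1). The symmetric condition $\tau_{1}\ge\sigma_{1}+C_{0}$ uses $\bar\gamma'|_{[0,\tau_{1}]}$, whose $180D$-long stretch and the slack $\|h'\|_{S}\le 2E$ are precisely calibrated to witness $u\in v\mathfrak{B}_{D,E,f}(v^{-1})$, giving Case (2).

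The main obstacle is the remaining range $|\tau_{1}-\sigma_{1}|<C_{0}$, where Cases (1) and (2) cannot be obtained directly and I must force Case (3). In this range $|\tau_{2}-\sigma_{2}|=O(D+EK_{0})$, so (for $D$ chosen large relative to $E$) the two fellow-travel stretches overlap on an interval of length $\ge 100D$ sitting inside the $4\delta$-fellow-travelling range; on this overlap $\bar\gamma(t)$ is simultaneously $20\delta$-close to $uhAx(f)$ and $24\delta$-close to $vh'Ax(f)$. Corollary \ref{cor:projGrom}(1), (4) then forces the projection of $uhAx(f)$ onto $vh'Ax(f)$ to have diameter exceeding the constant $K$ of Theorem \ref{thm:elemClos}, giving $(uh)^{-1}(vh')=:\alpha\in EC(f)$. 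Fellow-travelling yields $d_{X}(x_{0},\alpha x_{0})=d_{X}(uhx_{0},vh'x_{0})=O(D+E)$, and since $\langle f\rangle$ is a finite-index subgroup of $EC(f)$ acting properly on $Ax(f)$, only finitely many such $\alpha$ exist, with $\|\alpha\|_{S}\le M(D,E)$. Writing $u^{-1}v=h\alpha h'^{-1}$ yields $\|u^{-1}v\|_{S}\le 3E+M(D,E)=:R_{0}$, and setting $R:=R_{0}+1$ ensures $d_{S}(u,v)\ge R$ is incompatible with this third range and therefore forces one of Cases (1) or (2). The two delicate technical points will be extracting a genuine $\ge 100D$ overlap inside the $L_{\min}$-range (which will dictate how large $D$ must be taken in the hypothesis) and cleanly transferring the $20\delta$-proximity across the $4\delta$-fellow-travel so that Theorem \ref{thm:elemClos} applies.
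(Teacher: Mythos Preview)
Your overall strategy—extracting geodesics from both memberships and comparing them via fellow-travelling—matches the paper's, and your Cases (1) and (2) work as described. The gap is in Case (3).

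Your Case (3) argument needs the axis-stretches $[\tau_2,\tau_3]$ and $[\sigma_2,\sigma_3]$ (where $\bar\gamma$ runs close to $uhAx(f)$ and $vh'Ax(f)$ respectively) to overlap substantially, so that Theorem~\ref{thm:elemClos} forces $(uh)^{-1}(vh')\in EC(f)$. You flag this as needing ``$D$ chosen large relative to $E$'', but the statement fixes $D$ absolutely and then lets $E$ be arbitrary (the paper takes $D>1000(\delta+K_0+D_0)$ with $K_0,D_0$ independent of $E$, and the application in Proposition~\ref{prop:magicAcyl} uses $E\ge 10\|f\|_S D$). Worse, your threshold $C_0$ is forced to be roughly $200D+2EK_0$ by the requirement $\tau_3-\tau_1\le C_0$ in Case (1); with $|\tau_1-\sigma_1|<C_0$ one can then have $\sigma_2>\tau_3$ outright, so the two axis-stretches need not overlap at all, and Theorem~\ref{thm:elemClos} gives nothing.

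The paper sidesteps this with a different split and a different lemma. It compares $\tau_2$ with $\sigma_2$; assuming $\tau_2\le\sigma_2$, the subcase $\tau_3\le\sigma_1+1.5D$ yields Case (1) by truncating at $\sigma_1+1.5D$, while $\tau_3>\sigma_1+1.5D$ guarantees that the interval $[\sigma_1,\sigma_3]\cap[\tau_2,\tau_3]$ has length $\ge 1.5D$. The crucial move is then to invoke Lemma~\ref{lem:amplifyWPD} rather than Theorem~\ref{thm:elemClos}: one projects the pair $\{vx_0,\,vh'f^{\pm 180D}x_0\}$ (close to $\gamma(\sigma_1),\gamma(\sigma_3)$) onto $uh[x_0,f^{\pm 200D}x_0]$ and reads off $d_S(uh,v)\le R'(200D,\,\|h'f^{\pm 180D}\|_S)$. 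This lemma does not require the second axis $vh'Ax(f)$ to fellow-travel the first; it only needs two orbit points with bounded $S$-displacement between them to project far apart, which is exactly what absorbs the $E$-dependence into $R$ rather than into the lower bound on $D$.
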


\begin{proof}
Let $K_{0} = K$ be the constant as in Theorem \ref{thm:elemClos}. Furthermore, let $D_{0} = D_{0}$ be as in Lemma \ref{lem:amplifyWPD}. We assume that $D > 1000(\delta + K_{0} + D_{0})$.

Furthermore, let $R' = R(200D, 300D\|f\|_{S} + E)$ be as in Lemma \ref{lem:amplifyWPD}, and let $R = R' + E$.

By the assumption, there exist a geodesic $\gamma : [0, \tau] \rightarrow X$, $0\le \tau_{1} \le \tau_{2} \le \tau_{3} \le \tau$, a sign $\epsilon \in \{+1, -1\}$ and an element $h \in G$ with $\|h\|_{S} \le E$ such that \[\begin{aligned}
\gamma(0) = x_{0}, \, d_{X}\big(\gamma(\tau_{1}), ux_{0}\big) < 1.1D, \,d_{X}\big(\gamma(\tau_{2}), uhx_{0}\big) < 20\delta, \\
d_{X}\big(\gamma(\tau_{3}), uhf^{200\epsilon D}x_{0}\big) < 20\delta, \, d_{X}\big(\gamma(\tau), wx_{0}\big) < 5D.
\end{aligned}
\]
Similarly, there exist a geodesic $\gamma' : [0, \sigma] \rightarrow X$, $0\le \sigma_{1} \le \sigma_{2} \le \sigma_{3} \le \sigma$,  $\epsilon' \in \{+1, -1\}$ and  $h' \in G$ with $\|h'\|_{S} \le 2E$ such that \[\begin{aligned}
\gamma'(0) = x_{0}, \, d_{X}\big(\gamma'(\sigma_{1}), vx_{0}\big) < 3D, \,d_{X}\big(\gamma'(\sigma_{2}), vh'x_{0}\big) < 20\delta, \\
d_{X}\big(\gamma'(\sigma_{3}), vh'f^{180\epsilon' D}x_{0}\big) < 20\delta, \, d_{X}\big(\gamma'(\sigma), wx_{0}\big) < 5D.
\end{aligned}
\]

We define $L_{min} := (\gamma(\tau) | \gamma'(\sigma))_{x_{0}}$. Then we have \[
L_{min} = \tau - (x_{0} | \gamma'(\sigma))_{\gamma(\tau)} \ge \tau - d_{X}(\gamma(\tau), \gamma'(\sigma)) \ge \tau - 10D.
\]
Similarly, $L_{min} \ge \tau - 10D$. Similarly $L_{min} \ge \sigma - 10D$. By Lemma \ref{lem:fellow}, $\gamma(t)$ and $\gamma'(t)$ are $4\delta$-close for $0\le t\le L_{min}$.

We claim that if $\tau_{2} \le \sigma_{2}$ then either (1) or (3) holds, and if $\tau_{2} \ge \sigma_{2}$ then either (2) or (3) holds. Since the latter case follows from a similar argument, we only explain the former one.

If $\tau_{3} \le \sigma_{1} + 1.5D$ in addition,  then we have \[\begin{aligned}
d_{X}\big(\gamma(\sigma_{1} +1.5D), vx_{0}\big) &\le d_{X}\big(\gamma(\sigma_{1}+ 1.5D), \gamma(\sigma_{1}) \big) + d_{X}(\gamma(\sigma_{1}), \gamma'(\sigma_{1})) + d_{X}\big(\gamma'(\sigma_{1}), vx_{0} \big) \\
&\le 1.5D +4\delta +3D  < 4.6D.
\end{aligned}
\]
Here, we  can feed the parameter $\sigma_{1} + 1.5D$ in $\gamma(\cdot)$ because $\sigma_{1}\le \sigma_{1}+1.5D \le \sigma - 150D \le L_{min} \le \tau$. Since we have the geodesic $\gamma$ with timing $\tau_{1} \le \tau_{2} \le \tau_{3} \le \sigma_{1}+1.5D$, we conclude that $vx_{0} \in u \mathfrak{A}_{D, E, f}(u^{-1})$. Furthermore, since $\tau_{3} \ge \tau_{2} + 199D$ we have \[\begin{aligned}
d_{X}(wx_{0}, vx_{0}) &=_{5D} \tau - (\sigma_{1} + 1.5D) \le \tau - \tau_{3} \\
&\le \tau - \tau_{2} + 199D \le \tau - \tau_{1} + 199D =_{D} d_{X}(wx_{0}, ux_{0}) + 199D.
\end{aligned}
\]

If $\tau_{3} \ge \sigma_{1} + 1.5D$, then we claim that the intersection $I$ of $[ \sigma_{1}, \sigma_{3}]$ and $[\tau_{2}, \tau_{3}]$ is large. Indeed, there are three cases: \begin{itemize}
\item First note that $[\sigma_{1}, \sigma_{3}]$ and $[\tau_{2}, \tau_{3}]$ are both $100D$-long. Hence, if one includes the other one, the intersection $I$ must be $100D$-long. 
\item If $\sigma_{1} \le \tau_{2}$ and $\sigma_{3} \le \tau_{3}$, then $I$ is $100D$-long as \[
\sigma_{3} \ge \sigma_{2} + 100D \ge \tau_{2} + 100D.
\]
\item If $\sigma_{1} \ge \tau_{2}$ and $\sigma_{3} \ge \tau_{3}$, then $I$ is at least $1.5D$-long by the assumption $\sigma_{1} \le \tau_{3} - 1.5D$.
\end{itemize} 
All in all, we have $\diam(I) \ge 1.5D$. In other words, the projections of $\gamma(\sigma_{1})$ and $\gamma(\sigma_{3})$ onto $\gamma([\tau_{2}, \tau_{3}])$ is at least $1.5D$-distant. Note that $uh[x_{0}, f^{200\epsilon D}x_{0}]$ and $\gamma([\tau_{2}, \tau_{3}])$ are $22\delta$-equivalent by Lemma \ref{lem:GromHausdorff}. Hence, Corollary \ref{cor:projGrom}(4) tells us that 
 $d_{uh[x_{0}, f^{200 \epsilon D}x_{0}]}(\gamma(\sigma_{1}), \gamma(\sigma_{3})) \ge 1.4D$. Moreover, note that \[
d_{X}(\gamma(\sigma_{1}), vx_{0}) \le d_{X}(\gamma(\sigma_{1}), \gamma'(\sigma_{1})) + d_{X}(\gamma'(\sigma_{1}), vx_{0}) \le 4\delta + 1.1D.
\]Similarly, $\gamma(\sigma_{3})$ and $vh'f^{180\epsilon'D}x_{0}$ are $24\delta$-close. By Corollary \ref{cor:projGrom}(1) we conclude\[
d_{uh[x_{0}, f^{200 \epsilon D}x_{0}]}(vx_{0}, vh'f^{180\epsilon' D}x_{0}) >0.1D> K_{0}.
\]
Note that $\|h'f^{180\epsilon' D}\|_{S} \le E + 180D \|f\|_{S}$. Our choice of constant $R'$ based on Lemma \ref{lem:amplifyWPD} guarantees $d_{S}(uh, v) \le R'$. Hence, $d_{S}(u, v) \le R' + E = R$. 
\end{proof}

We now need
\begin{obs}\label{obs:acylObs2.1}
Let $D$ be large enough and let $E > 10\|f\|_{S}D$. Let $u, v, w \in G$ such that $w \in \mathfrak{A}_{D, E, f}^{\pm}(u)$ and $(vx_{0} | wx_{0})_{x_{0}} < 2D$. Then $w \in \mathfrak{B}_{D, E, f}^{\pm}(v)$.
\end{obs}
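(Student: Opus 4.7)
The plan is to let $\gamma' : [0, \sigma] \to X$ be a geodesic from $vx_{0}$ to $wx_{0}$ and read off the $\mathfrak{B}$-witness for $v$ from its geometry. First, since $(vx_{0}|wx_{0})_{x_{0}} < 2D$, one has $d(x_{0}, \gamma') \le 2D + O(\delta)$, so defining $\sigma_{1}$ to be the time at which $\gamma'(\sigma_{1})$ is closest to $x_{0}$ yields $\gamma'(\sigma_{1}) \in \mathcal{N}_{3D}(x_{0})$ once $D$ is large enough.

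Next, I will let $\gamma, \tau_{1}\le \tau_{2} \le \tau_{3}\le \tau, h$ witness $w \in \mathfrak{A}_{D,E,f}^{\pm}(u)$, and write $\alpha := h\,Ax(f)$, parameterized by arc length so that $\alpha(0) = hx_{0}$ and $\alpha(\pm 200D) = hf^{\pm 200D}x_{0}$. The subsegments $\gamma|_{[\tau_{1}, \tau]}$ and $\gamma'|_{[\sigma_{1}, \sigma]}$ are geodesics whose endpoints pairwise differ by at most $5D + O(\delta)$ (both begin near $x_{0}$ and end near $wx_{0}$), so Lemma \ref{lem:GromHausdorff} makes them Hausdorff-equivalent within $5D + 2\delta$. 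Chaining this with the inclusions $\gamma(\tau_{2}) \in \mathcal{N}_{20\delta}(\alpha(0))$ and $\gamma(\tau_{3}) \in \mathcal{N}_{20\delta}(\alpha(\pm 200D))$ yields $d(\alpha(0), \gamma'), d(\alpha(\pm 200D), \gamma') \le 5D + 22\delta$.

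The crucial step is to apply the constriction property (Corollary \ref{cor:projGrom}(2)) to the projections $p := \pi_{\gamma'}(\alpha(0))$ and $q := \pi_{\gamma'}(\alpha(\pm 200D))$. The separation $d(p, q) \ge 200D - 2(5D + 22\delta) = 190D - 44\delta$ is far above $12\delta$, so there exists a subsegment $[x', y'] \subseteq \alpha|_{[0, \pm 200D]}$ that is $12\delta$-Hausdorff close to $[p, q] \subseteq \gamma'$, with $d(x', p), d(y', q) < 10\delta$. Writing $x' = \alpha(r)$ and $y' = \alpha(s)$, one obtains $r \in [0, 5D + 32\delta]$ and $s \in [195D - 32\delta, 200D]$. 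I then take $r_{1} := \lceil r \rceil$ and $h' := hf^{\pm r_{1}}$; the bound $\|h'\|_{S} \le \|h\|_{S} + r_{1}\|f\|_{S} \le E + (5D + O(\delta))\|f\|_{S} \le 2E$ follows from the hypothesis $E > 10D\|f\|_{S}$ for $D$ large enough. Since $r_{1} + 180D \le 186D \le s$, both $\alpha(r_{1}) = h'x_{0}$ and $\alpha(r_{1} + 180D) = h'f^{\pm 180D}x_{0}$ lie inside $[x', y']$, so the $12\delta$-Hausdorff closeness produces timings $\sigma_{2} \le \sigma_{3}$ within the $[p, q]$-interval of $\gamma'$ satisfying $d(\gamma'(\sigma_{2}), h'x_{0}), d(\gamma'(\sigma_{3}), h'f^{\pm 180D}x_{0}) \le 12\delta \le 20\delta$, which together with Step~1 verifies $w \in \mathfrak{B}_{D, E, f}^{\pm}(v)$.

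The main obstacle will be the numerical bookkeeping: the slack gaps in $\mathfrak{B}$'s definition ($3D$ vs.\ $1.1D$; $2E$ vs.\ $E$; $\pm 180D$ vs.\ $\pm 200D$) are each calibrated precisely to absorb the $O(D)$-sized Hausdorff and projection errors coming from the $5D$-mismatch between $\gamma(\tau)$ and $wx_{0}$, and verifying the ordering $\sigma_{1}\le \sigma_{2}\le \sigma_{3} \le \sigma$ requires a short orientation-preservation argument using that both $\gamma$ and $\gamma'$ travel from near $x_{0}$ toward $wx_{0}$ and so the projections inherit the axis direction.
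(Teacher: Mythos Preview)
Your proposal is correct and follows essentially the same route as the paper: take a geodesic $\gamma'$ from $vx_{0}$ to $wx_{0}$, locate a point near $x_{0}$ via the Gromov-product hypothesis, use Hausdorff equivalence with $\gamma|_{[\tau_{1},\tau]}$ to transfer the axis witnesses, and shift along $Ax(f)$ by at most $10D$ to upgrade the $O(D)$ proximity to $O(\delta)$ proximity at the cost of replacing $(E,200D)$ by $(2E,180D)$.

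The one variation is the tool used for that last sharpening: the paper applies Fact~\ref{fact:thin1000D} directly to the pair $\eta(t_{2}),\eta(t_{3})$ (points on $\gamma'$ already known to lie $5.5D$-close to $hx_{0}$ and $hf^{\pm 200D}x_{0}$), obtaining intermediate times $\tau_{2}^{\ast}\le\tau_{3}^{\ast}$ within $[t_{2},t_{3}]$ that are $2\delta$-close to $hf^{\pm 10D}x_{0}$ and $hf^{\pm 190D}x_{0}$. You instead project the axis endpoints onto $\gamma'$ and invoke constriction. Both work, but note that the paper's route makes the ordering $\sigma_{1}\le\sigma_{2}\le\sigma_{3}$ automatic (since $t_{2},t_{3}$ are produced inside $\gamma'|_{[\sigma_{1},\sigma]}$ by the Hausdorff correspondence, and Fact~\ref{fact:thin1000D} stays inside $[t_{2},t_{3}]$), whereas your projection $p=\pi_{\gamma'}(\alpha(0))$ is taken onto all of $\gamma'$ and could in principle land before $\sigma_{1}$. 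Your acknowledged orientation-preservation argument does close this, but if you want it for free, project onto $\gamma'|_{[\sigma_{1},\sigma]}$ rather than onto $\gamma'$.
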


\begin{proof}
Let $\gamma : [0, \tau] \rightarrow X$ be a geodesic starting at $ux_{0}$  and let $h \in G$ be the ones that realize the membership $w \in \mathfrak{A}_{D, E, f}^{\pm}(u)$. In particular, there are timing $\tau_{1} \le \tau_{2} \le \tau_{3} \le \tau$ such that \[\begin{aligned}
d_{X}(\gamma(\tau_{1}), x_{0}) < 1.1D, \,\, 
d_{X}(\gamma(\tau_{2}), hx_{0}) < 20\delta,\\
d_{X}\big(\gamma(\tau_{3}), hf^{\pm 200D} x_{0}\big) < 20\delta,\,\,
d_{X}(\gamma(\tau), wx_{0}) < 5D.
\end{aligned}
\]

Let us draw a geodesic $\eta : [0, L'] \rightarrow X$ that connects $vx_{0}$ to $wx_{0}$. Since we are assuming $(vx_{0} | wx_{0})_{x_{0}} < 2D$,  there exists $\tau^{\ast}_{1}$ such that $\eta(\tau^{\ast}_{1})$ and $x_{0}$ are $(2D+20\delta)$-close. Now $\eta([\tau^{\ast}_{1}, L'])$ and $\gamma([\tau_{1}, \tau_{3}])$ are $5.1D$-equivalent by Lemma \ref{lem:GromHausdorff}. Namely, there exist $\tau^{\ast}_{1} \le t_{2} \le t_{3} \le L'$ such that $\eta(t_{2})$ and $\eta(t_{3})$ are $5.5D$-close to $hx_{0}$ and $h f^{200\epsilon D} x_{0}$, respectively. 

Now Fact \ref{fact:thin1000D} gives timing $t_{2} \le \tau_{2}^{\ast} \le \tau_{3}^{\ast} \le t_{3}$ such that $\eta(\tau_{2}^{\ast})$ and $\eta(\tau_{3}^{\ast})$ are $2\delta$-close to $hf^{10\epsilon D} x_{0}$ and $hf^{190\epsilon D} x_{0}$, respectively. The geodesic $\eta$ together with $\tau_{1}^{\ast} \le \tau_{2}^{\ast} \le \tau_{3}^{\ast} \le L$ show that $v \in u\mathfrak{B}_{D, E, f}(g)$, as $\|hf^{10\epsilon D}\|_{S} \le E + 10\|f\|_{S} D \le 2E$.
\end{proof}

Now for each $u \in G$ we define \[\begin{aligned}
\mathcal{H}_{D, E, f}(u) 
&:=  \left\{g \in G :  \big(gx_{0}\big|ux_{0}\big)_{x_{0}} > D \,\,\textrm{or}\not \exists h \in G \Big[[\|h\|_{S}\le E] \wedge [d_{h Ax(f)} (x_{0}, gx_{0}) \ge 250D]\Big]\right\}.
\end{aligned}
\]
We call it an $f$-halfspace radius parameters $(D, E)$. This is related to anti-halfspaces $\mathfrak{A}_{D, E, f}$  because:
\begin{lem}\label{lem:fHalfNest}
For each $D>1000(\delta+1)$ and $E \ge 0$ there exists $F>E$ such that \[
\big[g \notin \mathcal{H}_{D, E, f} (u) \big]\Rightarrow  \big[ g \in \mathfrak{A}_{D, F, f}(u) \big] \quad(\forall g, u \in G).
\]
\end{lem}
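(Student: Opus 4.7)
The plan is to exhibit, directly, a geodesic $\gamma:[0,\tau]\to X$ from $ux_0$ to $gx_0$ together with an element $h'\in G$ witnessing $g\in\mathfrak{A}_{D,F,f}(u)$. The hypothesis $g\notin\mathcal{H}_{D,E,f}(u)$ unpacks into two ingredients: (i) $(gx_0\mid ux_0)_{x_0}\le D$, and (ii) some $h\in G$ with $\|h\|_S\le E$ satisfies $d_{hAx(f)}(x_0,gx_0)\ge 250D$.

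From (i), the insize bound for $\delta$-hyperbolic triangles (Lemma~\ref{lem:projection}) gives some $\gamma(\tau_1)$ within $(gx_0\mid ux_0)_{x_0}+O(\delta)\le 1.1D$ of $x_0$; this handles the first waypoint. From (ii) and Corollary~\ref{cor:projGrom}(2), the projections $p_1\in\pi_{hAx(f)}(x_0)$, $p_2\in\pi_{hAx(f)}(gx_0)$ satisfy $d_X(p_1,p_2)\ge 250D$ and some subsegment of $\eta:=[x_0,gx_0]$ is $12\delta$-Hausdorff close to $[p_1,p_2]$. Using that $f$ is unital axial, write $p_j=_1 hf^{i_j}x_0$ for integers $i_j$, and WLOG $i_1<i_2$ (else swap to $\mathfrak{A}^-$); set $h':=hf^{j_1}$ with $j_1:=i_1+25D$, which leaves a $25D$ buffer between $\{j_1,j_1+200D\}$ and $\{i_1,i_2\}$.

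The crux is to transport the $12\delta$-fellow-traveling of $\eta$ with $[p_1,p_2]$ onto $\gamma$. Lemma~\ref{lem:GromHausdorff} alone only yields Hausdorff distance $O(D)$ between $\gamma$ and $\eta$ (because $\tau_1$ is merely $1.1D$-close to $x_0$), which is far too crude for the $20\delta$ budget in $\mathfrak{A}$. The fix is to parameterize both geodesics in reverse from their common endpoint $gx_0$: ingredient (i) yields $(ux_0\mid x_0)_{gx_0}\ge d_X(x_0,gx_0)-D$, so Lemma~\ref{lem:fellow} produces $4\delta$-fellow-traveling for the initial $d_X(x_0,gx_0)-D$ of the reversed parameterizations. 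Hence every point of $\eta$ at $\eta$-parameter $\ge D$ (measured from $x_0$) has a $\gamma$-counterpart within $4\delta$. Since $h'x_0$ and $h'f^{200D}x_0$ lie in the $25D$-interior of $[p_1,p_2]$, their $12\delta$-approximants on $\eta$ live at $\eta$-parameter $\ge 25D-O(\delta)\gg D$; chaining the two estimates produces $\tau_1\le\tau_2\le\tau_3\le\tau$ with $\gamma(\tau_i)$ within $16\delta<20\delta$ of the prescribed orbit points, and $\gamma(\tau)=gx_0\in\mathcal{N}_{5D}(gx_0)$ trivially. The ordering $\tau_1\le\tau_2\le\tau_3$ follows because the chosen $\eta$-parameters are strictly increasing.

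Finally, with $C_S:=\max_{s\in S\cup S^{-1}}d_X(x_0,sx_0)$, we have $d_X(x_0,hx_0)\le C_S E$, hence $d_X(hx_0,p_1)\le 2C_SE$ and $|i_1|\le 2C_SE+O(1)$ by unitality. Therefore $\|h'\|_S\le\|h\|_S+|j_1|\|f\|_S\le E+(2C_SE+25D+O(1))\|f\|_S=:F$, and $F>E$ as required. The main obstacle, as flagged, is the fellow-traveling argument in the preceding paragraph, where the $O(\delta)$-closeness demanded by the definition of $\mathfrak{A}$ cannot come from the crude Hausdorff bound and instead hinges on the shared-endpoint trick together with a careful check that the chosen waypoints $h'x_0,\, h'f^{200D}x_0$ really lie inside the region where this fellow-traveling is valid.
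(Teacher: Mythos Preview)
Your argument is correct, but it takes a longer route than necessary. The paper proceeds exactly as you do up to producing $\gamma(\tau_1)\in\mathcal N_{1.1D}(x_0)$, and then simply observes that by the coarse Lipschitzness of $\pi_{hAx(f)}$ one has $d_{hAx(f)}(\gamma(\tau_1),\gamma(\tau))>248D$. Constriction (Corollary~\ref{cor:projGrom}(2)) can therefore be applied \emph{directly to the subgeodesic $\gamma|_{[\tau_1,\tau]}$}, yielding $\tau_2,\tau_3\in[\tau_1,\tau]$ with $\gamma(\tau_2),\gamma(\tau_3)$ each $12\delta$-close to $hf^{i+1}x_0$, $hf^{i+200D+1}x_0$ respectively. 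This gives the $20\delta$-closeness and the ordering $\tau_1\le\tau_2\le\tau_3$ in one stroke, with no auxiliary geodesic $\eta$, no shared-endpoint fellow-travelling, and no $25D$ buffer.

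What you flagged as ``the crux'' --- transporting $O(\delta)$-closeness from $\eta$ to $\gamma$ --- is thus a self-imposed difficulty: you never needed $\eta$ at all. Your fix via Lemma~\ref{lem:fellow} at the shared endpoint $gx_0$ does work, and the $25D$ buffer correctly ensures the waypoints land in the synchronized region. Two small wrinkles: (a) the reference to Lemma~\ref{lem:projection} for finding $\tau_1$ is off --- that lemma is about projections, not triangle insize; the paper simply asserts the point exists (it follows from Lemma~\ref{lem:fellow} or the thin-triangle condition); (b) your justification for $\tau_1\le\tau_2$ (``the $\eta$-parameters are increasing'') is slightly loose since $\tau_1$ was not constructed from an $\eta$-parameter, but it follows immediately by comparing $d(gx_0,\gamma(\tau_1))\ge d(x_0,gx_0)-1.1D$ with $d(gx_0,\gamma(\tau_2))\le d(x_0,gx_0)-24D$.
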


\begin{proof}
Let $K_{0} := \max_{s \in S} d_{X}(x_{0}, sx_{0}) + \|f\|_{S}$. We claim that $F := E + (2K_{0}E + 2D + 1)K_{0}$ works.  To see this, let $g \notin \mathcal{H}_{D, E, f}(u)$. Then there exists $h \in G$ such that $\|h\|_{S} \le E$ and $d_{hAx(f)} (x_{0}, gx_{0}) \ge 250D$. Now let $\gamma : [0, L] \rightarrow X$ be the geodesic connecting $ux_{0}$ to $gx_{0}$. Since $(gx_{0} | ux_{0})_{x_{0}} \le D$, there exists $\tau_{1} \in [0, L]$ such that $\gamma(\tau_{1})$ is $1.1D$-close to $x_{0}$. Then by the coarse Lipschitzness of $\pi_{hAx(f)}(\cdot)$, we have \begin{equation}\label{eqn:248DIneq}
d_{hAx(f)} \big(\gamma(\tau_{1}), \gamma(L)\big) > d_{hAx(f)} (x_{0}, gx_{0}) - (1.1D+12\delta) >  248D.
\end{equation}

Let $i, j \in \Z$ be such that $\pi_{hAx(f)}(\gamma(\tau_{1}))$ intersects $[hf^{i} x_{0}, hf^{i+1} x_{0}]$ and $\pi_{hAx(f)}(\gamma(L))$ intersects $[hf^{j} x_{0}, hf^{j+1} x_{0}]$. Then either $j > i + 247D$ or $j < i-247D$ due to Inequality \ref{eqn:248DIneq}. We will focus on the former case; the latter case can be handled in a similar way. In this case, Corollary \ref{cor:projGrom} tells us that there exist $\tau_{1} \le \tau_{2} \le \tau_{3} \le L$ such that $\gamma(\tau_{2})$ is $12\delta$-close to $hf^{i+1} x_{0}$ and $\gamma(\tau_{3})$ is $12\delta$-close to $hf^{i + 200D+1}x_{0}$.

Recall that $d_{X}(\gamma(\tau_{1}), hAx(f)) \le d_{X}(\gamma(\tau_{1}), x_{0}) + d_{X}(x_{0}, hx_{0})$. Using this, we observe that  \[\begin{aligned}
d_{X}(hx_{0}, h f^{i+1}x_{0})& \le d_{X}( hx_{0}, \pi_{hAx(f)} (\gamma(\tau_{1})) + 1\\
&\le d_{X}\big(hx_{0}, \gamma(\tau_{1}) \big) + d_{X}\big( \gamma(\tau_{1}), hAx(f) \big) \\
& \le 2d_{X}(x_{0}, hx_{0}) + 2D + 1 \le 2K_{0} E + 2D +1.
\end{aligned}
\]
This means $|i+1| < 2K_{0}E + 2D + 1$ and $\|hf^{i+1}\|_{S} \le \|h\|_{S} + (2K_{0}E + 2D + 1) \|f\|_{S} \le F$. 

All in all, our choice of timing $\tau_{1} \le \tau_{2} \le \tau_{3} \le L$, together with $hf^{i+1}\in G$ with $\|hf^{i+1}\|_{S} \le F$, guarantees that $gx_{0} \in \mathfrak{A}_{D, F, f}(u)$ as desired.
\end{proof}

We can now state:

\begin{prop}\label{prop:magicAcyl}
Let $X$ be a $\delta$-hyperbolic space and let $G$ be a non-virtually cyclic group acting on $X$ with an axial, unital WPD loxodromic element $f$. Let $S$ be a finite generating set of $G$. Let $x_{0} \in Ax(f)$.

Then for each $\epsilon>0$ and for each large $D>0$ and $E, E' \ge 0$ there exists a constant $N = N(\epsilon, D, E, E')$ such that for every finite set $A \subseteq G$ there exists a subset $A' \subseteq A$ satisfying:

\begin{enumerate}
\item $\#A' \ge (1-\epsilon) \#A$;
\item For each $a \in A'$ there exist $f$-halfspaces $\mathcal{H}_{1}, \mathcal{H}_{2} \subseteq G$ with radius parameters $(D, E)$ such that  \[
\#\Big( A \setminus a\cdot \big(\mathcal{H}_{1} \cup \mathcal{H}_{2} \setminus \{g \in G : \|g\|_{S} \le E'\} \big) \Big) \le N.
\]
\end{enumerate}
\end{prop}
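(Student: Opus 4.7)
The plan is to adapt the argument of Proposition~\ref{prop:magic} to the acylindrical setting, with three main modifications: replace the pair $(\mathcal{H}_D, \mathfrak{A}_D)$ by the $f$-versions $(\mathcal{H}_{D, E, f}, \mathfrak{A}_{D, F, f})$ linked via Lemma~\ref{lem:fHalfNest}; work with a $d_S$-separated subset of the bad set to compensate for non-properness; and use Observation~\ref{obs:acylObs2} in place of the ``linear lineage'' afforded by direct hyperbolic projection. First I fix constants: given $\epsilon, D, E, E'$, invoke Lemma~\ref{lem:fHalfNest} to obtain $F \ge E$ with $g \notin \mathcal{H}_{D, E, f}(u) \Rightarrow g \in \mathfrak{A}_{D, F, f}(u)$, then invoke Observation~\ref{obs:acylObs2} with parameters $D, F$ to obtain $R$, and set $M := \#B_S(id, R + 2E')$ and $N := 2M/\epsilon$.

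Let $\mathcal{A}_1 \subseteq A$ be the set of $a$ for which property~(2) fails with this $N$; the goal is $\#\mathcal{A}_1 \le \epsilon \#A$. I extract a maximal $d_S$-$(R+2E')$-separated subset $\mathcal{A}_2 \subseteq \mathcal{A}_1$, giving $\#\mathcal{A}_2 \ge \#\mathcal{A}_1/M$ and ensuring that the translates $\{aB_S(id, E') : a \in \mathcal{A}_2\}$ are pairwise disjoint. Enumerate $\mathcal{A}_2 = \{a_1, a_2, \ldots\}$ in increasing order of $d_X(x_0, a_ix_0)$ and run the good/bad/undecided bookkeeping of Proposition~\ref{prop:magic}: at step $i$, set $\mathfrak{A}_i := a_i\mathfrak{A}_{D, F, f}(a_i^{-1})$; if $\mathcal{A}_2 \cap \mathfrak{A}_i = \emptyset$, put $a_i$ in $\mathcal{G}$. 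Otherwise pick $b_i \in \mathcal{A}_2 \cap \mathfrak{A}_i$ minimizing $d_X(x_0, b_ix_0)$, form the second anti-halfspace $\mathfrak{A}_i' := a_i\mathfrak{A}_{D, F, f}(a_i^{-1}b_i)$, and put $a_i$ in $\mathcal{G}$ (if $\mathcal{A}_2 \cap \mathfrak{A}_i \cap \mathfrak{A}_i' = \emptyset$) or in $\mathcal{B}$ with fresh witnesses $b_i, c_i \in \mathcal{U}$. Observation~\ref{obs:acylObs1} guarantees that new witnesses are strictly farther from $x_0$ than $a_i$, hence never reclassified, and Observation~\ref{obs:acylObs2} combined with \ref{obs:acylObs2.1}, with case~(3) ruled out by the $d_S$-separation, yields the uniqueness of witnesses across distinct bad steps as in Claim~\ref{claim:unredun}, so $\#\mathcal{G} \ge \#\mathcal{A}_2/2$.

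For each good $a_i$ with witness $b_i$ (or a dummy), set $\mathcal{H}_1^i := \mathcal{H}_{D, E, f}(a_i^{-1})$ and $\mathcal{H}_2^i := \mathcal{H}_{D, E, f}(a_i^{-1}b_i)$, and let $K_i := G \setminus a_i(\mathcal{H}_1^i \cup \mathcal{H}_2^i \cup B_S(id, E'))$; the badness of $a_i$ gives $\#(A \cap K_i) > N$. The crux of the proof, analogous to Claim~\ref{claim:magicDisjt}, is the disjointness of $K_i$ and $K_j$ for distinct good $a_i, a_j$: any $w \in K_i \cap K_j$ lies in $\mathfrak{A}_i \cap \mathfrak{A}_i' \cap a_j\mathfrak{A}_{D, F, f}(a_j^{-1})$ by Lemma~\ref{lem:fHalfNest}, and Observation~\ref{obs:acylObs0} promotes $\mathfrak{A}$ to $\mathfrak{B}$ so that Observation~\ref{obs:acylObs2} applies; the $d_S$-separation forces cases~(1) or~(2), each yielding either $a_j \in \mathfrak{A}_i$ or $a_i \in a_j\mathfrak{B}_{D, F, f}(a_j^{-1})$, contradicting the goodness of $a_i$ or $a_j$ (Observation~\ref{obs:acylObs2.1} is used to match the $\mathfrak{B}$-anti-halfspace back to a disqualifying $\mathfrak{A}$). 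Summing yields $\#A \ge N \#\mathcal{G} \ge N\#\mathcal{A}_2/2 \ge \#\mathcal{A}_1/\epsilon$, closing the argument. The principal obstacle is coordinating the four anti-halfspaces $\mathfrak{A}_i, \mathfrak{A}_i', \mathfrak{A}_j, \mathfrak{A}_j'$ and the $\mathfrak{A}/\mathfrak{B}$ distinction coherently with Observation~\ref{obs:acylObs2}, and calibrating the $d_S$-separation $R + 2E'$ to absorb both the lineage threshold from that observation and the $B_S(id, E')$ subtraction.
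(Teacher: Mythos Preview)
Your proposal is essentially the paper's own proof: the same good/bad/undecided bookkeeping on a $d_S$-separated subset $\mathcal{A}_2$, driven by Lemma~\ref{lem:fHalfNest} and Observations~\ref{obs:acylObs0}--\ref{obs:acylObs2.1}, with the same disjointness argument for the $K_i$'s. The one structural difference is your treatment of $E'$: the paper disposes of it in the first line by noting that $\#\{g:\|g\|_S\le E'\}\le (2\#S)^{E'}$ and reducing to $E'=0$, whereas you carry $E'$ through via the enlarged separation $R+2E'$.

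That deviation introduces a small wrinkle you should patch. With the statement as written (the set subtracted from $A$ is $a\cdot\bigl((\mathcal{H}_1\cup\mathcal{H}_2)\setminus B_S(id,E')\bigr)$), badness of $a_i$ bounds from below the cardinality of $(A\setminus a_i(\mathcal{H}_1^i\cup\mathcal{H}_2^i))\cup(A\cap a_iB_S(id,E'))$, which is a \emph{superset} of your $A\cap K_i$; so ``badness gives $\#(A\cap K_i)>N$'' is not literally what follows. The fix is immediate---either absorb an additive $\#B_S(id,E')$ into $N$, or simply adopt the paper's reduction to $E'=0$ up front, after which your argument and the paper's coincide verbatim (your separation becomes $R$, your $K_i$ becomes $G\setminus a_i(\mathcal{H}_1^i\cup\mathcal{H}_2^i)$, and the mismatch disappears).
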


\begin{proof}
Note that $\{g \in G : \|g\|_{S} \le E'\}$ have elements at most $(2\#S)^{E'}$. Hence, the statement for general $E'$ will follow once we prove it for $E' = 0$. For this reason we set $E' = 0$. Let $D$ be large enough that Observation \ref{obs:acylObs1},  \ref{obs:acylObs2} and \ref{obs:acylObs2.1} apply, and let $E \ge 10\|f\|_{S} D$. 

Let $F =F(D, E)$ be as in Lemma \ref{lem:fHalfNest} and let $R_{0}=R$ be as in Observation \ref{obs:acylObs2} for $(D, F)$. We claim that \[
N = N(\epsilon, D, E) := \frac{2 \cdot (2\#S)^{R}}{\epsilon}
\]
works.

Let us begin the proof by collecting problematic elements, i.e., \[
\mathcal{A}_{1} := A \setminus A' = \left\{a \in A :  \begin{array}{c}\#\big( A  \setminus a(\mathcal{H}_{1} \cup \mathcal{H}_{2}) \big) \ge N\,\, \textrm{for every $f$-halfspaces }\\
\textrm{$\mathcal{H}_{1}, \mathcal{H}_{2}$ with radius parameters $(D, E)$}\end{array}\right\}.
\]
Let $\mathcal{A}_{2}$ be a maximally $R$-separated subset $\mathcal{A}_{2}$ of $\mathcal{A}_{1}$, i.e., we have \begin{enumerate}
\item $d_{S}(a, a') \ge R$ for each pair of distinct elements $a, a' \in \mathcal{A}_{2}$;
\item $\mathcal{A}_{2}$ is a maximal subset of $\mathcal{A}_{1}$ satisfying this property.
\end{enumerate}

Then $\bigcup_{a \in \mathcal{A}_{2}} a \cdot \{g \in G : \|g\|_{S} \le R\}$ covers entire $\mathcal{A}_{1}$. Hence, we have \[
\#\mathcal{A}_{2} \ge \frac{1}{(2\#S)^{R}} \cdot \#\mathcal{A}_{1}.
\]

As before, we first prepare empty collections $\mathcal{B}= \mathcal{U} = \mathcal{G} = \emptyset$. Enumerate $\mathcal{A}_{2}$ by the distance from $x_{0}$, i.e., let $\mathcal{A}_{2} = \{a_{1}, a_{2}, \ldots, a_{\#\mathcal{A}_{2}}\}$ be such that $d_{X}(x_{0}, a_{i}) \le d_{X}(x_{0}, a_{i+1})$ for each $i$. 

We now describe a procedure that takes place throughout $\#\mathcal{A}_{2}$ steps. At step $i$, we first declare $\mathfrak{A}_{i}:= a_{i}\mathfrak{A}_{D, F, f}(a_{i}^{-1})$. \begin{enumerate}
\item If $\mathcal{A}_{2} \cap \mathfrak{A}_{i}$ has no element, then we declare that $a_{i} \in \mathcal{G}$ and $b_{i} := id$.
\item If not, pick $b_{i} \in \mathcal{A}_{2} \cap \mathfrak{A}_{i}$ whose orbit point $b_{i}x_{0}$ is the \emph{closest} to $x_{0}$. We then declare $\mathfrak{A}_{i}' := a_{i}\mathfrak{A}_{D, F, f}( a_{i}^{-1}b_{i})$. 
\begin{enumerate}
\item If $\mathcal{A}_{2} \cap \mathfrak{A}_{i} \cap \mathfrak{A}_{i}'$ has no element, then we declare that $a_{i} \in \mathcal{G}$. 
\item If not, we pick $c_{i} \in \mathcal{A}_{2} \cap \mathfrak{A}_{i} \cap \mathfrak{A}_{i}'$ whose orbit point $c_{i}x_{0}$ is the \emph{closest} to $x_{0}$. We then declare $a_{i} \in \mathcal{B}$ and $b_{i}, c_{i} \in \mathcal{U}$.
\end{enumerate}
\end{enumerate}
(If an element in $\mathcal{U}$ is declared good or bad, it is not undecided anymore; we remove it from $\mathcal{U}$.)

Till step $i$, $\mathcal{G} \cup \mathcal{B}$ comprises of elements from $\{a_{1}, \ldots, a_{i}\}$; they do not contain any of $a_{i+1}, a_{i+2}, \ldots$. ($\ast$) Let us now observe what happens at step $i$.

In case (1), $\mathcal{G}$ gains one more element that might be from  $\mathcal{U}$ or not. $\mathcal{B}$ does not change. Overall, $\#\mathcal{B}$ stays the same and $\#\mathcal{U} + \#\mathcal{G}$ does not decrease. Similar situation happens in Case (2-a).

In case (2-b), $\mathcal{B}$ gains one element $a_{i}$, which might be from $\mathcal{U}$. In exchange, $\mathcal{U}$ gains elements $b_{i}$ and $c_{i}$. Here Observation \ref{obs:acylObs1} guarantees that $d_{X}(x_{0}, b_{i}x_{0}), d_{X}(x_{0}, c_{i}x_{0}) > d_{X}(x_{0}, a_{i}x_{0})$. Since $\mathcal{A}_{2}$ was labelled with respect to the distance from $x_{0}$, we conclude that $b_{i}, c_{i} \in \{a_{i+1}, a_{i+2}, \ldots\}$; in other words, neither $b_{i}$ nor $c_{i}$ come from $\mathcal{G} \cup \mathcal{B}$. We thus confirm that elements are never re-classified once they are put in $\mathcal{G} \cup \mathcal{B}$. 

Furthermore, Observation \ref{obs:acylObs1} guarantees that $d_{X}(b_{i}x_{0}, c_{i}x_{0}) > 100D$. Hence $b_{i}$ and $c_{i}$ are distinct elements. If $b_{i}, c_{i}$ are genuinely new addition to $\mathcal{U}$ and are not re-used from $\mathcal{U}$ at step $i-1$, then we can conclude that $\#\mathcal{U}$ increases at least by 1 in Case (2-b). It remains to show\begin{claim}\label{claim:unredunAcyl}
For $i< j$ such that $a_{i}, a_{j} \in \mathcal{B}$, we have $\{b_{i}, c_{i}\} \cap \{b_{j}, c_{j}\} =\emptyset$.
\end{claim} 

\begin{proof}[Proof of Claim \ref{claim:unredunAcyl}]
Suppose to the contrary that $b_{i} \in \{b_{j}, c_{j}\}$. That means \[
b_{i} \in a_{i} \mathfrak{A}_{D, F, f}(a_{i}^{-1}) \cap a_{j} \mathfrak{A}_{D, F, f}(a_{j}^{-1}).
\]
Here, recall that $d_{X}(x_{0}, a_{i}x_{0}) \le d_{X}(x_{0}, a_{j}x_{0})$ and $d_{S}(a_{i}, a_{j}) > R$. Observation \ref{obs:acylObs2} tells us that $a_{j} \in a_{i} \mathfrak{A}_{D, F, f}(a_{i}^{-1})$. (In Observation \ref{obs:acylObs2}, Case 2 cannot happen because of Observation \ref{obs:acylObs1}, and Case 3 cannot happen for $d_{S}(a_{i}, a_{j})>R$.) Here, note that $d_{X}(x_{0}, a_{j}x_{0}) < d_{X}(x_{0}, b_{j}x_{0})$ because of Observation \ref{obs:acylObs1}. This contradicts the minimality of $b_{i}$.

Next, suppose to the contrary that $c_{i} \in \{b_{j}, c_{j}\}$. That means \[
c_{i} \in a_{i} \mathfrak{A}_{D, F, f}(a_{i}^{-1}) \cap a_{i} \mathfrak{A}_{D, F, f}(a_{i}^{-1}b_{i}) \cap  a_{j} \mathfrak{A}_{D, F, f}(a_{j}^{-1}).
\]
For the same reason as above, we have $a_{j} \in a_{i} \mathfrak{A}_{D, F, f}(a_{i}^{-1})$ and $d_{X}(c_{i} x_{0}, a_{j}x_{0}) < d_{X}(c_{i} x_{0}, a_{i}x_{0})$.

We then have \[\begin{aligned}
d_{X}(x_{0}, a_{j}x_{0}) &\ge d_{X}(x_{0}, a_{i} x_{0}) + 100D, & (\because \textrm{Observation \ref{obs:acylObs1}})\\
d_{X}(b_{i} x_{0}, a_{j} x_{0}) &\ge d_{X}(b_{i}x_{0}, c_{i}x_{0}) - d_{X}(a_{j} x_{0}, c_{i}x_{0}) \\
&\ge d_{X}(b_{i} x_{0}, c_{i}x_{0}) - d_{X}(c_{i}x_{0}, a_{i}x_{0}) \\
&=_{2D} d_{X}(b_{i}x_{0}, a_{i}x_{0}) & \big(\because c_{i} \in a_{i} \mathfrak{A}_{D, F, f}(a_{i}^{-1} b_{i})\big).
\end{aligned}
\]
This implies that $(x_{0} | b_{i}x_{0})_{a_{j}x_{0}} \ge 98D$. Meanwhile, $(x_{0} | c_{i}x_{0})_{a_{j}x_{0}} \le D$ as $c_{i} \in a_{j} \mathfrak{A}_{D, F, f}(a_{j}^{-1})$. By Lemma \ref{lem:GromIneq}, we have $(b_{i}x_{0} | c_{i}x_{0})_{a_{j}x_{0}} < 2D$. Combining this with $c_{i} \in a_{j} \mathfrak{A}_{D, F, f}(a_{j}^{-1})$, we can apply Observation \ref{obs:acylObs2.1} to conclude that $c_{i} \in a_{j}\mathfrak{B}_{D, F, f}(a_{j}^{-1} b_{i})$.

We thus have $c_{i} \in a_{i} \mathfrak{A}_{D, E, f}(a_{i}^{-1}, b_{i}) \cap a_{j} \mathfrak{A}_{D, E, f}(a_{j}^{-1} b_{i})$. By Observation \ref{obs:acylObs2}, either: \begin{enumerate}
\item $a_{j} \in a_{i} \mathfrak{A}_{D, F, f}(a_{i}^{-1} b_{i})$ and $d_{X}(c_{i}x_{0}, a_{j} x_{0}) < d_{X}(c_{i} x_{0}, a_{i} x_{0})$, or
\item $a_{i} \in a_{j} \mathfrak{B}_{D, F, f}(a_{j}^{-1} b_{i})$ and  $d_{X}(c_{i}x_{0}, a_{j} x_{0}) > d_{X}(c_{i} x_{0}, a_{i} x_{0})$.
\end{enumerate}
(Again, $d_{S}(a_{i}, a_{j}) < R$ is ruled out.) Since we already know $d_{X}(c_{i}x_{0}, a_{j} x_{0}) < d_{X}(c_{i} x_{0}, a_{i} x_{0})$, the former case happens.

Hence $a_{j} \in a_{i} \mathfrak{A}_{D, F, f}(a_{i}^{-1}) \cap a_{i} \mathfrak{A}_{D, F, f}(a_{i}^{-1}b_{i})$ with $d_{X}(x_{0}, a_{j}x_{0}) < d_{X}(x_{0}, c_{i}x_{0}) - 100D$, as $c_{i} \in  a_{j} \mathfrak{A}_{D, F, f}(a_{j}^{-1})$. This contradicts the minimality of $c_{i}$.
\end{proof}

Thanks to the claim, we conclude that $\#\mathcal{B} \le \#\mathcal{U} + \#\mathcal{G}$ at each step. But recall also that $a_{i} \in \mathcal{A}_{2}$ is declared good or bad at step $i$ and is not affected thereafter. Hence, after the last step, there is no element of $\mathcal{U}$ left. Hence, we have $\#\mathcal{B} \le \#\mathcal{G}$, and $\mathcal{G}$ takes up at least half of $\mathcal{A}_{2}$.

Now, with the final $\mathcal{G}$ in hand, for each $i \in \{1, \ldots, \#\mathcal{A}_{2}\}$ such that $a_{i} \in \mathcal{G}$, we define \[
K_{i} := A \setminus a_{i} \big(\mathcal{H}_{D, E, f}(a_{i}^{-1}) \cup \mathcal{H}_{D, E, f}(a_{i}^{-1} b_{i})\big).
\]
Since $a_{i} \in \mathcal{G} \subseteq \mathcal{A}_{2}$, we have $\#K_{i} \ge N$. The remaining claim is: 

\begin{claim}\label{claim:magicDisjtWPD}
For every pair of distinct elements $a_{i}, a_{j} \in \mathcal{G}$, $K_{i}$ and $K_{j}$ do not intersect.
\end{claim}

To check this claim, suppose to the contrary that $K_{i}$ and $K_{j}$ have a common element $w$ for some $i < j$ such that $a_{i}, a_{j} \in \mathcal{G}$. By Lemma \ref{lem:fHalfNest}, we have \[
w \in a_{i}\mathfrak{A}_{D, F, f}(a_{i}^{-1}) \cap a_{i}\mathfrak{A}_{D, F, f}(a_{i}^{-1}b_{i}) \cap a_{j} \mathfrak{A}_{D, F, f}(a_{j}^{-1}) \cap a_{j} \mathfrak{A}_{D, F, f}(a_{j}^{-1}b_{j}).
\]

Depending on whether $d_{X}(wx_{0}, a_{j} x_{0}) \le d_{X} (wx_{0}, a_{i} x_{0})$ or not, we have $a_{j} \in a_{i} \mathfrak{A}_{D, F, f}(a_{i}^{-1}) \cap a_{i} \mathfrak{A}_{D, F, f}(a_{i}^{-1}b_{i})$ or  $a_{i} \in a_{j} (\mathfrak{A}_{D, F, f}(a_{j}^{-1}) \cap  a_{j}\mathfrak{A}_{D, F, f}(a_{j}^{-1}b_{j})$ by Observation \ref{obs:acylObs1}. This contradicts the goodness of $a_{i}$ or $a_{j}$. Hence, such a common element $w$ cannot exist and $K_{i}$ and $K_{j}$ are disjoint.

With Claim \ref{claim:magicDisjt} in hand, we have \[\begin{aligned}
\# A &\ge \sum_{i : a_{i} \in \mathcal{G}} \# (K_{i} \cap A) \ge N \cdot \#\mathcal{G} \ge N \cdot \frac{\#\mathcal{A}_{2}}{2} \\
&\ge N \cdot \frac{\#\mathcal{A}_{1}}{2 \cdot (2\#S)^{R} } \ge \frac{1}{\epsilon}(\#A - \#A'). \qedhere
\end{aligned}
\]
\end{proof}

We now have to check the branching property. Recall that $\mathscr{A}$ is the collection of all translates of $Ax(f)$. Let  \[\begin{aligned}
\mathcal{NF}_{D} &:= \big\{ g \in G : \forall \gamma \in \mathscr{A} [ d_{\gamma}(x_{0}, gx_{0}) < D] \big\}, \\
\mathcal{NF}_{D}^{\ge i}& := \mathcal{NF}_{D} \cap \{ g : d_{S}(id, g) \ge i \}.
\end{aligned}
\]
We then observe that: \begin{prop}\label{prop:NFRough}
For each $D$, $\mathcal{NF}_{D}$ is $r$-roughly branching for some $r$.
\end{prop}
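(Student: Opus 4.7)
The plan is to adapt the modification-map strategy used in the proof of Proposition \ref{prop:barrierContracting}. Let $K_0$ be the constant $K$ from Theorem \ref{thm:elemClos}, enlarged so that $d_X(x_0, sx_0) \le K_0$ for every $s \in S$. Since $G$ is not virtually cyclic, pick $w \in S$ with $w \notin EC(f)$; then distinct translates in $\mathscr{A}$ have $K_0$-bounded mutual projection. Fix $N := 1000(D + K_0 + \delta)$ and choose $R' > 0$ large enough that the finite set $EC(f) \cap \{g : d_X(x_0, gx_0) \le 4N + 4K_0\}$ is contained in $\{g \in G : \|g\|_S \le R'\}$; this is possible because $\langle f \rangle$ is a quasi-isometrically embedded finite-index subgroup of $EC(f)$. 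Set $R := R' + 2$ and $r := R + 2N\|f\|_S + \|w\|_S$. Let $B^{\ast} \subseteq \mathcal{NF}_D$ be a maximal $R$-separated subset in $d_S$, and define the injective modification
\[
F \colon B^{\ast} \to G, \qquad F(g) := g \cdot f^N w f^N.
\]
Take the witnessing set $B' := F(B^{\ast})$; that $\mathcal{NF}_D$ lies in the $r$-neighborhood of $B'$ with respect to $d_S$ is immediate, since each $a \in \mathcal{NF}_D$ is $d_S$-within $R$ of some $a^{\ast} \in B^{\ast}$ by maximality, while $d_S(a^{\ast}, F(a^{\ast})) = \|f^N w f^N\|_S \le 2N\|f\|_S + \|w\|_S$.

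For the injective concatenation property, suppose $a_1, \ldots, a_k, b_1, \ldots, b_k \in B^{\ast}$ satisfy $F(a_1) \cdots F(a_k) = F(b_1) \cdots F(b_k)$. Consider the piecewise geodesic $\Gamma_a$ through the block-end points $x_0, F(a_1) x_0, F(a_1) F(a_2) x_0, \ldots, F(a_1) \cdots F(a_k) x_0$. The Gromov product at each block end translates under $(F(a_1) \cdots F(a_i))^{-1}$ to $(F(a_i)^{-1} x_0 \mid F(a_{i+1}) x_0)_{x_0}$: the first argument is reached from $x_0$ by initially traversing $f^{-N}$ along $Ax(f)$, whereas $[x_0, F(a_{i+1}) x_0]$ begins with the transverse geodesic $[x_0, a_{i+1} x_0]$ that has $Ax(f)$-projection at most $D$ since $a_{i+1} \in \mathcal{NF}_D$. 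Hence each such Gromov product is $\le D + O(\delta)$. The block lengths are at least $2N - O(D + K_0 + \delta)$, using $w \notin EC(f)$ to see that $[x_0, f^N w f^N x_0]$ fellow-travels $Ax(f)$ for near-$N$ before turning onto $f^N w \cdot Ax(f)$ for another near-$N$. Since $N$ dominates these constants, Lemma \ref{lem:stability} produces points on the common geodesic $[x_0, F(a_1) \cdots F(a_k) x_0]$ that are $O(D + K_0 + \delta)$-close to each $F(a_1) \cdots F(a_i) x_0$, and likewise for the corresponding $\Gamma_b$. A second application of Lemma \ref{lem:stability} \emph{within} each block identifies its two long axis-aligned subsegments, lying along the distinct axes $F(a_1) \cdots F(a_{i-1}) a_i \cdot Ax(f)$ and $F(a_1) \cdots F(a_{i-1}) a_i f^N w \cdot Ax(f)$ of $\mathscr{A}$.

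Consequently the common geodesic hosts $2k$ long alignments with elements of $\mathscr{A}$. By the no-backtracking property of Corollary \ref{cor:projGrom}(3) together with the $K_0$-bounded projection property of $\mathscr{A}$, each axis can match at most one interval of the geodesic, and distinct axes cannot jointly cover a long interval. Hence the $a$- and $b$-decompositions produce the same ordered list of $2k$ axes; in particular $a_1 Ax(f) = b_1 Ax(f)$, so $a_1^{-1} b_1 \in EC(f)$ by Theorem \ref{thm:elemClos}. Matching the endpoints of the first axis-alignment up to $O(D + K_0 + \delta)$ further forces $d_X(a_1 x_0, b_1 x_0) \le O(D + K_0 + \delta)$; hence $a_1^{-1} b_1 \in \{g : \|g\|_S \le R'\}$ by our choice of $R'$, so $d_S(a_1, b_1) \le R' < R$. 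The $R$-separation of $B^{\ast}$ then gives $a_1 = b_1$; cancelling $F(a_1) = F(b_1)$ from the product and iterating on $k$ completes the proof. The main technical delicacy lies in the second stability argument within each block, where the short $w$-turn between the two axis segments must be handled by absorption into an $O(K_0)$ Gromov product rather than treated as a proper intermediate segment; this is a routine adaptation of the kind of bookkeeping carried out in the proof of Claim \ref{claim:chainFInj}.
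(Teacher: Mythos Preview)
Your overall architecture — take a maximal $R$-separated subset, append a fixed ``long axis word'' $f^Nwf^N$, and use stability of the resulting concatenation to force $a_1=b_1$ — is close to what the paper does, but there is a genuine gap in the junction estimate.

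You claim that at each block-end the Gromov product $\bigl(F(a_i)^{-1}x_0\,\big|\,F(a_{i+1})x_0\bigr)_{x_0}$ is at most $D+O(\delta)$, because the incoming direction starts along $[x_0,f^{-N}x_0]\subset Ax(f)$ while the outgoing one starts with $[x_0,a_{i+1}x_0]$, whose $Ax(f)$-projection is $<D$. The second assertion is correct, but it does not control the $Ax(f)$-projection of the \emph{whole} point $F(a_{i+1})x_0=a_{i+1}f^Nwf^Nx_0$. If $a_{i+1}\in EC(f)$ conjugates $f$ to $f^{-1}$ (an orientation-reversing element of the elementary closure), then $a_{i+1}Ax(f)=Ax(f)$ and $a_{i+1}f^N=f^{-N}a_{i+1}$, so $a_{i+1}f^Nx_0$ lies within $D$ of $f^{-N}x_0$. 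A short computation then shows $\pi_{Ax(f)}\bigl(F(a_{i+1})x_0\bigr)$ lies within $O(D+K_0+\delta)$ of $f^{-N}x_0$, so the outgoing geodesic fellow-travels $[x_0,f^{-N}x_0]$ for length $\sim N$. The junction Gromov product is therefore of order $N$, not $O(D)$, and Lemma~\ref{lem:stability} no longer applies. Such orientation-reversing elements can lie in $\mathcal{NF}_D$ whenever $d_X(x_0,a_{i+1}x_0)$ is small, and nothing in your choice of $B^\ast$ excludes them (a maximal $R$-separated set need not contain $id$, and even if it did you have not argued that all of $EC(f)^-\cap\mathcal{NF}_D$ lies in $B_S(id,R)$).

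This is exactly why the paper does \emph{not} use a fixed appendage: it inserts a $g$-dependent letter $\mathscr{W}(g)\in\{id,w\}$ chosen so that $g\mathscr{W}(g)Ax(f)\neq Ax(f)$ (equivalently $\diam_{Ax(f)}\bigl(g\mathscr{W}(g)Ax(f)\bigr)\le K_1$). With that choice, $F(g)=g\mathscr{W}(g)f^{50D}$ sends $x_0$ into an axis transverse to $Ax(f)$ regardless of whether $g\in EC(f)$, and the junction Gromov product is genuinely $O(D)$. Your argument can be repaired along the same lines, but as written the fixed word $f^Nwf^N$ does not suffice.
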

\begin{proof}
Since $G$ is non-virtually cyclic, there exists $w \in S$ such that $Ax(f)$ and $w Ax(f)$ have $K_{0}$-bounded projection onto each other. This guarantees a constant $K_{1}$ such that the following holds. For each $g \in \mathcal{NF}_{D}$, there exists $\mathscr{W}(g) \in \{id, w\}$ such that \[
\diam_{g^{-1} Ax (f)} \big(\mathscr{W}(g) \cdot Ax (f)\big ) \le K_{1}.
\]
By increasing $K_{1}$ if necessary, we can also guarantee that $d_{X}(x_{0}, wx_{0}) < K_{1}$. 

We will prove the proposition for $D > 10^{4}(\delta+K_{1}+1)$. For each $g \in G$ we define $F(g) := g \mathscr{W}(g) f^{50D}$.

Recall that the set \[
EC(f) \cap \{ g \in G : d_{X}(x_{0}, gx_{0}) < 100D\}
\]
is a finite set. Namely, it is contained in $\{g \in G : d_{S}(id, g) < R'\}$ for some $R'$. Let $R = R'+2$.

We now consider a  subset $A$ of $\mathcal{NF}_{D}$ that is maximally $R$-separated in the word metric $d_{S}$. Let $a_{1}, \ldots, a_{k}, b_{1}, \ldots, b_{k} \in A $ be such that \[
F(a_{1}) F(a_{2}) \cdots F(a_{k}) = F(b_{1}) F(b_{2}) \cdots F(b_{k}).
\]
We then claim $a_{1} = b_{1}$. To see this, let us define \[\begin{aligned}
p_{i} &:= F(a_{1}) \cdots F(a_{i}) x_{0} & (i=0, \ldots, k),\\
q_{i} &:= F(a_{1}) \cdots F(a_{i-1}) \cdot a_{i} \mathscr{W}(a_{i})x_{0} & (i=1, \ldots, k).
\end{aligned}
\]
We claim that: \begin{enumerate}
\item $d_{X}(q_{i-1}, p_{i}) = 50D$, $q_{i-1}$ is $1.1D$-close to $[p_{i-1}, p_{i}]$ and $d_{X}(p_{i-1}, p_{i}) > 48D$ for $i=1, \ldots, k$;
\item $(q_{i-1} | p_{i+1})_{p_{i}}, (p_{i-1} |p_{i+1})_{p_{i}} \le 2.2D$ for $i=1, \ldots, k-1$.
\end{enumerate}
Recall that $a_{i} \in \mathcal{NF}_{D}$. Hence, we have \[
d_{a_{i} \mathscr{W}(a_{i})Ax(f)} (x_{0}, a_{i}x_{0}) \le D. 
\] 
By the coarse Lipschitzness of the projection (Corollary \ref{cor:projGrom}(1)), we also have \[
d_{a_{i} \mathscr{W}(a_{i})Ax(f)} \big(a_{i}x_{0}, a_{i}\mathscr{W}(a_{i})x_{0}\big) \le 0.001D. 
\]
In summary, we have $d_{a_{i} \mathscr{W}(a_{i})Ax(f)} (x_{0}, a_{i}\mathscr{W}(a_{i})x_{0}) \le 1.001D$. By Corollary \ref{cor:projGrom}(5), we then have \[
d_{a_{i} \mathscr{W}(a_{i}) [x_{0}, f^{50D}x_{0}]}(x_{0}, a_{i}\mathscr{W}(a_{i})x_{0}) \le 1.01D.
\]
By Lemma \ref{lem:projection}, we conclude $(x_{0}| a_{i}\mathscr{W}(a_{i}) f^{50D}x_{0})_{a_{i} \mathscr{W}(a_{i}) x_{0}} \le 1.015D$. Now Lemma \ref{lem:fellow} tells us that $a_{i}\mathscr{W}(a_{i}) x_{0}$ is $1.1D$-close to $[x_{0}, a_{i}\mathscr{W}(a_{i})f^{50D} x_{0}]$. This also implies \[
d_{X}\big(x_{0}, a_{i} \mathscr{W}(a_{i})f^{50D} x_{0}\big) \ge d_{X}\big(a_{i}\mathscr{W}(a_{i})x_{0}, a_{i} \mathscr{W}(a_{i})f^{50D} x_{0}\big) - 1.1D \ge 50D  - 1.1D \ge 48D.
\]
Hence, we conclude Item (1).

We now observe that \[\begin{aligned}
d_{Ax(f)}(x_{0}, a_{i+1} x_{0}) &\le D, \\
d_{Ax(f)} (a_{i+1} x_{0},  a_{i+1} \mathscr{W}(a_{i+1})x_{0}) &\le 0.001D, \\
 d_{Ax(f)} \big(a_{i+1} \mathscr{W}(a_{i+1})x_{0}, a_{i+1} \mathscr{W}(a_{i+1})f^{50D}x_{0}\big) &\le 0.001D.
 \end{aligned}
\]
The first inequality is due to the membership $a_{i+1} \in \mathcal{NF}_{D}$. The second inequality is by Corollary \ref{cor:projGrom}(1). The third inequality is the requirement for $\mathscr{W}(a_{i+1})$. Combined with Corollary \ref{cor:projGrom}(5), these imply \[
d_{Ax(f)} \big(x_{0}, a_{i+1} \mathscr{W}(a_{i+1}) f^{50D} x_{0}\big) \le 1.002D,\,\,d_{[f^{-50 D} x_{0}, x_{0}]}  \big(x_{0}, a_{i+1} \mathscr{W}(a_{i+1}) f^{50D} x_{0}\big) \le 1.003D.
\]
All in all, we have \[
\big(f^{-50D} x_{0} \, \big| \, a_{i+1} \mathscr{W}(a_{i+1})f^{50D}x_{0}\big)_{x_{0}} \le 1.01D,
\]
i.e. $(q_{i-1}| p_{i+1})_{q_{i}} \le 2D$. Meanwhile, by Item (1) we have \[\begin{aligned}
\big( f^{-50D} x_{0} \, \big| \, f^{-50D} \mathscr{W}(a_{i})^{-1} a_{i}^{-1} x_{0} \big)_{x_{0}} &= \big(a_{i} \mathscr{W}(a_{i}) x_{0} \, \big| \, x_{0} \big)_{F(a_{i})x_{0}} \\
&=_{4\delta} d_{X}(x_{0}, f^{-50D}x_{0}) - d_{X} (a_{i} \mathscr{W}\big(a_{i})x_{0}, [x_{0}, F(a_{i})x_{0}] \big)\\
&\ge 50D - 1.1D  \ge 48D.
\end{aligned}
\]
Now Gromov's 4-point inequality implies that \[
 \big( f^{-50D} \mathscr{W}(a_{i})^{-1} a_{i}^{-1} x_{0}\, \big| \, a_{i+1} \mathscr{W}(a_{i+1})f^{50D}x_{0} \big)_{x_{0}} \le 1.1D,
\]
i.e. $(p_{i-1} | p_{i+1})_{p_{i}} \le 2D$. This leads to Item (2).

We can now apply Lemma \ref{lem:stability} to the sequence \[
\big(x_{0},\, q_{1}, \, p_{1}, \, p_{2}, \, \ldots, \, p_{k}\big).
\]
Let $\gamma : [0, L] \rightarrow X$ be the geodesic connecting $x_{0}$ to $F(a_{1}) \cdots F(a_{k})x_{0}$. By Lemma \ref{lem:stability} there exist $\tau \le \tau'$ such that  $d_{X}(\gamma(\tau), q_{1}), d_{X}(\gamma(\tau'), p_{1}) \le 2.25D$. Note that $\tau' \ge \tau + 40D$.

For the exactly same reason, there exist $\sigma \le \sigma'$ such that $\gamma(\sigma)$, $\gamma(\sigma')$ are $2.25D$-close to $b_{1} \mathscr{W}(b_{1}) x_{0}$ and $F(b_{1})x_{0}$, respectively. 

Suppose without loss of generality that $\tau \le \sigma$. There are two cases. \begin{enumerate}
\item If $\sigma \ge \tau + 25D$, then we have \[
d_{\gamma([\tau, \tau'])} (x_{0}, \gamma(\sigma) ) \ge 25D.
\]
Recall that $\gamma([\tau, \tau'])$ and $[q_{1}, p_{1}] = a_{1}\mathscr{W}(a_{1}) [x_{0}, f^{50D}x_{0}]$ are $2.3D$-equivalent by Lemma \ref{lem:GromHausdorff}. Moreover, $\gamma(\sigma)$ and $b_{1} x_{0}$ are $2.3D$-close. These facts and Corollary \ref{cor:projGrom}(5) imply  \[
d_{a_{1}\mathscr{W}(a_{1})[x_{0}, f^{50D} x_{0}]} (x_{0}, b_{1}x_{0}) \ge 12D, \quad 
d_{a_{1}\mathscr{W}(a_{1}) Ax(f)} (x_{0}, b_{1}x_{0}) \ge 10D.
\]
This contradicts the requirement that $b_{1} \in \mathcal{NF}_{D}$.
\item If $\sigma \in [\tau, \tau+25D]$, then $\gamma([\tau, \tau'])$ and $\gamma([\sigma, \sigma'])$ overlap for length at least $15D$. Since $\gamma([\tau, \tau'])$ ($\gamma([\sigma, \sigma'])$, resp.) and $a_{1}\mathscr{W}(a_{1}) [x_{0}, f^{50D}x_{0}]$ ($b_{1}\mathscr{W}(b_{1}) [x_{0}, f^{50D}x_{0}]$, resp.) are $2.3D$-equivalent. By Corollary \ref{cor:projGrom}(1), (4) and (5), we have  \[
\diam_{a_{1}\mathscr{W}(a_{1}) [x_{0}, f^{50D}x_{0}]} \big(b_{1}\mathscr{W}(b_{1}) Ax(f) \big) \ge 8D, \,\,\diam_{a_{1}\mathscr{W}(a_{1}) Ax(f)} \big(b_{1}\mathscr{W}(b_{1}) Ax(f) \big) \ge 7D.
\]
This implies that $\mathscr{W}(a_{1})^{-1} a_{1}^{-1} b_{1} \mathscr{W}(b_{1}) \in EC(f)$. Meanwhile, note that $a_{1}\mathscr{W}(a_{1}) x_{0}$ and $b_{1}\mathscr{W}(b_{1}) x_{0}$ are $30D$-close. Hence, we have \[
\big\|\mathscr{W}(a_{1})^{-1} a_{1}^{-1} b_{1} \mathscr{W}(b_{1})\big\|_{S} \le R',\,\,  \|a_{1}^{-1} b_{1} \|_{S} \le R.
\]
Since $a_{1}, b_{1}$ are chosen from an $R$-separated set $A$, we conclude $a_{1} = b_{1}$ as desired.
\end{enumerate}

By induction, we conclude that $a_{i} = b_{i}$ for each $i$. 

It remains to show that $\mathcal{NF}_{D}$ is contained in a bounded neighborhood of $F(A)$ in the word metric. It is clear that $\mathcal{NF}_{D}$ is contained in the $R$-neighborhood of $A$, and $A$ is contained in the $(50D \|f\|_{S} + 1)$-neighborhood of $F(A)$. This ends the proof.
\end{proof}

We now observe that $\mathcal{NF}_{D}^{>i}$ indeed serves as a ``barrier". First, we record the following corollary of Lemma \ref{lem:chainBBF}.

\begin{lem}\label{lem:nestedBBF}
There exists $K>0$ such that the following holds. Let $\gamma_{0}, \gamma_{1}, \ldots, \gamma_{N} \in \mathscr{A} := \{g Ax (f) : g \in G\}$ and let $z \in X$ be such that: \begin{enumerate}
\item $d_{\gamma_{i-1}}(x_{0}, \gamma_{i}) \ge K$ for $1 \le i \le N$, and
\item $d_{\gamma_{N}}(x_{0}, z) \ge K$.
\end{enumerate}
Then $d_{\gamma_{1}}(x_{0}, z)\ge d_{\gamma_{0}}(x_{0}, \gamma_{1}) - K$.
\end{lem}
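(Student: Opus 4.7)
The plan is to deduce this from Lemma \ref{lem:chainBBF} by feeding the chain $\gamma_{0}, \gamma_{1}, \ldots, \gamma_{N}$ into it with $x_{0}$ playing the role of the distinguished point (called ``$z$'' in that lemma), and then transporting the conclusion from points of $\gamma_{N}$ to the actual point $z$ via a projection/no‐backtracking argument.

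First, I would recall that by Theorem \ref{thm:elemClos}, distinct elements of $\mathscr{A} = \{g \operatorname{Ax}(f) : g \in G\}$ have mutually $K_{0}$-bounded nearest–point projections for a uniform constant $K_{0}$, so the chain $\gamma_{0}, \gamma_{1}, \ldots, \gamma_{N}$ has $K_{0}$-bounded projections (if two axes coincide, one of the hypotheses in (1) forces $K$ to be small and the statement becomes vacuous). Choosing $K$ at least $5K_{0} + 100\delta$, hypothesis (1) becomes exactly the hypothesis of Lemma \ref{lem:chainBBF} for the chain with base point $x_{0}$, yielding for every $q \in \gamma_{N}$ and every $0 \le i \le N-1$,
\[
d_{\gamma_{i}}(x_{0}, q) \;\ge\; d_{\gamma_{i}}(x_{0}, \gamma_{i+1}) - (2K_{0} + 112\delta).
\]

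Second, I would transfer the inequality from a point $q \in \gamma_{N}$ to the given point $z$ itself. Let $q \in \pi_{\gamma_{N}}(z)$; hypothesis (2) says $d_{\gamma_{N}}(x_{0}, q) = d_{\gamma_{N}}(x_{0}, z) \ge K$. Provided $K$ is large, the constriction property (Corollary \ref{cor:projGrom}(2)) forces the geodesic $[x_{0}, z]$ to pass within $10\delta$ of $q$ at some interior point $p$. Applying the no–backtracking property (Corollary \ref{cor:projGrom}(3)) to the projection of $[x_{0}, z]$ onto $\gamma_{1}$, the point $\pi_{\gamma_{1}}(p)$ lies in the $12\delta$-neighborhood of $[\pi_{\gamma_{1}}(x_{0}), \pi_{\gamma_{1}}(z)]$; combining with the coarse Lipschitzness of $\pi_{\gamma_{1}}$ (Corollary \ref{cor:projGrom}(1)), $\pi_{\gamma_{1}}(q)$ is within a uniformly bounded distance of $\pi_{\gamma_{1}}(p)$, hence of the segment $[\pi_{\gamma_{1}}(x_{0}), \pi_{\gamma_{1}}(z)]$. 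This places $\pi_{\gamma_{1}}(q)$ and $\pi_{\gamma_{1}}(z)$ on the same side of $\pi_{\gamma_{1}}(x_{0})$ up to a constant $C = C(\delta, K_{0})$, so that
\[
d_{\gamma_{1}}(x_{0}, z) \;\ge\; d_{\gamma_{1}}(x_{0}, q) - C.
\]
An analogous argument gives the same type of inequality with $\gamma_{0}$ in place of $\gamma_{1}$.

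Combining the two displayed estimates along the chain (and, if needed, iterating one step to pass from $d_{\gamma_{i}}(x_{0}, \gamma_{i+1})$ to $d_{\gamma_{i-1}}(x_{0}, \gamma_{i})$ via Corollary \ref{cor:projGrom}(5)) and choosing $K$ large enough to absorb $(2K_{0} + 112\delta) + C$ yields the claimed bound. The only real obstacle is bookkeeping: every invocation of constriction, coarse Lipschitzness, or no–backtracking contributes an additive error of order $O(\delta + K_{0})$, and the final clean conclusion requires picking $K$ once and for all to dominate the cumulative additive constant.
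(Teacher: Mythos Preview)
Your approach is correct and is precisely what the paper intends: the paper records the lemma as a direct corollary of Lemma~\ref{lem:chainBBF} without further argument, and your two-step strategy (apply Lemma~\ref{lem:chainBBF} with $x_{0}$ playing the role of the base point to bound $d_{\gamma_{i}}(x_{0},q)$ for $q\in\gamma_{N}$, then transfer from $q$ to $z$ via constriction and no-backtracking from Corollary~\ref{cor:projGrom}) is exactly the natural way to unpack that corollary.

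One remark on your final paragraph. The stated conclusion reads $d_{\gamma_{1}}(x_{0},z)\ge d_{\gamma_{0}}(x_{0},\gamma_{1})-K$, with mismatched indices on the two sides; the way the lemma is actually invoked in Proposition~\ref{prop:DEThenEPrime} makes clear that the intended conclusion is $d_{\gamma_{0}}(x_{0},z)\ge d_{\gamma_{0}}(x_{0},\gamma_{1})-K$ (and more generally the same with any fixed index $i$ on both sides). For that version your first two steps already finish the proof, and the ``iteration via Corollary~\ref{cor:projGrom}(5)'' you suggest is both unnecessary and not quite applicable: that item compares projections onto a geodesic and a \emph{subgeodesic} of it, not onto two distinct axes in $\mathscr{A}$. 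Drop that clause and the argument is clean.
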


\begin{prop}\label{prop:DEThenEPrime}
For each large enough $D, E>0$, there exists $E'>0$ such that the following holds.
Let $g \in G$ be such that  \begin{enumerate}
\item there does not exist $h \in G$ such that \[
d_{S}(id, h) \le E'\,\,\textrm{and}\,\,d_{h Ax(f)}(x_{0}, gx_{0}) \ge 250D.
\]
\item $d_{S}(id, g) \ge E$.
\end{enumerate}
Let $(id = g_{0}, g_{1}, \ldots, g_{N} := g)$ be a $d_{S}$-path between $id$ and $g$. Then there exists $t$ such that $g_{t} \in \mathcal{NF}_{300D}^{\ge E}$.
\end{prop}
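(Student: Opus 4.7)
The plan is to argue by contradiction: suppose no $g_t$ lies in $\mathcal{NF}_{300D}^{\ge E}$, and let $t_1 = \max\{t : \|g_t\|_S < E\}$. Condition (2) forces $t_1 < N$, and for each $t > t_1$ we have $\|g_t\|_S \ge E$, so the assumption forces $g_t \notin \mathcal{NF}_{300D}$. For every such $t$, I pick $\gamma_t \in \mathscr{A}$ with $d_{\gamma_t}(x_0, g_t x_0) \ge 300D$. The goal is then to chain the $\gamma_t$'s together in such a way that the first axis in the chain serves as a witness $h\cdot Ax(f)$ contradicting condition~(1).

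Next I would build a greedy sequence of (pairwise distinct) axes $\alpha_1, \ldots, \alpha_k \in \mathscr{A}$ together with transition times $t_1 = s_0 < s_1 < \cdots < s_k = N$: set $\alpha_1 = \gamma_{t_1+1}$, let $s_1$ be the largest $t$ with $d_{\alpha_1}(x_0, g_t x_0) \ge 280D$, and if $s_1 < N$ take $\alpha_2 := \gamma_{s_1+1}$ and iterate. Consecutive path vertices move by at most $K := \max_{s\in S} d_X(x_0, sx_0)$ and projections are coarse-Lipschitz (Corollary~\ref{cor:projGrom}(1)), so the transition vertex $g_{s_i+1}$ satisfies $d_{\alpha_i}(x_0, g_{s_i+1} x_0) \ge 280D - O(1)$ while simultaneously $d_{\alpha_{i+1}}(x_0, g_{s_i+1} x_0) \ge 300D$. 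The central geometric claim is $d_{\alpha_i}(x_0, \alpha_{i+1}) \ge 250D$ for $D$ large: by constriction (Corollary~\ref{cor:projGrom}(2)) the geodesic $[x_0, g_{s_i+1} x_0]$ fellow-travels $\alpha_i$ and $\alpha_{i+1}$ along subsegments of lengths $\ge 280D - O(1)$ and $\ge 300D - O(1)$; the bounded projection between distinct axes (Theorem~\ref{thm:elemClos}) prevents substantial overlap; and a no-backtracking argument (Corollary~\ref{cor:projGrom}(3)) combined with the greedy order of selection pins $\pi_{\alpha_i}(\alpha_{i+1})$ near the far endpoint of the $\alpha_i$-segment.

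With the chain in hand, Lemma~\ref{lem:chainBBF} applied to $(\alpha_1, \ldots, \alpha_k)$ with $z = x_0$ and $q = \pi_{\alpha_k}(g x_0) \in \alpha_k$ yields $d_{\alpha_1}(x_0, q) \ge 250D - (2K + 112\delta)$, and a further contraction/projection estimate using Lemma~\ref{lem:projGrom} (together with the fact that $q$ is the nearest-point projection of $gx_0$ onto $\alpha_k$ and $\alpha_k$ itself projects boundedly on $\alpha_1$) upgrades this to $d_{\alpha_1}(x_0, gx_0) \ge 250D$. To finish the contradiction with condition~(1), I must represent $\alpha_1 = h\cdot Ax(f)$ with $\|h\|_S \le E'$: I choose $h \in h_0 \langle f\rangle$ so that $h x_0$ lies within distance $1$ of $\pi_{\alpha_1}(x_0)$, which makes $d_X(x_0, hx_0) \le KE + O(1)$ because $\alpha_1$ fellow-travels $[x_0, g_{t_1+1} x_0]$, a geodesic of length $\le KE$. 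Non-properness prevents this $X$-distance bound from directly controlling $\|h\|_S$; however, the projections of $h^{-1} x_0$ and $h^{-1} g_{t_1+1} x_0$ onto $Ax(f)$ differ by $\ge 300D$, and after shifting by an appropriate power $f^n$ both projections lie in $[x_0, f^k x_0]$ for some $k = O(E)$. The contrapositive of Lemma~\ref{lem:amplifyWPD} (with $g := f^n h^{-1}$ and $h := g_{t_1+1}$, $M := E$) then yields $\|f^n h^{-1}\|_S \le R(k, E)$, and hence $\|h\|_S \le R(k, E) + |n|\|f\|_S \le E'$ for some $E' = E'(D, E)$, contradicting condition~(1).

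The main obstacle I foresee is the central geometric claim $d_{\alpha_i}(x_0, \alpha_{i+1}) \ge 250D$ and the associated pairwise distinctness required by Lemma~\ref{lem:chainBBF}: specifically, ruling out the scenario in which the $\alpha_{i+1}$-fellow-traveling subsegment precedes the $\alpha_i$-subsegment along $[x_0, g_{s_i+1} x_0]$, which would collapse $\pi_{\alpha_i}(\alpha_{i+1})$ onto $\pi_{\alpha_i}(x_0)$ and break the chain. I anticipate this will need a Behrstock-type dichotomy for the BBF projections on $\mathscr{A}$, possibly together with a refinement of the greedy selection rule (e.g.\ choosing $\gamma_t$ to maximize $d_{\gamma_t}(x_0, g_t x_0)$, and breaking ties in favor of axes whose projection on each prior $\alpha_j$ lies near the far endpoint of the $\alpha_j$-segment).
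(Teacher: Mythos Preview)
Your overall structure matches the paper's: argue by contradiction, start from the last vertex on the path with small word-norm, chain axes along the tail of the path, and use Lemma~\ref{lem:amplifyWPD} to bound the word-norm of a representative of the first axis. The paper establishes the $E'$ bound for the first axis $\gamma_0$ up front (before any chaining), but your end-of-proof route via Lemma~\ref{lem:amplifyWPD} is essentially the same computation.

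The obstacle you flag in your final paragraph is real and is exactly where your argument is incomplete. The paper's remedy is simpler and more decisive than your suggestions: at each transition vertex $g_{i(j)}$, choose $\gamma_j$ not as an arbitrary witness that $g_{i(j)} \notin \mathcal{NF}_{300D}$, but as the element of $\{\gamma \in \mathscr{A} : d_{\gamma}(x_0, g_{i(j)} x_0) \ge 300D\}$ that \emph{minimizes $d_X(x_0, \gamma)$}. If $\gamma_{j+1}$ were closer to $x_0$ than $\gamma_j$ (the backtracking scenario), one argues exactly as in the proof of Claim~\ref{claim:pathBlock} that $\gamma_{j+1}$ already satisfied $d_{\gamma_{j+1}}(x_0, g_{i(j)} x_0)$ large at the \emph{previous} transition vertex, contradicting the minimality of $\gamma_j$ there. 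This forces the fellow-travel segments to appear in the desired order and delivers $d_{\gamma_j}(x_0, \gamma_{j+1})$ large. Your proposed fixes do not obviously pin down the order: the Behrstock inequality only tells you that one of $d_{\alpha_i}(x_0, \alpha_{i+1})$ and $d_{\alpha_{i+1}}(x_0, \alpha_i)$ is small, without saying which, and maximizing $d_{\gamma}(x_0, g_t x_0)$ has no direct bearing on which axis lies closer to $x_0$. Swap in the closest-axis selection rule and your chain works.
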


\begin{proof}
Let $D_{0}$ be as in Lemma \ref{lem:amplifyWPD} and let $K_{0} = K$ be as in Lemma \ref{lem:nestedBBF}. We assume that $D >1000(\delta + \max_{s \in S} d_{X}(x_{0}, sx_{0}) + D_{0} + K_{0})$. Furthermore, let $E_{amp}= R(600D, E)$ be as in Lemma \ref{lem:amplifyWPD} and let $E' = E_{amp} + 600\|f\|_{S}$.

Suppose to the contrary that a $d_{S}$-path $P=(g_{1}, \ldots, g_{N})$ never intersects $\mathcal{NF}_{300D}^{\ge E}$. We will deduce contradiction. Let \[
i(0) := \max \{ i : d_{S}(id, g_{i}) \le E\}.
\]
If $g_{i(0)} \in \mathcal{NF}_{300D}$, then it is in $\mathcal{NF}_{300D}^{\ge E}$. Due to our standing assumption, this is not the case.

Thus, there exists $h \in G$ such that $d_{h Ax(f)} (x_{0}, g_{i(0)} x_{0}) \ge 300D$. By replacing $h$ with an element of $\{h f^{i} \}_{i \in \Z}$, we may suppose that $\pi_{hAx(f)} (x_{0})$ intersects $[hx_{0}, hfx_{0}]$. Now consider a $d_{S}$-geodesic $Q = (id, g_{1}', \ldots, g_{E}' =: g_{i(0)})$ connecting $id$ and $g_{i(0)}$. By the coarse Lipschitzness of $\pi_{hAx(f)}(\cdot)$, there exist $1 \le j \le E$ such that $d_{X}(hx_{0}, \pi_{hAx(f)}(g_{j}'x_{0})) =_{D} 30D$.

Then  $\pi_{hAx(f)} (x_{0})$ and $\pi_{hAx(f)}(g_{j}'x_{0})$ are  contained in $[hf^{-300D}x_{0}, hf^{300D}x_{0}]$, so we have \[
d_{[hf^{-300D}x_{0}, hf^{300D}x_{0}]}(x_{0}, g_{j}'x_{0})=
d_{hAx(f)}(x_{0}, g_{j}'x_{0}) \ge 25D.
\] 
Note here that $\|g_{j}'\|_{S} \le \|g_{i(0)}\|_{S} =E$. Lemma \ref{lem:amplifyWPD} then tells us that $\|h\|_{S} \le E_{amp}$ and $\|h\|_{S} \le E'$.

In summary,  the collection \[
\mathcal{C}_{0}:= \{ \gamma \in \mathscr{A} : d_{\gamma}(x_{0}, g_{i(1)} x_{0})  \ge 300D\}
\]
is a nonempty collection, and each element of $\mathcal{C}_{0}$ is realized as $hAx(f)$ for some $\|h\|_{S} \le E'$. Let us take $\gamma_{0} = h_{0}Ax(f) \in \mathcal{C}_{0}$ whose axis is the closest to $x_{0}$, i.e., the one  with the smallest $d_{X}(x_{0}, h_{0} Ax(f))$.

If $d_{h_{0} Ax(f)} (x_{0}, g_{i}x_{0}) \ge 300D$ for all $i \ge i(0)$, including $i = N$, then it contradicts the condition on $g$. Hence, $d_{h_{0}Ax(f)} (x_{0}, g_{i} x_{0}) < 290D$ for some $i$. Let us take the smallest such $i$ and name it $i(1)$. By the coarse Lipschitzness of $\pi_{h_{0}Ax(f)}(\cdot)$, we have $d_{h_{0} Ax(f)} (x_{0}, g_{i(1)} x_{0}) \ge 289D$. 

Meanwhile, by our standing assumption, the collection \[
\mathcal{C}_{1} := \{ \gamma \in \mathscr{A} : d_{\gamma}(x_{0}, g_{i(1)} x_{0}) > 300D \}
\]
is nonempty. We pick $\gamma_{1} \in \mathcal{C}_{1}$ that is the closest to $x_{0}$. Clearly $\gamma_{1} \neq \gamma_{0}$. 

Now, as in the proof of Claim \ref{claim:pathBlock}, we can prove that $\gamma_{1}$ appears later than $\gamma_{0}$ along $[x_{0}, g_{i(2)}x_{0}]$; otherwise it will contradict the minimality of $\gamma_{0}$ in $\mathcal{C}_{0}$. We deduce that $d_{\gamma_{0}}(x_{0}, \gamma_{1}) > 289D$.

The proof goes on. If $d_{\gamma_{1}}(x_{0}, g_{i}x_{0}) \ge 300D$ for all $i \ge i(1)$, including $i=N$, then Lemma \ref{lem:nestedBBF} implies that $d_{\gamma_{0}}(x_{0}, g_{N}x_{0}) > 280D$. This contradicts the condition on $g$.

Hence, $d_{\gamma_{1}}(x_{0}, g_{i}x_{0}) < 290D$ for some $i$, and we take the smallest such $i$ as $i(2)$. We have $d_{\gamma_{1}}(x_{0}, g_{t_{2}}x_{0}) \ge 289D$. By the standing assumption,  \[
\mathcal{C}_{2} := \{ \gamma \in \mathscr{A} : d_{\gamma}(x_{0}, g_{u(2)} x_{0}) > 300D \}
\]
is nonempty. We pick $\gamma_{2} \in \mathcal{C}_{2}$ that is the closest to $x_{0}$. We then observe that $\gamma_{2}$ appears later than $\gamma_{1}$; otherwise it violates the minimality of $\gamma_{1}$ in $\mathcal{C}_{1}$. We deduce that $d_{\gamma_{1}}(x_{0}, \gamma_{2}) > 289D$.

If $d_{\gamma_{2}}(x_{0}, g_{i} x_{0}) \ge 300D$ for all $i \ge i(2)$, then  Lemma \ref{lem:nestedBBF} again implies that $d_{\gamma_{0}}(x_{0}, g_{N}x_{0}) > 280D$, a contradiction. Thus, $d_{\gamma_{2}}(x_{0}, g_{i}x_{0}) < 290D$ for some $i$, and we take the smallest such $i$ as $i(3)$. We have $d_{\gamma_{2}}(x_{0}, g_{i(3)}x_{0}) \ge 289D$. 

If this process persists, it means we get an infinite sequence $i(1) < i(2) < \ldots$ in a finite sequence $0 \le 1 \le \ldots \le N$. This is a contradiction.
\end{proof}

\begin{cor}\label{cor:hutchcroftIotaAcyl}
Let $\Gamma$ be the Cayley graph of an acylindrically hyperbolic group $G$. Then Equation \ref{eqn:hutchcroftGamma2} holds.
\end{cor}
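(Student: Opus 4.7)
The plan is to verify every hypothesis of Theorem~\ref{thm:hutchcroftIotaGen}, from which Equation~\ref{eqn:hutchcroftGamma2} follows immediately. Let $f \in G$ be the unital axial WPD loxodromic furnished by Theorem~\ref{thm:elemClos}, fix $x_0 \in Ax(f)$, and let $r$ denote the constant appearing in Proposition~\ref{prop:barrierContracting}. Given the Theorem's parameter $D$, I will apply Propositions~\ref{prop:magicAcyl}, \ref{prop:barrierContracting}, \ref{prop:NFRough} and \ref{prop:DEThenEPrime} at the rescaled value $D_\ast := rD$, so that a single Gromov-product scale is used consistently throughout.

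For each large $D$ and each $E \ge 0$, letting $E' := E'(D_\ast, E)$ be the constant from Proposition~\ref{prop:DEThenEPrime}, I will define
\[
\mathscr{H}_D := \big\{\{g \in G : gx_0 \in \mathcal{H}_{D_\ast}(x_0, y)\} : y \in X\big\}, \qquad S_D := \mathcal{NF}_{300 D_\ast},
\]
with $S_{D;i} := S_D \cap \{g \in G : \|g\|_S = i\}$, and
\[
\mathcal{G}_{D, E} := \big\{g \in G : \|g\|_S \ge E \text{ and } d_{hAx(f)}(x_0, gx_0) < 250 D_\ast \text{ for every } h \in B_S(E')\big\}.
\]
Proposition~\ref{prop:NFRough} yields hypothesis~(1); Proposition~\ref{prop:barrierContracting}, applied with radius $R = D_\ast = rD$, produces an $r$-roughly branching disjoint union of $R/r = D$ barriers between $id$ and each $\mathcal{H} \in \mathscr{H}_D$, giving hypothesis~(2); and Proposition~\ref{prop:DEThenEPrime} applied at scale $D_\ast$ states that every $d_S$-path from $id$ to an element of $\mathcal{G}_{D, E}$ must cross $\mathcal{NF}_{300 D_\ast}^{\ge E} = \sqcup_{i \ge E} S_{D;i}$, yielding hypothesis~(3).

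To verify the magic condition, given $\epsilon, D, E > 0$ I will apply Proposition~\ref{prop:magicAcyl} with parameters $(\epsilon, D_\ast, E', E)$ to obtain a constant $N$, a subset $A' \subseteq A$ with $\#A' \ge (1-\epsilon)\#A$, and, for each $a \in A'$, two $f$-halfspaces $\mathcal{H}_i := \mathcal{H}_{D_\ast, E', f}(u_i)$ ($i=1,2$) for which $\#(A \setminus a((\mathcal{H}_1 \cup \mathcal{H}_2) \setminus B_S(E))) \le N$. The key combinatorial observation is that by the defining formula
\[
\mathcal{H}_{D_\ast, E', f}(u) = \{g : (gx_0 | ux_0)_{x_0} > D_\ast\} \cup \{g : d_{hAx(f)}(x_0, gx_0) < 250 D_\ast \text{ for every } h \in B_S(E')\},
\]
each $f$-halfspace decomposes as an element $\mathcal{H}'(u_i) \in \mathscr{H}_D$ union a $u$-independent tail that, upon removing $B_S(E)$, coincides with $\mathcal{G}_{D, E}$. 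This gives the containment $(\mathcal{H}_1 \cup \mathcal{H}_2) \setminus B_S(E) \subseteq \mathcal{H}'(u_1) \cup \mathcal{H}'(u_2) \cup \mathcal{G}_{D, E}$, and translating by $a$ yields the magic-lemma condition of Theorem~\ref{thm:hutchcroftIotaGen} with the same $N$.

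The main obstacle I anticipate is not any single geometric step but the careful simultaneous calibration of the constants $(D, D_\ast, E, E', r)$ so that the Gromov-product threshold $D$, the $250D$ and $300D$ sitting inside $\mathcal{H}_{D, E, f}$ and $\mathcal{NF}_{300D}$, and the branching ratio of Proposition~\ref{prop:barrierContracting} are mutually compatible. Only then does the $f$-halfspace decomposition above match $\mathscr{H}_D$ and $\mathcal{G}_{D, E}$ verbatim, and only then does Proposition~\ref{prop:barrierContracting} produce exactly $D$ layers rather than $D/r$. Once this calibration is fixed, no further geometric work is needed.
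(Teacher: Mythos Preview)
Your proposal is correct and follows essentially the same approach as the paper's own proof: both verify the hypotheses of Theorem~\ref{thm:hutchcroftIotaGen} by taking $\mathscr{H}_D$ to be Gromov-product halfspaces at a rescaled threshold, $S_D = \mathcal{NF}_{300D_\ast}$, and $\mathcal{G}_{D,E}$ the elements with no short $h$ yielding large $d_{hAx(f)}(x_0,gx_0)$, then invoking Propositions~\ref{prop:barrierContracting}, \ref{prop:NFRough}, \ref{prop:DEThenEPrime}, and \ref{prop:magicAcyl} in turn. Your explicit decomposition of the $f$-halfspace as $\mathcal{H}'(u) \cup (\text{tail})$ and the uniform rescaling $D_\ast = rD$ are in fact tidier than the paper's own bookkeeping, which mixes $D$ and $rD$ in a few places; but the underlying argument is identical.
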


\begin{proof}
Without loss of generality, we can fix an action of $G$ on a $\delta$-hyperbolic space $X$ with a unital, axial WPD element $f \in G$. Let $S$ be a finite generating set for $G$ that gives rise to $\Gamma = Cay(G, S)$. Let $x_{0} \in Ax(f)$.

Let $r>0$ be as in Proposition \ref{prop:barrierContracting}. Given $D, E > 0$, we define \[\begin{aligned}
\mathscr{H}_{D} &:= \big\{\{ g\in G : gx_{0} \in \mathcal{H}_{r \cdot D}(x_{0}, ux_{0})\} : u \in G \big\},\\
S_{D} &:= \mathcal{NF}_{300D} = \sqcup_{i=1}^{\infty} \mathcal{NF}_{300D}^{i}.
\end{aligned}
\]
Also, let $E' = E'(D, E)$ be as in Proposition \ref{prop:DEThenEPrime} for $D$ and $E$. We then define \[
\mathcal{G}_{D, E} := \big\{ g \in G : \|g\|_{S} \ge E, \, \not\exists h \in G \big[ \|h\|_{S} \le E' \,\, \textrm{and}\,\,d_{hAx(f)} (x_{0}, gx_{0}) \ge 250D\big]\big\}.
\]
Now given $\epsilon > 0$ as well, we let $N = N(\epsilon, rD, E'(rD, E), E)$ be as in Proposition \ref{prop:magicAcyl}.

Then $S_{D}$ is roughly branching for each $D$. Moreover, for each $\mathcal{H} \in \mathscr{H}_{D}$ there exists an $r$-branching subset $B = B_{1} \sqcup \ldots \sqcup B_{D}$ that is a disjoint union of $D$ $d_{S}$-barriers $B_{1}, \ldots, B_{D}$ between $id$ and $\mathcal{H}$, by Proposition \ref{prop:barrierContracting}.

Finally, for each $D, E> 0$ we observe that $\mathcal{NF}_{300D}^{\ge E}$ is a $d_{S}$-barrier for $id$ and $\mathcal{G}_{D, E}$ by Proposition \ref{prop:DEThenEPrime}. Lastly, Proposition \ref{prop:magicAcyl} guarantees that for each finite $A \subseteq G$ there exist $A' \subseteq A$ with $\#A' \ge (1-\epsilon)\#A$ such that for each $a \in A'$, there exist $\mathcal{H}_{1}, \mathcal{H}_{2} \in \mathscr{H}_{D}$ such that \[\# \big( A \setminus a(\mathcal{H}_{1} \cup \mathcal{H}_{2} \cup \mathcal{G}_{D, E}) \big) \le N.
\]

We can thus apply Theorem \ref{thm:hutchcroftIotaGen} and conclude Equation \ref{eqn:hutchcroftGamma2}.
\end{proof}

\appendix
\section{Proof of Theorem \ref{thm:hutchcroft1plus2}}\label{appendix:hutchcroft1}

We summarize Hutchcroft's proof of Theorem \ref{thm:hutchcroft1plus2}. Let $G, S, R$ and $\mathscr{H}=\{H(g) : g \in G\}$ be as in the assumption. Our goal is to show that there exists $\epsilon > 0$ such that $\left(\frac{d}{dp}\right)_{+}\chi_{p} \ge \epsilon \chi_{p}^{2}$ holds for all $p_{c}/2 \le p \le p_{c}$. Then by integration we will have $\chi_{p} \le \epsilon^{-1} (p_{c} - p)^{-1}$ for each $p_{c}/2 < p < p_{c}$, which leads to Equation \ref{eqn:hutchcroftGamma1}.

We first define a $\{0, 1\}$-valued function $I = I(g, A) \subseteq \Omega$ for inputs $a \in G$ and $A \subseteq G$: \[
I(a, A) := 1_{\{\exists g, h \in G [ \|g\|_{S}, \|h\|_{S} \le R \,\,\textrm{and}\,\, A \subseteq a H(g) \,\, \textrm{and}\,\, H(g) \cap h H(g) = \emptyset]\}}.
\]
By our assumption, we have \[
\sum_{a \in A} I(a, A) \ge \frac{1}{2} \#A
\]
for each $A \subseteq G$. We now define $F : \Omega \times G \rightarrow \mathbb{R}$: \[
F(\omega, a) := I\big(a, C_{\omega}(id)\big) 1_{a \in C_{\omega}(id)}.
\]
We will now fix $p_{c} /2 < p < p_{c}$. We have \[
\sum_{a \in G} \E_{p} F(\omega, a) = \E_{p} \sum_{a \in C_{\omega}(id)} I\big(a, C_{\omega}(id)\big) \ge \E_{p} \left( \frac{1}{2} \# C_{\omega}(id) \right) = \frac{1}{2} \chi_{p}.
\]
Meanwhile, we have \[
\begin{aligned}
\sum_{a \in G} \E_{p} F(\omega, a) &= 
\sum_{a \in G} \E_{p} F(\omega, a^{-1}) = \E_{p} \sum_{a : a^{-1} \in C_{\omega}(id)} I\big(a^{-1}, C_{\omega}(id)\big) \\
&= \E_{p} \sum_{a : id \in C_{\omega}(a)} I\big(id, C_{\omega}(a)\big) \\
&= \E_{p} \sum_{a \in C_{\omega}(id)} I\big(id, C_{\omega}(id)\big) = \E_{p}\big[ \#C_{id} \cdot I\big(id, C_{\omega}(id)\big) \big].
\end{aligned}
\]

Recall also that there are at most $N$ choices $g, h \in G$ such that $\|g\|_{S}, \|h\|_{S}< R$, where $N = (2\#S)^{2R}$. Among those finitely many candidates, there exist a concrete, non-random $g, h$ such that $H(g)$ and $hH(g)$ are disjoint and such that \[
\E_{p} \big[ \#C(id) \cdot 1_{\{C(id) \subseteq H(g)\}}\big] \ge \frac{1}{2N} \chi_{p}.
\]
By applying the action of $h$, we also have  \[
\E_{p} \big[ \#C(h) \cdot 1_{\{C(h) \subseteq hH(g)\}}\big] \ge \frac{1}{2N} \chi_{p}.
\]
Now, note that the event $\{C(id) \subseteq H(g)\}$ is determined solely by edges in $H(g)$, and $\{C(h) \subseteq hH(g)\}$ is determined solely by edges in $hH(g)$. Since the two sets are disjoint, the two events are independent. We conclude that \[
\E_{p} \big[ \big(\#C(id) \big) \cdot \big( \#C(h) \big) \cdot 1_{\{C(id) \subseteq H(g)\,\,\textrm{and}\,\, C(h) \subseteq h H(g)\}}\big] \ge \frac{1}{4N^{2}} \chi_{p}^{2}.
\]

We now pick a $d_{S}$-geodesic $\gamma = (g_{1}, g_{2}, \ldots, g_{\|h\|_{S}})$ connecting $id$ to $h$. Let $e = \overrightarrow{g_{n} g_{n+1}}$ be the first (oriented) edge of $\gamma$ that connects $H(g)$ to $\Gamma \setminus H(g)$. As described in Hutchcroft's proof, a standard conversion of finitely many states guarantees a constant $c_{p}$, which is a linear combination of finitely many products and ratios of $p$ and $1-p$, hence bounded on compact subsets of $(0, 1)$, such that \[
\E_{p}\big[ \big(\#C(g_{n}) \big) \cdot \big( \# C(g_{n+1})\big) \cdot 1_{\{ g_{n} \not\leftrightarrow g_{n+1} \}} \big] \ge c_{p} 
\E_{p} \big[ \big(\#C(id) \big) \cdot \big( \#C(h) \big) \cdot 1_{\{C(id) \subseteq H(g), C(h) \subseteq h H(g)\}}\big].
\]
By applying the action by $g_{n}^{-1}$, we conclude that \[
\E_{p} \left[  \big(\#C(id) \big) \cdot \big( \# C(s)\big)  \cdot 1_{id \not\leftrightarrow s} \right] \ge c \chi_{p}^{2}
\]
for some $c$ uniform on $0.5p_{c} < p < p_{c}$ and for some $s = s(p)$ in the generating set $S$.

We now recall Russo's formula. For each $g \in G$, a closed edge $e = \overrightarrow{e^{-} e^{+}} \in \mathcal{E}^{\rightarrow}$ is pivotal for the event $\{id \leftrightarrow g\}$ if and only if $id \leftrightarrow e^{-}$, $e^{+} \leftrightarrow g$ and $e^{-} \not\leftrightarrow e^{+}$. Hence, we have \[
\left(\frac{d}{dp}\right)_{+} \tau_{p}(g) \ge \frac{1}{1-p} \sum_{e \in \mathcal{E}^{\rightarrow}} \Prob_{p} (\{ id \leftrightarrow e^{-} \} \cap \{ e^{-} \not\leftrightarrow e^{+}\} \cap \{ e^{+} \leftrightarrow g\}).
\]
Since $\tau_{p}(g)$ are monotonic for each $g \in G$, summing over finitely many $g$'s and taking limits imply  \[\begin{aligned}
\left(\frac{d}{dp} \right)_{+} \chi_{p} &\ge \frac{1}{1-p} \sum_{g \in G} \sum_{e \in \mathcal{E}^{\rightarrow}} \Prob_{p} (\{ id \leftrightarrow e^{-} \} \cap \{ e^{-} \not\leftrightarrow e^{+}\} \cap \{ e^{+} \leftrightarrow g\}) \\
&= \frac{1}{1-p}  \sum_{s \in S}\sum_{h \in G} \sum_{g\in G} \Prob_{p} (\{ id \leftrightarrow h\}  \cap \{ h \not\leftrightarrow hs\} \cap \{ hs \leftrightarrow g\} )\\
&= \frac{1}{1-p}   \sum_{s \in S}\sum_{h \in G} \sum_{g\in G} \Prob_{p} (\{ id \leftrightarrow h\}  \cap \{ h \not\leftrightarrow hs\} \cap \{ hs \leftrightarrow hsg\} )\\
&= \frac{1}{1-p} \sum_{g, h \in G, s \in S} \Prob_{p} (\{ h^{-1} \leftrightarrow id\}  \cap \{ id \not\leftrightarrow s\} \cap \{ s \leftrightarrow g \} ).
\end{aligned}
\]
The last summation is bounded from below by $\frac{1}{1-p}\E_{p} \left[  \big(\#C(id) \big) \cdot \big( \# C(s)\big)  \cdot 1_{id \not\leftrightarrow s} \right]$ for our choice $s=s(p)$. Hence, we conclude that $(d/dp)_{+}\chi_{p}$ is uniformly coarsely bounded from below by $\chi_{p}^{2}$ for $p \in (p_{c}/2, p_{c})$. This ends the proof.

\section{Proof of Corollary \ref{cor:projGrom}}\label{appendix:proj}

We sketch the proof of Corollary \ref{cor:projGrom}.
\begin{enumerate}
\item It is a direct consequence of Lemma \ref{lem:projGrom}.
\item We have \[
d_{X}(x, p) + d_{X}(p, y) - 12\delta \le d_{X}(x, p) + d_{X}(p, q) + d_{X}(q, y) - 12 \delta \le  d_{X}(x, y).
\]
Hence, $(x | y)_{p} \le 6\delta$. Pick $x' \in [x, y]$ such that $d_{X}(x, x') = (y | p)_{x}$. Then by Lemma \ref{lem:fellow}, $x'$ is $10\delta$-close to $p$. Similarly, the point $y' \in [x, y]$ such that $d_{X}(x, y') = (y | q)_{x}$ is $10\delta$-close to $p$.
Note that \[
d_{X}(x, p) \le d_{X} (x, y) - d_{X}(q, y)  - d_{X}(p, q) + 12\delta < d_{X}(x, q) - 12\delta + 12\delta = d_{X}(x, q).
\]
For a similar reason, we have $d_{X}(y, q) < d_{X}(y, p)$. This implies that 
\[
d_{X}(x, x') = \frac{1}{2} [d_{X}(x, y) + d_{X}(x, p) - d_{X}(y, p)] \le  \frac{1}{2} [d_{X}(x, y) + d_{X}(x, q) - d_{X}(y, q)] = d_{X}(x, y').
\]
Hence, $x'$ comes earlier than $y'$ along $[x, y]$. By Lemma \ref{lem:GromHausdorff}, $[x', y']$ and $[p, q]$ are $12\delta$-equivalent.      
\item 
Suppose to the contrary that there exist $p \in \pi_{\gamma}(x)$, $q \in \pi_{\gamma}(y)$ and $r \in \pi_{\gamma}(z)$ such that $r \notin \mathcal{N}_{12\delta}([p, q])$. This means that $d_{X}(p, r) + d_{X}(r, q) > 24\delta + d_{X}(p, q)$.

Now Lemma \ref{lem:projGrom} tells us that \[
\begin{aligned}
d_{X}(x, z) \ge d_{X}(x, p) + d_{X}(r, z) + d_{X}(p, r) - 12\delta,\\
d_{X}(z, y) \ge d_{X}(y, q) + d_{X}(r, z) + d_{X}(q, r) - 12\delta.
\end{aligned}
\]
This implies that \[
d_{X}(x, y) \ge d_{X}(x, p) + d_{X}(y, q) + \big( d_{X}(p, r) + d_{X}(r, q) \big) - 24\delta > d_{X}(x, p) + d_{X}(y, q) + d_{X}(p, q).
\]
This is a contradiction.
   
\item Let $\gamma = [y, z]$ and $\gamma' = [y', z']$. Let $a \in [y ,z]$ be such that $d_{X}(y, a) = (x|z)_{y}$, let $b \in [y, x]$ be such that $d_{X}(y, b) = (x|z)_{y}$, let $c \in [y, x]$ be such that $d_{X}(y, c) = (x|z')_{y}$, let $d \in [x, z']$ be such that $d_{X} (z', d) = (x | y)_{z'}$, let $e \in [x, z']$ be such that $d_{X} (z', e) = (x | y')_{z'}$, and let $f \in [y', z']$ be such that $d_{X}(z', f) = (x | y')_{z'}$. 

Then by Lemma \ref{lem:projection}, $a$ and $e$ are $8\delta$-equivalent to $\pi_{\gamma}(x)$ and $\pi_{\gamma'}(x)$, respectively. Moreover, Lemma \ref{lem:fellow} tells us that $d_{X}(a, b), d_{X} (c, d), d_{X}(e, f) \le 4\delta$, and the triangle inequality tells us that $d_{X}(b, c) \le d_{X}(z, z') \le D$, $d_{X}(d, e) \le d_{X}(y, y') \le D$. In conclusion, $\pi_{\gamma}(x)$ and $\pi_{\gamma'}(x)$ are $(2D+28\delta)$-equivalent as desired.
\item Suppose that there exist $p \in \pi_{\gamma}(x)$ and $q \in \pi_{\gamma}(y)$ that realizes $d_{X}(p, q)  = d_{\gamma'}(x, y) > 12\delta$. By Corollary \ref{cor:projGrom}(2), there exists $x', y' \in [x, y]$ that are $10\delta$-close to $p$ and $q$, respectively. In particular, $x'$ and $y'$ are $16\delta$-close to $\gamma' \subseteq \gamma$. This implies \[\begin{aligned}
\pi_{\gamma}(x') \subseteq \mathcal{N}_{10\delta}(x') \subseteq \mathcal{N}_{20\delta}(p), \quad \pi_{\gamma}(y') \subseteq \mathcal{N}_{10\delta}(y') \subseteq \mathcal{N}_{20\delta}(q).
\end{aligned}
\]
This implies that $d_{\gamma}(x', y') \ge d_{X}( p, q) - 40\delta$. Meanwhile, Corollary \ref{cor:projGrom}(3) tells us that $d_{\gamma}(x, y) \ge d_{\gamma}(x', y') - 24\delta$. Combining these two, we conclude that  \[
\diam_{\gamma}(x, y) \ge d_{\gamma}(x', y') - 24\delta \ge d_{X}(p, q) - 64\delta. \qedhere
\]
\end{enumerate}

%
%%%%%%%%%%%%%%%%%%%%%%%%%%%%%%%%%%%%%%%
%
%							References
%
%%%%%%%%%%%%%%%%%%%%%%%%%%%%%%%%%%%%%%%

\medskip
\bibliographystyle{alpha}
\bibliography{perc}

\end{document}